\newcommand*{\mailto}[1]{\href{mailto: #1}{\nolinkurl{#1}}}
\newcommand{\arxiv}[1]{\href{http://arxiv.org/abs/#1}{arXiv: #1}}
\newcommand{\msc}[1]{\href{http://www.ams.org/msc/msc2020.html?t=&s=#1}{#1}}
\newtheorem{theorem}{Theorem}[section]
\newtheorem{lemma}[theorem]{Lemma}
\newtheorem{proposition}[theorem]{Proposition}
\newtheorem{corollary}[theorem]{Corollary}
\newtheorem{mainthm}{Theorem}[section]
\newtheorem{maincor}{Corollary}[section]
\theoremstyle{definition}
\newtheorem{definition}[theorem]{Definition}
\newtheorem{example}[theorem]{Example}
\newtheorem{mainexa}{Example}[section]
\theoremstyle{remark}
\newtheorem{remark}[theorem]{Remark}
\newcommand{\R}{{\mathbb R}}
\newcommand{\N}{{\mathbb N}}
\newcommand{\Z}{{\mathbb Z}}
\newcommand{\C}{{\mathbb C}}
\newcommand{\spr}[2]{\langle #1 , #2 \rangle}
\newcommand{\Qr}{\mathsf{q}}
\newcommand{\Wr}{\mathsf{w}}
\newcommand{\T}{\mathrm{T}}
\newcommand{\KIO}{\mathrm{K}}
\newcommand{\rI}{\mathrm{I}}
\newcommand{\E}{\mathrm{e}}
\newcommand{\I}{\mathrm{i}}
\newcommand{\sgn}{\mathrm{sgn}}
\newcommand{\supp}{\mathrm{supp}}
\newcommand{\tr}{\mathrm{tr}}
\newcommand{\im}{\mathrm{Im}}
\newcommand{\re}{\mathrm{Re}}
\newcommand{\loc}{{\mathrm{loc}}}
\newcommand{\cc}{{\mathrm{c}}}
\newcommand{\ac}{{\mathrm{ac}}}
\newcommand{\dis}{{\mathrm{dis}}}
\newcommand{\sing}{{\mathrm{s}}}
\newcommand{\cQ}{\mathcal{Q}}
\newcommand{\cE}{E}
\newcommand{\cF}{F}
\newcommand{\cR}{\mathcal{R}}
\newcommand{\gx}{\mathfrak{x}}
\newcommand{\gh}{\mathfrak{h}}
\newcommand{\ran}[1]{\mathrm{ran}(#1)}
\renewcommand{\ker}[1]{\mathrm{ker}(#1)}
\newcommand{\OO}{\mathcal{O}}
\newcommand{\oo}{o}
\newcommand{\ledot}{\,\cdot\,}
\newcommand{\redot}{\cdot\,}
\newcommand{\NL}{(0-)}
\newcommand{\NLz}{(z,0-)}
\newcommand{\qd}{{[1]}}
\newcommand{\Hast}{\dot{H}^1[0,L)}
\newcommand{\Hasto}{\dot{H}^1_0[0,L)}
\newcommand{\cH}{\mathcal{H}}
\newcommand{\dip}{\upsilon}
\newcommand{\eps}{\varepsilon}
\newcommand{\id}{{\mathbbm 1}}
\newcommand{\Peakons}{\mathcal{P}}
\newcommand{\CHdom}{\mathcal{D}}
\newcommand{\dlmf}[1]{%
\cite[%
 \def\nextitem{\def\nextitem{, }}%
 \@for \el:=#1\do{\nextitem\expandafter\dlmf@eq@href\el...\end}%
]{dlmf}%
}
\def\dlmf@eq@href#1.#2.#3.#4\end{%
  \href{http://dlmf.nist.gov/#1.#2.E#3}{(#1.#2.#3)}}
\numberwithin{equation}{section}
\begin{document}

\title[Trace formulas and inverse spectral theory]{Trace formulas and inverse spectral theory\\ for generalized indefinite strings}
 
\author[J.\ Eckhardt]{Jonathan Eckhardt}
\address{Department of Mathematical Sciences\\ Loughborough University\\ Epinal Way\\ Loughborough\\ Leicestershire LE11 3TU \\ UK}
\email{\mailto{J.Eckhardt@lboro.ac.uk}}

\author[A.\ Kostenko]{Aleksey Kostenko}
\address{Faculty of Mathematics and Physics\\ University of Ljubljana\\ Jadranska 19\\ 1000 Ljubljana\\ Slovenia\\ and Faculty of Mathematics\\ University of Vienna\\ Oskar-Morgenstern-Platz 1\\ 1090 Wien\\ Austria}
\curraddr{Faculty of Mathematics and Physics\\ University of Ljubljana\\ Jadranska 19\\ 1000 Ljubljana\\ Slovenia\\ and Institute for Analysis and Scientific Computing\\Vienna University of Technology\\Wiedner Hauptstra\ss e 8–10/101\\
1040 Wien\\ Austria}\email{\mailto{Aleksey.Kostenko@fmf.uni-lj.si}}


\thanks{{\it Research of A.K.\ is supported by the Austrian Science Fund (FWF) under Grant I-4600 and by the Slovenian Research Agency (ARIS) under Grants No.\ N1-0137 and P1-0291.}}

\keywords{Generalized indefinite strings, inverse spectral theory, trace formulas}
\subjclass[2020]{Primary \msc{34A55}, \msc{34B07}; Secondary \msc{34L05}, \msc{37K15}}

\begin{abstract}
Generalized indefinite strings provide a canonical model for self-adjoint operators with simple spectrum (other classical models are Jacobi matrices, Krein strings and $2\times 2$ canonical systems). 
 We prove a number of Szeg\H{o}-type theorems for generalized indefinite strings and related spectral problems (including Krein strings, canonical systems and Dirac operators). 
More specifically, for several classes of coefficients (that can be regarded as Hilbert--Schmidt perturbations of model problems), we provide a complete characterization of the corresponding set of spectral measures.
 In particular, our results also apply to the isospectral Lax operator for the conservative Camassa--Holm flow and allow us to establish existence of global weak solutions with various step-like initial conditions of low regularity via the inverse spectral transform. 
\end{abstract}

\maketitle

{\scriptsize
\tableofcontents}

\section{Introduction}\label{sec:Intro}

\subsection{Main results}\label{ss:01main}

A {\em generalized indefinite string} is a triple $(L,\omega,\dip)$ such that $L\in(0,\infty]$, $\omega$ is a real distribution in $H^{-1}_{\loc}[0,L)$ and $\dip$ is a positive Borel measure on $[0,L)$. 
Associated with such a generalized indefinite string is the ordinary differential equation  
  \begin{align}\label{eqnGISODE}
  -f'' = z\, \omega f + z^2 \dip f
 \end{align}
 on the interval $[0,L)$, where $z$ is a complex spectral parameter. 
 Spectral problems of this form go back at least to work of M.\ G.\ Krein and H.\ Langer from the 1970s on indefinite analogues of the classical moment problem~\cite{krla79,la76}.
 In the generality above, they were introduced in~\cite{IndefiniteString}, where we proved that they serve as a canonical model for self-adjoint operators with simple spectrum. 
 We are going to summarize some relevant facts about generalized indefinite strings in Section~\ref{sec:prelim} as far as they are needed in this article, but in order to state our main theorems below, let us note that there is a unique function $\Wr$ in $L^2_{\loc}[0,L)$ such that 
 \begin{align}
  \omega(h) = -\int_0^L \Wr(x)h'(x)dx
 \end{align}
 for all $h\in H^1_{\cc}[0,L)$, called the {\em normalized anti-derivative} of the distribution $\omega$. 
 
 One of the main objects in spectral theory for a generalized indefinite string $(L,\omega,\dip)$ is the {\em Weyl--Titchmarsh function} $m$ defined on $\C\backslash\R$ by 
\begin{align}
  m(z) = \frac{\psi'(z,0-)}{z\psi(z,0)},
\end{align}
where $\psi(z,\redot)$ is the unique (up to scalar multiples) nontrivial solution of the differential equation~\eqref{eqnGISODE} that lies in the homogeneous Sobolev space $\Hast$ and in $L^2([0,L);\dip)$. 
This Weyl--Titchmarsh function $m$ turns out to be a {\em Herglotz--Nevanlinna function}, that is, it is analytic, maps the upper complex half-plane $\C_+$ into the closure of the upper complex half-plane and satisfies the symmetry relation
 \begin{align}
  m(z^\ast)^\ast = m(z)
 \end{align}
 for all $z\in\C\backslash\R$ (here and henceforth, we will use $z^\ast$ to denote the complex conjugate of a complex number $z$). 
 Even more, we established in~\cite[Section~6]{IndefiniteString} that {\em the map $(L,\omega,\dip)\mapsto m$ is a homeomorphism between the set of all generalized indefinite strings (equipped with a reasonable topology) and the set of all Herglotz--Nevanlinna functions (equipped with the topology of locally uniform convergence)}. 
 The main results of the present article are a couple of Szeg\H{o}-type theorems for this correspondence. 
 In the first place, we will prove the following theorem, which gives a characterization of all Weyl--Titchmarsh functions for a class of generalized indefinite strings that can be understood as (relative) Hilbert--Schmidt perturbations of a certain explicitly solvable model example. 
 Throughout this article, we will let $\alpha$ and $\beta$ be arbitrary positive constants.

 \begin{mainthm}\label{thm:KSforExB1}
  A Herglotz--Nevanlinna function $m$ is the Weyl--Titchmarsh function of a generalized indefinite string $(L,\omega,\dip)$ with $L=\infty$ and  
    \begin{align}\label{eqnCondS1}
      \int_0^\infty \biggl(\Wr(x)-c-\frac{x}{1+2\sqrt{\alpha}x}\biggr)^2 x\, dx +  \int_{[0,\infty)} x\, d\dip(x) < \infty
    \end{align}
    for some constant $c\in\R$ if and only if all the following conditions hold:
    \begin{enumerate}[label=(\roman*), ref=(\roman*), leftmargin=*, widest=iii]
    \item\label{itmKSc1} The function $m$ has a meromorphic extension to $\C\backslash[\alpha,\infty)$ that is analytic at zero.
    \item\label{itmKSc2} The negative poles $\sigma_-$ and the positive poles $\sigma_+$ of $m$ in $(-\infty,\alpha)$ satisfy
    \begin{align}\label{eqnLT-I}
    \sum_{\lambda\in \sigma_-} \frac{1}{|\lambda|^{\nicefrac{3}{2}}} + \sum_{\lambda\in \sigma_+}(\alpha-\lambda)^{\nicefrac{3}{2}} <\infty.
    \end{align}
    \item\label{itmKSc3} The boundary values of the function $m$ satisfy\footnote{Remember that $m(\lambda+\I0) = \lim_{\varepsilon\downarrow 0} m(\lambda + \I \varepsilon)$ exists for almost all $\lambda\in\R$ because $m$ is of bounded type in the upper complex half-plane.} 
    \begin{align}\label{eqnSzego-I}
 \int_{\alpha}^{\infty}  \frac{\sqrt{\lambda-\alpha}}{\lambda ^3}\log(\im\, m(\lambda+\I0)) d\lambda >- \infty.
    \end{align}
    \end{enumerate}
    \end{mainthm}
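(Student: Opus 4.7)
The plan is to prove a Szeg\H{o}-type trace formula that identifies a positive multiple of the Hilbert--Schmidt quantity in \eqref{eqnCondS1} with the spectral functional made up of the Szeg\H{o} integral in \eqref{eqnSzego-I} and the two pole sums controlled by \eqref{eqnLT-I}. Once such a sum rule is in hand, necessity of (i)--(iii) is essentially immediate and sufficiency follows from approximation, lower semi-continuity, and the homeomorphism between generalized indefinite strings and Herglotz--Nevanlinna functions established in \cite{IndefiniteString}.

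The first step is to analyse the model string $(L_0,\omega_0,\dip_0)$ with $L_0=\infty$, $\Wr_0(x)=x/(1+2\sqrt{\alpha}x)$ and $\dip_0=0$. Since $\Wr_0'(x)\sim 1/(4\alpha x^2)$ at infinity, \eqref{eqnGISODE} reduces at infinity to an Euler-type equation whose transition from exponentially decaying to oscillatory solutions occurs precisely at $z=\alpha$; the model Weyl--Titchmarsh function $m_\alpha$ can then be computed explicitly and shown to satisfy (i)--(iii) with strict finiteness, with boundary asymptotics at $\lambda=\alpha$ and $\lambda=\infty$ dictating the weight $\sqrt{\lambda-\alpha}/\lambda^3$ appearing in \eqref{eqnSzego-I}. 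Introduce a uniformizing variable $k$ via $z=\alpha/(1-k^{-2})$ (or a similar conformal map) sending $\C\setminus[\alpha,\infty)$ onto a half-plane or the exterior of a disk, and define an associated transmission coefficient $a(k)$ by comparing the Jost solutions for $(L,\omega,\dip)$ with those for the model.

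The core of the argument is the trace formula
\begin{equation*}
 -\!\!\int_{\alpha}^{\infty}\!\frac{\sqrt{\lambda-\alpha}}{\lambda^3}\log\bigl(\im\, m(\lambda+\I0)\bigr) d\lambda + \!\!\sum_{\lambda\in\sigma_-}\!\frac{C_1}{|\lambda|^{\nicefrac{3}{2}}} + \!\!\sum_{\lambda\in\sigma_+}\!C_2(\alpha-\lambda)^{\nicefrac{3}{2}} = c_\alpha\!\!\int_0^\infty\!\!\biggl(\Wr(x)-\Wr_0(x)-c\biggr)^{\!2}\! x\, dx + c_\alpha\!\!\int_{[0,\infty)}\! x\, d\dip(x) + R,
\end{equation*}
with positive constants $C_1,C_2,c_\alpha$ and a controllable remainder $R$; both sides arise from the same quantity $\log a(k)$. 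The right-hand side is produced by a Buslaev--Faddeev-style high-energy asymptotic expansion of $\log a(k)$ along the real $k$-axis, obtained by iterating the Volterra integral equation for the Jost solution, written in terms of $\Wr$ and $\dip$ to accommodate the distributional coefficient $\omega$. The left-hand side comes from the Herglotz representation of $\log a(k)$ together with a contour-integration argument that extracts the boundary Szeg\H{o} integral and the contributions from zeros and poles (which in turn correspond to the discrete spectrum inside $(-\infty,\alpha)$).

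Given the trace formula, necessity is clear: (i) follows because the perturbation is relatively Hilbert--Schmidt with respect to the model, whose essential spectrum is exactly $[\alpha,\infty)$, so the spectrum outside $[\alpha,\infty)$ is discrete, and (ii) and (iii) then follow from nonnegativity of the individual terms on the left. For sufficiency, approximate a given $m$ satisfying (i)--(iii) by Weyl--Titchmarsh functions $m_N$ of problems truncated to $[0,N)$, which automatically satisfy \eqref{eqnCondS1}, apply the sum rule to each truncation, and pass to the limit $N\to\infty$ using lower semi-continuity of the Szeg\H{o} entropy (a Kolmogorov-type argument) together with continuity of the eigenvalue contributions under weak limits; the resulting bound, combined with the homeomorphism of \cite{IndefiniteString}, forces the limiting string to satisfy \eqref{eqnCondS1}. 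The main obstacle is establishing the trace formula with sharp constants: the critical Hardy-type decay of the model makes the scattering expansion only borderline convergent, and the purely distributional character of $\omega$ requires working throughout with $\Wr\in L^2_{\loc}$ and a weighted $L^2$-norm rather than with $\omega$ itself.
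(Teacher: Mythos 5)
Your overall architecture (a sum rule whose spectral side carries the Szeg\H{o} integral and the pole sums and whose coefficient side carries the weighted Hilbert--Schmidt quantity, combined with lower semi-continuity of an entropy functional) is the right one and matches the paper's strategy. But the step that produces the coefficient side would fail as you describe it. You propose to extract $\int_0^\infty(\Wr-\Wr_0-c)^2x\,dx+\int x\,d\dip$ from a Buslaev--Faddeev high-energy expansion of $\log a(k)$, obtained by iterating the Volterra equation for the Jost solutions. For generalized indefinite strings this expansion is not available: the spectral parameter multiplies the solution (and enters quadratically through $\dip$), so the high-energy behaviour is governed by the exponential type $\int_0^x\varrho$ rather than by $\Wr$, and sufficiently strong large-$z$ asymptotics of the Weyl solutions are simply not known for $\omega\in H^{-1}_{\loc}$ --- the paper flags exactly this as a principal obstruction. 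The actual proof instead expands the relative Wronskian $a(\ledot,x)$ at the \emph{interior} point $k=\I\sqrt{\alpha}$ (i.e.\ $z=0$), where Proposition~\ref{propFS} gives closed formulas for $\dot\theta(0,x)$, $\ddot\theta^\qd(0,x)$, etc., in terms of $\int_0^x\Wr^2$ and $\int_{[0,x)}d\dip$; the identity $\ddot a-\dot a^2-\frac{\I}{\sqrt{\alpha}}\dot a$ at $\I\sqrt{\alpha}$ is what produces the weighted $L^2$ quantity (Proposition~\ref{prop:a-decomp}), and the weight $\sqrt{\lambda-\alpha}/\lambda^3$ then emerges from evaluating the Poisson-type representation of $\log|a|$ and its Blaschke factors at that same point (Theorem~\ref{thm:FactorMain}). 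Without replacing your high-energy expansion by such a low-energy one, the trace formula cannot be established. A second, smaller issue: the non-trace-class nature of the perturbation means the zero/pole contributions only converge as regularized limits $\lim_{\delta\downarrow0}\sum_{\delta<\kappa<1/\delta}$, which your contour argument would need to accommodate.

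You have also swapped the logical roles of the two halves. Lower semi-continuity of the Szeg\H{o} entropy gives $\cQ_\alpha(m)\leq\liminf_n\cQ_\alpha(m_n)$, which is the inequality needed for \emph{necessity}: one truncates the \emph{coefficients} beyond $x=n$ so that the truncated string agrees with the model there, applies the step-by-step sum rule (where the transmission coefficient is identically $1$ beyond the truncation), and passes to the limit to bound the spectral functional by the coefficient functional. For \emph{sufficiency} one argues in the opposite direction: the step-by-step formula on $[0,x]$ bounds the coefficient integral over $[0,x]$ from above by the full spectral quantity (using $2\log|a(\xi,x)|\leq-\log T(\xi,0)$ and monotone interlacing of the pole sets), and one then lets $x\rightarrow\infty$. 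As written, your sufficiency step asks lower semi-continuity to deliver an upper bound on the coefficient side, which it does not.
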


\begin{remark}
A few remarks about the conditions in Theorem~\ref{thm:KSforExB1} are in order:
\begin{enumerate}[label=(\alph*), ref=(\alph*), leftmargin=*, widest=e]
\item Condition~\ref{itmKSc1} says that the essential spectrum of every generalized indefinite string $(L,\omega,\dip)$ with $L=\infty$ and~\eqref{eqnCondS1} for some constant $c\in\R$ is contained in the interval $[\alpha,\infty)$. 
This can be considered as a consequence of Weyl's theorem on compact perturbations. 
In fact, we are going to show in Section~\ref{sec:CompactPert} that it is possible to interpret such generalized indefinite strings as relatively compact perturbations (actually, of Hilbert--Schmidt class) of a particular model example whose spectrum coincides with the interval $[\alpha,\infty)$; see Example~\ref{exaalpha}.

\item Analyticity of the Weyl--Titchmarsh function $m$ at zero alone has strong consequences on the coefficients of a generalized indefinite string $(L,\omega,\dip)$.
In fact, it implies that $L=\infty$ and that the inverse of an associated linear relation $\T$ is a bounded linear operator. 
 We recently proved in~\cite[Proposition~5.2~(i)]{DSpec} that the latter is equivalent to 
\begin{align}
\limsup_{x\rightarrow \infty}x\int_x^\infty (\Wr(s) - \Wr_0)^2ds + x\int_{[x,\infty)}d\dip  <\infty 
\end{align}
for some constant $\Wr_0\in\R$, which is then necessarily given by 
\begin{align}
\Wr_0  = \lim_{x\rightarrow\infty}\frac{1}{x}\int_0^x \Wr(s)ds.
\end{align}
Moreover, the constant $\Wr_0$ turns out to be nothing but the value of $m$ at zero as we will see in Section~\ref{secmatzero}.
In particular, this shows that under the conditions in Theorem~\ref{thm:KSforExB1} one has 
\begin{align}\label{eq:intromat0}
m(0) = c + \frac{1}{2\sqrt{\alpha}},
\end{align}
which relates the constant $c$ in~\eqref{eqnCondS1} explicitly to the function $m$. 

\item Condition~\ref{itmKSc2} is a Lieb--Thirring bound on the eigenvalues below $\alpha$, which implies that negative eigenvalues may accumulate only at $-\infty$ and that positive eigenvalues in the interval $(0,\alpha)$ may accumulate only at $\alpha$. 
 In fact, we will obtain a sharp Lieb--Thirring inequality in Corollary~\ref{cor:LT01new}.

\item Condition~\ref{itmKSc3} implies that the absolutely continuous spectrum is essentially supported on the interval $(\alpha,\infty)$, which has been shown to be necessary for~\eqref{eqnCondS1} to hold for some constant $c\in\R$ before in~\cite[Theorem~3.2]{ACSpec}. 
 Since the pointwise boundary values of $\im\, m$ depend only on the absolutely continuous part of the spectrum, we see that there is no restriction on the support and structure of the singular spectrum in $[\alpha,\infty)$. 
\end{enumerate}
\end{remark}

\begin{remark}\label{rem:LaguerreIntro}
The Herglotz--Nevanlinna function
\begin{align}\label{eq:LaguerreWT}
m_\alpha(z) = \E^{\alpha - z} E_1(\alpha-z) = \int_\alpha^{\infty} \frac{\E^{-(\lambda-\alpha)}}{\lambda-z}d\lambda, 
\end{align}
where $E_1$ denotes the principal value of the generalized exponential integral~\dlmf{8.19.2}, satisfies the conditions~\ref{itmKSc1}, \ref{itmKSc2} and~\ref{itmKSc3} of Theorem~\ref{thm:KSforExB1} (indeed, the function $m_\alpha$ is nothing but the Stieltjes transform of the shifted Laguerre weight). 
 In this case, it is possible to express the coefficients of the corresponding generalized indefinite string via the Laguerre polynomials. 
  One may view Theorem~\ref{thm:KSforExB1} as a characterization of a certain class of perturbations of the Laguerre operator, which is usually expressed in $\ell^2$ as a Jacobi matrix (see Example~\ref{ex:Laguerre}). 
  Let us mention that the Laguerre operator has received some attention recently~\cite{kkt18, ko21, krso15}, mainly because of its appearance in the study of nonlinear waves in $(2+1)$-dimensional noncommutative scalar field theory~\cite{aca, cfw03}.  
\end{remark}

Theorem~\ref{thm:KSforExB1} can easily be stated as well in terms of the {\em spectral measure} $\rho$ of a generalized indefinite string $(L,\omega,\dip)$, which is defined such that 
\begin{align}\label{eqnmIntRep}
  m(z) = c_1 z + c_2 - \frac{1}{Lz} +  \int_\R \frac{1}{\lambda-z} - \frac{\lambda}{1+\lambda^2}\, d\rho(\lambda), 
\end{align}
where $c_1$, $c_2\in\R$ are some constants with $c_1\geq0$ and $\rho$ is a positive Borel measure on $\R$ with $\rho(\{0\})=0$ that satisfies  
  \begin{align}\label{eqnrhoPoisson}
    \int_\R \frac{d\rho(\lambda)}{1+\lambda^2} < \infty. 
  \end{align}
  The measure $\rho$ is indeed a spectral measure for an associated self-adjoint linear relation $\T$ (which we are going to define properly in Section~\ref{sec:prelim}). 
Notice that unlike the Weyl--Titchmarsh function $m$, the spectral measure $\rho$ does not determine the generalized indefinite string $(L,\omega,\dip)$ uniquely. 
 More specifically, it does not determine the length $L$ and assuming that $L$ is known, it determines $\Wr$ only up to an additive constant and $\dip$ up to a point mass at zero (see Remark~\ref{rem:uniquerho} below). 

 \begin{maincor}\label{cor:KSforExB1}
  A positive Borel measure $\rho$ on $\R$ with~\eqref{eqnrhoPoisson} is the spectral measure of a generalized indefinite string $(L,\omega,\dip)$ with $L=\infty$ and~\eqref{eqnCondS1} for some constant $c\in\R$ if and only if both of the following conditions hold:
    \begin{enumerate}[label=(\roman*), ref=(\roman*), leftmargin=*, widest=ii]
\item\label{itmCorKSc1} The support of $\rho$ is discrete in $(-\infty,\alpha)$, does not contain zero and satisfies
  \begin{align}\label{eqnLT-Irho}
    \sum_{\lambda\in \supp(\rho) \atop \lambda<0} \frac{1}{|\lambda|^{\nicefrac{3}{2}}} + \sum_{\lambda\in \supp(\rho)\atop 0<\lambda<\alpha}(\alpha-\lambda)^{\nicefrac{3}{2}} <\infty.
  \end{align}
\item\label{itmCorKSc2} The absolutely continuous part $\rho_\ac$ of $\rho$ on $(\alpha,\infty)$ satisfies
    \begin{align}\label{eqnSzego-Irho}
       \int_{\alpha}^{\infty}  \frac{\sqrt{\lambda-\alpha}}{\lambda ^3} \log\biggl(\frac{d\rho_\ac(\lambda)}{d\lambda}\biggr) d\lambda >- \infty.
    \end{align}
    \end{enumerate}
    \end{maincor}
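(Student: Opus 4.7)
The plan is to derive Corollary~\ref{cor:KSforExB1} directly from Theorem~\ref{thm:KSforExB1} by translating each condition on the Weyl--Titchmarsh function $m$ into the corresponding condition on the spectral measure $\rho$ via the integral representation~\eqref{eqnmIntRep}. Since the map $(L,\omega,\dip)\mapsto\rho$ is not injective (as noted in the text), the ``only if'' direction amounts to a direct translation, while the ``if'' direction first requires constructing a suitable Herglotz--Nevanlinna function $m$ with spectral measure $\rho$ and then invoking Theorem~\ref{thm:KSforExB1}.

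For the forward direction, let $(L,\omega,\dip)$ satisfy $L=\infty$ and~\eqref{eqnCondS1}, and apply Theorem~\ref{thm:KSforExB1} to its Weyl--Titchmarsh function $m$. Standard Herglotz theory (Stieltjes inversion applied to~\eqref{eqnmIntRep}) tells us that \ref{itmKSc1} is equivalent to $\rho$ being purely atomic on $(-\infty,\alpha)$ with support equal to the poles of $m$ there, together with $0\notin\supp(\rho)$; hence $\supp(\rho)\cap(-\infty,\alpha)$ coincides with the set of poles appearing in~\eqref{eqnLT-I}, and~\eqref{eqnLT-Irho} is literally the same sum as~\eqref{eqnLT-I}. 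Finally, using $\im m(\lambda+\I 0)=\pi\, d\rho_\ac/d\lambda$ almost everywhere together with the fact that $\int_\alpha^\infty \sqrt{\lambda-\alpha}/\lambda^3\, d\lambda$ is finite, condition~\ref{itmKSc3} is equivalent to~\eqref{eqnSzego-Irho}.

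For the backward direction, given $\rho$ satisfying~\ref{itmCorKSc1}--\ref{itmCorKSc2}, I would define the candidate function
\begin{align*}
m(z) := \int_\R \biggl( \frac{1}{\lambda - z} - \frac{\lambda}{1+\lambda^2}\biggr) d\rho(\lambda), \qquad z \in \C\setminus\R.
\end{align*}
Thanks to~\eqref{eqnrhoPoisson}, $m$ is a Herglotz--Nevanlinna function, and by the homeomorphism of~\cite{IndefiniteString} it is the Weyl--Titchmarsh function of some generalized indefinite string with spectral measure $\rho$. The same translations as in the forward direction show that $m$ satisfies~\ref{itmKSc1}--\ref{itmKSc3}: discreteness of $\supp(\rho)\cap(-\infty,\alpha)$ together with Poisson integrability yields the meromorphic extension of $m$ to $\C\setminus[\alpha,\infty)$ with simple poles at the support points, while $0\notin\supp(\rho)$ gives analyticity at $0$. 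Applying Theorem~\ref{thm:KSforExB1} then produces $L=\infty$ (consistent with remark~(b) above) and~\eqref{eqnCondS1} for some $c\in\R$.

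The only slightly delicate point, and the one I would write out most carefully, is the Stieltjes-inversion dichotomy: condition~\ref{itmKSc1} on $m$ must be equivalent to the ``purely atomic, discrete, zero-excluding'' description of $\rho|_{(-\infty,\alpha)}$. One direction is Stieltjes inversion; the other requires showing that the sum of polar contributions from the atoms of $\rho$ in $(-\infty,\alpha)$ assembles into a meromorphic function on $\C\setminus[\alpha,\infty)$ analytic at $0$, which follows from absolute convergence on compacta of $\C\setminus\supp(\rho)$ via~\eqref{eqnrhoPoisson}. Apart from this bookkeeping, the corollary follows essentially mechanically from Theorem~\ref{thm:KSforExB1}.
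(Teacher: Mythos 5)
Your proposal is correct and matches the paper's (implicit) intent: the paper states Corollary~\ref{cor:KSforExB1} as an immediate reformulation of Theorem~\ref{thm:KSforExB1} obtained by translating conditions~\ref{itmKSc1}--\ref{itmKSc3} on $m$ into conditions on $\rho$ via the integral representation~\eqref{eqnmIntRep} and Stieltjes inversion, exactly as you do, with the backward direction handled by choosing the representative $m$ with $c_1=c_2=0$ and no $-\nicefrac{1}{Lz}$ term. Your attention to the dichotomy between meromorphy of $m$ on $\C\backslash[\alpha,\infty)$ and discreteness of $\supp(\rho)$ in $(-\infty,\alpha)$, and to the fact that the real constants $c_1$, $c_2$ do not affect either $\rho$ or the boundary values of $\im\,m$, covers the only points requiring care.
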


  Results like Theorem~\ref{thm:KSforExB1} and Corollary~\ref{cor:KSforExB1} have their origin in the work of G.\ Szeg\H{o} on orthogonal polynomials on the unit circle (see~\cite{simSzego} for a detailed historical account and further references) and have been pioneered by R.\ Killip and B.\ Simon for Jacobi matrices~\cite{kisi03} and Schr\"odinger operators~\cite{kisi09}. 
  In both cases, the proof relies on two crucial ingredients: 
    (a) Continuous dependence of spectral data on the coefficients (for example, the spectral measure of a Jacobi matrix on the Jacobi parameters or the Weyl--Titchmarsh function on the potential for a Schr\"odinger operator) 
    and (b) a trace formula (or sum rule in the terminology of~\cite{kisi03,kisi09,simSzego}). 
   Of course, in addition one also needs a sophisticated direct and inverse spectral theory available for the respective operators.
    Unlike Jacobi matrices and Schr\"odinger operators, generalized indefinite strings were introduced only relatively recently in~\cite{IndefiniteString} and are far less well developed. 
    However, they are closely related to $2\times 2$ canonical systems~\cite{dB68,rem,rom} and this connection provides us with the necessary continuity property.
    In fact, this is widely known as continuity of the Krein--de Branges correspondence, a fundamental result of Krein--de Branges theory. 
    On the other hand, trace formulas are usually connected to conserved quantities of related completely integrable systems; the Toda lattice for Jacobi matrices and the Korteweg--de Vries equation for Schr\"odinger operators. 
    This role is taken by the Camassa--Holm equation~\cite{caho93} for generalized indefinite strings, whose conservation laws suggest suitable trace formulas.   
     However, the known conserved quantities for smooth classical solutions of the Camassa--Holm equation are insufficient and have to be amended. 
   Instead one has to consider more general conservative weak solutions, a particular kind of weak solutions of low regularity that allows continuation of solutions after blow-up.  
   Conservation laws for these solutions appear to be new and we are unaware of any previous derivations.
   In fact, we already employed a corresponding trace formula earlier when establishing results akin to the ones by P.~Deift and R.~Killip~\cite{deki99} for generalized indefinite strings in~\cite{ACSpec}. 
     Here we will show that this trace formula indeed holds in a strong sense, meaning that finiteness of one side implies finiteness of the other side; see~\eqref{eqnTFalphageq} and~\eqref{eqnTFalphaleq}.
     This readily proves Theorem~\ref{thm:KSforExB1}.  
     
 When compared to Jacobi matrices and Schr\"odinger operators, there are several more principal differences.
  First of all, the spectral problem~\eqref{eqnGISODE} is nonlinear in the spectral parameter.
  However, even without the quadratic term, the spectral parameter enters the problem in the `wrong' place. 
  This makes it difficult to view~\eqref{eqnGISODE} as an additive perturbation as  the underlying Hilbert space varies together with the coefficients. 
  We are going to clarify this situation in Section~\ref{sec:CompactPert} based on recent advances in~\cite{DSpec}. 
  A second issue lies in the fact that in contrast to Sch\"odinger operators even a relatively small alteration of the coefficients $\omega$ and $\dip$ (for example, changing them on a compact interval) can lead to perturbations that are not of trace class anymore (for Jacobi matrices the latter are even of finite rank). 
  In fact, this only occurs when the spectrum is not semi-bounded, which constitutes another difference to the Jacobi matrix and Schr\"odinger operator cases.
 On a technical level, this makes it necessary to consider suitably regularized perturbation determinants and infinite products that are only conditionally convergent in general.
 Finally, one more major difference to Jacobi matrices and Schr\"odinger operators lies in the crucial role played by the high-energy spectral asymptotics, a very well understood classical subject in both these cases. 
 For generalized indefinite strings, sufficiently strong asymptotics are not known in fact. 
 Instead, we will have to rely on a qualified understanding of spectral asymptotics at zero. 
 These will lead to our trace formulas as well as corresponding conservation laws for the conservative Camassa--Holm flow. 
 
  In addition to Theorem~\ref{thm:KSforExB1}, we will also prove a similar result for another class of generalized indefinite strings that give rise to essential spectrum $(-\infty,-\beta]\cup[\beta,\infty)$ for some positive constant $\beta$. 
 To this end, we will write 
 \begin{align}\label{eqndiprhosing}
    \dip(B) = \int_B \varrho(x)^2 dx + \dip_\sing(B), 
 \end{align}
  where $\varrho$ is the (positive) square root of the Radon--Nikod\'ym derivative of $\dip$ with respect to the Lebesgue measure and $\dip_\sing$ is the singular part of $\dip$.
    Whereas Theorem~\ref{thm:KSforExB1} is motivated by the study of a particular phase space associated with the Hamiltonian of the Camassa--Holm equation, this result is relevant for another phase space arising in connection with the two-component Camassa--Holm system~\cite{chlizh06,hoiv11}. 

 \begin{mainthm}\label{thm:KSforExB2}
   A Herglotz--Nevanlinna function $m$ is the Weyl--Titchmarsh function of a generalized indefinite string $(L,\omega,\dip)$ with $L=\infty$ and  
   \begin{align}\label{eqnCondS2}
      \int_0^\infty  (\Wr(x)-c)^2 x\,dx +  \int_0^\infty   \biggl( \varrho(x) - \frac{1}{1+2\beta x} \biggr)^2 x\, dx + \int_{[0,\infty)} x\, d\dip_\sing(x) & < \infty
    \end{align}
  for some constant $c\in\R$ if and only if all the following conditions hold:
    \begin{enumerate}[label=(\roman*), ref=(\roman*), leftmargin=*, widest=iii]
    \item\label{itmKS2c1} The function $m$ has a meromorphic extension to $\C_+\cup(-\beta,\beta)\cup\C_-$ that is analytic at zero. 
    \item\label{itmKS2c2} The poles $\sigma_\dis$ of $m$ in $(-\beta,\beta)$ satisfy
    \begin{align}\label{eqnLT-II}
    \sum_{\lambda\in \sigma_\dis} (\beta-|\lambda|)^{\nicefrac{3}{2}} <\infty.
    \end{align}
    \item\label{itmKS2c3} The boundary values of the function $m$ satisfy 
    \begin{align}\label{eqnSzego-II}
 \int_{\R\backslash(-\beta,\beta)} \frac{\sqrt{\lambda^2-\beta^2}}{|\lambda|^3}\log(\im\, m(\lambda+\I0)) d\lambda  >- \infty.
    \end{align}
    \end{enumerate}
    \end{mainthm}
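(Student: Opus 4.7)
The plan is to mirror the strategy used for Theorem~\ref{thm:KSforExB1}, replacing the one-sided Laguerre-type model by a reference model whose essential spectrum is the two-sided set $(-\infty,-\beta]\cup[\beta,\infty)$. The natural candidate is the explicitly solvable generalized indefinite string with $L=\infty$, $\Wr(x)\equiv 0$ and $d\dip(x)=(1+2\beta x)^{-2}dx$, whose governing equation reduces, after the substitution $t=1+2\beta x$, to an Euler equation whose indicial exponents depend only on $z^2/\beta^2$. Its Weyl--Titchmarsh function $m_\beta$ can then be written in closed form and verified to satisfy conditions~\ref{itmKS2c1}--\ref{itmKS2c3} with purely absolutely continuous spectrum on $\R\setminus(-\beta,\beta)$. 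Using the framework developed in Section~\ref{sec:CompactPert}, I would interpret the family of strings satisfying~\eqref{eqnCondS2} as (relative) Hilbert--Schmidt perturbations of this reference model, so that condition~\ref{itmKS2c1} is immediate from Weyl's theorem on compact perturbations.

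The core of the proof is a two-sided trace formula (sum rule) that directly relates the left-hand side of~\eqref{eqnCondS2} to both the Lieb--Thirring-type sum in~\eqref{eqnLT-II} and the Szeg\H{o}-type integral in~\eqref{eqnSzego-II}, in the strong sense that finiteness of one side forces finiteness of the other. I would derive this identity by analysing a suitably regularised perturbation determinant comparing the linear relation $\T$ associated with $(L,\omega,\dip)$ to the linear relation $\T_\beta$ associated with the reference model, matching its behaviour extracted from the spectral asymptotics at $z=0$ against its boundary values on the essential spectrum. The two quadratic functionals appearing in~\eqref{eqnCondS2} should emerge as conservation laws for the conservative two-component Camassa--Holm flow, playing the role that the classical Camassa--Holm conservation laws played for Theorem~\ref{thm:KSforExB1}. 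Once this trace identity is in place, both directions of the theorem follow routinely from continuity of the Krein--de~Branges correspondence: in the forward direction the identity converts finiteness of the integral on the left of~\eqref{eqnCondS2} into the bounds~\ref{itmKS2c2} and~\ref{itmKS2c3}; in the reverse direction the homeomorphism between Herglotz--Nevanlinna functions and generalized indefinite strings produces a candidate string, and approximation by compactly supported truncations combined with lower semicontinuity yields the uniform bound~\eqref{eqnCondS2}.

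The main obstacle I anticipate is the derivation of the trace formula itself in this two-sided setting. Because eigenvalues may accumulate at both endpoints $\pm\beta$ of the spectral gap and the essential spectrum is unbounded in both directions, the perturbation determinants are only conditionally convergent and must be symmetrised between the two branches with some care. High-energy spectral asymptotics are not available in the required quantitative form for generalized indefinite strings, so the entire derivation has to be anchored in a qualified low-energy expansion of $m$ around $z=0$, one that must handle the symmetric branch structure of $m_\beta$ simultaneously. A further delicate point is that the conservation laws of the two-component Camassa--Holm system for conservative weak solutions of the limited regularity permitted by~\eqref{eqnCondS2} do not appear to be available in the literature, so a substantial part of the work will consist in establishing them rigorously, in parallel with but somewhat more intricately than in the proof of Theorem~\ref{thm:KSforExB1}.
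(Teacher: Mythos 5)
Your plan matches the paper's proof in all essentials: the reference model is exactly Example~\ref{exa2alpha}, condition~\ref{itmKS2c1} is obtained from Weyl's theorem via the Hilbert--Schmidt perturbation framework of Section~\ref{sec:CompactPert} (Theorem~\ref{thm:HSforAB}~\ref{itmHSforB}), and the heart of the argument is a relative trace formula anchored in the Taylor expansion of $m$ at zero, combined with lower semi-continuity of an entropy functional and approximation by coefficients glued to the model beyond $x=n$. The paper realizes your ``regularised perturbation determinant'' concretely as the relative Wronskian $a(k,x)$ of Section~\ref{secSbSsumrule2}, handling the two-branch structure with the conformal map $\zeta(k)=\beta\frac{1+k^2}{1-k^2}$ and the factorization results of Appendix~\ref{appMeroMainII}, which is exactly the step-by-step sum-rule mechanism you describe.
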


\begin{remark}
 Similar to Theorem~\ref{thm:KSforExB1}, condition~\ref{itmKS2c1} says that the essential spectrum is contained in the set $(-\infty,-\beta]\cup[\beta,\infty)$. 
 The constant $c$ in~\eqref{eqnCondS2} is related to the Weyl--Titchmarsh function $m$ via $m(0) = c$. 
  Condition~\ref{itmKS2c2} is a Lieb--Thirring bound on the eigenvalues in the interval $(-\beta,\beta)$ with a corresponding sharp Lieb--Thirring inequality given in Corollary~\ref{cor:LT02new} and condition~\ref{itmKS2c3} implies that the absolutely continuous spectrum is essentially supported on the set $(-\infty,-\beta]\cup[\beta,\infty)$ (a fact proved before in~\cite[Theorem~3.2]{ACSpec2}), whereas there is no restriction on the singular spectrum in $(-\infty,-\beta]\cup[\beta,\infty)$.
\end{remark}

Let us again state this result also in terms of the spectral measure.

 \begin{maincor}\label{cor:KSforExB2}
     A positive Borel measure $\rho$ on $\R$ with~\eqref{eqnrhoPoisson} is the spectral measure of a generalized indefinite string $(L,\omega,\dip)$ with $L=\infty$ and~\eqref{eqnCondS2} for some constant $c\in\R$ if and only if both of the following conditions hold:
    \begin{enumerate}[label=(\roman*), ref=(\roman*), leftmargin=*, widest=ii]
\item The support of $\rho$ is discrete in $(-\beta,\beta)$, does not contain zero and satisfies
  \begin{align}\label{eqnLT-IIrho}
    \sum_{\lambda\in \supp(\rho) \atop |\lambda|<\beta} (\beta-|\lambda|)^{\nicefrac{3}{2}} <\infty.
  \end{align}
\item The absolutely continuous part $\rho_\ac$ of $\rho$ on $(-\infty,-\beta)\cup(\beta,\infty)$ satisfies
    \begin{align}\label{eqnSzego-IIrho}
              \int_{\R\backslash(-\beta,\beta)}  \frac{\sqrt{\lambda^2-\beta^2}}{|\lambda|^3} \log\biggl(\frac{d\rho_\ac(\lambda)}{d\lambda}\biggr) d\lambda >- \infty.
    \end{align}
    \end{enumerate}
    \end{maincor}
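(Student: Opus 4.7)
The plan is to derive Corollary~\ref{cor:KSforExB2} from Theorem~\ref{thm:KSforExB2} by translating the three conditions on the Weyl--Titchmarsh function $m$ into the two conditions on the spectral measure $\rho$. The bridge is the Herglotz representation~\eqref{eqnmIntRep} together with the standard boundary-value identity $\im\, m(\lambda+\I 0) = \pi\, d\rho_\ac/d\lambda$ for almost every $\lambda \in \R$, which holds because $m$ is of bounded type in the upper half-plane.

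For the forward implication, let $(L,\omega,\dip)$ be a generalized indefinite string with $L=\infty$ and~\eqref{eqnCondS2}, and let $m$ and $\rho$ be its Weyl--Titchmarsh function and spectral measure. By Theorem~\ref{thm:KSforExB2}, the function $m$ satisfies (i)--(iii). Stieltjes inversion applied to the meromorphic extension provided by (i) shows that the restriction of $\rho$ to $(-\beta,\beta)$ is purely atomic with support equal to the pole set $\sigma_\dis$ from (ii), while analyticity of $m$ at zero forces $\rho(\{0\})=0$; hence~\eqref{eqnLT-II} passes verbatim to~\eqref{eqnLT-IIrho}. For the Szegő condition, the boundary-value identity gives
\[
\log \im\, m(\lambda+\I 0) = \log\pi + \log\bigl(d\rho_\ac/d\lambda\bigr)
\]
almost everywhere, and since the weight $\sqrt{\lambda^2-\beta^2}/|\lambda|^3$ is integrable on $\R\setminus(-\beta,\beta)$ (being $O(|\lambda|^{-2})$ at infinity and $O(\sqrt{|\lambda|-\beta})$ near $\pm\beta$), condition~\eqref{eqnSzego-II} is equivalent to~\eqref{eqnSzego-IIrho}.

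For the converse, given $\rho$ satisfying (i) and (ii) of the corollary, define a Herglotz--Nevanlinna function $m$ via~\eqref{eqnmIntRep} with $L=\infty$ and arbitrary constants $c_1\geq 0$, $c_2\in\R$; the integral converges thanks to~\eqref{eqnrhoPoisson}. Discreteness of $\supp(\rho)\cap(-\beta,\beta)$ and the absence of zero from this set imply that the Cauchy integral $\int(\lambda-z)^{-1}d\rho(\lambda)$ extends meromorphically across $(-\beta,\beta)$ and is analytic at zero, establishing condition (i) of Theorem~\ref{thm:KSforExB2}; the poles of $m$ in $(-\beta,\beta)$ coincide with the atoms of $\rho$, so~\eqref{eqnLT-IIrho} yields~\eqref{eqnLT-II}; and the same boundary-value identity turns~\eqref{eqnSzego-IIrho} back into~\eqref{eqnSzego-II}. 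Theorem~\ref{thm:KSforExB2} then furnishes a generalized indefinite string $(L,\omega,\dip)$ with $L=\infty$ and~\eqref{eqnCondS2} whose Weyl--Titchmarsh function is $m$, and thus whose spectral measure is $\rho$.

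The main point to handle carefully is the bookkeeping of the additive constant $\log\pi$ and of the free parameters $c_1,c_2$, which reflect the non-uniqueness of the inverse map $\rho\mapsto(L,\omega,\dip)$ recalled in Remark~\ref{rem:uniquerho}; finiteness of the Szegő integral is insensitive to them precisely because the weight is integrable on its domain, so every admissible choice of $c_1,c_2$ produces a valid string.
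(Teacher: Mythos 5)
Your proposal is correct and follows essentially the same route the paper intends: Corollary~\ref{cor:KSforExB2} is obtained from Theorem~\ref{thm:KSforExB2} by the standard dictionary between the Weyl--Titchmarsh function and its Herglotz representation, namely that the poles of the meromorphic extension in $(-\beta,\beta)$ are exactly the atoms of $\rho$ there, that $\im\,m(\lambda+\I0)=\pi\,d\rho_\ac/d\lambda$ almost everywhere, and that the additive $\log\pi$ and the free parameters $c_1,c_2$ (Remark~\ref{rem:uniquerho}) are harmless because the weight $\sqrt{\lambda^2-\beta^2}/|\lambda|^3$ is integrable on $\R\backslash(-\beta,\beta)$ and because point masses of $\omega$ and $\dip$ at zero do not affect condition~\eqref{eqnCondS2}. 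No gaps.
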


\begin{remark}
 It is also possible to prove similar results for classes of generalized indefinite strings with finite length $L$.
   This can be achieved by means of the second transformation in Remark~\ref{rem:uniquerho}, which changes the length of a generalized indefinite string while only altering the residue of the Weyl--Titchmarsh function at zero. 
\end{remark}

\subsection{Completely integrable systems}\label{ss:CHintro}

  One of the main motivations for studying inverse spectral theory for Jacobi matrices and Schr\"odinger operators stems from their relevance for the Toda lattice and the Korteweg--de Vries equation. 
 In the same way, inverse spectral theory for generalized indefinite strings is important for nonlinear wave equations like the Hunter--Saxton equation and the Dym equation, but in particular the Camassa--Holm equation
 \begin{align}\label{eqnCH}
    u_{t} -u_{xxt}  = 2u_x u_{xx} - 3uu_x + u u_{xxx},
 \end{align}
 which was first found by B.\ Fuchsteiner and A.\ S.\ Fokas~\cite{fofu81} to be formally integrable with Hamiltonians given by
 \begin{align}\label{eqnCHHam}
  H_1 & = \int u^2 + u_x^2\, dx, & H_2 & =  \frac{1}{2}\int u^3 + uu_x^2\, dx,
   \end{align}
   where the domain of integration is either the real line $\R$ or the circle $\mathbb{T}=\R/\Z$.  
 Its formulation as a non-local conservation law
\begin{align}
\label{eqnCHweak}
  u_t + u u_x + P_x  = 0, 
\end{align}
 where the source term $P$ is defined (for suitable functions $u$) as the convolution 
 \begin{align}\label{eq:PequCHdef}
 P = \frac{1}{2}\E^{-|\cdot|} \ast \biggl(u^2 + \frac{1}{2} u_x^2\biggr), 
 \end{align} 
is reminiscent of the three-dimensional incompressible Euler equation. 
Regarding the hydrodynamical relevance of the Camassa--Holm equation, let us mention that it can be derived as an approximation to the Green--Naghdi equations, the governing equations for shallow water waves over a flat bed~\cite{caho93,joh02,laco09}. 

An intensive study of~\eqref{eqnCH} started with the discoveries of R.~Camassa and D.~Holm~\cite{caho93} (we only refer to a very brief selection of articles~\cite{besasz00, bkst09, brco07, co01, coes98, como00, cost00, hora07, mc03, mc04, mis, xizh00}). Indeed, it turned out that equation~\eqref{eqnCH} exhibits a rich mathematical structure. 
  For instance, the Bott--Virasoro group serves for equation~\eqref{eqnCHweak} as a symmetry group and thus the Camassa--Holm equation can be viewed as a geodesic equation with respect to a right invariant metric~\cite{mis}.   
   Among many celebrated models for shallow water waves (for example, the Korteweg--de Vries equation or the Benjamin--Bona--Mahoney equation), the significance of equation~\eqref{eqnCH} stems from the fact that it is the first and long sought-for model with the following three features: 
   (a) Complete integrability, as it admits a Lax pair formulation, 
   (b) solitons, including peaked ones (called {\em peakons}) and 
   (c) finite time blow-up of smooth solutions that resembles wave-breaking to some extent; see~\cite{coes98b}. 
  More specifically, singularities develop in a way that the solution $u$ remains bounded pointwise, while the spatial derivative $u_x$ tends to $-\infty$ at some points. 
  However, the $H^1$ norm of $u$ remains bounded (actually, the $H^1$ norm serves as a Hamiltonian for the Camassa--Holm equation and is thus conserved) and $u$ approaches a limit weakly in $H^1$ as the blow-up happens, which raises the natural question of continuation past the blow-up. 
  Indeed, it was found that the Camassa--Holm equation possesses global weak solutions~\cite{xizh00}, which are not unique however and hence continuation of solutions after blow-up is a delicate matter. 
 
  A particular kind of weak solutions are so-called {\em conservative solutions}, the notion of which was suggested independently in~\cite{brco07} and~\cite{hora07}.   
  These are weak solutions (in a sense to be made precise in Definition~\ref{def:weaksolCH}) of the system 
  \begin{align}
 \begin{split}\label{eqnmy2CH}
  u_t + u u_x + P_x & = 0, \\
  \mu_t + (u\mu)_x & = (u^3 - 2Pu)_x, 
 \end{split}
 \end{align}
 where the auxiliary function $P$ satisfies
 \begin{align}\label{eq:PequDef}
  P - P_{xx} & = \frac{u^2+ \mu}{2}.
 \end{align} 
We will call system~\eqref{eqnmy2CH} the {\em two-component Camassa--Holm system} because it includes the Camassa--Holm equation as well as its two-component generalization (see \cite{chlizh06, coiv08, esleyi07, hoiv11})
 \begin{align}
 \begin{split}\label{eqn2CH}
    u_{t} -u_{xxt} & = 2u_x u_{xx} - 3uu_x + u u_{xxx} - \varrho \varrho_x, \\
   \varrho_t & = -u_x \varrho - u\varrho_x,
 \end{split}
 \end{align}
where one just needs to set $\mu = u^2 + u_x^2 + \varrho^2$. 
The role of the additional positive Borel measure $\mu$ is to control the loss of energy at times of blow-up. 
More precisely, as the model example of a peakon--antipeakon collision illustrates (see~\cite[Section~6]{brco07} for example), at times when the solution blows up, the corresponding energy, measured by $\mu$, concentrates on sets of Lebesgue measure zero.
Solutions of this kind have been constructed by a generalized method of characteristics that relied on a transformation from Eulerian to Lagrangian coordinates and was accomplished for various classes of initial data in~\cite{brco07, grhora12, grhora12b, hora07, hora07c}.
 
From our perspective, conservative solutions are of special interest because they preserve the integrable structure of the Camassa--Holm equation and can be obtained by employing the inverse spectral transform method~\cite{ConservMP,ConservCH,Eplusminus}, at least in principle. 
 The underlying isospectral problem is of the form 
\begin{align}\label{eqnCHspec}
 -f'' + \frac{1}{4} f & = z\, \omega f + z^2 \dip f,  
\end{align}
where $\omega = u-u_{xx}$ and $\mu = u^2 + u_x^2 + \dip$. Despite its relevance and a large amount of articles, relatively little is known about the spectral problem~\eqref{eqnCHspec} so far when $\omega$ is allowed to change sign and $\dip$ to be not zero. 
In fact, under the necessary low regularity restrictions (the coefficient $\omega$ may be a real distribution and $\dip$ may be a positive measure), not even the well-developed theory about self-adjoint realizations covers this kind of spectral problem; see~\cite{bebrwe08, bebrwe12, CHPencil, MeasureSL, gewe14}, where only~\cite{CHPencil} includes the additional coefficient $\dip$. 
Consequently, most of the literature on inverse spectral theory restricts to the case when $\dip$ vanishes identically and $\omega$ is strictly positive and smooth.
Under these assumptions, the differential equation can be transformed into a standard potential form that is known from the Korteweg--de Vries equation and some conclusions may be drawn from this~\cite{besasz98, co01, cogeiv06, le04, mc03b}.  
In the general case, apart from the explicitly solvable finite dimensional case~\cite{besasz00, ConservMP, InvPeriodMP} and a class of coefficients that gives rise to purely discrete spectrum~\cite{ConservCH}, only insufficient partial uniqueness results~\cite{be04, bebrwe08, bebrwe12, bebrwe15, LeftDefiniteSL, CHPencil, TFCPeriod, IsospecCH} have been obtained so far.  

The spectral problem~\eqref{eqnCHspec} differs from the one for a generalized indefinite string in~\eqref{eqnGISODE} only by a constant coefficient term and can be turned into this form in several ways by employing a simple change of variables. 
For example, when~\eqref{eqnCHspec} is considered on the half-line $[0,\infty)$ with a real-valued function $u$ in $H^1_{\loc}[0,\infty)$ and a positive Borel measure $\dip$ on $[0,\infty)$, the spectral problem~\eqref{eqnCHspec} can be transformed into a generalized indefinite string $(L,\tilde{\omega},\tilde{\dip})$ with $L=\infty$ using the change of variables $x\mapsto \log(1+x)$. 
The details of this transformation can be found in~\cite[Section~7]{ACSpec} and are also briefly summarized in Section~\ref{secCH:01}.
 In this situation, one can also define a Weyl--Titchmarsh function $m$ for the spectral problem~\eqref{eqnCHspec} on $\C\backslash\R$ by 
\begin{align}
  m(z) = \frac{\psi'(z,0-)}{z\psi(z,0)},
\end{align}
where $\psi(z,\redot)$ is the unique (up to scalar multiples) nontrivial solution of the differential equation~\eqref{eqnCHspec} that lies in $H^1[0,\infty)$ and $L^2([0,\infty);\dip)$. 
 It turns out that this function coincides with the Weyl--Titchmarsh function of the corresponding generalized indefinite string $(L,\tilde{\omega},\tilde{\dip})$ up to a pole at zero. 
 As a consequence, it allows an integral representation of the form
 \begin{align}
  m(z) = c_1 z + c_2 - \frac{1}{2z} +  \int_\R \frac{1}{\lambda-z} - \frac{\lambda}{1+\lambda^2}\, d\rho(\lambda), 
\end{align}
where $c_1$, $c_2\in\R$ are constants with $c_1\geq0$ and $\rho$ is the spectral measure of $(L,\tilde{\omega},\tilde{\dip})$. 
We will call $\rho$ the {\em spectral measure} of the spectral problem~\eqref{eqnCHspec} with a real-valued function $u$ in $H^1_{\loc}[0,\infty)$ and a positive Borel measure $\dip$ on $[0,\infty)$. 
 
 By using the relations described above, we are able to translate all results for generalized indefinite strings to the half-line spectral problem~\eqref{eqnCHspec}.
 However, for the sake of brevity, we will only state one of these results here, obtained immediately from Theorem~\ref{thm:KSforExB1}, where $\kappa$ is a fixed positive constant. 
 A few more consequences that follow readily in this case are outlined in Section~\ref{secCH:01}.
 
\begin{corollary}\label{cor:KSforCHonR+}
    A positive Borel measure $\rho$ on $\R$ with~\eqref{eqnrhoPoisson} is the spectral measure of the spectral problem~\eqref{eqnCHspec} on the half-line $[0,\infty)$ with a real-valued function $u$ in $H^1_{\loc}[0,\infty)$ and a positive Borel measure $\dip$ on $[0,\infty)$ satisfying 
    \begin{align}\label{eqnCondCHonR+1}
      \int_0^\infty (u(x) - \kappa)^2+u'(x)^2\, dx +  \int_{[0,\infty)}  d\dip < \infty
    \end{align}
     if and only if both of the following conditions hold with $\alpha = \frac{1}{4\kappa}$:
    \begin{enumerate}[label=(\roman*), ref=(\roman*), leftmargin=*, widest=ii]
\item The support of $\rho$ is discrete in $(-\infty,\alpha)$, does not contain zero and satisfies~\eqref{eqnLT-Irho}.
\item The absolutely continuous part $\rho_\ac$ of $\rho$ on $(\alpha,\infty)$ satisfies~\eqref{eqnSzego-Irho}.
    \end{enumerate}
\end{corollary}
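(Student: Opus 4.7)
The plan is to transplant Corollary~\ref{cor:KSforExB1} to the half-line Camassa--Holm setting via the Liouville-type change of variables $y = \log(1+x)$ described in~\cite[Section~7]{ACSpec} and recalled in Section~\ref{secCH:01}. This transformation sends a pair $(u,\dip)$ with $u\in H^1_{\loc}[0,\infty)$ and $\dip$ a positive Borel measure on $[0,\infty)$ to a generalized indefinite string $(L,\tilde{\omega},\tilde{\dip})$ with $L = \infty$ whose Weyl--Titchmarsh function differs from that of the Camassa--Holm problem only by the pole $-\frac{1}{2z}$ at zero. Consequently, the spectral measure $\rho$ of the Camassa--Holm problem coincides on $\R$ with the spectral measure of the associated generalized indefinite string.

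The core of the proof is then to verify that condition~\eqref{eqnCondCHonR+1} on $(u,\dip)$ with constant $\kappa > 0$ is equivalent to condition~\eqref{eqnCondS1} on $(\tilde{\omega},\tilde{\dip})$ with $\alpha = \frac{1}{4\kappa}$ and some $c \in \R$. The value $\alpha = \frac{1}{4\kappa}$ is forced by matching the essential spectra: the Camassa--Holm problem with $u \equiv \kappa$ and $\dip = 0$ reduces to $-f'' + \frac{1}{4} f = z\kappa f$ on $[0,\infty)$, which has spectrum $[\frac{1}{4\kappa},\infty)$, and this must coincide with the spectrum $[\alpha,\infty)$ of the Laguerre-type model string of Example~\ref{exaalpha}. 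Hence the constant profile $u \equiv \kappa$ is mapped (modulo an additive normalization absorbed into $c$) to this model string under the change of variables.

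With the model identification in place, the equivalence of the two regularity conditions reduces to a careful tracking of norms under the substitution $y = \log(1+x)$. The Jacobian $dx = \E^y dy$ together with the weight $y$ appearing in~\eqref{eqnCondS1} combines to match the unweighted Lebesgue measure $dx$ in~\eqref{eqnCondCHonR+1}, and I would verify that $\tilde{\Wr}(y) - c - \frac{y}{1 + 2\sqrt{\alpha}y}$ lies in the relevant weighted $L^2$ space if and only if $u - \kappa \in H^1[0,\infty)$, while the weighted integral $\int_{[0,\infty)} y\, d\tilde{\dip}(y)$ is finite if and only if $\dip([0,\infty))$ is.

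Once this equivalence is established, applying Corollary~\ref{cor:KSforExB1} to $(L,\tilde{\omega},\tilde{\dip})$ produces exactly conditions \ref{itmCorKSc1} and \ref{itmCorKSc2} there, which with $\alpha = \frac{1}{4\kappa}$ are conditions (i) and (ii) of the present corollary. The principal obstacle is the explicit bookkeeping of the change of variables---in particular, the identification of the constant background $u \equiv \kappa$ with the Laguerre model string and the matching of cross-terms arising from $u - \kappa$; the free additive constant $c$ in~\eqref{eqnCondS1} provides exactly the right flexibility, consistent with the relation~\eqref{eq:intromat0}.
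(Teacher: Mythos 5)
Your proposal follows essentially the same route as the paper: transform the half-line problem~\eqref{eqnCHspec} into the generalized indefinite string $(\infty,\tilde{\omega},\tilde{\dip})$ via $x\mapsto\log(1+x)$, note that the Weyl--Titchmarsh functions differ only by the pole $-\frac{1}{2z}$ so the spectral measures agree, and then check that~\eqref{eqnCondCHonR+1} is equivalent to~\eqref{eqnCondS1} with $\alpha=\frac{1}{4\kappa}$ and $c=u(0)-(2\sqrt{\alpha})^{-1}$, which is exactly the equivalence recorded in item~\ref{itmCHSpecThm1} of Section~\ref{secCH:01} before applying Corollary~\ref{cor:KSforExB1}. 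The only detail left implicit in your sketch is the elementary fact the paper flags for this verification, namely that $u-\kappa\in H^1[0,\infty)$ if and only if $u+u'-\kappa\in L^2[0,\infty)$, which is what makes the Jacobian and the weight combine as you describe.
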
 
 
\begin{remark}
 Condition~\eqref{eqnCondCHonR+1} means that the function $u-\kappa$ belongs to the Sobolev space $H^1[0,\infty)$ and that the measure $\dip$ is finite. 
 These restrictions appear to be natural for the Camassa--Holm equation; compare the first Hamiltonian in~\eqref{eqnCHHam}. 
 In fact, by employing a generalized method of characteristics, existence of global conservative weak solutions to the Camassa--Holm equation with initial conditions such that $u-\kappa \in H^1(\R)$ was established by H.\ Holden and X.\ Raynaud in~\cite{hora07c}. 
 The constant $\kappa$ here is related to the critical wave speed and the Camassa--Holm equation is regarded to be in the dispersive regime for positive $\kappa$. 
 The case when the constant $\kappa$ is allowed to be different for $-\infty$ and for $+\infty$ has been settled in~\cite{grhora12b}. 
 We are going to show below that Theorem~\ref{thm:KSforExB1} can be used to establish the inverse spectral transform and existence of weak solutions in the special case when $\kappa$ is zero for $-\infty$ and positive for $+\infty$.
   However, analogous to the situation for the Toda lattice and the Korteweg--de Vries equation on the line in connection with Killip--Simon results~\cite{kisi03,kisi09}\footnote{Well-posedness of the Korteweg--de Vries equation in $L^2(\R)$ is a seminal result of J.\ Bourgain~\cite{bou} (see also the recent breakthrough in~\cite{kv18} by R.~Killip and M.~Visan). 
   On the other side, a complete solution of the inverse spectral problem for Schr\"odinger operators with $L^2$ potentials on the half-line is given in~\cite{kisi09} and one would hope that this enables one to integrate the Korteweg--de Vries flow on $L^2(\R)$ by means of the inverse scattering method. 
   However, this has not been achieved so far since the solution of the inverse spectral problem is given in terms of the half-line spectral measure whereas the inverse scattering transform approach in this setting requires some yet unknown full-line spectral data for potentials in $L^2(\R)$.}, at the moment it is not clear how results like Corollary~\ref{cor:KSforCHonR+} can be employed to handle the case of positive $\kappa$ at both endpoints.
   \end{remark}
 
 Unlike the corresponding result for Schr\"odinger operators in~\cite{kisi09}, Theorem~\ref{thm:KSforExB1} and Theorem~\ref{thm:KSforExB2} can, even though they are results for a half-line spectral problem, be applied to the Camassa--Holm equation and its two-component generalization on the full real line. 
 In fact, both of these theorems establish an inverse spectral transform on a particular phase space of step-like profiles that completely linearizes the conservative Camassa--Holm flow. 
   Here we will restrict our discussion again to what follows from Theorem~\ref{thm:KSforExB1}, further similar results can be deduced readily from the more detailed exposition in Sections~\ref{secCH:02} and~\ref{ss:13wellposed}.
 Let us begin by introducing an associated phase space $\CHdom^\kappa$ for the two-component Camassa--Holm system~\eqref{eqnmy2CH}, where $\kappa$ is a fixed positive constant. 

\begin{definition}\label{defDcapH1}
The set $\CHdom^\kappa$ consists of all pairs $(u,\mu)$ such that $u$ is a real-valued function in $H^1_{\loc}(\R)$ and $\mu$ is a positive Borel measure on $\R$ with
\begin{align}\label{eqnumuIntro}
   \int_B u(x)^2 + u'(x)^2\, dx \leq  \mu(B)
\end{align}
for every Borel set $B\subseteq\R$, satisfying the asymptotic growth restrictions  
\begin{align} \label{eqnDdef-Intro}  
 \int_{(-\infty,0)} \E^{-s}d\mu(s) & < \infty,  & \int_{0}^{\infty}(u(x)-\kappa)^2 + u'(x)^2dx + \int_{[0,\infty)} d\dip & < \infty,
\end{align}
where $\dip$ is the positive Borel measure on $\R$ defined such that   
\begin{align}\label{eqndipdefIntro}
 \mu(B) = \dip(B) + \int_B u(x)^2 + u'(x)^2\, dx.
\end{align}
\end{definition}

\begin{remark}
We chose to work with pairs $(u,\mu)$ and the unusual condition~\eqref{eqnumuIntro} instead of the simpler definable pairs $(u,\dip)$ for various reasons. 
For example, the measure $\mu$ is more natural when considering suitable notions of convergence; see Definition~\ref{defDTop} for details and compare with~\eqref{eqnSMPhomeo2} in Proposition~\ref{propSMPcont}.
Moreover, in the context of the conservative Camassa--Holm flow, the measure $\mu$ satisfies the transport equation in~\eqref{eqnmy2CH} and represents the energy of a solution.
\end{remark}

The first condition in~\eqref{eqnDdef-Intro} requires strong decay of both, the function $u$ and the measure $\dip$, at $-\infty$, whereas the second condition means that the function $u-\kappa$ lies in $H^1$ near $+\infty$ and that the measure $\dip$ is finite near $+\infty$.
Of course, the conditions at $-\infty$ and at $+\infty$ could also be switched due to the symmetry
\begin{align}
  (x,t) \mapsto (-x,-t)
\end{align}
of the two-component Camassa--Holm system~\eqref{eqnmy2CH}. 
However, we are not able to allow nonzero asymptotics at both endpoints with current methods.

Due to the strong decay restriction at $-\infty$, for every pair $(u,\mu)$ in $\CHdom^\kappa$ one can introduce a Weyl--Titchmarsh function $m$ for the spectral problem~\eqref{eqnCHspec} on the real line. 
 This function coincides with the Weyl--Titchmarsh function of a corresponding generalized indefinite string. 
 Namely, using the diffeomorphism $x\mapsto \E^x$ from $\R$ to $(0,\infty)$, the spectral 
problem~\eqref{eqnCHspec} is transformed to a generalized indefinite string $(L,\tilde{\omega},\tilde{\dip})$ with $L=\infty$.
One can then use Corollary~\ref{cor:KSforExB1} to characterize the corresponding set of all spectral measures $\rho$ for pairs in $\CHdom^\kappa$; see Theorem~\ref{thmCHualpha}. 
 In order to state this result, we first define $\mathcal{R}^\alpha$ for a positive constant $\alpha$ as the set of all positive Borel measures $\rho$ on $\R$ with~\eqref{eqnrhoPoisson} that satisfy conditions~\ref{itmCorKSc1} and \ref{itmCorKSc2} in Corollary~\ref{cor:KSforExB1}.  
 
 \begin{theorem}\label{eqnISTDkappa}
 The map $(u,\mu) \mapsto \rho$ from $\CHdom^\kappa$ to $\mathcal{R}^{\frac{1}{4\kappa}}$ is bijective. 
 \end{theorem}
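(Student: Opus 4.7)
The plan is to reduce the theorem to Corollary~\ref{cor:KSforExB1} via the diffeomorphism $x\mapsto \E^x$ from $\R$ onto $(0,\infty)$, which transforms the half-line Camassa--Holm spectral problem~\eqref{eqnCHspec} into a generalized indefinite string $(L,\tilde\omega,\tilde\dip)$ with $L=\infty$ (this is the transformation already described for pairs in $\CHdom^\kappa$ in the paragraph preceding the theorem, and carried out in detail in Section~\ref{secCH:01}). The proof then amounts to two verifications: first, that a pair $(u,\mu)$ lies in $\CHdom^\kappa$ if and only if the associated generalized indefinite string $(L,\tilde\omega,\tilde\dip)$ satisfies condition~\eqref{eqnCondS1} for some $c\in\R$, with $\alpha=\tfrac{1}{4\kappa}$; and second, that the map $(u,\mu)\mapsto(L,\tilde\omega,\tilde\dip)$ is a bijection onto the set of such generalized indefinite strings.

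For the translation of conditions, I would compute explicitly how $\tilde\Wr$ and $\tilde\dip$ are built from $u$, $u'$ and $\dip$ under $y=\E^x$. The second condition in~\eqref{eqnDdef-Intro} (which is precisely $u-\kappa\in H^1[0,\infty)$ together with finiteness of $\dip$ near $+\infty$) should, after the logarithmic substitution, correspond to the large-$y$ integrability in~\eqref{eqnCondS1} with the explicit model profile $\tfrac{y}{1+2\sqrt{\alpha}y}$ capturing the shift by $\kappa$ (so that the additive constant $c$ encodes the average value of $u-\kappa$ and $m(0)$ is determined via~\eqref{eq:intromat0}). The first condition in~\eqref{eqnDdef-Intro}, namely $\int_{(-\infty,0)}\E^{-s}d\mu(s)<\infty$, should translate to an integrability statement for $\tilde\Wr$ and $\tilde\dip$ near $y=0$ that is the small-$y$ part of~\eqref{eqnCondS1}, together with pinning down the otherwise free additive constant in $\Wr$ and the free point mass of $\dip$ at zero.

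With this dictionary in place, bijectivity reduces as follows. Injectivity: given $(u,\mu)\in\CHdom^\kappa$, the associated generalized indefinite string is fixed by the construction, and the spectral measure $\rho$ lies in $\mathcal{R}^{1/(4\kappa)}$ by Corollary~\ref{cor:KSforExB1}. Conversely, if two pairs produce the same $\rho$, Corollary~\ref{cor:KSforExB1} combined with Remark~\ref{rem:uniquerho} shows the generalized indefinite strings agree up to an additive constant in $\Wr$ and a point mass of $\dip$ at zero; but the strong decay at $-\infty$ encoded in the first condition of~\eqref{eqnDdef-Intro} rules out both ambiguities, yielding $(u,\mu)$ uniquely. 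Surjectivity: given $\rho\in\mathcal{R}^{1/(4\kappa)}$, Corollary~\ref{cor:KSforExB1} supplies a generalized indefinite string $(L,\tilde\omega,\tilde\dip)$ with $L=\infty$ and~\eqref{eqnCondS1} for some $c\in\R$, which—after fixing the constants so as to meet the small-$y$ integrability—can be inverted through $x\mapsto\log y$ to produce a pair $(u,\mu)\in\CHdom^\kappa$ with the prescribed spectral measure.

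The main obstacle I anticipate is the careful bookkeeping at $-\infty$ in the Eulerian picture, equivalently at $y=0^+$ for the generalized indefinite string. On the one hand, the condition $\int_{(-\infty,0)}\E^{-s}d\mu(s)<\infty$ is genuinely stronger than what one reads off from~\eqref{eqnCondS1} alone and must be shown to correspond exactly to the regularity implicit in fixing the additive constant in $\tilde\Wr$ and excluding a point mass of $\tilde\dip$ at zero; on the other, the constraint~\eqref{eqnumuIntro} between $\mu$ and $(u^2+u'^2)dx$ must be shown to be preserved in both directions of the correspondence, since it is what allows $\mu$ (rather than just $\dip$) to be reconstructed from the generalized indefinite string data. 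Once these points are pinned down, the theorem follows.
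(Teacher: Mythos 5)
Your proposal follows essentially the same route as the paper: transform the full-line problem into the generalized indefinite string $(\infty,\tilde\omega,\tilde\dip)$ via $x\mapsto\E^x$, apply Corollary~\ref{cor:KSforExB1}, and resolve the three-parameter non-uniqueness of Remark~\ref{rem:uniquerho} using the normalizations built into $\CHdom^\kappa$. The only structural difference is that the paper imports the bijectivity of $(u,\mu)\mapsto\rho$ between the larger space $\CHdom$ and $\cR_0$ from~\cite[Sections~2--4]{Eplusminus} and then only proves the characterization of the image of $\CHdom^\kappa$ (Theorem~\ref{thmCHualpha}), whereas you re-sketch the bijectivity directly; that is acceptable in outline, since the inversion of the coefficient transformation is exactly the content of the cited sections.

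One concrete correction to your bookkeeping at the endpoints. The ``small-$y$ part of~\eqref{eqnCondS1}'' is vacuous: every generalized indefinite string has $\tilde\Wr\in L^2_{\loc}$ and $\tilde\dip$ locally finite, so~\eqref{eqnCondS1} imposes nothing near $y=0$. The actual role of the first condition in~\eqref{eqnDdef-Intro} is (a) to guarantee that the formulas for $\tilde\Wr$ and $\tilde\dip$ produce a generalized indefinite string at all (square integrability of $y\mapsto -(u(\log y)+u'(\log y))/y$ near $y=0$ and finiteness of $\int_{(-\infty,\log y)}\E^{-s}d\dip$ are \emph{not} automatic from $u\in H^1_{\loc}(\R)$), and (b) to exclude a point mass of $\tilde\dip$ at zero. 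It does \emph{not} pin down the additive constant in $\tilde\Wr$: a constant is square integrable near $y=0$, so the decay at $-\infty$ cannot rule out that ambiguity. What fixes the constant is the behavior at $+\infty$ — the second condition in~\eqref{eqnDdef-Intro} (or already~\eqref{eqnMdef+}) forces $\tilde\Wr\in L^2$ near $y=\infty$, and only one choice of additive constant is compatible with that; equivalently, in the proof of Theorem~\ref{thmCHualpha} the constant $c$ in~\eqref{eqnCondS1} is forced to equal $-(2\sqrt{\alpha})^{-1}$ by the global square integrability of $\tilde\Wr$. Both normalizations are available in the definition of $\CHdom^\kappa$, so your argument survives once this attribution is corrected, but as written the injectivity step rests on the wrong condition.
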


\begin{remark}
 Just like the classical Krein--de Branges correspondence, the map $(u,\mu) \mapsto \rho$ becomes a homeomorphism with respect to reasonable topologies; see~\cite[Proposition~4.5]{Eplusminus}. 
 In fact, for our phase space $\CHdom^\kappa$ we are able to improve on this result and obtain homeomorphy with respect to finer topologies, whose definition is dictated by the trace formula in Corollary~\ref{corTFCH1} underlying the phase space $\CHdom^\kappa$ and takes into account the additional control at $+\infty$; see Definition~\ref{defD1Top}.
\end{remark}

 By means of the bijection in Theorem~\ref{eqnISTDkappa}, we are able to define the {\em conservative Camassa--Holm flow} on $\CHdom^\kappa$ by introducing a (well-defined) flow on $\mathcal{R}^\frac{1}{4\kappa}$ via 
\begin{align}\label{eqnEvolRho}
   d\rho(\lambda,t)  = \E^{-\frac{t}{2\lambda}} d\rho(\lambda,0).
\end{align}
 That this flow indeed gives rise to global weak solutions (in a sense to be made precise in Definition~\ref{def:weaksolCH}) of the two-component Camassa--Holm system~\eqref{eqnmy2CH} with initial data in $\CHdom^\kappa$ follows from our recent results in~\cite[Section~5]{Eplusminus}. 

\begin{theorem}\label{thmCHonDkappa}
Integral curves of the conservative Camassa--Holm flow on $\CHdom^\kappa$ are weak solutions of the two-component Camassa--Holm system~\eqref{eqnmy2CH}.
\end{theorem}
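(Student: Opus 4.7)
The plan is to read the statement as a combination of two facts: (i) the spectral evolution $d\rho(\lambda,t)=\E^{-t/(2\lambda)}d\rho(\lambda,0)$ preserves the class $\mathcal{R}^{1/(4\kappa)}$, so that via the bijection of Theorem~\ref{eqnISTDkappa} we do get a well-defined curve $t\mapsto(u(\redot,t),\mu(\redot,t))$ in $\CHdom^\kappa$, and (ii) this curve coincides with the conservative two-component Camassa--Holm flow already constructed in \cite[Section~5]{Eplusminus} on a larger phase space, so that the weak formulation~\eqref{eqnmy2CH}--\eqref{eq:PequDef} is inherited. My proof would make both steps explicit.

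First, for well-definedness, I would verify the three defining conditions of $\mathcal{R}^\alpha$ (with $\alpha=1/(4\kappa)$) for the evolved measure. The Poisson-integrability~\eqref{eqnrhoPoisson} is preserved because the factor $\E^{-t/(2\lambda)}$ is uniformly bounded on $\R\setminus\{0\}$ apart from a controllable blow-up at zero, and the support of $\rho$ stays bounded away from zero by condition~\ref{itmCorKSc1}, so $\E^{-t/(2\lambda)}\leq C_t(1+\lambda^2)^{-1}$ fails only in a neighbourhood of $0$ which has no $\rho$-mass. Condition~\ref{itmCorKSc1} is trivially preserved because multiplying a discrete measure by a positive smooth factor does not change its support. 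Condition~\ref{itmCorKSc1}'s Lieb--Thirring sum~\eqref{eqnLT-Irho} is preserved because on the discrete support the factor $\E^{-t/(2\lambda)}$ is bounded above and below by positive constants depending on $t$. Finally, for the Szeg\H{o} integral~\eqref{eqnSzego-Irho}, the evolved density changes by the additive term $-t/(2\lambda)$ inside the logarithm, and
\begin{align*}
\int_{\alpha}^{\infty}\frac{\sqrt{\lambda-\alpha}}{\lambda^3}\,\frac{|t|}{2\lambda}\,d\lambda<\infty,
\end{align*}
so the integral remains finite.

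Second, by Theorem~\ref{eqnISTDkappa}, the evolved spectral measure corresponds to a unique pair $(u(\redot,t),\mu(\redot,t))\in\CHdom^\kappa$, which gives a well-defined curve in $\CHdom^\kappa$. To see that this curve solves the two-component system weakly, I would observe that $\CHdom^\kappa$ embeds into the larger phase space of \cite{Eplusminus} (where no particular asymptotics at $+\infty$ are imposed beyond the decay condition at $-\infty$ and the natural regularity), and that the exponential evolution~\eqref{eqnEvolRho} on the spectral side is exactly the evolution identified there as generating the conservative Camassa--Holm flow. Consequently, the curve $(u(\redot,t),\mu(\redot,t))$ constructed here coincides with the integral curve produced in \cite[Section~5]{Eplusminus} for the same initial datum, and therefore satisfies Definition~\ref{def:weaksolCH}.

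The main obstacle I expect is step (ii): one has to check that the framework of \cite{Eplusminus}, which was set up for data with decay at both $\pm\infty$, is compatible with the nontrivial asymptotic $u\to\kappa$ at $+\infty$ built into $\CHdom^\kappa$. The way to reconcile this is to work with the auxiliary measure $\dip$ from~\eqref{eqndipdefIntro} and the diffeomorphism $x\mapsto \E^x$: under this change of variables the half-line spectral problem~\eqref{eqnCHspec} becomes a generalized indefinite string on $(0,\infty)$, and the decaying half corresponds to the Neumann endpoint while the step-like half corresponds to the Weyl endpoint. Since \cite[Section~5]{Eplusminus} formulates the conservative flow intrinsically in terms of the generalized indefinite string, it applies to the transformed string without any change; transporting the resulting identities back via the change of variables then yields the weak formulation~\eqref{eqnmy2CH} on the whole real line, with the constant $\kappa$ absorbed into the source term $P$ through~\eqref{eq:PequCHdef}. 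No additional analytical ingredient beyond the bijection of Theorem~\ref{eqnISTDkappa} and the results already cited is required.
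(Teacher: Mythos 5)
Your proposal is correct and follows essentially the same route as the paper: the weak-solution property is inherited from the conservative flow on the larger space $\CHdom$ constructed in \cite[Section~5]{Eplusminus} (restated as Theorem~\ref{thm:consCHweak}), combined with invariance of $\CHdom^\kappa$ under the spectral evolution, which the paper obtains from the preservation of the spectral characterization of Theorem~\ref{thmCHualpha} exactly as in your step (i). The only inaccuracy is the concern in your step (ii): the phase space $\CHdom$ of \cite{Eplusminus} does not require decay at $+\infty$ — condition~\eqref{eqnMdef+} is mild and already contains all of $\CHdom^\kappa$ — so no reconciliation between the two frameworks is actually needed.
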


 Of course, by its very definition, the conservative Camassa--Holm flow on $\CHdom^\kappa$ is linearized under the {\em inverse spectral transform} $(u,\mu)\mapsto\rho$. 
 The time evolution on the spectral side can be seen to split into one part on the essential spectrum given by~\eqref{eqnEvolRho} and the usual time evolution of norming constants associated with the eigenvalues in the discrete spectrum. 
 This means that global conservative solutions with step-like initial data in $\CHdom^\kappa$ can be integrated by means of the inverse spectral transform. 
 One can expect that this will allow to deduce qualitative properties of such solutions. 

\begin{remark}\label{rem:MPs}
A pair $(u,\mu)$ is called a {\em multi-peakon profile} if it is of the form 
\begin{align}
u(x) & = \frac{1}{2}\sum_{n=1}^N \omega_n\E^{-|x-x_n|}, & \dip & = \sum_{n=1}^N\dip_n\delta_{x_n},
\end{align}
where $\delta_x$ is the unit Dirac measure centered at $x$.
The phase space $\CHdom^\kappa$ clearly does not contain any multi-peakon profiles.
However, it does include certain pairs $(u,\mu)$ that are made up of infinitely many peakons, meaning that they are of the above form with $N=\infty$. 
For example, the shifted Laguerre weight from Remark~\ref{rem:LaguerreIntro} leads to such a pair (described in more detail in Remark~\ref{remWPD1}\ref{rem:LaguerreMP}) and a complete characterization of spectral measures for the class of such pairs can be deduced from Theorem~\ref{th:HambMP}. 
The corresponding weak solutions of the two-component Camassa--Holm system~\eqref{eqnmy2CH} can be written down explicitly in terms of the moments of the time-evolved spectral measure.
Asymptotic long-time behavior of these solutions appears to be an intriguing topic, which will be addressed elsewhere. 
\end{remark}

\subsection{Further applications}\label{ss:CS+Dirac}

Since generalized indefinite strings serve as yet another canonical model of self-adjoint operators with simple spectrum~\cite{IndefiniteString}, one can apply our main results also to other important one-dimensional models and operators of mathematical physics. 
We begin with Krein strings~\cite{kakr74, kowa82} as they constitute a rather obvious and important subclass of generalized indefinite strings. 
More specifically, a {\em Krein string} is a generalized indefinite string $(L,\omega,\dip)$ such that $\omega$ is a positive Borel measure on $[0,L)$ and $\dip$ is identically zero. 
On the spectral side, this is equivalent to the condition that the Weyl--Titchmarsh function $m$ is a Stieltjes function, which means that in the integral representation~\eqref{eqnmIntRep} the constant $c_1$ vanishes, the constant $c_2$ is non-negative and the measure $\rho$ is supported on $[0,\infty)$ with
\begin{align}\label{eqnPoisson1}
\int_{[0,\infty)} \frac{d\rho(\lambda)}{1+\lambda} <\infty.
\end{align} 
Theorem~\ref{thm:KSforExB1} and Corollary~\ref{cor:KSforExB1} can thus easily be specialized to Krein strings; see Theorem~\ref{thm:KS} and Corollary~\ref{cor:KS01}. 
Furthermore, even though it is less immediately obvious, we can also apply Theorem~\ref{thm:KSforExB2} to Krein strings by using a simple relation between the Weyl--Titchmarsh functions of a Krein string $(L,\dip)$ and the generalized indefinite string $(L,0,\dip)$.  
In this way, we obtain a characterization for another class of perturbations for Krein strings that is quite different from the previous one in Theorem~\ref{thm:KS}.
This characterization will be given in Theorem~\ref{thmKSforKS02}, of which the following result is an immediate consequence, where we continue to use the notation introduced in~\eqref{eqndiprhosing}. 

 \begin{corollary}\label{corKSforKS02}
  A positive Borel measure $\rho$ on $[0,\infty)$ with~\eqref{eqnPoisson1} is the spectral measure of a Krein string $(L,\dip)$ with $L=\infty$ and  
     \begin{align}
      \int_0^\infty   \biggl(\varrho(x) - \frac{1}{1+2\sqrt{\alpha} x} \biggr)^2 x\, dx + \int_{[0,\infty)} x\, d\dip_\sing(x) & < \infty
    \end{align}
  if and only if both of the following conditions hold:
    \begin{enumerate}[label=(\roman*), ref=(\roman*), leftmargin=*, widest=ii]
      \item The support of $\rho$ is discrete in $[0,\alpha)$, does not contain zero and satisfies 
        \begin{align}
     \sum_{\lambda\in \supp(\rho)\atop 0<\lambda<\alpha}(\alpha-\lambda)^{\nicefrac{3}{2}} <\infty.
  \end{align}
\item The absolutely continuous part $\rho_\ac$ of $\rho$ on $(\alpha,\infty)$ satisfies
    \begin{align}\label{eq:SzegoKS02}
       \int_{\alpha}^{\infty}  \frac{\sqrt{\lambda-\alpha}}{\lambda^2} \log\biggl(\frac{d\rho_\ac(\lambda)}{d\lambda}\biggr) d\lambda >- \infty.
    \end{align}
    \end{enumerate}
    \end{corollary}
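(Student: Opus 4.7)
The plan is to apply Corollary~\ref{cor:KSforExB2} to the generalized indefinite string $(L, 0, \dip)$ with parameter $\beta = \sqrt{\alpha}$ and then transfer the resulting conditions back to the Krein string $(L, \dip)$ via the relation between their Weyl--Titchmarsh functions. Since $\omega = 0$ for $(L, 0, \dip)$, the normalized anti-derivative $\Wr$ vanishes identically, so the term $\int_0^\infty (\Wr(x) - c)^2 x\, dx$ in~\eqref{eqnCondS2} is finite only for $c = 0$, and then~\eqref{eqnCondS2} reduces exactly to the coefficient condition in the present corollary.

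The differential equation $-f'' = z^2 \dip f$ of the generalized indefinite string $(L, 0, \dip)$ coincides with the Krein string equation $-f'' = w \dip f$ under the substitution $w = z^2$. Accordingly, the Weyl--Titchmarsh function $m$ of $(L, 0, \dip)$ is odd and satisfies $m(z) = M(z^2)/z$, where $M$ is the Weyl--Titchmarsh function of $(L, \dip)$ (that is, the Stieltjes transform associated with $\rho$). Consequently, the spectral measure $\tilde\rho$ of $(L, 0, \dip)$ is the symmetric push-forward of $\rho$ under $\lambda \mapsto \pm\sqrt{\lambda}$; on absolutely continuous parts this gives the identity $(d\rho_{\ac}/d\lambda)(\lambda) = \sqrt{\lambda}\,(d\tilde\rho_{\ac}/dz)(\sqrt{\lambda})$ for $\lambda > 0$.

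With this dictionary, the conditions of Corollary~\ref{cor:KSforExB2} translate directly to those stated above. The support of $\tilde\rho$ is discrete in $(-\sqrt{\alpha}, \sqrt{\alpha})$ and avoids zero if and only if the support of $\rho$ is discrete in $[0, \alpha)$ and avoids zero, and the associated Lieb--Thirring sum equals $2\sum_\lambda (\sqrt{\alpha} - \sqrt{\lambda})^{\nicefrac{3}{2}}$, which is comparable to $\sum_\lambda (\alpha - \lambda)^{\nicefrac{3}{2}}$ because accumulation of support points of $\rho$ is only possible at $\alpha$. For the Szeg\H{o} integrals, the substitution $\lambda = z^2$ together with the splitting $\log \im m(z + \I 0) = \log \im M(z^2 + \I 0) - \log |z|$ yields
\begin{align*}
 \int_{\R \setminus (-\sqrt{\alpha}, \sqrt{\alpha})} \frac{\sqrt{z^2 - \alpha}}{|z|^3} \log \im m(z + \I 0)\, dz = \int_\alpha^\infty \frac{\sqrt{\lambda - \alpha}}{\lambda^2} \log \im M(\lambda + \I 0)\, d\lambda - \tfrac{1}{2}\int_\alpha^\infty \frac{\sqrt{\lambda - \alpha}}{\lambda^2} \log \lambda\, d\lambda,
\end{align*}
in which the last integral is manifestly finite. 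Combined with $\im M(\lambda + \I 0) = \pi (d\rho_{\ac}/d\lambda)(\lambda)$ almost everywhere on $(\alpha, \infty)$, this produces~\eqref{eq:SzegoKS02} up to an additive finite constant.

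The main technical point to verify is the analyticity of $m$ at zero required by condition~\ref{itmKS2c1} of Theorem~\ref{thm:KSforExB2}: in terms of the Krein function this amounts to the normalization $M(0) = 0$, and one has to check that this is forced by the coefficient condition of the present corollary together with the assumption $0 \notin \supp(\rho)$ (compare the discussion after Theorem~\ref{thm:KSforExB1} concerning boundedness of the associated linear relation). Once this normalization is confirmed, the bijective correspondence $\rho \leftrightarrow \tilde\rho$ completes the reduction to Corollary~\ref{cor:KSforExB2}.
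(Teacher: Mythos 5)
Your proposal is correct and takes essentially the same route as the paper: the paper proves Theorem~\ref{thmKSforKS02} by passing to the generalized indefinite string $(L,0,\dip)$, invoking Proposition~\ref{propOddWT} to obtain $m(z)=zm_+(z^2)$, and then applying Theorem~\ref{thm:KSforExB2} and Corollary~\ref{cor:KSforExB2} with $\beta=\sqrt{\alpha}$, exactly as you do (you merely spell out more of the translation of the Lieb--Thirring and Szeg\H{o} conditions than the paper does). Two harmless slips: since $\im\,m(\xi+\I 0)=\xi\,\im\,m_+(\xi^2+\I 0)$, the logarithmic correction term enters with the opposite sign to the one you wrote (still a manifestly finite integral), and analyticity of $m$ at zero is not the normalization ``$M(0)=0$'' but rather analyticity of $m_+$ at zero (equivalently $L=\infty$ and $0\notin\supp(\rho)$), after which $m(0)=0$ holds automatically because $m$ is odd.
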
 
    
\begin{remark}
A few remarks are in order:
\begin{enumerate}[label=(\alph*), ref=(\alph*), leftmargin=*, widest=e]
\item Analogous to our considerations in Section~\ref{ss:CHintro}, these additional results for Krein strings can be applied to the conservative Camassa--Holm flow. 
More specifically, Corollary~\ref{corKSforKS02} gives rise to another phase space and a corresponding inverse spectral transform. 
 Due to the positivity condition on the spectral side, the additional measure $\mu$ becomes superfluous and $u$ can be understood as a weak solution of the Camassa--Holm equation.
 Moreover, this positivity restriction is known to prevent blow-ups and one can expect uniqueness of weak solutions; compare~\cite{como00}. 

 \item  Let us stress that the difference between conditions~\eqref{eq:SzegoKS02} and~\eqref{eqnSzego-Irho} is only on the asymptotic behavior of the density of the spectral measure at infinity. 
 However, condition~\eqref{eq:SzegoKS02} is clearly stronger than~\eqref{eqnSzego-Irho} and ensures that the absolutely continuous part of the Krein string's weight measure is not trivial.
 In particular, it excludes Krein--Stieltjes strings (compare with Remark~\ref{rem:LaguerreIntro}).
 
\item Corollary~\ref{corKSforKS02} has an interesting connection with recent work of R.\ V.\ Bessonov and S.\ A.\ Denisov~\cite{bede20,bede21,bede22} on the spectral version of the classical Szeg\H{o}--Kolmogorov--Krein theorem, which will be discussed in Section~\ref{sec:KreinString}. 
\end{enumerate}
\end{remark}

Two more subclasses of generalized indefinite strings that are related to the Hamburger moment problem and the Stieltjes moment problem are Krein--Langer strings and Krein--Stieltjes strings. 
For these kinds of generalized indefinite strings, the coefficients $\omega$ and $\dip$ are supported on a discrete set, so that the differential equation~\eqref{eqnGISODE} reduces to a difference equation similar to the second order recurrence relations in the case of Jacobi matrices. 
They are also of particular interest because they correspond to weak solutions of the Camassa--Holm equation that are made up of infinitely many peakons.
More details and applications of our main results to these two subclasses will be given in Section~\ref{secKreinLanger}.

  For the sake of brevity, let us just briefly mention a few more applications of our results:  
 First of all, our main results obtained for generalized indefinite strings can be translated to $2\times 2$ canonical systems by using a known transformation between them (see Appendix~\ref{app:D}). 
 We will state these results for canonical systems obtained from Theorem~\ref{thm:KSforExB1} and Theorem~\ref{thm:KSforExB2} in Section~\ref{ss:KSforCS} (related results for canonical systems have been obtained before in~\cite{bede21} and in~\cite{dey} using the Arov gauge). 
 What is more important though, we can also apply our results to prove Szeg\H{o}-type theorems for one-dimensional Dirac operators (a related result can be found in~\cite{husc14}) and Schr\"odinger operators. 
  However, we decided to focus only on one straightforward application to a one-dimensional Dirac operator presented in Section~\ref{ss:Dirac}, but using a standard supersymmetry trick one may derive a corresponding Szeg\H{o}-type theorem for one-dimensional Schr\"odinger operators. 
  
We conclude this lengthy introduction with one more observation.
 The inverse spectral theory for generalized indefinite strings also sheds some light on the rich mathematical structure of the two-component Camassa--Holm system~\eqref{eqnmy2CH}.
  One may easily recognize a lot of similarities between this system and other classical completely integrable systems: The Toda lattice and multi-peakon interaction~\cite{besasz01}, the Korteweg--de Vries equation and the Camassa--Holm equation~\cite{besasz98,le04,mc03b}, the AKNS system and the two-component Camassa--Holm system~\cite{chlizh06}.  
  All these connections are not surprising at all if one realizes that the corresponding isospectral problems (Jacobi matrices, one-dimensional Schr\"odinger and Dirac operators) can all be transformed into the form of a generalized indefinite string.

\subsection{Outline of this article}\label{ss:overview}

The preliminary Section~\ref{sec:prelim} first collects all necessary information about generalized indefinite strings.
In Section~\ref{sec:CompactPert}, we use some results from our recent work~\cite{DSpec} in order to demonstrate how the conditions~\eqref{eqnCondS1} and~\eqref{eqnCondS2} can be understood as additive Hilbert--Schmidt perturbations. 
 Section~\ref{secmatzero} then deals with spectral asymptotics of Weyl--Titchmarsh functions at zero. 
 On the one hand, one may use the results from~\cite{AsymCS} to get the leading term of the asymptotics (see Proposition~\ref{lem:GISasymp}).
 However, since the Weyl--Titchmarsh function has an analytic extension to zero in our case, we are indeed able to obtain a complete Taylor series expansion at zero (see Proposition~\ref{prop:expandm}).
 This turns out to be an important prerequisite for obtaining trace formulas. 

Sections~\ref{secSbSsumrule}, \ref{secLowSemiContA} and~\ref{secKSB1proof} are dedicated to the proof of Theorem~\ref{thm:KSforExB1}. 
The overall strategy is similar to the one in~\cite{kisi03,kisi09}. 
 More specifically, Section~\ref{secSbSsumrule} contains the key result; relative trace formulas in Theorem~\ref{thm:traceflaA}. 
 Crucial for the proof of these identities are factorizations of certain meromorphic functions on the open upper complex half-plane $\C_+$ and associated trace formulas that will be derived in Appendix~\ref{appMeroMain}. 
 Next, in Section~\ref{secLowSemiContA}, we establish lower semi-continuity of certain functionals involving an associated {\em transmission coefficient}, a quantity resembling the classical transmission coefficient in one-dimensional scattering, which is also related to a perturbation determinant. 
 Having all these ingredients at hand, we finally complete the proof of our first main result in Section~\ref{secKSB1proof}.  
 As in~\cite{kisi03,kisi09}, this is achieved by showing that the trace formula underlying the class of generalized indefinite strings in Theorem~\ref{thm:KSforExB1} holds in the strong sense that if one side is finite, then so is the other one.
 Even more, we will additionally also obtain another trace formula in Corollary~\ref{corTFalpham} that only involves the half-line spectral measure and leads to a sharp Lieb--Thirring inequality in Corollary~\ref{cor:LT01new}. 
In a similar way, the proof of Theorem~\ref{thm:KSforExB2} is contained in Sections~\ref{secSbSsumrule2}, \ref{secLowSemiContB} and~\ref{secKSB2proof} as well as Appendix~\ref{appMeroMainII}.

The remaining sections are dedicated to applications of our main results. 
To begin with, we describe applications to the special case of Krein strings in Section~\ref{sec:KreinString}. 
We then continue to apply our results to Krein--Stieltjes strings and Krein--Langer strings in Section~\ref{secKreinLanger}.
 Applications to the conservative Camassa--Holm flow will be explored in Section~\ref{secCH}.
In the final Section~\ref{secCO}, we present two more applications to $2\times 2$ canonical systems and one-dimensional Dirac operators.
 Relevant necessary notions and facts about canonical systems and their connection with generalized indefinite strings are summarized in Appendix~\ref{app:D}.
For convenience, we also gather some auxiliary information on two particular functions appearing frequently in our trace formulas in Appendix~\ref{app:F1F2}.

\section{Preliminaries}\label{sec:prelim} 

We are first going to introduce several spaces of functions and distributions.  
 For every fixed $L\in(0,\infty]$, we denote with $H^1_{\loc}[0,L)$, $H^1[0,L)$ and $H^1_{\cc}[0,L)$ the usual Sobolev spaces 
\begin{align}
H^1_{\loc}[0,L) & =  \lbrace f\in AC_{\loc}[0,L) \,|\, f'\in L^2_{\loc}[0,L) \rbrace, \\
 H^1[0,L) & = \lbrace f\in H^1_{\loc}[0,L) \,|\, f,\, f'\in L^2[0,L) \rbrace, \\ 
 H^1_{\cc}[0,L) & = \lbrace f\in H^1[0,L) \,|\, \supp(f) \text{ compact in } [0,L) \rbrace.
\end{align}
The space of distributions $H^{-1}_{\loc}[0,L)$ is the topological dual of $H^1_{\cc}[0,L)$, so that the mapping $\Qr\mapsto\chi$, defined by
 \begin{align}
    \chi(h) = - \int_0^L \Qr(x)h'(x)dx
 \end{align} 
 for all $h\in H^1_{\cc}[0,L)$, establishes a one-to-one correspondence between $L^2_{\loc}[0,L)$ and $H^{-1}_{\loc}[0,L)$. 
The unique function $\Qr\in L^2_{\loc}[0,L)$ corresponding to some distribution $\chi\in H^{-1}_{\loc}[0,L)$ in this way will be referred to as the {\em normalized anti-derivative} of $\chi$.
 Furthermore, a distribution in $H^{-1}_{\loc}[0,L)$ is said to be {\em real} if its normalized anti-derivative is real-valued almost everywhere on $[0,L)$.  

A particular kind of distribution in $H^{-1}_{\loc}[0,L)$ arises from Borel measures on the interval $[0,L)$.
 In fact, if $\chi$ is a complex-valued Borel measure on $[0,L)$, then we will identify it with the distribution in $H^{-1}_{\loc}[0,L)$ given by  
 \begin{align}
  h \mapsto \int_{[0,L)} h\,d\chi. 
 \end{align}
The normalized anti-derivative $\Qr$ of such a measure $\chi$ is simply given by the left-continuous distribution function 
 \begin{align}
 \Qr(x)=\int_{[0,x)}d\chi
 \end{align}
 for almost all $x\in [0,L)$, as an integration by parts (use, for example, \cite[Exercise~5.8.112]{bo07} or~\cite[Theorem~21.67]{hest65}) shows.   

 In order to obtain a self-adjoint realization of our spectral problem in a suitable Hilbert space later, we also introduce the function space  
\begin{align}
\Hast & = \begin{cases} \lbrace f\in H^1_{\loc}[0,L) \,|\, f'\in L^2[0,L),~ \lim_{x\rightarrow L} f(x) = 0 \rbrace, & L<\infty, \\ \lbrace f\in H^1_{\loc}[0,L) \,|\, f'\in L^2[0,L) \rbrace, & L=\infty, \end{cases} 
\end{align}
 as well as the linear subspace  
\begin{align}
 \Hasto & = \lbrace f\in \Hast \,|\, f(0) = 0 \rbrace, 
\end{align}
 which turns into a Hilbert space when endowed with the scalar product 
 \begin{align}\label{eq:normti}
 \spr{f}{g}_{\Hasto} = \int_0^L f'(x) g'(x)^\ast dx.
 \end{align}
The space $\Hasto$ can be viewed as a completion with respect to the norm induced by~\eqref{eq:normti} of the space of all smooth functions which have compact support in $(0,L)$. 
In particular, the space $\Hasto$ coincides algebraically and topologically with the usual Sobolev space $H^1_0[0,L)$ when $L$ is finite. 

\begin{definition}
A {\em generalized indefinite string} is a triple $(L,\omega,\dip)$ such that $L\in(0,\infty]$, $\omega$ is a real distribution in $H^{-1}_{\loc}[0,L)$ and $\dip$ is a positive Borel measure on the interval $[0,L)$.  
\end{definition}

 For a generalized indefinite string $(L,\omega,\dip)$, the normalized anti-derivative of the distribution $\omega$ will always be denoted with $\Wr$ in the following. 
 Associated with such a generalized indefinite string is the inhomogeneous differential equation
 \begin{align}\label{eqnDEinho}
  -f''  = z\, \omega f + z^2 \dip f + \chi, 
 \end{align}
 where $\chi$ is a distribution in $H^{-1}_{\loc}[0,L)$ and $z$ is a complex spectral parameter. 
 Of course, this differential equation has to be understood in a weak sense:   
  A solution of~\eqref{eqnDEinho} is a function $f\in H^1_{\loc}[0,L)$ such that 
 \begin{align}\label{eqnIntDEho}
 f'\NL h(0) + \int_{0}^L f'(x) h'(x) dx = z\, \omega(fh) + z^2 \int_{[0,L)}fh\,d\dip +  \chi(h)
 \end{align}
 for all functions $h\in H^1_{\cc}[0,L) $ and a (unique) constant $f'\NL \in\C$. 
 
 With this notion of solution, we are able to introduce the fundamental system of solutions $\theta(z,\redot)$ and $\phi(z,\redot)$ of the homogeneous differential equation
   \begin{align}\label{eqnDEho}
  -f'' = z\, \omega f + z^2 \dip f
 \end{align}
  satisfying the initial conditions
 \begin{align}
  \theta(z,0)& = \phi'\NLz =1, &  \theta'\NLz & = \phi(z,0) =0,
 \end{align}
 for every $z\in\C$; see \cite[Lemma~3.2]{IndefiniteString}. 
 Even though the derivatives of these functions are only locally square integrable in general, there are unique left-continuous functions $\theta^\qd(z,\redot)$ and $\phi^\qd(z,\redot)$ on $[0,L)$ such that 
 \begin{align}
   \theta^\qd(z,x) & = \theta'(z,x) + z\Wr(x)\theta(z,x), & \phi^\qd(z,x) & = \phi'(z,x) + z\Wr(x)\phi(z,x),
 \end{align}
 for almost all $x\in[0,L)$;  see \cite[Equation~(4.12)]{IndefiniteString}.
 These functions will henceforth be referred to as {\em quasi-derivatives} of the solutions $\theta(z,\redot)$ and $\phi(z,\redot)$. 
 As functions of the spectral parameter $z$, the solutions as well as their quasi-derivatives are entire; see~\cite[Corollary~3.5]{IndefiniteString} for example. 
 We defer justification of the remaining claims in the following theorem to Appendix~\ref{app:D}, where we summarize the connection between generalized indefinite strings and canonical systems. 
 
 \begin{theorem}\label{thm:FSScartwright}
For every fixed $x\in[0,L)$, the functions
    \begin{align}\label{eqnFSSstring}
   z & \mapsto \theta(z,x), & z & \mapsto \theta^\qd(z,x)/z, & z & \mapsto \phi(z,x), & z & \mapsto \phi^\qd(z,x), 
 \end{align}
 are real entire and have only real and simple zeros (unless they are identically zero\footnote{This can happen only if $x$ is zero or $\omega$ and $\dip$ both vanish on $[0,x)$; see Proposition~\ref{propFS} below.}).
Moreover, they belong to the Cartwright class with exponential type given by 
\begin{align}\label{eqnthetaphiExpTyp}
  \int_0^x \varrho(s) ds,
\end{align}
 where $\varrho$ is the square root of the Radon--Nikod\'ym derivative of $\dip$ with respect to the Lebesgue measure.
 \end{theorem}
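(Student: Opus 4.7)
The plan is to reduce the statement to the corresponding (classical) result for $2\times 2$ canonical systems via the transformation outlined in Appendix~\ref{app:D}. Under this transformation, a generalized indefinite string $(L,\omega,\dip)$ is converted into a canonical system $Jy' = -z\gh y$ on some interval $[0,\ell)$ with a trace-normalized Hamiltonian $\gh$, and the four functions in~\eqref{eqnFSSstring}, for a fixed $x\in[0,L)$, become (up to trivial rescaling) the four entries of the transfer matrix $W(z,s(x))$ of the canonical system, where $s(x)\in[0,\ell)$ is the image of $x$ under the change of variables implementing the correspondence. The factor of $1/z$ that is divided out in the second function of~\eqref{eqnFSSstring} exactly absorbs the simple zero at $z=0$ caused by the extra factor of $z$ in the definition of the quasi-derivative.

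For canonical systems, the Cartwright class membership and the exponential-type formula are classical: each entry of $W(z,s)$ is an entire function of bounded type in both half-planes and of exponential type $\int_0^s \sqrt{\det \gh(u)}\, du$ (de Branges's theorem). Under the change of variables linking the string to the canonical system, $\sqrt{\det \gh(u)}\, du$ pulls back precisely to $\varrho(s)\,ds$, where $\varrho$ is the square root of the Radon--Nikod\'ym derivative of $\dip$ with respect to Lebesgue measure, yielding the exponential type~\eqref{eqnthetaphiExpTyp}. Reality of the four functions in~\eqref{eqnFSSstring} follows from the fact that the Hamiltonian $\gh$ is real-valued and the initial conditions are real.

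Reality and simplicity of the zeros can be read off from a self-adjointness argument. Each zero of, say, $z\mapsto \theta(z,x)$, corresponds to an eigenvalue of the self-adjoint canonical system boundary value problem on $[0,s(x)]$ with initial datum $(1,0)^{\mathrm T}$ at the left endpoint and the relevant Dirichlet-type condition at the right, so all zeros must be real. Simplicity then follows from the Wronskian identity $\theta(z,x)\phi^\qd(z,x) - \theta^\qd(z,x)\phi(z,x) = 1$ (valid for all $z\in\C$ and $x\in[0,L)$): a double zero of $\theta(z,x)$ in $z$ would force $\theta^\qd(z,x)$ to vanish there as well, contradicting the Wronskian identity. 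The same argument applies verbatim to the other three functions.

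The main technical obstacle is that the string differential equation~\eqref{eqnDEho} is quadratic in the spectral parameter $z$, so a naive self-adjointness argument is not available directly. The canonical system reformulation precisely circumvents this: it linearizes the quadratic pencil $-f'' = z\omega f + z^2\dip f$ into a first-order system with linear $z$-dependence, at the cost of an enlarged phase space and a somewhat delicate change of variables. The real work, deferred to Appendix~\ref{app:D}, is therefore the careful bookkeeping needed to set up the correspondence, to identify the entries of the transfer matrix with the four functions in~\eqref{eqnFSSstring}, and to verify that the exponential type computed on the canonical system side matches~\eqref{eqnthetaphiExpTyp} after the change of variables.
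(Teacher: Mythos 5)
Your proposal follows essentially the same route as the paper: the paper also defers the proof to its appendix on canonical systems, identifies the four functions in~\eqref{eqnFSSstring} with entries of the transfer matrix of the trace-normalized Hamiltonian~\eqref{eqnCSfromGIS} via~\eqref{eqnFSSforCSviaGIS}, cites the classical canonical-system facts for reality, simplicity of zeros and Cartwright class, and obtains the type from the Krein--de Branges formula $\int_0^{\varsigma(x)}\sqrt{\det h(s)}\,ds$. The only substantive point you assert rather than verify --- that $\sqrt{\det h(s)}\,ds$ pulls back to $\varrho(s)\,ds$ --- is exactly where the paper spends its effort, proving the almost-everywhere identity $\xi'(s)=\xi'(s)^2\bigl(1+\Wr(\xi(s))^2+\varrho(\xi(s))^2\bigr)$ by a careful analysis of the generalized inverse $\xi$ of $\varsigma$; you correctly flag this bookkeeping as the remaining work.
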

 
  At the origin, when $z$ is zero, the differential equation~\eqref{eqnDEho} can of course be solved explicitly and our fundamental system is given by
 \begin{align}\label{eqncsatzero}
   \theta(0,x) & = 1, & \theta^\qd(0,x) & = 0, & \phi(0,x) & = x, & \phi^\qd(0,x) & = 1.
 \end{align}
 More crucially, we will furthermore require the following formulas for the derivatives of this fundamental system with respect to the spectral parameter at the origin, which have been derived in~\cite[Proposition~4.1]{ACSpec}. 
 Let us note that differentiation with respect to the spectral parameter will be denoted with a dot here and is always meant to be done after taking quasi-derivatives.
 
 \begin{proposition}\label{propFS}
  For every $x\in[0,L)$, one has   
  \begin{align}
    \label{eqndtheta0} \dot{\theta}(0,x) & =  - \int_0^x \Wr(s)ds, &   \dot{\theta}^\qd(0,x) & =  0, \\
    \label{eqndphi0} \dot{\phi}(0,x) & = \int_0^x \int_0^s \Wr(t)dt\, ds - \int_0^x \Wr(s) s\, ds, &   \dot{\phi}^\qd(0,x) & = \int_0^x \Wr(s)ds,
  \end{align}
  as well as  
  \begin{align}
    \label{eqnddtheta0}  \ddot{\theta}(0,x) & = \biggl(\int_0^x \Wr(s)ds\biggr)^2 - 2 \int_0^x \int_0^s \Wr(t)^2 dt\,ds - 2 \int_0^x \int_{[0,s)} d\dip\,ds,                                                   \\
    \label{eqnddthetap0} \ddot{\theta}^\qd(0,x) & = -2 \int_0^x \Wr(s)^2 ds -2 \int_{[0,x)} d\dip,                                                    \\
    \label{eqnddphip0} \ddot{\phi}^\qd(0,x) & = \biggl(\int_0^x \Wr(s)ds\biggr)^2 - 2 \int_0^x \Wr(s)^2 s\, ds -2 \int_{[0,x)} s\, d\dip(s).                                                 
  \end{align}
 \end{proposition}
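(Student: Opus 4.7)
The plan is to prove Proposition~\ref{propFS} by differentiating, once and twice, the Volterra integral system satisfied by the pair $(f,f^{[1]})$ for a solution $f$ of~\eqref{eqnDEho}, evaluating at $z=0$, and then plugging in the explicit zeroth order expressions in~\eqref{eqncsatzero}. All the stated identities then come out by direct manipulation, with one small nontrivial move (a Fubini/symmetrization step) producing the squared single-integral terms.

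First I would set up the Volterra system. Using the defining relation $f^\qd = f' + z \Wr f$ and integrating the distributional equation $-f''=z\omega f + z^2\dip f$ against the indicator of $[0,x)$ (equivalently, testing the weak formulation~\eqref{eqnIntDEho} with a suitable cut-off of $\mathbbm{1}_{[0,x)}$), one obtains the coupled integral equations
\begin{align}\label{eqnPlanSys}
\begin{split}
 f(x) &= f(0) + \int_0^x f^\qd(s)\,ds - z\int_0^x \Wr(s) f(s)\,ds, \\
 f^\qd(x) &= f^\qd\NL + z\int_0^x \Wr(s) f^\qd(s)\,ds - z^2 \int_0^x \Wr(s)^2 f(s)\,ds - z^2 \int_{[0,x)} f\,d\dip,
\end{split}
\end{align}
valid for $\theta(z,\cdot)$ and $\phi(z,\cdot)$ with the respective initial data. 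Since the right-hand sides are polynomial in $z$ with coefficients depending only on $\Wr$ and $\dip$, joint analyticity in $z$ (already a consequence of \cite[Corollary~3.5]{IndefiniteString}) permits term-by-term differentiation in $z$ under the integral sign.

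Next I would apply $\partial_z$ to~\eqref{eqnPlanSys} and set $z=0$. Inserting $\theta(0,s)=1$, $\theta^\qd(0,s)=0$ and $\phi(0,s)=s$, $\phi^\qd(0,s)=1$ from~\eqref{eqncsatzero}, the equations~\eqref{eqndtheta0}--\eqref{eqndphi0} fall out immediately; in particular, $\dot\theta^\qd(0,\ledot)\equiv 0$ since the corresponding integrand vanishes identically. Differentiating~\eqref{eqnPlanSys} a second time and setting $z=0$ yields
\begin{align}
 \ddot f(x) &= \int_0^x \ddot f^\qd(s)\,ds - 2\int_0^x \Wr(s)\dot f(s)\,ds, \\
 \ddot f^\qd(x) &= 2\int_0^x \Wr(s) \dot f^\qd(s)\,ds - 2\int_0^x \Wr(s)^2 f(s)\,ds - 2\int_{[0,x)} f\,d\dip,
\end{align}
where all functions on the right are evaluated at $z=0$. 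For $f=\theta$ the first derivatives feeding into the right-hand side are $\dot\theta(0,s)=-\int_0^s \Wr$ and $\dot\theta^\qd(0,s)=0$; for $f=\phi$ they are $\dot\phi^\qd(0,s)=\int_0^s \Wr$. Substituting these, formulas \eqref{eqnddthetap0} and~\eqref{eqnddphip0} come out after no more than swapping order of integration to simplify $2\int_0^x \Wr(s)\int_0^s \Wr(t)\,dt\,ds = \bigl(\int_0^x \Wr(s)\,ds\bigr)^2$, and formula~\eqref{eqnddtheta0} follows by one additional integration.

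The only point requiring genuine care is the derivation of~\eqref{eqnPlanSys} itself, since $\omega$ is merely a distribution and $\dip$ only a measure, so both the quasi-derivative and the integration-by-parts implicit in passing from~\eqref{eqnIntDEho} to~\eqref{eqnPlanSys} need to be handled carefully (one tests against a Lipschitz cut-off of $\mathbbm{1}_{[0,x)}$ and passes to the limit, using that $\Wr\in L^2_\loc$ absorbs the $\omega f$-term into the quasi-derivative). Once~\eqref{eqnPlanSys} is in hand the rest is bookkeeping.
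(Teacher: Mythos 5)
Your derivation is correct: the Volterra system you write down is exactly the integral-equation reformulation of the weak equation~\eqref{eqnIntDEho} (after absorbing the $\omega f h'$ term into the quasi-derivative), the signs on the $z^2$ terms come out right, and term-by-term differentiation at $z=0$ together with the symmetrization $2\int_0^x \Wr(s)\int_0^s\Wr(t)\,dt\,ds=\bigl(\int_0^x\Wr\bigr)^2$ reproduces all six formulas. Note that the paper itself does not prove this proposition but imports it from~\cite[Proposition~4.1]{ACSpec}; your argument is the standard one underlying that reference, and the delicate step you flag (passing from the weak formulation to the integral system via a cut-off of $\mathbbm{1}_{[0,x)}$) is precisely what is established in~\cite[Lemma~3.2]{IndefiniteString}, so nothing is missing.
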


 The differential equation~\eqref{eqnDEinho} for a generalized indefinite string $(L,\omega,\dip)$ gives rise to an associated self-adjoint linear relation in the Hilbert space
 \begin{align}
 \cH = \Hasto\times L^2([0,L);\dip),
\end{align}
which is endowed with the scalar product
\begin{align}
 \spr{f}{g}_{\cH} = \int_0^L f_1'(x) g_1'(x)^\ast dx + \int_{[0,L)} f_2(x) g_2(x)^\ast d\dip(x).
\end{align} 
 Here, the respective components of some vector $f\in\cH$ are always denoted by adding subscripts, that is, with $f_1$ and $f_2$.  
Now the linear relation $\T$ in the Hilbert space $\cH$ is defined by saying that some pair $(f,g)\in\cH\times\cH$ belongs to $\T$ if and only if  
\begin{align}\label{eqnDEre1}
-f_1'' & =\omega g_{1} + \dip g_{2}, &  \dip f_2 & =\dip g_{1}.
\end{align}
In order to be precise, the right-hand side of the first equation in~\eqref{eqnDEre1} has to be understood as the $H^{-1}_{\loc}[0,L)$ distribution given by 
\begin{align}
 h \mapsto \omega(g_1h) + \int_{[0,L)} g_2 h\, d\dip,
\end{align} 
so that the first equation can be understood as a special case of~\eqref{eqnDEinho}. 
 Moreover, the second equation in~\eqref{eqnDEre1} holds if and only if $f_2$ is equal to $g_1$ almost everywhere on $[0,L)$ with respect to the measure $\dip$. 
  The linear relation $\T$ defined in this way turns out to be self-adjoint in the Hilbert space $\cH$; see \cite[Theorem~4.1]{IndefiniteString}.
  It is indeed closely related to the differential equation~\eqref{eqnDEinho}: 
  A pair $(f,g)\in\cH\times\cH$ belongs to $\T-z$ if and only if 
\begin{align}\label{eqnTloc-z}
 -f_1'' & = z\, \omega f_1 +z^2 \dip f_1 + \omega g_1 + z\,\dip g_1 + \dip g_2, & \dip f_2 = z\,\dip f_1 +  \dip g_1. 
\end{align}
In particular, this shows that some $f\in\cH$ belongs to $\ker{\T-z}$ if and only if $f_1$ is a solution of the differential equation~\eqref{eqnDEho} and $\dip f_2 = z\, \dip f_1$. 

For the sake of simplicity, we shall always mean the spectrum of the corresponding linear relation when we speak of the spectrum of a generalized indefinite string in the following.
We will use the same convention for the various spectral types.   
 
   A central object in the spectral theory for the linear relation $\T$ is the associated {\em Weyl--Titchmarsh function} $m$. 
   This function can be defined on $\C\backslash\R$ by 
 \begin{align}\label{eqnmdef}
  m(z) =  \frac{\psi'\NLz}{z\psi(z,0)},
 \end{align} 
 where $\psi(z,\redot)$ is the unique (up to constant multiples) non-trivial solution of the differential equation~\eqref{eqnDEho} that lies in $\Hast$ and $L^2([0,L);\dip)$, guaranteed to exist by~\cite[Lemma~4.2]{IndefiniteString}. 
 We have seen in~\cite[Lemma~5.1]{IndefiniteString} that the Weyl--Titchmarsh function $m$ is a Herglotz--Nevanlinna function.
 As such, it admits an integral representation of the form (see~\cite[Section~5]{IndefiniteString} for a justification of the third term on the right-hand side)
\begin{align}\label{eqnWTmIntRep}
 m(z) = c_1 z + c_2 - \frac{1}{Lz} +  \int_\R \frac{1}{\lambda-z} - \frac{\lambda}{1+\lambda^2}\, d\rho(\lambda)  
\end{align}
for some non-negative constant $c_1$, some real constant $c_2$ and a positive Borel measure $\rho$ on $\R$ with $\rho(\lbrace0\rbrace)=0$ and   
\begin{align}\label{eqnWTrhoPoisson}
 \int_\R \frac{d\rho(\lambda)}{1+\lambda^2} < \infty. 
\end{align}
Here, we employ the convention that whenever an $L$ appears in a denominator, the corresponding fraction has to be interpreted as zero when $L$ is not finite.

The measure $\rho$ turns out to be a {\em spectral measure} for the linear relation $\T$ in the sense that the operator part of $\T$ is unitarily equivalent to multiplication with the independent variable in $L^2(\R;\rho)$; see \cite[Theorem~5.8]{IndefiniteString}.
 Of course, this establishes an immediate connection between the spectral properties of the linear relation $\T$ and the measure $\rho$. 
 For example, the spectrum of $\T$ coincides with the topological support of $\rho$ and thus can be read off the singularities of $m$ (more precisely, the function $m$ admits an analytic continuation away from the spectrum of $\T$). 

 The main basic result of inverse spectral theory for generalized indefinite strings says that every Herglotz--Nevanlinna function is the Weyl--Titchmarsh function of a unique generalized indefinite string; see~\cite[Theorem~6.1]{IndefiniteString}.
  
\begin{theorem}\label{thm:IST}  
The map $(L,\omega,\dip) \mapsto m$ is a bijection between the set of all generalized indefinite strings and the set of all Herglotz--Nevanlinna functions.
\end{theorem}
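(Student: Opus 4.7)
The plan is to reduce Theorem~\ref{thm:IST} to the classical Krein--de Branges correspondence for $2\times 2$ canonical systems, using the transformation outlined in Appendix~\ref{app:D}. In one direction, every generalized indefinite string $(L,\omega,\dip)$ can be recast as a canonical system: starting from the fundamental solutions $\theta(z,\redot)$, $\phi(z,\redot)$ of~\eqref{eqnDEho} together with their quasi-derivatives, one builds a matrix-valued transfer matrix solving $Jy' = -zHy$ for a trace-normalized Hamiltonian $H$ on a new independent variable obtained by reparametrizing in terms of $\Wr$ and $\dip$. A direct computation using Proposition~\ref{propFS} and the definition~\eqref{eqnmdef} shows that the Weyl--Titchmarsh function of the resulting canonical system coincides with $m$, and the Herglotz--Nevanlinna property together with the integral representation~\eqref{eqnWTmIntRep} then follows automatically from the corresponding facts for canonical systems.

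The harder direction is surjectivity. Given a Herglotz--Nevanlinna function $m$, the Krein--de Branges theorem produces a trace-normalized Hamiltonian $H$ on some maximal interval $[0,\ell)$, uniquely determined up to a change of the independent variable, whose Weyl--Titchmarsh function is $m$. The key technical step is to show that $H$ admits, after a canonical choice of reparametrization, a unique decomposition of exactly the shape dictated by the string-to-canonical-system transformation of Appendix~\ref{app:D}. Concretely, one normalizes the parametrization so that the first diagonal entry of $H$ takes the prescribed form, reads off a locally square-integrable function $\Wr$ from the off-diagonal entry, defines $\omega\in H^{-1}_{\loc}[0,L)$ as the distributional derivative $-\Wr'$, and extracts a positive Borel measure $\dip$ from the remaining trace contribution. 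The length $L\in(0,\infty]$ is forced by the maximal interval of definition together with the residue at zero (or its absence) in~\eqref{eqnWTmIntRep}.

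The main obstacle will be verifying that the data $(\Wr,\dip)$ obtained in this way are genuinely admissible, i.e., that $\Wr$ is locally square-integrable, that $\dip$ is a bona fide positive Borel measure on $[0,L)$, and that the reparametrization to the canonical string form is always possible. Trace-normalization of $H$ together with standard non-degeneracy properties of Hamiltonians on their maximal interval is precisely what yields the required local integrability of $\Wr$ and local finiteness of $\dip$; positivity of $\dip$ reflects positivity of the corresponding diagonal entry of $H$. One must then verify, going back through the transformation, that the generalized indefinite string $(L,\omega,\dip)$ so constructed really does produce $m$ as its Weyl--Titchmarsh function, which reduces to matching the Weyl disks at an interior point and invoking uniqueness of the Weyl solution $\psi(z,\redot)$.

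Injectivity is then comparatively routine: if two generalized indefinite strings give rise to the same $m$, the first direction produces two trace-normalized Hamiltonians with the same Weyl function, which must therefore agree up to reparametrization by Krein--de Branges; the reparametrization is pinned down by the normalization of the first diagonal entry used above, forcing equality of $L$, of $\Wr$ (hence of $\omega$), and of $\dip$. Together with the surjectivity construction this establishes the claimed bijection.
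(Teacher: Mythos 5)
Your reduction to the Krein--de Branges correspondence via the canonical-system transformation is exactly the route behind the paper's proof of this statement (it is quoted from \cite[Theorem~6.1]{IndefiniteString}, and the two-way transformation you describe is spelled out in Appendix~\ref{app:D}: reparametrize by $\xi(x)=\int_0^x h_{22}(s)ds$, read $\Wr$ off $h_{12}/h_{22}$, and extract $\dip$ from the remaining part of the trace). The only slips are cosmetic rather than substantive: with the paper's conventions the normalization is carried by the \emph{second} diagonal entry $h_{22}$ (the top-left entry is the one that becomes a measure), and the normalized anti-derivative convention gives $\omega=\Wr'$ rather than $-\Wr'$.
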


\begin{remark}\label{rem:uniquerho}
According to the definition of the spectral measure $\rho$ above via the integral representation~\eqref{eqnWTmIntRep}, it recovers the Weyl--Titchmarsh function $m$ only up to the three parameters $c_1$, $c_2$ and $L$.
In view of Theorem~\ref{thm:IST}, this means that the collection of all generalized indefinite strings having the same spectral measure $\rho$ is a three parameters family. 
The parameters $c_1$ and $c_2$ simply correspond to changes of point masses of $\omega$ and $\dip$ at zero, which are invisible to the linear relation $\T$ and thus also to the spectral measure $\rho$.
More precisely, if $(L,\omega_1,\dip_1)$ and $(L,\omega_2,\dip_2)$ are two generalized indefinite strings such that 
\begin{align}
  \Wr_1(x) & = b + \Wr_2(x), & \int_{[0,x)} d\dip_1 & = a + \int_{[0,x)} d\dip_2,
 \end{align}
 for almost all $x\in [0,L)$ and some $a$, $b\in\R$, then the corresponding Weyl--Titchmarsh functions $m_1$ and $m_2$ are related by 
  \begin{align}
    m_1(z) = az + b + m_2(z), 
  \end{align}
 while the corresponding spectral measures are the same. 
 The parameter $L$ corresponds of course to a change of length, which can be achieved via an elementary transformation.
 In fact, if $(L_1,\omega_1,\dip_1)$ and $(L_2,\omega_2,\dip_2)$ are two generalized indefinite strings such that  
  \begin{align}
   \begin{split}
    \Wr_1(x) & = \biggl(1-\frac{x}{L_1}+\frac{x}{L_2}\biggr)^{-2} \Wr_2(\eta(x)) \\
     & \qquad\qquad + 2\biggl(\frac{1}{L_2}-\frac{1}{L_1}\biggr) \int_0^{\eta(x)} \biggl(1+\frac{s}{L_1}-\frac{s}{L_2}\biggr) \Wr_2(s)ds, 
    \end{split} \\
    \int_{[0,x)}d\dip_1 & = \int_{[0,\eta(x))} \biggl(1+\frac{s}{L_1}- \frac{s}{L_2}\biggr)^2 d\dip_2(s), 
  \end{align}
  for almost all $x\in [0,L_1)$, where the bijection $\eta:[0,L_1)\rightarrow[0,L_2)$ is given by 
  \begin{align}
   \eta(x) & = \frac{x}{1-\frac{x}{L_1}+\frac{x}{L_2}},
  \end{align}
  then the corresponding Weyl--Titchmarsh functions $m_1$ and $m_2$ are related by 
  \begin{align}
    m_1(z) + \frac{1}{L_1z} = m_2(z) + \frac{1}{L_2z},
  \end{align}
  while the corresponding spectral measures are the same. 
  In conclusion, we see that the spectral measure $\rho$ determines a generalized indefinite string $(L,\omega,\dip)$ only up to transformations of the above types. 
\end{remark}

We will also rely on a continuity property of the correspondence $(L,\omega,\dip)\mapsto m$. 
In order to state it, let $(L_n,\omega_n,\dip_n)$ be a sequence of generalized indefinite strings, let $m_n$ be the corresponding Weyl--Titchmarsh functions and let $\Wr_n$ be the normalized anti-derivatives of $\omega_n$.

 \begin{proposition}\label{propSMPcont}
 The Weyl--Titchmarsh functions $m_n$ converge locally uniformly to $m$ if and only if  
 \begin{align}\label{eqnLiminfL}
    \sup\biggl\lbrace x\in\lbrace 0\rbrace \cup [0,\liminf_{k\rightarrow\infty} L_{n_k})\,\biggl|\, \limsup_{k\rightarrow\infty} \int_0^x \Wr_{n_k}(s)^2 ds + \int_{[0,x)} d\dip_{n_k} < \infty \biggr\rbrace = L
  \end{align}
 holds for each subsequence $n_k$ and\footnote{Note that all quantities are well-defined for large enough $n\in\N$ in view of~\eqref{eqnLiminfL}.}  
 \begin{align}\label{eqnSMPhomeo}
   \lim_{n\rightarrow\infty} \int_0^{x} \Wr_n(s)ds  & = \int_0^x \Wr(s)ds, \\
\label{eqnSMPhomeo2}
   \lim_{n\rightarrow\infty} \int_0^x \Wr_n(s)^2 ds + \int_{[0,x)} d\dip_n & = \int_0^x \Wr(s)^2 ds + \int_{[0,x)} d\dip,
 \end{align}
 for almost every $x\in [0,L)$.  
\end{proposition}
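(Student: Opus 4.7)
The plan is to reduce the statement to the classical continuity theorem for the Krein--de Branges correspondence, using the transformation between generalized indefinite strings and trace-normalized $2\times 2$ canonical systems recorded in Appendix~\ref{app:D}. Under that transformation, the generalized indefinite string $(L,\omega,\dip)$ corresponds to a canonical system whose Hamiltonian $H$ has integrated entries
\begin{align*}
 \int_0^x H_{11}(s)\,ds &= x, & \int_0^x H_{12}(s)\,ds &= -\int_0^x \Wr(s)\,ds, \\
 \int_0^x H_{22}(s)\,ds &= \int_0^x \Wr(s)^2\,ds + \int_{[0,x)} d\dip,
\end{align*}
and the associated Weyl--Titchmarsh function of the canonical system agrees with $m$ up to a simple transformation that preserves locally uniform convergence on $\C\setminus\R$. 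Consequently, both sides of the claimed equivalence admit transparent reformulations on the canonical-system side.

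The forward direction would proceed as follows. Assuming $m_n\to m$ locally uniformly, one translates this to locally uniform convergence of the canonical-system Weyl--Titchmarsh functions. The classical Krein--de Branges continuity theorem then yields pointwise almost-everywhere convergence of the integrated Hamiltonians $\int_0^\redot H_n(s)\,ds$ to $\int_0^\redot H(s)\,ds$ on the effective domain, together with the correct matching of trace-normalized lengths. Reading off the $(1,2)$ entries gives~\eqref{eqnSMPhomeo}, reading off the $(2,2)$ entries gives~\eqref{eqnSMPhomeo2}, and the length statement becomes~\eqref{eqnLiminfL} once expressed in the original string coordinate, since the right-hand side of~\eqref{eqnLiminfL} is precisely the supremum of $x\in[0,L)$ for which $\int_0^x H(s)\,ds$ is finite.

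The converse direction runs the same bridge in the opposite direction. Given the length condition~\eqref{eqnLiminfL} and the two integral convergences~\eqref{eqnSMPhomeo} and~\eqref{eqnSMPhomeo2}, one concludes that the canonical Hamiltonians $H_n$ converge in the sense required by the Krein--de Branges continuity theorem (namely pointwise convergence of the integrated Hamiltonians on a full-measure subset, plus compatibility of the right endpoints). That theorem then delivers locally uniform convergence of the canonical Weyl functions, which transfers back to locally uniform convergence of $m_n$ to $m$ on $\C\setminus\R$.

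The main obstacle is bookkeeping rather than analysis. First, the coordinate on the string side and the trace-normalized coordinate of the canonical system are generally different, so one must check carefully that the subsequential supremum in~\eqref{eqnLiminfL} translates exactly to the length condition of the canonical system continuity theorem; this is where condition~\eqref{eqnLiminfL} has to be checked along every subsequence of $L_n$ rather than just along the full sequence, to rule out mass escaping near the right endpoint in some subsequence. Second, the singular part of $\dip$ contributes to $H_{22}$ only through jumps, so one has to verify that~\eqref{eqnSMPhomeo2} genuinely encodes weak convergence of $\dip_n$ to $\dip$ plus $L^2$ convergence of $\Wr_n$ to $\Wr$ in the combined sense dictated by the canonical system; the point is that $\int_0^x H_{22}(s)\,ds$ depends on $\dip$ only through its left-continuous distribution function, which is exactly the quantity appearing in~\eqref{eqnSMPhomeo2}.
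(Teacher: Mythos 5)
Your overall route---passing to the associated canonical system and invoking continuity of the Krein--de Branges correspondence---is legitimate; it is essentially how the result that the paper actually cites, \cite[Proposition~6.2]{IndefiniteString}, was proved in the first place. The paper's own proof is a two-line reduction: it declares the statement a slight variation of that proposition and isolates the single new point, namely that \eqref{eqnSMPhomeo2} (convergence for almost every $x$) is equivalent to convergence, for \emph{every} $x\in[0,L)$, of the once-more-integrated quantities $\int_0^x\bigl(\int_0^s \Wr_n(t)^2dt+\int_{[0,s)}d\dip_n\bigr)ds$. One direction follows from monotonicity of the inner functions (which supplies local uniform bounds for dominated convergence); the converse is a Tauberian statement for nondecreasing functions, quoted from \cite[Theorem~B]{kas12}. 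Since you choose to reprove the cited result from scratch, you own all of its details, and the analogous step reappears in your argument in a different guise.

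Concretely, the gap is the step you file under ``bookkeeping.'' The Krein--de Branges continuity theorem requires pointwise (equivalently, locally uniform) convergence of the integrated \emph{trace-normalized} Hamiltonians on $[0,\infty)$; by Appendix~\ref{app:D} these are built from the generalized inverses $\xi_n$ of $\varsigma_n(x)=x+\int_0^x\Wr_n(s)^2ds+\int_{[0,x)}d\dip_n$, so you must pass between everywhere-convergence of $\xi_n$ and almost-everywhere convergence of the nondecreasing functions appearing in \eqref{eqnSMPhomeo2}. That passage is exactly the monotonicity/Tauberian content of the paper's proof and is absent from yours. Two further cautions: the integrated entries you write down are in the $h_{22}=1$ gauge of Remark~\ref{rem:GISasCS} (with the $(1,1)$ and $(2,2)$ roles interchanged relative to the paper's conventions), not the trace-normalized gauge to which the classical continuity theorem applies, so the ``length condition'' must be formulated in the $\varsigma_n$-coordinate; and \eqref{eqnSMPhomeo}--\eqref{eqnSMPhomeo2} do \emph{not} encode weak convergence of $\dip_n$ together with $L^2_{\loc}$ convergence of $\Wr_n$ separately---a weak-$L^2$ defect of $\Wr_n$ can be compensated by a loss of mass of $\dip_n$, and only the combined monotone function is controlled---so the second ``obstacle'' in your final paragraph should not be phrased as verifying those two convergences individually.
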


\begin{proof}
The claim is a slight variation of~\cite[Proposition~6.2]{IndefiniteString} and it is enough to justify why~\eqref{eqnSMPhomeo2} is equivalent to pointwise convergence of 
 \begin{align*} 
   \lim_{n\rightarrow\infty} \int_0^x \biggl(\int_0^s \Wr_{n}(t)^2 dt + \int_{[0,s)} d\dip_{n}\biggr)ds   = \int_0^x \biggl(\int_0^s \Wr(t)^2 dt + \int_{[0,s)} d\dip\biggr) ds
 \end{align*}
 for all $x\in[0,L)$. 
Due to positivity of the integrands, we only need to show that the latter implies~\eqref{eqnSMPhomeo2} for almost every $x\in [0,L)$. 
 However, this immediately follows from~\cite[Theorem~B]{kas12} for example.    
\end{proof}

 We conclude this preliminary section with two examples of {\em unperturbed} generalized indefinite strings associated with Theorem~\ref{thm:KSforExB1} and Theorem~\ref{thm:KSforExB2}. 
    
  \begin{mainexa}\label{exaalpha}
   Let $\alpha>0$ and consider the generalized indefinite string $(L,\omega,\dip)$ such that $L$ is infinite, the distribution $\omega$ is given via its normalized anti-derivative $\Wr$ by   
    \begin{align}\label{eq:Walpha}
      \Wr(x) = \frac{x}{1+2\sqrt{\alpha}x} 
    \end{align} 
    and the measure $\dip$ vanishes identically. 
   Under these assumptions, the corresponding differential equation~\eqref{eqnDEho} simply reduces to 
    \begin{align}
    - f''(x) = \frac{z}{(1+2\sqrt{\alpha}x)^2}  f(x).
    \end{align}
      For every $k$ in the open upper complex half-plane $\C_+$, the function $\psi(k,\redot)$ given by 
    \begin{align}
      \psi(k,x) = (1+2\sqrt{\alpha}x)^{\frac{\I k}{2\sqrt{\alpha}} + \frac{1}{2}}
    \end{align}   
    is a solution of this differential equation with $z=k^2+\alpha$ that lies in $\dot{H}^1[0,\infty)$.
    Consequently, the corresponding Weyl--Titchmarsh function $m$ is given by  
    \begin{align}\label{eq:Malpha}
      m(k^2+\alpha) = \frac{\psi'(k,0-)}{(k^2+\alpha)\psi(k,0)}= \frac{1}{\sqrt{\alpha}-\I k}
    \end{align}
  and the corresponding spectral measure $\rho$ is given by 
    \begin{align}\label{eq:Rhoalpha}
    \rho(B) = \frac{1}{\pi} \int_{B\cap [\alpha,\infty)}\frac{\sqrt{\lambda-\alpha}}{\lambda}d\lambda.
    \end{align}
    In particular, the latter means that the corresponding spectrum coincides with the interval $[\alpha,\infty)$ and is purely absolutely continuous. 
    The Weyl--Titchmarsh function $m$ and the spectral measure $\rho$ clearly satisfy the conditions in Theorem~\ref{thm:KSforExB1} and Corollary~\ref{cor:KSforExB1} in this case. 
  \end{mainexa}
  
      \begin{remark}
    The Weyl--Titchmarsh function and the spectral measure of the generalized indefinite string in Example~\ref{exaalpha} also serve as a Weyl-Titchmarsh function and a spectral measure for the one-dimensional half-line Schr\"odinger operator with constant potential $\alpha$, subject to mixed boundary conditions at the finite endpoint.
    \end{remark}

     \begin{mainexa}\label{exa2alpha}
         Let $\beta>0$ and consider the generalized indefinite string $(L,\omega,\dip)$ such that $L$ is infinite, the distribution $\omega$ is identically zero and the measure $\dip$ is given by 
    \begin{align}\label{eq:DipBeta}
       \dip(B) = \int_B \frac{1}{(1+2 \beta x)^2} dx.
    \end{align} 
   Under these assumptions, the corresponding differential equation~\eqref{eqnDEho} simply reduces to 
    \begin{align}
    - f''(x) = \frac{z^2}{(1+2 \beta x)^{2}}  f(x).
    \end{align}
 For every $k$ in the open upper complex half-plane $\C_+$, the function $\psi(k,\redot)$ given by 
    \begin{align}
      \psi(k,x) = (1+2\beta x)^{\frac{\I k}{1-k^2} + \frac{1}{2}} 
    \end{align}   
    is a solution of this differential equation with 
    \begin{align}
      z = \zeta(k) := \beta \frac{1+k^2}{1-k^2}
    \end{align}
    that lies in both $\dot{H}^1[0,\infty)$ and $L^2([0,\infty);\dip)$.
    Consequently, the corresponding Weyl--Titchmarsh function $m$ is given by  
    \begin{align}\label{eq:Mbeta}
      m\circ \zeta(k) = \frac{\psi'(k,0-)}{\zeta(k)\psi(k,0)} = \frac{1+\I k}{1-\I k} 
    \end{align}
  and the corresponding spectral measure $\rho$ is given by 
    \begin{align}\label{eq:Rhobeta}
    \rho(B) = \frac{1}{\pi} \int_{B\backslash (-\beta,\beta)} \frac{\sqrt{\lambda^2-\beta^2}}{|\lambda|}d\lambda.
    \end{align}
   In particular, the latter means that the corresponding spectrum coincides with the set $(-\infty,-\beta]\cup [\beta,\infty)$ and is purely absolutely continuous. 
    The Weyl--Titchmarsh function $m$ and the spectral measure $\rho$ clearly satisfy the conditions in Theorem~\ref{thm:KSforExB2} and Corollary~\ref{cor:KSforExB2} in this case. 
         \end{mainexa}

    \begin{remark}
    The Weyl--Titchmarsh function and the spectral measure of the generalized indefinite string in Example~\ref{exa2alpha} also serve as a Weyl-Titchmarsh function and a spectral measure for a one-dimensional half-line Dirac operator; see Example~\ref{xmpl:Diracbeta}.  
    \end{remark}

\section{Compact perturbations}\label{sec:CompactPert} 

 The aim of this section is to demonstrate how certain generalized indefinite strings can be understood as compact or Hilbert--Schmidt perturbations of the generalized indefinite strings in Example~\ref{exaalpha} or Example~\ref{exa2alpha}.
 In particular, this will establish necessity of condition~\ref{itmKSc1} in Theorem~\ref{thm:KSforExB1} and condition~\ref{itmKS2c1} in Theorem~\ref{thm:KSforExB2}. 
 
 \begin{theorem}\label{thm:HSforAB}
   Let $S$ be a generalized indefinite string $(L,\omega,\dip)$ with $L=\infty$ and let $\alpha$, $\beta>0$. 
    \begin{enumerate}[label=(\roman*), ref=(\roman*), leftmargin=*, widest=ii]
\item\label{itmHSforA}
If there is a constant $c\in\R$ such that 
\begin{align}\label{eq:KIOdipComp}
\lim_{x\rightarrow \infty} x\int_x^\infty \biggl(\Wr(s) - c - \frac{s}{1+2\sqrt{\alpha} s} \biggr)^2ds  + x \int_{[x,\infty)}d\dip =0,
\end{align}
then zero does not belong to the spectrum of $S$ and the essential spectrum of $S$ coincides with the interval $[\alpha,\infty)$.
\item\label{itmHSforB}
If there is a constant $c\in\R$ such that 
\begin{align}\label{eq:KIOwComp}
  \lim_{x\rightarrow\infty}  x\int_x^\infty (\Wr(s) - c)^2ds + x\int_x^\infty \biggl(\varrho(s) - \frac{1}{1+2\beta s}\biggr)^2 ds + x\int_{[x,\infty)} d\dip_\sing = 0,
\end{align}
where $\varrho$ is the square root of the Radon--Nikod\'ym derivative of $\dip$ and $\dip_\sing$ is the singular part of $\dip$  (both with respect to the Lebesgue measure), then zero does not belong to the spectrum of $S$ and the essential spectrum of $S$ coincides with the set $(-\infty,-\beta]\cup [\beta,\infty)$.
\end{enumerate}
\end{theorem}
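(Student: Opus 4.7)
The plan is to deduce both statements from three ingredients that are already available: the explicit spectral analysis of the model strings in Example~\ref{exaalpha} and Example~\ref{exa2alpha}, Weyl's theorem on invariance of the essential spectrum under relatively compact perturbations, and the criterion in \cite[Proposition~5.2~(i)]{DSpec} for zero not to lie in the spectrum of $\T$. The main technical issue, flagged by the authors in the introduction, is that the Hilbert space $\cH=\Hasto\times L^2([0,L);\dip)$ depends on $\dip$, so ``perturbation'' has to be understood in a relative sense via suitable identification operators between the spaces of $S$ and of the model. This framework is exactly what was set up in \cite{DSpec}, and I would import it as a black box.

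For part~\ref{itmHSforA}, let $S_\alpha$ denote the model generalized indefinite string of Example~\ref{exaalpha}. By Remark~\ref{rem:uniquerho}, adding a real constant to $\Wr$ does not affect the linear relation $\T$, so we may reduce to the case $c=0$. The hypothesis \eqref{eq:KIOdipComp} then states precisely that the difference of the coefficients of $S$ and $S_\alpha$, measured in the weighted tail norm of \cite{DSpec}, vanishes as $x\to\infty$. By the relative compactness criterion proved there (the Hilbert--Schmidt quantitative version applies as well), the corresponding resolvent difference (in the appropriate identification sense) is compact. Weyl's theorem then yields
\begin{align*}
\sigma_{\ess}(\T) = \sigma_{\ess}(\T_\alpha) = [\alpha,\infty),
\end{align*}
using the explicit spectral measure \eqref{eq:Rhoalpha} of the model.

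It remains to show $0\notin\sigma(\T)$. Since the model weight $s\mapsto s/(1+2\sqrt{\alpha}s)$ converges to $\frac{1}{2\sqrt{\alpha}}$ at infinity, an elementary Cauchy--Schwarz splitting of
\begin{align*}
\Wr(s)-\Wr_0 = \biggl(\Wr(s)-c-\frac{s}{1+2\sqrt{\alpha}s}\biggr) + \biggl(\frac{s}{1+2\sqrt{\alpha}s}-\frac{1}{2\sqrt{\alpha}}\biggr),
\end{align*}
with $\Wr_0:=c+\tfrac{1}{2\sqrt{\alpha}}$, combined with \eqref{eq:KIOdipComp}, gives
\begin{align*}
\limsup_{x\to\infty} x\int_x^\infty (\Wr(s)-\Wr_0)^2 ds + x\int_{[x,\infty)} d\dip < \infty.
\end{align*}
Invoking \cite[Proposition~5.2~(i)]{DSpec} then yields that $\T^{-1}$ is a bounded operator, so in particular $0\notin\sigma(\T)$.

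Part~\ref{itmHSforB} proceeds in exactly parallel fashion, with Example~\ref{exa2alpha} as the reference model $S_\beta$ of essential spectrum $(-\infty,-\beta]\cup[\beta,\infty)$. Here the coefficient comparison is more delicate because $\dip$ of the model is purely absolutely continuous with density $(1+2\beta s)^{-2}$, which is why the hypothesis \eqref{eq:KIOwComp} must be formulated in terms of $\varrho$ and $\dip_\sing$ separately; this matches exactly the structure required to invoke the perturbation machinery of \cite{DSpec}. Once the relative compactness is established, Weyl's theorem identifies the essential spectrum, and the argument for $0\notin\sigma(\T)$ is the same (the model weight is identically zero, so $\Wr_0=c$ and the tail bound on $\dip$ follows from the $\varrho$ and $\dip_\sing$ parts of \eqref{eq:KIOwComp} together with $\int_x^\infty (1+2\beta s)^{-2} s\,ds = O(1/x)$). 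The main obstacle in both parts is purely bookkeeping: once the variable-Hilbert-space framework of \cite{DSpec} is accepted, the proof reduces to matching the hypotheses with the quantitative Hilbert--Schmidt criterion stated there.
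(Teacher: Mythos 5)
Your overall strategy coincides with the paper's: compare $S$ with the explicit model string, apply Weyl's theorem to a compact difference of inverses, and get $0\notin\sigma(\T)$ from the boundedness criterion of \cite[Proposition~5.2~(i)]{DSpec}. The reduction to $c=0$ via a point mass of $\omega$ at zero and your Cauchy--Schwarz splitting showing that~\eqref{eq:KIOdipComp} implies~\eqref{eq:m-anal0Wr} are both correct and match the paper.

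The gap is in the step you declare to be bookkeeping and import as a black box. There is no ready-made ``relative compactness criterion for the difference of two generalized indefinite strings'' in \cite{DSpec}; what is available there are criteria for the \emph{individual} operators $\KIO_\chi$ and $\rI_\chi$ to be bounded or compact, the representation~\eqref{eqnTinvBOM} of $\T^{-1}$, and the fact that the non-zero spectrum of $\T^{-1}$ agrees with that of a symmetrized block operator matrix. Assembling these into a statement that a ``resolvent difference in the appropriate identification sense'' is compact is precisely the content of the proof, and it is not generic. For part~\ref{itmHSforA} the mechanism is to replace $\T^{-1}$ by the block matrix
\begin{align*}
  \begin{pmatrix} \KIO_\omega & \sqrt{\KIO_\dip} \\ \sqrt{\KIO_\dip} & 0 \end{pmatrix}
\end{align*}
acting on $\dot{H}^1_0[0,\infty)\times\dot{H}^1_0[0,\infty)$, so that the perturbed and model inverses live on the \emph{same} space independent of $\dip$; their difference then has entries $\KIO_{\omega-\omega_\alpha}$ and $\sqrt{\KIO_\dip}$, each compact by the two parts of~\eqref{eq:KIOdipComp}. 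For part~\ref{itmHSforB} even this is not enough, because the model measure $\dip_\beta$ is nonzero and absolutely continuous while $\dip$ need not be: one has to split $L^2([0,\infty);\dip)$ into its absolutely continuous and singular parts, transport the former to $L^2[0,\infty)$ by the isometry $V_\varrho f=\varrho f$, and then use the algebraic identity $V_\varrho\rI_{\varrho^2}-V_{\varrho_\beta}\rI_{\varrho_\beta^2}=V_{\varrho-\varrho_\beta}\rI_{(\varrho-\varrho_\beta)^2}$ to exhibit the difference as a block operator whose compactness is governed by $\varrho-\varrho_\beta$ and $\dip_\sing$ — exactly the quantities appearing in~\eqref{eq:KIOwComp}. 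Your proposal correctly anticipates why the hypothesis is phrased in terms of $\varrho$ and $\dip_\sing$, but without these constructions the assertion that the difference is compact is not justified; supplying them would complete the argument along the lines the paper actually follows.
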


Our proof of the above result will rely on Weyl's theorem, which guarantees stability of the essential spectrum under compact perturbations. 
However, it is not immediately obvious how to understand the corresponding linear relations here as additive perturbations. 
 To this end, we first need to recall a few facts, details of which can be found in~\cite[Section~3]{DSpec}: 
In the Hilbert space $\dot{H}^1_0[0,\infty)$, let us introduce the closed linear operator $\KIO_\chi$ with symbol $\chi\in H^{-1}_{\loc}[0,\infty)$ via its bilinear form   
    \begin{align}\label{eq:KIOviaForm}
      \spr{\KIO_\chi f}{g}_{\dot{H}^1_0[0,\infty)} = \chi(fg^\ast),
    \end{align}
    which is initially only defined for functions with compact support (for a thorough discussion we refer the reader to~\cite[Section~3]{DSpec}). 
  We note that this operator $\KIO_\chi$ is self-adjoint if and only if the distribution $\chi$ is real.
If $\chi$ is even a positive Borel measure on $[0,\infty)$, then the bilinear form in~\eqref{eq:KIOviaForm} implies the factorization 
    \begin{align}\label{eq:KIOviaFactor}
      \KIO_\chi = \rI_\chi^\ast \rI_\chi
    \end{align}
 of the operator $\KIO_\chi$, where $\rI_\chi\colon\dot{H}^1_0[0,\infty)\rightarrow L^2([0,\infty);\chi)$ is the inclusion map.   
In the following proposition, we are going to collect several useful criteria from~\cite[Theorem~4.6, Proposition~5.1 and Theorem~5.2]{DSpec} for the inverse of $\T$ to be a bounded/compact/Hilbert--Schmidt operator.  

\begin{proposition}\label{corPelin}
   Let $S$ be a generalized indefinite string $(L,\omega,\dip)$ with $L=\infty$ and let $\T$ be the corresponding linear relation. 
   Then the following assertions hold:
    \begin{enumerate}[label=(\roman*), ref=(\roman*), leftmargin=*, widest=iii]
\item\label{itmPelini}
Zero belongs to the resolvent set of $\T$ if and only if the operators $\KIO_\omega$ and $\KIO_\upsilon$ are bounded, which is further equivalent to validity of 
\begin{align}\label{eq:m-anal0Wr}
\limsup_{x\rightarrow \infty} \,x \int_x^\infty (\Wr(s) - c)^2 ds + x \int_{[x,\infty)}d\dip <\infty
\end{align}
for some constant $c\in\R$. 
 In this case, the inverse of $\T$ is given by
  \begin{align}\label{eqnTinvBOM}
   \T^{-1} = \begin{pmatrix} \KIO_\omega & \rI_\dip^\ast  \\ \rI_\dip & 0 \end{pmatrix} 
  \end{align}
  and the constant $c$ is given by 
   \begin{align}\label{eq:def-c_w}
c=\lim_{x\rightarrow \infty}\, \frac{1}{x} \int_0^x \Wr(s) ds.
\end{align}
  \item\label{itmPelinii}
  The spectrum of $\T$ is purely discrete if and only if the operators $\KIO_\omega$ and $\KIO_\upsilon$ are compact, which is further equivalent to validity of
  \begin{align}\label{eq:Tcomp}
\lim_{x\rightarrow \infty} \,x \int_x^\infty (\Wr(s) - c)^2 ds + x \int_{[x,\infty)}d\dip = 0
\end{align}
for some constant $c\in\R$. 
\item\label{itmPeliniii}
The inverse of $\T$ belongs to the Hilbert--Schmidt ideal if and only if so do the operators $\KIO_\omega$ and $\rI_\dip$,  which is further equivalent to validity of
  \begin{align}\label{eq:TgS2}
  \begin{split}
  \int_0^\infty (\Wr(x) - c)^2 x\, dx + \int_{[0,\infty)}x\,d\dip(x) <\infty
\end{split}
\end{align}
for some constant $c\in\R$. 
 \end{enumerate} 
\end{proposition}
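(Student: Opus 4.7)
My plan is to prove the three assertions in parallel by first identifying $\T^{-1}$ with an explicit block operator built out of $\KIO_\omega$ and $\rI_\dip$, then reducing each operator-ideal property of $\T^{-1}$ to the corresponding properties of its entries, and finally converting those properties into the stated pointwise conditions on $\Wr$ and $\dip$ by means of weighted Hardy inequalities.

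The first step is the derivation of~\eqref{eqnTinvBOM}. Given $g=(g_1,g_2)\in\cH$, one seeks $f=(f_1,f_2)\in\cH$ with $(f,g)\in\T$, that is, satisfying $-f_1''=\omega g_1+\dip g_2$ together with $\dip f_2=\dip g_1$. The second identity gives $f_2=\rI_\dip g_1$ in $L^2([0,L);\dip)$. Testing the first against $h\in H^1_{\cc}[0,\infty)$ and invoking the Riesz representation in $\Hasto$ yields
\begin{align*}
\spr{f_1}{h}_{\Hasto}=\omega(g_1 h)+\int_{[0,L)} g_2\, h\, d\dip=\spr{\KIO_\omega g_1+\rI_\dip^{\ast} g_2}{h}_{\Hasto},
\end{align*}
so $f_1=\KIO_\omega g_1+\rI_\dip^{\ast} g_2$. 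This identifies $\T^{-1}$ with the block operator in~\eqref{eqnTinvBOM} on a dense domain, and zero belongs to the resolvent set of $\T$ precisely when this operator extends boundedly to all of $\cH$.

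The second step is the reduction to the entries. Writing $\T^{-1}$ as a $2\times 2$ block, it is bounded, compact, or Hilbert--Schmidt exactly when each entry is, since these ideals are closed under finite sums, adjoints and direct sums. Since $\rI_\dip$ is bounded (respectively compact) iff $\KIO_\dip=\rI_\dip^{\ast}\rI_\dip$ is, this already yields the operator-theoretic halves of \ref{itmPelini} and \ref{itmPelinii}; for \ref{itmPeliniii} one instead needs $\rI_\dip$ itself (not $\KIO_\dip$) to be Hilbert--Schmidt, because the identity
\begin{align*}
\|\T^{-1}\|_{\mathrm{HS}}^2=\|\KIO_\omega\|_{\mathrm{HS}}^2+2\|\rI_\dip\|_{\mathrm{HS}}^2
\end{align*}
shows that the Schatten-$4$ condition $\KIO_\dip\in\mathfrak{S}_2$ would be too weak.

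The third and analytically deepest step is the identification of these operator properties with~\eqref{eq:m-anal0Wr}--\eqref{eq:TgS2}. For $\rI_\dip$, this is the classical Kac--Krein/Muckenhoupt correspondence for the inclusion $\Hasto\hookrightarrow L^2([0,\infty);\dip)$: boundedness corresponds to $\sup_{x>0} x\,\dip([x,\infty))<\infty$, compactness to $\lim_{x\rightarrow\infty} x\,\dip([x,\infty))=0$, and the Hilbert--Schmidt case can be computed directly from the explicit Green's kernel $\min(x,y)$ of $-d^2/dx^2$ on $\Hasto$, yielding $\|\rI_\dip\|_{\mathrm{HS}}^2=\int_{[0,\infty)} x\, d\dip(x)$. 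For $\KIO_\omega$, the bilinear form $\omega(fg)=-\int_0^\infty\Wr(fg)'\,dx$ is invariant under $\Wr\mapsto\Wr-c$ (since $fg\in H^1_{\cc}$ makes the boundary term vanish), so the sharpest condition is obtained by subtracting a constant; a dyadic splitting combined with the weighted Hardy inequality then matches the three operator-ideal regimes with the respective tail bounds on $\int_x^\infty(\Wr-c)^2\,ds$. Since at most one constant can satisfy $\limsup_{x\rightarrow\infty} x\int_x^\infty(\Wr-c)^2\,ds<\infty$ (a larger or smaller $c$ would contribute an unbounded linear term), a Fubini argument identifies it as the Ces\`aro mean in~\eqref{eq:def-c_w}.

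The main obstacle will be the sharp Muckenhoupt-type characterization for $\KIO_\omega$: because $\omega$ is merely a distribution and $\Wr$ only locally square integrable, the quadratic form estimate must be read off directly from the tail behaviour of $\Wr-c$, and obtaining the precise two-sided bound together with pinning down the correct constant $c$ is the delicate point, whereas the block representation and the ideal-theoretic reductions are essentially formal.
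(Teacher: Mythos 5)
The paper does not actually prove this proposition: it is quoted verbatim from the companion work \cite[Theorem~4.6, Proposition~5.1 and Theorem~5.2]{DSpec}, so there is no in-paper argument to compare against. Your outline reconstructs the intended proof and is consistent with every fragment of that argument that is visible elsewhere in the paper: the block representation~\eqref{eqnTinvBOM}, the factorization $\KIO_\dip=\rI_\dip^\ast\rI_\dip$ from~\eqref{eq:KIOviaFactor}, the reduction of boundedness/compactness of $\rI_\dip$ to that of $\KIO_\dip$, the Hilbert--Schmidt computation via the reproducing kernel $\min(x,y)$ (which matches the trace identity $\tr(\KIO_\dip)=\int x\,d\dip$ used in the Remark after Theorem~\ref{thm:HSforAB}), and the appeal to the Chisholm--Everitt and Muckenhoupt weighted-norm inequalities that the paper itself invokes in the proof of Proposition~\ref{prop:expandm}. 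Your uniqueness argument for the constant $c$ (via $(c-c')^2\leq 2(\Wr-c)^2+2(\Wr-c')^2$) and its identification with the Ces\`aro mean by dyadic Cauchy--Schwarz are both sound. The one place where the proposal is genuinely only a sketch is the step you yourself flag: the two-sided Muckenhoupt-type characterization of boundedness and compactness of $\KIO_\omega$ in terms of the tails of $\Wr-c$, together with the explicit kernel computation giving $\|\KIO_\omega\|_{2}^2=2\int_0^\infty(\Wr(x)-c)^2x\,dx$; this is the actual content of the cited results in \cite{DSpec} and would need to be carried out in full (via the unitary $f\mapsto f'$ onto $L^2[0,\infty)$ and the resulting Hardy-type integral operator) for the proof to be complete.
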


\begin{proof}[Proof of Theorem~\ref{thm:HSforAB}~\ref{itmHSforA}]
Since
\begin{align*}
\limsup_{x\to\infty}x\int_x^\infty \frac{1}{(1+2\sqrt{\alpha} s)^2}ds = \limsup_{x\to\infty} \frac{1}{2\sqrt{\alpha}}\frac{x}{1+2\sqrt{\alpha}x} =\frac{1}{4\alpha},
\end{align*}
it is immediate to see that~\eqref{eq:KIOdipComp} implies condition~\eqref{eq:m-anal0Wr} and hence, by Proposition~\ref{corPelin}~\ref{itmPelini}, zero belongs to the resolvent set of $\T$ and the inverse of $\T$ has the form~\eqref{eqnTinvBOM}. 
Moreover, in this case (see~\cite[Corollary~4.7]{DSpec}), the non-zero spectrum of $\T^{-1}$ coincides with the non-zero spectrum of the block operator matrix
 \begin{align*}
     \begin{pmatrix} \KIO_\omega & \sqrt{\KIO_\dip}  \\ \sqrt{\KIO_\dip} & 0 \end{pmatrix}
 \end{align*}
 acting in $\dot{H}^1_0[0,\infty)\times \dot{H}^1_0[0,\infty)$ and all non-zero eigenvalues of this block operator matrix are simple.
 In particular, this also holds for the generalized indefinite string from Example~\ref{exaalpha}, so that the spectrum of the corresponding block operator matrix 
 \begin{align*}
   \begin{pmatrix} \KIO_{\omega_{\alpha}} & 0  \\ 0 & 0 \end{pmatrix}
\end{align*}
 coincides with the interval $[0,\alpha^{-1}]$, where $\omega_a$ is the distribution in $H^{-1}_{\loc}[0,\infty)$ whose normalized anti-derivative is given by~\eqref{eq:Walpha}.
 Now the remaining claim follows because by Proposition~\ref{corPelin}~\ref{itmPelinii} and~\eqref{eq:KIOdipComp}, the difference 
 \begin{align}\label{eqnPertA}
    \begin{pmatrix} \KIO_\omega & \sqrt{\KIO_\dip}  \\ \sqrt{\KIO_\dip} & 0 \end{pmatrix} -   \begin{pmatrix} \KIO_{\omega_{\alpha}} & 0  \\ 0 & 0 \end{pmatrix} = \begin{pmatrix} \KIO_{\omega-\omega_\alpha} & \sqrt{\KIO_\dip}  \\ \sqrt{\KIO_\dip} & 0 \end{pmatrix}
\end{align}
 is compact and we are left to apply Weyl's theorem. 
\end{proof}

\begin{remark}
  Comparing condition~\eqref{eqnCondS1} in Theorem~\ref{thm:KSforExB1} with condition~\eqref{eq:TgS2} in Proposition~\ref{corPelin}~\ref{itmPeliniii}, we immediately conclude that condition~\eqref{eqnCondS1} holds if and only if the perturbation in~\eqref{eqnPertA} is a Hilbert--Schmidt operator.
  In this case, the Hilbert--Schmidt norm of the perturbation in~\eqref{eqnPertA} is given by 
     \begin{align}
       \|\KIO_{\omega - \omega_{\alpha}}\|^2_{2} + 2\,\tr (\KIO_\dip) 
 & = 2\int_0^\infty \biggl(\Wr(x) - c - \frac{x}{1+2\sqrt{\alpha}x} \biggr)^2x\,dx + 2\int_{[0,\infty)}x\,d\dip(x).
 \end{align}
  Let us also mention that the above considerations, together with further results from~\cite[Theorem~5.2]{DSpec} (see also~\cite{AJPR,rowo19}), imply various criteria for the perturbation in~\eqref{eqnPertA} to belong to general Schatten--von Neumann ideals.
\end{remark}

The way we view the generalized indefinite strings in Theorem~\ref{thm:KSforExB2} and Theorem~\ref{thm:HSforAB}~\ref{itmHSforB} as compact perturbations of Example~\ref{exa2alpha} is slightly more involved and hence we present its proof separately. 

\begin{proof}[Proof of Theorem~\ref{thm:HSforAB}~\ref{itmHSforB}]
 It is again easy to see that~\eqref{eq:KIOwComp} implies condition~\eqref{eq:m-anal0Wr} and hence, by Proposition~\ref{corPelin}~\ref{itmPelini}, zero belongs to the resolvent set of $\T$ and the inverse of $\T$ has the form~\eqref{eqnTinvBOM}. 
In particular, for the linear relation $\T_\beta$ corresponding to the generalized indefinite string from Example~\ref{exa2alpha}, one has  
\begin{align*}
\T^{-1}_{\beta} = \begin{pmatrix}0 & \rI_{\dip_\beta}^\ast  \\ \rI_{\dip_\beta} & 0 \end{pmatrix},
\end{align*} 
where $\dip_\beta$ is the measure on $[0,\infty)$ defined by~\eqref{eq:DipBeta}. 
Since $\T^{-1}$ and $\T^{-1}_\beta$ act on different spaces, we are not able to compare them directly. 
However, by decomposing 
\begin{align*}
L^2([0,\infty);\dip) = L^2([0,\infty);\varrho^2)\oplus L^2([0,\infty);\dip_\sing),
\end{align*}
we can first rewrite~\eqref{eqnTinvBOM} as the block operator matrix 
\begin{align*}
\T^{-1} = \begin{pmatrix} \KIO_\omega & \rI_{\varrho^2}^\ast & \rI_{\dip_\sing}^\ast  \\ \rI_{\varrho^2} & 0 & 0 \\ \rI_{\dip_\sing} & 0 & 0 \end{pmatrix}
\end{align*}
acting on $\dot{H}_0^1[0,\infty)\times L^2([0,\infty);\varrho^2)\times L^2([0,\infty);\dip_\sing)$.
Using the isometry 
\begin{align*}
 V_\varrho\colon  L^2([0,\infty);\varrho^2) & \rightarrow L^2[0,\infty) \\  f & \mapsto \varrho f
\end{align*}
 it follows that, except for possibly zero, the spectrum of $T^{-1}$ coincides with the spectrum of the block operator matrix 
\begin{align*}
  \begin{pmatrix} \KIO_\omega & \rI_{\varrho^2}^\ast V_\varrho^\ast & \rI_{\dip_\sing}^\ast  \\ V_\varrho \rI_{\varrho^2} & 0 & 0 \\ \rI_{\dip_\sing} & 0 & 0 \end{pmatrix}
\end{align*}
acting on $\dot{H}_0^1[0,\infty)\times L^2[0,\infty)\times L^2([0,\infty);\dip_\sing)$.
On the same space, we also have the block operator matrix 
\begin{align*}
   \begin{pmatrix}0 & \rI_{\varrho_\beta^2}^\ast V_{\varrho_\beta}^\ast & 0 \\ V_{\varrho_\beta} \rI_{\varrho_\beta^2} & 0 & 0 \\ 0 & 0 & 0\end{pmatrix},
\end{align*} 
whose spectrum coincides with that of $\T^{-1}_{\beta}$ (as the spectrum of $\T^{-1}_\beta$ already contains zero), where $\varrho_\beta$ is the square root of the Radon--Nikod\'ym derivative of $\dip_\beta$. 
 Now the claim follows from Weyl's theorem because, after noting that 
 \begin{align*}
  V_\varrho \rI_{\varrho^2} - V_{\varrho_\beta}\rI_{\varrho_\beta^2} = V_{\varrho-\varrho_\beta} \rI_{(\varrho-\varrho_\beta)^2},
\end{align*}
  the difference of the block operator matrices above becomes
  \begin{align}\label{eqnPertB}
    \begin{pmatrix} \KIO_\omega &  \rI_{(\varrho-\varrho_\beta)^2}^\ast V_{\varrho-\varrho_\beta}^\ast & \rI_{\dip_\sing}^\ast  \\ V_{\varrho-\varrho_\beta} \rI_{(\varrho-\varrho_\beta)^2} & 0 & 0 \\ \rI_{\dip_\sing} & 0 & 0 \end{pmatrix},
\end{align}
 which is compact. 
 In fact, the operators $\KIO_\omega$, $\rI_{(\varrho-\varrho_\beta)^2}$  and $\rI_{\dip_\sing}$ are all compact in view of Proposition~\ref{corPelin}~\ref{itmPelinii} (see also~\cite[Theorem~3.5~(ii) and Theorem~3.6~(ii)]{DSpec} and note that compactness of the embedding $\rI_{\chi}$ is equivalent to that of $\KIO_{\chi}$).
\end{proof}

\begin{remark}\label{rem:tracesB}
 In view of Proposition~\ref{corPelin}~\ref{itmPeliniii}, we notice again that condition~\eqref{eqnCondS2} in Theorem~\ref{thm:KSforExB2} means that the perturbation in~\eqref{eqnPertB} is a Hilbert--Schmidt operator with Hilbert--Schmidt norm given by  
 \begin{align}\begin{split}
 & \|\KIO_{\omega}\|^2_{2} + 2\,\tr(\KIO_{(\varrho-\varrho_\beta)^2})+ 2\,\tr (\KIO_{\dip_\sing}) \\
& \qquad = 2\int_0^\infty  (\Wr(x)-c)^2 x\,dx + 2 \int_0^\infty   \biggl( \varrho(x) - \frac{1}{1+2\beta x} \biggr)^2 x\, dx + 2 \int_{[0,\infty)} x\, d\dip_\sing(x).
 \end{split}\end{align} 
\end{remark}

\section{Spectral asymptotics near zero}\label{secmatzero}

The purpose of this section is to establish the following result about spectral asymptotics of the Weyl--Titchmarsh function of a generalized indefinite string near zero. 
In particular, this will relate the constant $c$ in condition~\eqref{eqnCondS1} in Theorem~\ref{thm:KSforExB1} to the value of the Weyl--Titchmarsh function at zero. 

\begin{theorem}\label{thm:m-at0}
  Let $(L,\omega,\dip)$ be a generalized indefinite string.
  If the corresponding Weyl--Titchmarsh function $m$ has an analytic extension to zero, then $L=\infty$ and  
\begin{align}\label{eq:m-at0}
 m(0) = \lim_{x\rightarrow \infty} \frac{1}{x} \int_0^x \Wr(s) ds.
\end{align}
\end{theorem}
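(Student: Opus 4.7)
The plan is to reduce the theorem to the identity $m(0)=c$ with $c := \lim_{x\to\infty}\frac{1}{x}\int_0^x \Wr(s)\,ds$, by first using the hypothesis to force $L=\infty$ and push $0$ into the resolvent set of $\T$, and then identifying $m(0)$ by computing the first order Taylor coefficient in $z$ of the Weyl solution at the origin.

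The first step relies solely on the Herglotz integral representation \eqref{eqnWTmIntRep}. Analyticity of $m$ at zero forces the residue $-1/(Lz)$ to vanish, so $L=\infty$; by Stieltjes inversion applied to a small real interval around the origin, the spectral measure $\rho$ must be zero in some neighborhood of $0$, so that the origin does not belong to the support of $\rho$ and hence lies in the resolvent set of $\T$. Proposition~\ref{corPelin}~\ref{itmPelini} then yields the existence of $c=\lim_{x\to\infty}\frac{1}{x}\int_0^x\Wr(s)\,ds$ together with the quantitative bound
\begin{align*}
\limsup_{x\to\infty}\, x\!\int_x^\infty (\Wr(s)-c)^2\,ds + x\!\int_{[x,\infty)} d\dip < \infty,
\end{align*}
which in particular implies $\Wr - c \in L^2[0,\infty)$ and that $\dip$ is a finite measure.

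For the identification $m(0)=c$ I would work with the Weyl solution normalized at the origin,
\begin{align*}
\hat\psi(z,x) := \theta(z,x) + z\,m(z)\,\phi(z,x),
\end{align*}
which lies in $\Hast \cap L^2([0,L);\dip)$ for $z\in\C_+$, satisfies $\hat\psi(z,0)=1$, and is analytic in $z$ near zero for each fixed $x$ by analyticity of $m$. Using the formulas of Proposition~\ref{propFS} for the fundamental system at zero one immediately reads off $\hat\psi(0,x)=1$ and
\begin{align*}
\partial_z\hat\psi(z,x)\big|_{z=0} = \dot\theta(0,x) + m(0)\,\phi(0,x) = m(0)\,x - \int_0^x \Wr(s)\,ds,
\end{align*}
whose $x$-derivative is $m(0)-\Wr(x)$. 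If this first order coefficient can be shown to belong to $\Hasto$, then $m(0)-\Wr \in L^2[0,\infty)$; combined with $\Wr-c\in L^2[0,\infty)$ from the previous step, the constant $m(0)-c$ is then square integrable on $[0,\infty)$ and therefore equal to zero, which gives the theorem.

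The main obstacle is the $\Hasto$ membership of this first order coefficient, i.e.\ promoting pointwise analyticity of $z\mapsto\hat\psi(z,x)$ to $\Hasto$-valued analyticity of $z\mapsto\hat\psi(z,\cdot)-1$ at the origin. My plan is to write $\hat\psi(z,\cdot)-1$, which vanishes at $x=0$ and belongs to $\Hasto$ for $z\in\C_+$, as $(\T-z)^{-1}$ applied to a concrete element of $\cH$ depending analytically on $z$; since the resolvent is operator norm analytic at the origin by $0\in\rho(\T)$, differentiating this identity in $z$ at $z=0$ and using the explicit block form \eqref{eqnTinvBOM} of $\T^{-1}$ expresses $\dot{\hat\psi}(0,\cdot)$ as the first component of a fixed element of $\cH$, hence a member of $\Hasto$. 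The delicate part is the correct choice of the $z$-dependent right hand side in $\cH$ so that it reproduces $\hat\psi(z,\cdot)-1$ while admitting a clean Taylor expansion at $z=0$ consistent with the normalization $\hat\psi(z,0)=1$.
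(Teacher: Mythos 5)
Your first step and your final reduction are both sound: analyticity at zero kills the $-1/(Lz)$ term (so $L=\infty$), Stieltjes inversion places $0$ in the resolvent set of $\T$, and Proposition~\ref{corPelin}~\ref{itmPelini} then yields the constant $c$ with $\Wr-c\in L^2[0,\infty)$ and $\dip$ finite — this is exactly Proposition~\ref{lem:GISat0}. Likewise, the computation $\partial_z\hat\psi(z,x)|_{z=0}=m(0)\,x-\int_0^x\Wr(s)\,ds$ from Proposition~\ref{propFS} is correct, and the inference ``if this function lies in $\Hasto$ then the constant $m(0)-c$ is square integrable, hence zero'' is valid.

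The gap is precisely the step you flag as delicate, and it is not merely a bookkeeping issue: the specific route you propose runs into a structural obstruction. To realize $\hat\psi(z,\cdot)-1$ as $(\T-z)^{-1}g(z)$ you need, by \eqref{eqnTloc-z}, a pair $g(z)=(g_1,g_2)\in\cH$ with $\omega g_1+z\,\dip\, g_1+\dip\, g_2 = z\,\omega+z^2\dip$ as distributions. The natural choice $g_1=z\cdot\mathbbm{1}$ is forbidden because every element of $\Hasto$ vanishes at the left endpoint, and any cutoff correction $g_1-z\cdot\mathbbm{1}$ supported near $0$ produces the distribution $\omega(g_1-z)$, which is not of the form $\dip\, g_2$ for general $\omega$; the same problem reappears if you replace $1$ by $\hat\psi(z_0,\cdot)$ and try a resolvent identity. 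So the $z$-dependent right-hand side you are looking for does not exist in $\cH$, and the $\Hasto$-membership of $\dot{\hat\psi}(0,\cdot)$ — which is essentially equivalent to the theorem itself, since its $x$-derivative is $m(0)-\Wr$ — remains unproved. The paper avoids Hilbert-space-valued analyticity of the Weyl solution altogether: one proof (Proposition~\ref{lem:GISasymp}) rescales the string, uses $m_r(z)=m(z/r)$ and the continuity of the correspondence in Proposition~\ref{propSMPcont} to identify the nontangential limit of $m$ at $0$ with the Ces\`aro mean of $\Wr$; the other (Proposition~\ref{prop:expandm}) truncates the coefficients beyond $x=r$, computes $m_r$ explicitly as a ratio built from the fundamental system at $r$, Taylor-expands via Proposition~\ref{propFS}, and passes to the limit using a uniform spectral gap to get locally uniform convergence near $0$. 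If you want to salvage your approach, the truncation device of the second proof is the natural way to make your Taylor-coefficient computation rigorous without the resolvent representation.
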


 Notice that Section~\ref{sec:CompactPert} already contains a characterization of generalized indefinite strings whose Weyl--Titchmarsh functions admit an analytic extension to zero. 
Indeed, we first infer from~\eqref{eqnWTmIntRep} that $L$ must be infinite in this case. 
Furthermore, if $L=\infty$, then the singularities of the Weyl--Titchmarsh function $m$ coincide with the spectrum of the linear relation $\T$ and hence $m$ admits an analytic extension to zero exactly when zero does not belong to the spectrum of $\T$ and such a characterization is provided in Proposition~\ref{corPelin}~\ref{itmPelini}. 
The next result summarizes these considerations.

\begin{proposition}\label{lem:GISat0}
 The Weyl--Titchmarsh function $m$ of a generalized indefinite string $(L,\omega,\dip)$ has an analytic extension to zero if and only if $L=\infty$ and~\eqref{eq:m-anal0Wr} holds for a constant $c\in\R$. 
In this case, the constant $c$ is given by~\eqref{eq:def-c_w}.
\end{proposition}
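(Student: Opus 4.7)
The plan is to assemble the statement from two essentially separate ingredients: the integral representation~\eqref{eqnWTmIntRep} of $m$, which controls the length parameter, and the spectral characterization of zero in the resolvent set of $\T$ provided by Proposition~\ref{corPelin}~\ref{itmPelini}.

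First, I would address the implication from analyticity of $m$ at zero to $L=\infty$. Looking at~\eqref{eqnWTmIntRep}, the only potential pole of $m$ at the origin arises from the term $-1/(Lz)$, since the remaining pieces $c_1 z + c_2$ and the integral against $d\rho$ (which satisfies $\rho(\lbrace 0\rbrace)=0$ together with~\eqref{eqnWTrhoPoisson}) contribute a function that is bounded near zero along any non-tangential approach from the upper half-plane. Hence if $m$ has an analytic extension to zero, the $1/(Lz)$ term must be absent, which by the convention stated after~\eqref{eqnWTrhoPoisson} forces $L=\infty$.

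Next, with $L=\infty$ in hand, I would identify analyticity of $m$ at zero with zero lying in the resolvent set of $\T$. As noted just after~\eqref{eqnWTrhoPoisson} in the preliminaries, the spectrum of $\T$ coincides with the topological support of the spectral measure $\rho$, and $m$ admits an analytic continuation across every point that is not in the spectrum of $\T$. In particular, $m$ is analytic at zero if and only if $0\notin\sigma(\T)$, i.e.\ if and only if zero belongs to the resolvent set of $\T$. At this stage I simply invoke Proposition~\ref{corPelin}~\ref{itmPelini}: zero is in the resolvent set of $\T$ precisely when~\eqref{eq:m-anal0Wr} holds for some $c\in\R$, and in that case the constant $c$ is given by~\eqref{eq:def-c_w}.

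Finally, the converse implication is a direct application of the same proposition in the opposite direction together with the observation that $L=\infty$ and $0\notin\sigma(\T)$ imply analyticity of $m$ at zero by the very same identification between singularities of $m$ and the spectrum. The main conceptual point, and the only step that is not a direct appeal to the cited facts, is the argument ruling out $L<\infty$; all the remaining content is essentially packaged in Proposition~\ref{corPelin}~\ref{itmPelini}, so I do not anticipate any serious technical obstacle beyond carefully quoting the integral representation and the spectrum/support identification.
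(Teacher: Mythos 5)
Your argument follows the paper's own route exactly: read off $L=\infty$ from the integral representation \eqref{eqnWTmIntRep}, identify analyticity of $m$ at zero with $0$ lying in the resolvent set of $\T$ via the support of $\rho$, and then quote Proposition~\ref{corPelin}~\ref{itmPelini} for the equivalence with \eqref{eq:m-anal0Wr} and the formula \eqref{eq:def-c_w} for $c$. So the structure is fine and matches the paper.

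One justification you give is, however, incorrect as stated: it is not true that the term $\int_\R \frac{1}{\lambda-z}-\frac{\lambda}{1+\lambda^2}\,d\rho(\lambda)$ is bounded near zero merely because $\rho(\{0\})=0$ and \eqref{eqnWTrhoPoisson} holds. If $0\in\supp(\rho)$ the integral can blow up as $z\to0$ even without a point mass (take, say, $d\rho(\lambda)=|\lambda|^{-1/2}\,d\lambda$ near the origin), so you cannot isolate the $-1/(Lz)$ term as ``the only potential pole'' this way. The step is easily repaired by positivity rather than boundedness: evaluating on the imaginary axis,
\begin{align*}
  \im\, m(\I\varepsilon) = c_1\varepsilon + \frac{1}{L\varepsilon} + \int_\R \frac{\varepsilon}{\lambda^2+\varepsilon^2}\,d\rho(\lambda) \geq \frac{1}{L\varepsilon},
\end{align*}
and since analyticity at zero forces $m$ to stay bounded as $\varepsilon\downarrow0$, this gives $1/L=0$, i.e.\ $L=\infty$. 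With that fix the rest of your plan goes through; for the converse direction you should also note (as the paper does implicitly) that an analytic extension to zero is real on a real neighborhood of zero by the symmetry $m(z^\ast)^\ast=m(z)$, so Stieltjes inversion shows $0\notin\supp(\rho)=\sigma(\T)$, which is what licenses the appeal to Proposition~\ref{corPelin}~\ref{itmPelini}.
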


Before presenting proofs of the main result of this section, let us mention the following properties, which, in fact, are immediate consequences of Theorem~\ref{thm:m-at0} and Proposition~\ref{lem:GISat0}.

\begin{corollary}\label{cor:m-at0}
  Let $(L,\omega,\dip)$ be a generalized indefinite string with $L=\infty$ and let $m$ be the corresponding Weyl--Titchmarsh function. 
      \begin{enumerate}[label=(\roman*), ref=(\roman*), leftmargin=*, widest=ii]
\item\label{iCor:m-at0forA} If $(L,\omega,\dip)$ satisfies~\eqref{eq:KIOdipComp} for some $c\in\R$ and $\alpha>0$, then $m$ has an analytic extension to zero and
\begin{align}
m(0) = c+ \frac{1}{2\sqrt{\alpha}}.
\end{align}
\item\label{iCor:m-at0forB} If $(L,\omega,\dip)$ satisfies~\eqref{eq:KIOwComp} for some $c\in\R$ and $\beta>0$, then $m$ has an analytic extension to zero and
\begin{align}
m(0) = c.
\end{align}
\end{enumerate}
\end{corollary}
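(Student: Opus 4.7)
The plan is to deduce both parts immediately from Theorem~\ref{thm:m-at0} combined with Proposition~\ref{lem:GISat0}. Concretely, if I can verify condition~\eqref{eq:m-anal0Wr} with the specific constant $\tilde{c}$ that I claim for $m(0)$, then Proposition~\ref{lem:GISat0} will force $\tilde{c}$ to coincide with the Ces\`aro limit $\lim_{x\to\infty}\frac{1}{x}\int_0^x \Wr(s)\, ds$ (the constant in~\eqref{eq:m-anal0Wr} is uniquely determined, since any two admissible $c'$, $c''$ would yield $x(c'-c'')^2\int_x^\infty ds=\infty$ unless $c'=c''$). Theorem~\ref{thm:m-at0} then gives $m(0) = \tilde{c}$, so the entire argument boils down to producing the right constant in each case.

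For part~\ref{iCor:m-at0forB} I would take $\tilde{c}=c$. The first summand in~\eqref{eq:KIOwComp} is already $x\int_x^\infty (\Wr(s)-c)^2\, ds\to 0$. For the measure side, I decompose $d\dip = \varrho^2\, ds + d\dip_\sing$, apply the pointwise estimate
\begin{align*}
\varrho(s)^2 \leq 2\biggl(\varrho(s) - \frac{1}{1+2\beta s}\biggr)^2 + \frac{2}{(1+2\beta s)^2},
\end{align*}
and use the elementary bound $x\int_x^\infty (1+2\beta s)^{-2}\, ds \leq 1/(4\beta)$. Combined with the last summand of~\eqref{eq:KIOwComp}, this yields $\limsup_{x\to\infty} x\int_{[x,\infty)} d\dip<\infty$ and hence~\eqref{eq:m-anal0Wr} with constant $c$, as required.

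For part~\ref{iCor:m-at0forA} I would set $\tilde{c} = c + \frac{1}{2\sqrt{\alpha}}$ and use the decomposition
\begin{align*}
\Wr(s) - \tilde{c} = \biggl(\Wr(s) - c - \frac{s}{1+2\sqrt{\alpha}s}\biggr) - \frac{1}{2\sqrt{\alpha}(1+2\sqrt{\alpha}s)}.
\end{align*}
Squaring via $(a-b)^2 \leq 2a^2 + 2b^2$, integrating against $ds$ on $[x,\infty)$ and multiplying by $x$, the result is bounded by $2x\int_x^\infty(\Wr(s)-c - s/(1+2\sqrt{\alpha}s))^2\, ds$ plus $\frac{x}{2\alpha}\int_x^\infty (1+2\sqrt{\alpha}s)^{-2}\, ds$. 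Both quantities stay bounded as $x\to\infty$: the first by the hypothesis~\eqref{eq:KIOdipComp}, the second by exactly the elementary computation already used in the proof of Theorem~\ref{thm:HSforAB}~\ref{itmHSforA}. Together with the $\dip$-term of~\eqref{eq:KIOdipComp}, this verifies~\eqref{eq:m-anal0Wr} with constant $\tilde{c}$, completing the argument.

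The only genuinely delicate point I anticipate is bookkeeping around the constant in part~\ref{iCor:m-at0forA}: one must verify~\eqref{eq:m-anal0Wr} for the correct $\tilde{c}=c+\frac{1}{2\sqrt{\alpha}}$ rather than for $c$ itself, since the model example contributes an additional $\frac{1}{2\sqrt{\alpha}}$ to the Ces\`aro average of $\Wr$. Once this shift is handled by the algebraic identity above, everything else is routine and no further analytic input beyond the two cited results is needed.
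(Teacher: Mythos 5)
Your proof is correct and follows the paper's route exactly: the paper derives this corollary as an immediate consequence of Theorem~\ref{thm:m-at0} and Proposition~\ref{lem:GISat0}, and your verification of~\eqref{eq:m-anal0Wr} with the shifted constant $c+\frac{1}{2\sqrt{\alpha}}$ in part~\ref{iCor:m-at0forA} is precisely the elementary computation the paper leaves implicit. (Only a cosmetic slip: $x\int_x^\infty(1+2\beta s)^{-2}\,ds$ is bounded by $\frac{1}{4\beta^2}$ rather than $\frac{1}{4\beta}$, which changes nothing.)
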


 We are going to provide two different proofs of Theorem~\ref{thm:m-at0}. 
 Firstly, Theorem~\ref{thm:m-at0} is a particular case of the more general statement below, which can be deduced in a similar way as \cite[Theorem~6.1]{AsymCS}. 

\begin{proposition}\label{lem:GISasymp}
Let $(L,\omega,\dip)$ be a generalized indefinite string with $L=\infty$. 
The corresponding Weyl--Titchmarsh function $m$ satisfies 
\begin{align}\label{eq:m=zeta0}
  m(z) \rightarrow \zeta_0
\end{align}
for some $\zeta_0\in\overline{\C_+}$ as $z\rightarrow0$ nontangentially in $\C_+$ if and only if there are $a\in \R$ and $b\in[0,\infty)$ with $ a^2\le b$ such that
\begin{align}\label{eq:m=zeta0w}
 \lim_{x\rightarrow\infty} \frac{1}{x}\int_0^x \Wr(s)ds & = a, &
 \lim_{x\rightarrow\infty} \frac{1}{x}\int_0^x \Wr(s)^2 ds + \frac{1}{x}\int_{[0,x)}d\dip & = b.
\end{align}
In this case, the limits $a$, $b$ and $\zeta_0$ are connected via 
\begin{align}
a & = \re\,\zeta_0, & b & = |\zeta_0|^2, & \zeta_0 & = a + \I \sqrt{b-a^2}.
\end{align}
\end{proposition}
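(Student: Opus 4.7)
The plan is to transfer the statement to the setting of $2\times 2$ canonical systems and then invoke (or mimic) \cite[Theorem~6.1]{AsymCS}. Using the correspondence recalled in Appendix~\ref{app:D}, to the generalized indefinite string $(L,\omega,\dip)$ with $L=\infty$ one associates a (trace-normalized) canonical system $JY' = zHY$ on some half-line whose Hamiltonian $H$, read back in the string's variable, is essentially
\begin{align*}
  H(x) = \begin{pmatrix} 1 & \Wr(x) \\ \Wr(x) & \Wr(x)^2 \end{pmatrix} dx + \begin{pmatrix} 0 & 0 \\ 0 & 1 \end{pmatrix} d\dip(x).
\end{align*}
The associated canonical-system Weyl coefficient and the string's Weyl--Titchmarsh function $m$ differ only by an algebraic manipulation that, because $L=\infty$ (so there is no $-\tfrac{1}{Lz}$ term in~\eqref{eqnWTmIntRep}), preserves nontangential limits at zero: existence of the nontangential limit $\zeta_0$ of $m$ at $0$ is equivalent to existence of the corresponding limit of the canonical system's Weyl coefficient at $0$, and the limits agree.

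\textbf{Core step.} I would then appeal to the canonical-systems asymptotic result: the Weyl coefficient has a nontangential limit at $0$ iff the Cesàro-type average of $H$ over $[0,x)$ converges as $x\to\infty$ to a positive semidefinite matrix $\begin{pmatrix} 1 & a \\ a & b\end{pmatrix}$; in that case the limit of the Weyl coefficient equals $a+\I\sqrt{b-a^2}$. Computing the $(1,1)$, $(1,2)$ and $(2,2)$ entries of $\tfrac{1}{x}\int_0^x H$ for the Hamiltonian displayed above gives exactly $1$, $\tfrac{1}{x}\int_0^x\Wr(s)ds$ and $\tfrac{1}{x}\int_0^x\Wr(s)^2ds+\tfrac{1}{x}\int_{[0,x)}d\dip$, which matches~\eqref{eq:m=zeta0w}. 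Positive semidefiniteness of the limiting matrix forces $a^2\leq b$ automatically, and the algebraic identities $a=\re\zeta_0$, $b=|\zeta_0|^2$ follow from $\zeta_0 = a+\I\sqrt{b-a^2}$.

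\textbf{Necessity versus sufficiency.} For sufficiency, convergence of the Cesàro averages yields convergence of the averaged Hamiltonian, hence the nontangential limit of $m$ by the cited canonical-systems theorem. For necessity, suppose $m(z)\to\zeta_0$ nontangentially at $0$; the same theorem applied in reverse produces convergence of the averaged Hamiltonian, whose off-diagonal and $(2,2)$ entries are exactly the quantities in~\eqref{eq:m=zeta0w}. Separating $\Wr$ from $\dip$ in the $(2,2)$ entry is automatic because both terms there are nonnegative, so no delicate cancellation issue arises.

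\textbf{Main obstacle.} The delicate point is checking the translation in Step~1 rigorously: namely, the precise form of the canonical system attached to $(L,\omega,\dip)$ and the exact relation between its Weyl coefficient and $m$, together with the fact that potential pathologies coming from singular parts of $\dip$ (dual graph normalization, reparametrization to trace-normal form) do not distort the Cesàro limits. Once that dictionary is in place, the conclusion is essentially a substitution into \cite[Theorem~6.1]{AsymCS}; alternatively, one can repeat the proof of that theorem verbatim in the string variable $x$, since the only ingredients used there are monotonicity of the averaged Hamiltonian on a suitable Grassmannian and a normal-families argument for Herglotz--Nevanlinna functions, both of which transfer without change.
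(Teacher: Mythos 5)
Your approach is viable and lands on the right quantities, but the paper's actual proof stays entirely inside the string picture and is worth contrasting with yours. The paper rescales the coefficients directly: for $r>0$ it forms the string with $\Wr_r(x)=\Wr(rx)$ and $\dip_r([0,x))=r^{-1}\dip([0,rx))$, observes that the Weyl--Titchmarsh functions satisfy $m_r(z)=m(z/r)$, and notes that nontangential convergence $m(z)\to\zeta_0$ at $0$ is equivalent (by a normal-families argument for Herglotz--Nevanlinna functions) to locally uniform convergence $m_r\to\zeta_0$ on $\C_+$ as $r\to\infty$. It then invokes the continuity of the correspondence $(L,\omega,\dip)\mapsto m$ (Proposition~\ref{propSMPcont}) to translate the latter into convergence of the rescaled coefficients to the explicit model string with constant Weyl function $\zeta_0$ (namely $\Wr_0\equiv\re\,\zeta_0$, $\dip_0([0,x))=(\im\,\zeta_0)^2x$); the convergence criteria of Proposition~\ref{propSMPcont} are, after the substitution $s\mapsto rs$, exactly the Ces\`aro limits in~\eqref{eq:m=zeta0w}. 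Your detour through canonical systems buys nothing here and introduces two points you flag but do not resolve. First, \cite[Theorem~6.1]{AsymCS} is formulated for the opposite regime (nontangential limits of $m$ at $\infty$ governed by the averaged Hamiltonian near the \emph{regular} endpoint $x\to0$), which is precisely why the paper says the result is deduced ``in a similar way as'' that theorem rather than from it; a direct invocation would require a duality argument you have not supplied. Second, the trace-normalization reparametrizes the independent variable by $\varsigma(x)=x+\int_0^x\Wr^2+\dip([0,x))$, so the canonical-systems averages are normalized by $\varsigma(x)$ rather than by $x$; converting back to~\eqref{eq:m=zeta0w} requires $\varsigma(x)/x$ to converge to a \emph{finite} limit, and the degenerate case $\varsigma(x)/x\to\infty$ (limiting average matrix $\bigl(\begin{smallmatrix}1&0\\0&0\end{smallmatrix}\bigr)$, corresponding to $m(z)\to\infty$) must be excluded separately. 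Your fallback suggestion of repeating the argument verbatim in the string variable is essentially what the paper does, and is the cleaner route.
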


\begin{proof}
For $r>0$, let $(L_r,\omega_r,\dip_r)$ be the generalized indefinite string with $L_r=\infty$ and coefficients defined by   
\begin{align*}
\Wr_r(x) & = \Wr(rx), & \int_{[0,x)}d\dip_r & = \frac{1}{r}\int_{[0,rx)}d\dip.
\end{align*}  
It is straightforward to check that the solutions $\psi_r(z,\redot)$ of the corresponding differential equation~\eqref{eqnDEho} that lie in $\dot{H}^1[0,\infty)$ and $L^2([0,\infty);\dip_r)$ are related by 
\begin{align*}
\psi_r(z,x)  = \psi(z/r,rx),  
\end{align*}
and hence the corresponding Weyl--Titchmarsh functions are connected via  
\begin{align}\label{eqnWTmr}
m_r(z) = m(z/r).
\end{align}
Furthermore, a simple substitution yields that 
\begin{align}\begin{split}\label{eqnWrInt}
  \int_0^x \Wr_r(s)ds &  = \frac{1}{r}\int_0^{rx}  \Wr(s)ds, \\
 \int_0^x \Wr_r(s)^2 ds + \int_{[0,x)} d\dip_r & =  \frac{1}{r} \int_0^{rx} \Wr(s)^2 ds + \frac{1}{r} \int_{[0,rx)} d\dip.
\end{split}\end{align}
Apart from this, for a given $\zeta_0\in\overline{\C_+}$, we also introduce the generalized indefinite string $(L_0,\omega_0,\dip_0)$ with $L_0=\infty$ and coefficients defined by
\begin{align*}
\Wr_0(x) & = \re\,\zeta_0, &  \int_{[0,x)} d\dip_0 = (\im\,\zeta_0)^2 x,
\end{align*}
so that the corresponding Weyl--Titchmarsh function $m_0$ is constant on $\C_+$ and equal to $\zeta_0$ there. 
Now it remains to note that relation~\eqref{eqnWTmr} entails that $m$ satisfies~\eqref{eq:m=zeta0} as $z\rightarrow0$ nontangentially in $\C_+$ if and only if the functions $m_r$ converge to $\zeta_0$ locally uniformly on $\C_+$ as $r\rightarrow\infty$. 
In the sense of Proposition~\ref{propSMPcont}, this is equivalent to convergence of the generalized indefinite strings $(L_r,\omega_r,\dip_r)$ to $(L_0,\omega_0,\dip_0)$. 
Since this holds if and only if 
 \begin{align*}
   \lim_{r\rightarrow\infty} \int_0^{x} \Wr_r(s)ds  & = \re\,\zeta_0 x, &   \lim_{r\rightarrow\infty} \int_0^x \Wr_r(s)^2 ds + \int_{[0,x)} d\dip_r & = |\zeta_0|^2 x,
 \end{align*}
 for all $x\in[0,\infty)$, the claim follows readily after taking into account~\eqref{eqnWrInt}. 
\end{proof}

It is now easy to see that Theorem~\ref{thm:m-at0} is a consequence of Proposition~\ref{lem:GISasymp}. 

\begin{proof}[Proof of Theorem~\ref{thm:m-at0}]
One only needs to notice that if the Weyl--Titchmarsh function $m$ is analytic at zero, then $m(0)$ is real. 
In particular, this implies that $\zeta_0 = a$ in Proposition~\ref{lem:GISasymp}, so that the first equality in~\eqref{eq:m=zeta0w} becomes~\eqref{eq:m-at0}. 
\end{proof}

Our second proof of Theorem~\ref{thm:m-at0} recovers not only the value of the Weyl--Titchmarsh function at zero, but it can be used, at least in principle, to find the entire Taylor expansion near zero. 
However, deriving higher order terms would involve computing higher derivatives of the functions in Theorem~\ref{thm:FSScartwright} at zero as in Proposition~\ref{propFS}, which becomes extremely cumbersome quickly. 
For this reason, we only provide the first two terms in the Taylor expansion in the following result. 

\begin{proposition}\label{prop:expandm}
Let $(L,\omega,\dip)$ be a generalized indefinite string. 
If the corresponding Weyl--Titchmarsh function $m$ has an analytic extension to zero, then $L=\infty$ and $m$ admits the expansion 
 \begin{align}\label{eqnWTTaylor}
   m(z) & = c +  z \int_0^\infty (\Wr(x) - c)^2 dx+  z \int_{[0,\infty)}d\dip + \OO(z^2)
\end{align}
near zero, where the constant $c = m(0)$ is given by~\eqref{eq:def-c_w}.  
\end{proposition}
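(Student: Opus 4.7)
The plan is to Taylor-expand the Weyl solution $\psi(z,x)=\theta(z,x)+zm(z)\phi(z,x)$ at $z=0$, using Proposition~\ref{propFS} for the derivatives of the fundamental system at the origin, and to use $\psi(z,\cdot)\in\Hast$ to extract $\dot m(0)$. By Proposition~\ref{lem:GISat0}, analyticity of $m$ at zero forces $L=\infty$ and $c=m(0)$ to satisfy~\eqref{eq:def-c_w}, while Proposition~\ref{corPelin}~\ref{itmPelini} gives $x\int_x^\infty(\Wr-c)^2\,ds+x\dip([x,\infty))=\OO(1)$; consequently $\Wr-c\in L^2[0,\infty)$, $\dip([0,\infty))<\infty$, and both integrals on the right-hand side of~\eqref{eqnWTTaylor} are finite.

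Writing $m(z)=c+z\dot m(0)+\OO(z^2)$ and comparing coefficients of $z^2$ in $\psi(z,x)=1+z\xi_1(x)+z^2\xi_2(x)+\OO(z^3)$, one finds $\xi_1(x)=-\int_0^x(\Wr(s)-c)\,ds$ and $\xi_2(0)=0$. A direct simplification using $\Wr=c+(\Wr-c)$ and the formulas of Proposition~\ref{propFS} yields
\begin{align*}
\xi_2'(x)=\dot m(0)+(\Wr(x)-c)\int_0^x(\Wr(s)-c)\,ds-\int_0^x(\Wr(s)-c)^2\,ds-\dip([0,x)).
\end{align*}
Recognizing $(\Wr-c)\int_0^\cdot(\Wr-c)=\tfrac{1}{2}(\xi_1^2)'$, integrating from $0$ to $x$, and applying Fubini to the double integrals gives
\begin{align*}
\xi_2(x)=x\Bigl[\dot m(0)-\int_0^x(\Wr-c)^2\,ds-\dip([0,x))\Bigr]+\tfrac{1}{2}\xi_1(x)^2+\int_0^x s(\Wr(s)-c)^2\,ds+\int_{[0,x)} s\,d\dip(s).
\end{align*}

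The conclusion follows by asymptotic analysis as $x\to\infty$. Since $0\in\res{\T}$, the map $z\mapsto\psi(z,\cdot)$ is analytic from a complex neighborhood of zero into $\Hast$, forcing $\xi_2'\in L^2[0,\infty)$; together with $\xi_2(0)=0$, the Cauchy--Schwarz estimate $|\xi_2(x)|\leq\sqrt{x}\,\|\xi_2'\|_{L^2}$ gives $\xi_2(x)/x\to 0$. On the other hand, the leading bracket equals $\dot m(0)-\int_0^\infty(\Wr-c)^2-\dip([0,\infty))+\OO(1/x)$ by the tail estimate; an $\varepsilon$-$M$ argument using $\Wr-c\in L^2$ shows $\xi_1(x)=\oo(\sqrt x)$, so $\xi_1(x)^2=\oo(x)$; and integration by parts (Abel summation) combined with the finiteness of $\int_0^\infty(\Wr-c)^2$ and $\dip([0,\infty))$ yields $\int_0^x s(\Wr-c)^2\,ds=\oo(x)$ and analogously for the measure. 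Dividing the displayed formula by $x$ and letting $x\to\infty$ forces $\dot m(0)=\int_0^\infty(\Wr-c)^2\,ds+\int_{[0,\infty)} d\dip$, as claimed. The main obstacle is the $\Hast$-analyticity of $z\mapsto\psi(z,\cdot)$ at $z=0$, since the Weyl solution is \emph{a priori} characterized only for $z\in\C\setminus\R$; this is resolved by analytic continuation through the resolvent $(\T-z)^{-1}$ in a complex neighborhood of $0\in\res{\T}$.
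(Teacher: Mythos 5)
Your computations check out: the formula for $\xi_2'$, the integration via Fubini, and the $\oo(x)$ asymptotics for $\xi_1(x)^2$, $\int_0^x s(\Wr(s)-c)^2\,ds$ and $\int_{[0,x)}s\,d\dip(s)$ are all correct, and the identification $\dot m(0)=\int_0^\infty(\Wr-c)^2\,dx+\dip([0,\infty))$ does follow once $\xi_2(x)/x\to0$ is known. However, your route is genuinely different from the paper's. The paper truncates the coefficients at a level $r$ (setting $\Wr=c$ and killing $\dip$ beyond $r$), so that all solutions are linear to the right of $r$; this yields a closed formula for $m_r$ in terms of the fundamental system at $r$, whose Taylor expansion is read off from Proposition~\ref{propFS}, and the limit $r\to\infty$ is taken using locally uniform convergence of $m_r$ to $m$ on a fixed complex neighbourhood of zero, which rests on uniform bounds for the inverses of the truncated relations. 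You instead work with the single given string and extract $\dot m(0)$ from sublinear growth of the second Taylor coefficient of the Weyl solution. The two approaches shift the operator-theoretic burden to different places: the paper needs uniform resolvent bounds along the approximating family, while you need that $z\mapsto\psi(z,\redot)$ is analytic (or at least locally bounded and weakly analytic) as an $\Hast$-valued map on a complex neighbourhood of zero, so that $\xi_2'\in L^2[0,\infty)$. That is the one step you only sketch, and it is not automatic: for real $z$ near zero the Weyl solution is not characterized by the definition in~\eqref{eqnmdef}, so membership of $\theta(z,\redot)+zm(z)\phi(z,\redot)$ in $\Hast$ must be established separately. Your proposed fix via the resolvent is the right one but should be carried out; for instance, apply $(\T-z)^{-1}$ to a compactly supported $g$, note that to the right of $\supp(g_1)\cup\supp(g_2)$ the first component equals $A(z)\theta(z,\redot)+B(z)\phi(z,\redot)$ with $A$, $B$ analytic, that $B(z)=zm(z)A(z)$ for non-real $z$ and hence, by analytic continuation, on a full neighbourhood of zero, and that $A$ does not vanish there for suitable $g$; boundedness of the resolvent then gives a uniform bound on $\|\psi(z,\redot)'\|_{L^2}$ on a small circle and a Cauchy estimate delivers $\xi_2'\in L^2[0,\infty)$. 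With that supplied, your argument is complete and arguably more self-contained in that it avoids the approximation procedure, at the cost of heavier asymptotic bookkeeping.
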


\begin{proof}
 The proof is again based on the continuity of the correspondence in Theorem~\ref{thm:IST}.
 However, in contrast to Proposition~\ref{lem:GISasymp}, here we are going to approximate the given generalized indefinite string $(L,\omega,\dip)$, in a way that ensures a uniform spectral gap around zero. 
 More specifically, let us consider the generalized indefinite string $(L_r,\omega_r,\dip_r)$ with $L_r=\infty$ and coefficients given by    
  \begin{align*}
    \Wr_r(x) & = \begin{cases} \Wr(x), & x\leq r, \\ c, & x>r, \end{cases} & \dip_r & = \id_{[0,r)}\dip,
  \end{align*}
  for every $r>0$, where the constant $c$ is given by~\eqref{eq:def-c_w}. 
  It is immediate to check that $(L_r,\omega_r,\dip_r)$ converges to $(L,\omega,\dip)$ in the sense of Proposition~\ref{propSMPcont} as $r\rightarrow\infty$, and hence the corresponding Weyl--Titchmarsh functions $m_r$ converge to $m$ locally uniformly on $\C\backslash\R$.
  Moreover, we clearly have the uniform bound 
  \begin{align*}
  x \int_x^\infty (\Wr_r(s) - c)^2 ds+x \int_{[x,\infty)}d\dip_r \leq x \int_x^\infty (\Wr(s) - c)^2 ds+x \int_{[x,\infty)}d\dip,
  \end{align*}
  where the right-hand side is uniformly bounded for all $x\in[0,\infty)$ in view of Proposition~\ref{corPelin}~\ref{itmPelini}. 
 Due to this bound, the operators $\KIO_{\omega_r}$, $\KIO_{\dip_r}$ and thus also the inverse of $\T_r$ are uniformly bounded (more precisely, this follows from known bounds in~\cite{chev70, muc} on norms of certain integral operators, which are unitarily equivalent to the operators $\KIO_{\chi}$ as explained in the proof of~\cite[Theorem~3.5]{DSpec}). 
    This guarantees that there is a neighborhood of zero to which $m$ and $m_r$ have an analytic extension for all $r>0$.
 As a consequence (for example, this can be seen from the integral representations), the functions $m_r$ also converge to $m$ locally uniformly on a neighborhood of zero, so that we are able to obtain~\eqref{eqnWTTaylor} as the limit of the Taylor expansions of $m_r$. 
  Since the solutions of the differential equation 
     \begin{align*}
  -f'' = z\, \omega_r f + z^2 \dip_r f
 \end{align*}
 are linear to the right of $r$ and coincide with the solutions of~\eqref{eqnDEho} to the left of $r$, we find that the Weyl--Titchmarsh functions $m_r$ are given by 
  \begin{align*}
  m_r(z) = \frac{ c\theta(z,r) - \theta^\qd(z,r)/z}{\phi^\qd(z,r) - cz\phi(z,r)}.
   \end{align*}
 Using the expressions from Proposition~\ref{propFS}, a simple calculation then shows that
  \begin{align*}
m_r(z) & = \frac{c +z c\dot{\theta}(0,r) -  z\ddot{\theta}^\qd(0,r)/2 + \OO(z^2)}{1+z\dot{\phi}^\qd(0,r) - zcr + \OO(z^2)} \\
& = c + z\big(c\dot{\theta}(0,r) - \ddot{\theta}^\qd(0,r)/2 - c\dot{\phi}^\qd(0,r) + c^2 r\big)+ \OO(z^2)\\
& = c + z \int_0^r (\Wr(x)-c)^2 dx + z \int_{[0,r)} d\dip + \OO(z^2)
  \end{align*} 
 near zero and it remains to take the limit as $r\rightarrow\infty$.
\end{proof}

 \section{Relative trace formulas, \ref*{thm:KSforExB1}}\label{secSbSsumrule}

Fix $\alpha>0$ and let $(L,\omega,\dip)$ be a generalized indefinite string with $L=\infty$ and essential spectrum contained in $[\alpha,\infty)$, so that the corresponding Weyl--Titchmarsh function $m$ has a meromorphic extension to $\C\backslash[\alpha,\infty)$ that is analytic at zero. 
We consider the Weyl solution $\psi(k,\redot)$ of the differential equation~\eqref{eqnDEho} with $z=k^2+\alpha$ defined by 
 \begin{align}\label{eq:WeylSol}
    \psi(k,x) &= \theta(z,x) + zm(z)\phi(z,x) 
 \end{align}
 for all $k\in\C_+$ such that $z$ is not a pole of $m$. 
 Notice that the quasi-derivative of the solution $\psi(k,\redot)$ is simply given by 
  \begin{align}\label{eq:WeylSolQD}
    \psi^\qd(k,x) &= \theta^\qd(z,x) + zm(z)\phi^\qd(z,x) 
 \end{align}
 and that one has $\psi^\qd(k,0)=\psi'(k,0-)= zm(z)$. 
 With these solutions, we are able to define the {\em relative Wronskian} $a(\ledot,x)$ for each $x\geq0$ by 
 \begin{align}\label{def:a-function}
   a(k,x) = \frac{W(k,0)}{W(k,x)}, 
 \end{align}
 where the functions $W(\ledot,x)$ are given by 
 \begin{align}
   W(k,x) =  (1+2\sqrt{\alpha}x)^{-\frac{\I k}{2\sqrt{\alpha}}+\frac{1}{2}}\biggl(\frac{zx-\I k+\sqrt{\alpha}}{1+2\sqrt{\alpha}x}\psi(k,x)-\psi^\qd(k,x)\biggr).
 \end{align}
Because the functions $\psi(\ledot,x)$ and $\psi^\qd(\ledot,x)$ are meromorphic on $\C_+$, so are the functions $W(\ledot,x)$ and $a(\ledot,x)$  with $W(\I\sqrt{\alpha},x)=2\sqrt{\alpha}$ and $a(\I\sqrt{\alpha},x)=1$. 
  
 \begin{remark}\label{remExt}
   The relative Wronskian can be viewed as a (suitably regularized\footnote{Under our assumptions on the coefficients $\omega$ and $\dip$, the perturbation is in general not relative trace class, which is in contrast to the case of one-dimensional Schr\"odinger operators in~\cite{kisi09}.})  
   perturbation determinant between two spectral problems (with a parameter $x\geq0$)
   \begin{align}\label{eqnRelGIS}
     -f'' = z\, \omega_x f + z^2 \dip_x f
   \end{align}
  on the interval $(-\ell_\alpha,\infty)$, where $\ell_\alpha$, the anti-derivative $\Wr_x\in L^2_{\loc}(-\ell_\alpha,\infty)$ of the distribution $\omega_x$ and the Borel measure $\dip_x$ on $(-\ell_\alpha,\infty)$ are given by
  \begin{align}
   \ell_\alpha & = \frac{1}{2\sqrt{\alpha}}, & \Wr_x(s) & = \begin{cases} \frac{s}{1+2\sqrt{\alpha}s}, & s<x, \\ \Wr(s), & s\geq x, \end{cases} & \dip_x(B) & = \dip(B\cap[x,\infty)).
  \end{align}
  In fact, it is the quotient between two characteristic Wronskians for these spectral problems. 
 Up to cancellations, the zeros and poles of the relative Wronskian correspond to the eigenvalues below $\alpha$ of~\eqref{eqnRelGIS} and the eigenvalues below $\alpha$ of~\eqref{eqnRelGIS} with $x=0$. 
 The relative trace formula that we will derive in this section is in essence the difference of trace formulas for these two spectral problems. 
 \end{remark}
 
 In order to prove the main result of this section, we first need to express the derivatives of the relative Wronskian at $k=\I\sqrt{\alpha}$ in terms of the coefficients of our generalized indefinite string.
 Remember that we always use a dot to denote differentiation with respect to the complex variable (which may be $k$ or $z$ here).

 \begin{proposition}\label{prop:a-decomp}
   Let $\alpha>0$ and let $(L,\omega,\dip)$ be a generalized indefinite string with essential spectrum contained in $[\alpha,\infty)$ and 
  \begin{align}\label{eq:matzero}
    m(0)=\frac{1}{2\sqrt{\alpha}}.
  \end{align}
  Then the relative Wronskian $a(\ledot,x)$ satisfies  
 \begin{align}\label{eq:a-decomp02}   
 \dot{a}(\I\sqrt{\alpha},x) = 2\I\sqrt{\alpha} \int_0^x \biggl(\Wr(s)-\frac{s}{1+2\sqrt{\alpha}s}\biggr)ds 
  \end{align}
   for every $x\geq0$, as well as the identity
  \begin{align}\label{eq:a-decomp03}
     \begin{split} 
  &  \ddot{a}(\I\sqrt{\alpha},x) - \dot{a}(\I\sqrt{\alpha},x)^2 - \frac{\I}{\sqrt{\alpha}}\dot{a}(\I\sqrt{\alpha},x) \\
 & \qquad =  4\sqrt{\alpha}\int_0^x (1+2\sqrt{\alpha}s)\biggl(\Wr(s)-\frac{s}{1+2\sqrt{\alpha}s}\biggr)^2 ds + 4\sqrt{\alpha}\int_{[0,x)} (1+2\sqrt{\alpha}s)d\dip(s). 
 \end{split}
 \end{align} 
 \end{proposition}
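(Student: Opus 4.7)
The plan is to identify $W(k,x)$ as a Wronskian of the Weyl solution $\psi(k,\cdot)$ with the explicit Weyl solution $\psi_0(k,\cdot)$ of the model string from Example~\ref{exaalpha}, and then to Taylor expand in $k$ about $k = \I\sqrt{\alpha}$ (equivalently, in $z$ about $z = 0$). Setting $\psi_0(k,x) = (1+2\sqrt{\alpha}x)^{-\I k/(2\sqrt{\alpha})+1/2}$, so that $\psi_0^\qd(k,x) = (zx-\I k+\sqrt{\alpha})(1+2\sqrt{\alpha}x)^{-1}\psi_0(k,x)$, a direct check gives
\begin{align*}
W(k,x) = \psi(k,x)\psi_0^\qd(k,x) - \psi^\qd(k,x)\psi_0(k,x),
\end{align*}
and a distributional computation of $\partial_x W$ (using the differential equations for $\psi$ and $\psi_0$) produces the integral representation
\begin{align*}
W(k,x) - W(k,0) ={}& -z\int_0^x(\Wr-\Wr_0)(\psi\psi_0^\qd + \psi^\qd\psi_0)(k,s)\,ds\\
&+ z^2\int_0^x(\Wr^2-\Wr_0^2)\psi\psi_0(k,s)\,ds + z^2\int_{[0,x)}\psi\psi_0(k,s)\,d\dip(s),
\end{align*}
where $\Wr_0(s) = s/(1+2\sqrt{\alpha}s)$, together with $W(k,0) = -\I k + \sqrt{\alpha} - zm(z)$. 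The assumption $m(0) = 1/(2\sqrt{\alpha})$, combined with $\psi(\I\sqrt{\alpha},\cdot)=1$ and $\psi^\qd(\I\sqrt{\alpha},\cdot)=0$, forces $W(\I\sqrt{\alpha},\cdot) \equiv 2\sqrt{\alpha}$ and $a(\I\sqrt{\alpha},\cdot) \equiv 1$.

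Identity \eqref{eq:a-decomp02} follows at leading order in $\tau := k-\I\sqrt{\alpha}$. By Proposition~\ref{propFS} one has $\psi(z,s) = 1 + z p_1(s) + O(z^2)$ with $p_1(s) = s/(2\sqrt{\alpha}) - \int_0^s \Wr\,dt$ and $\psi^\qd(z,s) = z/(2\sqrt{\alpha}) + O(z^2)$, so that the $z$-linear integrand evaluated at $z=0$ collapses to $2\sqrt{\alpha}(\Wr-\Wr_0)$. Dividing by $W(k,0) = 2\sqrt{\alpha} + O(z)$ and substituting $z = 2\I\sqrt{\alpha}\tau + O(\tau^2)$ yields \eqref{eq:a-decomp02}.

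For \eqref{eq:a-decomp03}, the decisive observation is that its left-hand side has a clean meaning after passing to the $z$-variable: writing $\tilde a(z,x) = a(k(z),x)$ and using $z = k^2+\alpha$ (so that $\partial_k = 2k\,\partial_z$ and $\partial_k^2 = 2\partial_z + 4k^2\partial_z^2$), the chain rule at $k=\I\sqrt{\alpha}$ yields
\begin{align*}
\ddot a - \dot a^2 - \frac{\I}{\sqrt{\alpha}}\dot a = 4\,\partial_z \log \tilde a\big|_{z=0} - 4\alpha\,\partial_z^2 \log \tilde a\big|_{z=0}.
\end{align*}
The task therefore reduces to computing $\partial_z^{j}\log\tilde a(0,x)$ for $j=1,2$ from the integral representation, which requires expanding its ingredients one further order in $z$: the boundary value $W(k(z),0)$ (which introduces $m'(0)$), the integrand $\psi\psi_0^\qd + \psi^\qd\psi_0$ in the $z$-linear term (which brings in $\partial_k\psi_0^\qd$ at $k=\I\sqrt{\alpha}$, computed explicitly from the form of $\psi_0$), and $\psi\psi_0$ in the $z^2$ terms (only to leading order).

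After collecting terms, the identity reduces to verifying
\begin{align*}
2\int_0^x (\Wr-\Wr_0)(s)\,s\,ds = R_1'(0) + \sqrt{\alpha}\,c_1(x)^2,
\end{align*}
where $c_1(x) := \int_0^x(\Wr-\Wr_0)\,ds$ and $R_1'(0)$ is the $z$-derivative at $z=0$ of the first integrand in the representation. This follows from the Fubini-type symmetry $c_1(x)^2 = 2\int_0^x (\Wr-\Wr_0)(s)\,c_1(s)\,ds$ together with the elementary identities $\int_0^s \Wr_0\,dt = s/(2\sqrt{\alpha}) - \log(1+2\sqrt{\alpha}s)/(4\alpha)$ and $(1+2\sqrt{\alpha}s)\Wr_0(s) = s$; the latter is precisely what produces the weight $1+2\sqrt{\alpha}s$ on the right-hand side of \eqref{eq:a-decomp03}. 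The principal obstacle is organizational: the second-order computation couples the $z$-expansion of the perturbed solution $\psi$ with the $k$-expansion of the unperturbed model solution $\psi_0$ via $k = k(z)$, and verifying the cancellation of all $\log(1+2\sqrt{\alpha}s)$ contributions (which is what pins down the precise coefficient $-\I/\sqrt{\alpha}$ in the identity) is the one delicate step.
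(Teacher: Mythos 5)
Your proposal is correct, and I verified that the second-order computation closes: with $\psi_0(k,s)=(1+2\sqrt{\alpha}s)^{-\I k/(2\sqrt{\alpha})+1/2}$ one indeed has $W(k,x)=\psi(k,x)\psi_0^\qd(k,x)-\psi^\qd(k,x)\psi_0(k,x)$, the logarithmic terms cancel in $\partial_z(\psi\psi_0^\qd+\psi^\qd\psi_0)|_{z=0}=-2\sqrt{\alpha}c_1(s)+2s$, and the remaining identity reduces to $\Wr_0(s)(1+2\sqrt{\alpha}s)=s$, which produces exactly the weight in \eqref{eq:a-decomp03}. Your route is genuinely different from the paper's: the paper differentiates $W(k,x)$ directly in $k$ at $\I\sqrt{\alpha}$, feeding in the explicit second-order Taylor data for $\theta$, $\phi$ and their quasi-derivatives from Proposition~\ref{propFS} and then assembling the combination $\ddot a-\dot a^2-\frac{\I}{\sqrt{\alpha}}\dot a$ by brute force, whereas you first recast $W(k,x)-W(k,0)$ via the Lagrange identity as an integral against $\psi\psi_0$ and $\psi\psi_0^\qd+\psi^\qd\psi_0$ and only then expand in $z$. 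What your approach buys is an explanation of the structure of the answer — the weight $1+2\sqrt{\alpha}s$ is literally the value $\psi_0(\I\sqrt{\alpha},s)$ of the growing model solution at the band edge, and the measure term $\int(1+2\sqrt{\alpha}s)\,d\dip$ drops out of the $z^2\dip\psi\psi_0$ term with no further work. What the paper's approach buys is that it stays entirely within quantities (the entire functions of Proposition~\ref{propFS}) whose differentiability is already rigorously established; your distributional computation of $\partial_x W$ pairs the distribution $\omega-\omega_0$ with products of $H^1_{\loc}$ solutions and needs the weak formulation~\eqref{eqnIntDEho} to be made airtight, though this is a routine matter for these spectral problems. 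Both arguments use the same essential inputs ($m(0)=\tfrac{1}{2\sqrt{\alpha}}$ entering through $W(k,0)=\sqrt{\alpha-z}+\sqrt{\alpha}-zm(z)$, and Proposition~\ref{propFS} for the perturbed solution), so the difference is one of organization rather than substance, but it is a real and somewhat more conceptual reorganization.
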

 
 \begin{proof}
  The claims basically follow from cumbersome calculations using the definition of $a$ and the expressions in Proposition~\ref{propFS}.  
 First of all, let us mention that~\eqref{eq:matzero} means that $m$ has an analytic extension to zero and hence $L$ is infinite (see Theorem~\ref{thm:m-at0}). 
 Next, we compute 
 \begin{align*}
   \dot{\psi}(\I\sqrt{\alpha},x) & 
      = - 2\I\sqrt{\alpha} \int_0^x \biggl(\Wr(s)-\frac{1}{2\sqrt{\alpha}}\biggr) ds, \\
   \ddot{\psi}(\I\sqrt{\alpha},x) & 
    =  -2\int_0^x \Wr(s)ds - 4\alpha\biggl(\int_0^x \Wr(s)ds\biggr)^2  - 8\alpha\,\dot{m}(0)x+ 8\alpha \int_0^x \int_0^s \Wr(t)^2 dt\,ds  \\
   & \qquad + 8\alpha \int_0^x \int_{[0,s)} d\dip\,ds +\frac{x}{\sqrt{\alpha}} - 4\sqrt{\alpha}x\int_0^x \Wr(s)ds + 8\sqrt{\alpha}\int_0^x \Wr(s)s\,ds, 
 \end{align*}
 where we also used the identity
 \begin{align*}
  \int_0^x \int_0^s \Wr(t)dt\, ds  & = x\int_0^x \Wr(s) ds - \int_0^x \Wr(s) s\, ds.
 \end{align*}
 Furthermore, for the quasi-derivative of $\psi$, one gets $\dot{\psi}^\qd(\I\sqrt{\alpha},x)=\I$ and 
 \begin{align*}
   \ddot{\psi}^\qd(\I\sqrt{\alpha},x) & 
    =  8\alpha\int_0^x \Wr(s)^2ds + 8\alpha\int_{[0,x)}d\dip + \frac{1}{\sqrt{\alpha}} - 4\sqrt{\alpha}\int_0^x \Wr(s)ds - 8\alpha\,\dot{m}(0).
 \end{align*}
We next compute the derivative for the function $W$ and get 
 \begin{align*}
   \dot{W}(\I\sqrt{\alpha},x) & = 2\I\sqrt{\alpha} x - \I \log(1+2\sqrt{\alpha}x) - 4\I\alpha \int_0^x \Wr(s)ds -  2\I  \\
     & =  -4\I\alpha \int_0^x \biggl(\Wr(s)-\frac{s}{1+2\sqrt{\alpha}s}\biggr)ds -2\I,
       \end{align*}
 where we used that 
 \begin{align*}
    2\sqrt{\alpha}x-  \log(1+2\sqrt{\alpha}x) = 4\alpha \int_0^x \frac{s}{1+2\sqrt{\alpha}s}ds. 
 \end{align*}
 This readily yields~\eqref{eq:a-decomp02}. 
 For the second derivative, one first has  
 \begin{align*}
  &  \frac{\ddot{W}(\I\sqrt{\alpha},x)}{W(\I\sqrt{\alpha},x)} - \frac{\dot{W}(\I\sqrt{\alpha},x)^2}{W(\I\sqrt{\alpha},x)^2} \\
  & \qquad = 2\int_0^x \Wr(s)ds -x^2  + \frac{1}{2\alpha} + 4\sqrt{\alpha}\,\dot{m}(0)+ 8\sqrt{\alpha}\int_0^x \Wr(s)s\,ds \\
     & \qquad\qquad -4\sqrt{\alpha} \int_0^x (1+2\sqrt{\alpha}s)\Wr(s)^2 ds - 4\sqrt{\alpha} \int_{[0,x)} (1+2\sqrt{\alpha}s)d\dip(s)
\end{align*}
and furthermore  
 \begin{align*}
  &  \frac{\ddot{W}(\I\sqrt{\alpha},x)}{W(\I\sqrt{\alpha},x)} - \frac{\dot{W}(\I\sqrt{\alpha},x)^2}{W(\I\sqrt{\alpha},x)^2} -\frac{\I}{\sqrt{\alpha}} \frac{\dot{W}(\I\sqrt{\alpha},x)}{W(\I\sqrt{\alpha},x)} \\
  & \qquad\qquad = -4\sqrt{\alpha} \int_0^x (1+2\sqrt{\alpha}s)\biggl(\Wr(s)-\frac{s}{1+2\sqrt{\alpha}s}\biggr)^2 ds \\
     & \qquad\qquad\qquad  - 4\sqrt{\alpha} \int_{[0,x)} (1+2\sqrt{\alpha}s)d\dip(s)-\frac{1}{2\alpha} + 4\sqrt{\alpha}\,\dot{m}(0), 
\end{align*}
 where we used that 
 \begin{align*}
    x^2- 2 \int_0^x \frac{s}{1+2\sqrt{\alpha}s}ds = 4\sqrt{\alpha} \int_0^x \frac{s^2}{1+2\sqrt{\alpha}s}ds. 
 \end{align*}
 The remaining identity~\eqref{eq:a-decomp03} now follows without much effort.
  \end{proof}
   
 This brings us to the key ingredient for the proof of Theorem~\ref{thm:KSforExB1}; {\em relative trace formulas} (called {\em step-by-step sum rules} in \cite{kisi09,kisi03}). 
 In order to state them, we introduce the two functions $\cF_1$ and $\cF_2$ on $(0,1)\cup(1,\infty)$ by 
\begin{align}\label{eq:Fsdef}
\cF_1(s) &= \frac{2s}{s^2 - 1} + \log\biggl| \frac{s-1}{s+1}\biggr|, & \cF_2(s) &= \frac{2s^3 + 2s}{(s^2 - 1)^2} + \log\biggl| \frac{s-1}{s+1}\biggr|.
\end{align}
For basic properties of these functions we refer to Appendix~\ref{app:F1F2}.
 
 \begin{theorem}[Relative trace formulas]\label{thm:traceflaA}
Let $\alpha>0$ and let $(L,\omega,\dip)$ be a generalized indefinite string with essential spectrum contained in $[\alpha,\infty)$ and~\eqref{eq:matzero}.
Then for every $x\geq0$, the limit $a(\xi,x) = \lim_{\varepsilon\downarrow 0}a(\xi+\I\varepsilon,x)$ exists and is nonzero for almost all $\xi \in\R$, satisfies
  \begin{align}\label{eq:AlogSzego}
      \int_{\R} \frac{|\log|a(\xi,x)||}{1+\xi^4}d\xi <\infty
   \end{align} 
 and the identities 
   \begin{align} \label{eq:traceflaA01}
      \begin{split}
     &  \int_0^x \biggl(\Wr(s)-\frac{s}{1+2\sqrt{\alpha}s}\biggr) ds  \\ 
     & \qquad =   \frac{\sqrt{\alpha}}{\pi} \int_{\R} \frac{\log|a(\xi,x)|}{(\xi^2+\alpha)^2}d\xi +  \frac{1}{2\alpha} \lim_{\delta\downarrow0} \sum_{\kappa\in{\rm Z}\atop\delta<\kappa<1/\delta} \cF_1\biggl(\frac{\kappa}{\sqrt{\alpha}}\biggr) - \sum_{\eta\in{\rm P}\atop\delta<\eta<1/\delta}\cF_1\biggl(\frac{\eta}{\sqrt{\alpha}}\biggr),
     \end{split}
\\ \label{eq:traceflaA02}
  \begin{split}
  &  \int_0^x (1+2\sqrt{\alpha}s)\biggl(\Wr(s)-\frac{s}{1+2\sqrt{\alpha}s}\biggr)^2ds +  \int_{[0,x)} (1+2\sqrt{\alpha}s)d\dip(s)  \\ 
  & \qquad = \frac{2}{\pi} \int_{\R} \frac{\xi^2 \log|a(\xi,x)|}{(\xi^2+\alpha)^3}d\xi  +  \frac{1}{4\alpha^{\nicefrac{3}{2}}}\lim_{\delta\downarrow0}  \sum_{\kappa\in{\rm Z}\atop\delta<\kappa<1/\delta} \cF_2\biggl(\frac{\kappa}{\sqrt{\alpha}}\biggr) - \sum_{\eta\in{\rm P}\atop\delta<\eta<1/\delta} \cF_2\biggl(\frac{\eta}{\sqrt{\alpha}}\biggr),
\end{split}
\end{align}
  hold, where ${\rm Z} = \{\kappa>0\,|\, a(\I\kappa,x)=0\}$ and ${\rm P} = \{\eta>0\,|\, a(\I\eta,x)=\infty\}$.
 \end{theorem}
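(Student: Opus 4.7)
The plan is to view the theorem as a specialization of general factorization and trace formula results for meromorphic functions on $\C_+$ that satisfy appropriate growth and sign conditions, which are to be developed in Appendix~\ref{appMeroMain}. Concretely, I would apply those abstract results to the relative Wronskian $a(\ledot,x)$, with reference point $\I\sqrt{\alpha}$ (so that $a(\I\sqrt{\alpha},x)=1$), and then identify the derivatives $\dot{a}(\I\sqrt{\alpha},x)$ and the combination $\ddot{a}(\I\sqrt{\alpha},x) - \dot{a}(\I\sqrt{\alpha},x)^2 - \frac{\I}{\sqrt{\alpha}}\dot{a}(\I\sqrt{\alpha},x)$ by means of Proposition~\ref{prop:a-decomp}.

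The first step is a structural analysis of $a(\ledot,x)$. I would show that it belongs to a Nevanlinna-type class on $\C_+$ (bounded type with controlled singular parts) and, crucially, that its zeros and poles in $\C_+$ lie on the positive imaginary semi-axis $\I(0,\infty)$. The latter is to be read off from the spectral interpretation sketched in Remark~\ref{remExt}: the zeros of $W(\ledot,x)$ and of $W(\ledot,0)$ correspond to real eigenvalues below $\alpha$ of two self-adjoint realizations of~\eqref{eqnRelGIS} on $(-\ell_\alpha,\infty)$, and the hypothesis that the essential spectrum of $(L,\omega,\dip)$ is contained in $[\alpha,\infty)$ guarantees that those eigenvalues form a discrete set which can accumulate only at $\alpha$. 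Cartwright-type information for the fundamental system from Theorem~\ref{thm:FSScartwright}, combined with the exponential decay of the Weyl solution $\psi(k,\ledot)$ absorbed in the definition of $W(k,x)$, then gives the boundedness of type required by the abstract machinery, as well as the Szeg\H{o}-type integrability~\eqref{eq:AlogSzego} of the boundary logarithm.

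Next, the abstract results of Appendix~\ref{appMeroMain} yield a factorization of $a(\ledot,x)$ into a boundary (outer-type) factor determined by $\log|a(\xi,x)|$, together with Blaschke-type products over the imaginary zeros $\I\kappa$ and imaginary poles $\I\eta$. Taking the logarithmic derivative once and twice at the reference point $\I\sqrt{\alpha}$ produces two identities in which the boundary contributions are integrated against the Poisson kernel of $\C_+$ at $\I\sqrt{\alpha}$ and its first derivative, yielding precisely the densities $(\xi^2+\alpha)^{-2}$ and $\xi^2(\xi^2+\alpha)^{-3}$ appearing in \eqref{eq:traceflaA01} and \eqref{eq:traceflaA02}. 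The discrete contributions get evaluated against the same kernels, and a direct computation identifies the resulting weights with $\frac{1}{2\alpha}\cF_1(\kappa/\sqrt{\alpha})$ and $\frac{1}{4\alpha^{\nicefrac{3}{2}}}\cF_2(\kappa/\sqrt{\alpha})$, and likewise for $\eta$; the symmetric cut-off $\delta<\redot<1/\delta$ and the limit $\delta\downarrow 0$ are required because the associated Blaschke sums are only conditionally convergent, which reflects the fact that the perturbation is not of trace class (cf.\ the footnote in Remark~\ref{remExt}).

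Substituting the explicit values of $\dot{a}(\I\sqrt{\alpha},x)$ and of the combination appearing on the left-hand side of~\eqref{eq:a-decomp03} from Proposition~\ref{prop:a-decomp} into these two differentiated identities then yields \eqref{eq:traceflaA01} and \eqref{eq:traceflaA02}. I expect the main obstacle to be the first step, namely verifying the Nevanlinna-class and bounded-type properties of $a(\ledot,x)$ globally on $\C_+$ and localizing all its zeros and poles on the imaginary axis; once this is in place, everything else is a combination of the abstract trace formulas from Appendix~\ref{appMeroMain} with the already-established algebraic identities from Proposition~\ref{prop:a-decomp}.
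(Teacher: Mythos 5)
Your strategy coincides with the paper's at every structural step: show that the zeros and poles of $a(\ledot,x)$ are purely imaginary and simple, feed $a(\ledot,x)$ into the abstract factorization and trace formulas of Appendix~\ref{appMeroMain}, read off the Poisson-kernel densities $(\xi^2+\alpha)^{-2}$ and $\xi^2(\xi^2+\alpha)^{-3}$ and the weights $\cF_1$, $\cF_2$ by differentiating at $\I\sqrt{\alpha}$, and then substitute Proposition~\ref{prop:a-decomp}. The gap is in how you propose to verify the hypotheses of that machinery. Theorem~\ref{thm:FactorMain} is not a statement about arbitrary functions of bounded type with imaginary zeros; it applies only to functions admitting the explicit product representation~\eqref{eqnGprodF}, namely a ratio of real entire Cartwright-class functions of $k^2+\alpha$ times a finite product of ratios of meromorphic functions $G_{\pm,n}$ satisfying $\im\,G_{\pm,n}(k)\geq0$ for $\re\,k>0$ and $\leq0$ for $\re\,k<0$. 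Producing such a representation for $a(\ledot,x)$ is the technical heart of the proof and is absent from your plan. Note that the Weyl solution $\psi(k,\ledot)$ is only meromorphic in $k$ (it has poles at the eigenvalues below $\alpha$), so Cartwright-class arguments cannot be applied to $W(\ledot,x)$ directly; the paper's resolution is the identity
\[
a(k,x)=\frac{\phi(k^2+\alpha,x)}{\phi_\alpha(k^2+\alpha,x)}\,\frac{G(k,0)}{G(k,x)}\,\frac{G_+(k)}{G_-(k)},
\qquad
G(k,x)=\frac{\psi^\qd(k,x)}{z\psi(k,x)}-\frac{1}{(1+2\sqrt{\alpha}x)(\sqrt{\alpha}+\I k)}-\frac{x}{1+2\sqrt{\alpha}x},
\]
with explicit $G_\pm$ built from $\phi$, $\phi_\alpha$ and $\psi$; each $G$-factor inherits the half-plane sign property from the Herglotz property of $\psi^\qd/(z\psi)$ and $\phi^\qd/(z\phi)$, while the remaining quotient involves genuinely entire Cartwright functions by Theorem~\ref{thm:FSScartwright}. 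The integrability~\eqref{eq:AlogSzego} then follows from~\eqref{eqnCondCart} for the entire factors together with Theorem~\ref{th:SimonFactor} for the $G$-factors, rather than from any decay of $\psi$.

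Two further points. First, simplicity of the zeros and poles of $a(\ledot,x)$ is a hypothesis of Theorem~\ref{thm:FactorMain} and is needed for the sums over ${\rm Z}$ and ${\rm P}$ to make sense without multiplicities; you do not address it. In the paper it is obtained from the same decomposition via three case distinctions on coincidences among the zeros and poles of $\psi(\ledot,x)$, $G(\ledot,0)$ and $G(\ledot,x)$. Second, your proposed mechanism for localizing the zeros and poles --- the spectral interpretation of Remark~\ref{remExt} --- is only heuristic (that remark explicitly allows cancellations and is not used in the proof); the paper instead localizes them directly from the sign property $\im\,G(k,x)>0$ for $\re\,k>0$ and $\im\,G(k,x)<0$ for $\re\,k<0$. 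Without the explicit factorization your plan would have to be rebuilt around essentially this decomposition, so the missing step is not a routine verification but the main idea of the proof.
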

 
 \begin{proof} 
  We first note that the function $a(\ledot,x)$ can be written as 
   \begin{align*}
     a(k,x) & =  (1+2\sqrt{\alpha}x)^{\frac{\I k}{2\sqrt{\alpha}}-\frac{1}{2}} \frac{G(k,0)}{G(k,x)} \frac{1}{\psi(k,x)},
   \end{align*}
  where the meromorphic functions $G(\ledot,x)$ on $\C_+$ are given by 
  \begin{align*}
     G(k,x) & = \frac{\psi^\qd(k,x)}{z\psi(k,x)} - \frac{1}{(1+2\sqrt{\alpha}x)(\sqrt{\alpha}+\I k)} - \frac{x}{1+2\sqrt{\alpha}x}. 
  \end{align*}
    It follows readily that $\im\,G(k,x)>0$ when $\re\,k>0$ and $\im\,G(k,x)< 0$ when $\re\,k<0$ because the first term is a Herglotz--Nevanlinna function in $z$. 
   As a consequence, all zeros and poles of $G(\ledot,x)$, $G(\ledot,0)$ and $\psi(\ledot,x)$ are simple and lie on the imaginary axis. 
  The fact that the zeros and poles of $a(\ledot,x)$ are simple as well is a result of the following three observations:
  \begin{enumerate}[label=(\alph*), ref=(\alph*), leftmargin=*, widest=a]
  \item\label{itmTFzp1a} If $k$ is a zero of $\psi(\ledot,x)$, then $k$ is a pole of $G(\ledot,x)$ because $k$ can not be a zero of $\psi^\qd(\ledot, x)$ as well in this case.  
  \item\label{itmTFzp1b} If $k$ is a pole of $\psi(\ledot,x)$, then $k$ is a pole of $G(\ledot,0)$ because $z=k^2+\alpha$ must be a pole of $m$ in this case, which is the first term of $G(\ledot,0)$. 
  \item\label{itmTFzp1c} A $k$ that is neither a zero nor a pole of $\psi(\ledot,x)$ is a pole of $G(\ledot,x)$ if and only if it is a pole of $G(\ledot,0)$.
  In fact, this holds true because such a $k$ is a pole of $\psi^\qd(\ledot,x)$ if and only if $z=k^2+\alpha$ is a pole of $m$. 
  \end{enumerate}
   For $x>0$ (the claims are trivial otherwise), we next write the function $a(\ledot,x)$ as 
     \begin{align*}
     a(k,x) & = \frac{\phi(k^2+\alpha,x)}{\phi_\alpha(k^2+\alpha,x)} \frac{G(k,0)}{G(k,x)}\frac{G_{+}(k)}{G_{-}(k)},
   \end{align*}
    where $\phi_\alpha$ is the respective solution for the generalized indefinite string in Example~\ref{exaalpha}, given explicitly by
    \begin{align*}
      \phi_\alpha(z,x) = \frac{(1+2\sqrt{\alpha}x)^{\frac{1}{2}}}{\sqrt{z-\alpha}} \sin\biggl(\frac{\sqrt{z-\alpha}}{2\sqrt{\alpha}}\log(1+2\sqrt{\alpha}x)\biggr),
    \end{align*}
    and the meromorphic functions $G_{\pm}$ on $\C_+$ are given by 
  \begin{align*}
     G_{-}(k) & = z\phi(z,x)\psi(k,x) = \biggl(\frac{\phi^\qd(z,x)}{z\phi(z,x)} - \frac{\psi^\qd(k,x)}{z\psi(k,x)}\biggr)^{-1}, \\
     G_{+}(k) & = z\phi_\alpha(z,x)(1+2\sqrt{\alpha}x)^{\frac{\I k}{2\sqrt{\alpha}}-\frac{1}{2}} \\
      & = \biggl(\frac{k}{z}\cot\biggl(\frac{k}{2\sqrt{\alpha}}\log(1+2\sqrt{\alpha}x)\biggr) - \frac{\I k}{z}\biggr)^{-1}.
  \end{align*}
   Since one has that $\im\,G_\pm(k)\geq0$ when $\re\,k>0$ and $\im\,G_\pm(k)\leq 0$ when $\re\,k<0$, the claims readily follow from Theorem~\ref{thm:FactorMain} and Proposition~\ref{prop:a-decomp}.
  \end{proof}
   
   Under an additional assumption on the Weyl--Titchmarsh function, the integrals on the spectral side of the relative trace formulas can also be expressed in terms of suitable {\em transmission coefficients} for the spectral problems in Remark~\ref{remExt}. 
    To this end, we first define the meromorphic functions $M(\ledot,x)$ on $\C_+$ by 
    \begin{align}
       M(k,x) = \frac{\psi^\qd(k,x)}{z\psi(k,x)}
    \end{align}
    for $x\geq0$, so that $M(k,0)= m(z)$. 
    The limit $M(\xi,x)=\lim_{\varepsilon\downarrow0}M(\xi +\I\varepsilon,x)$ exists for almost all $\xi\in\R$ (see Theorem~\ref{th:SimonFactor} for example) and we may define 
    \begin{align}\label{eq:defTx}
       T(\xi,x) =  \frac{4\xi}{\xi^2+\alpha}  \frac{(1+2\sqrt{\alpha}x) \im\, M(\xi,x)}{\Bigl| (1+2\sqrt{\alpha}x) M(\xi,x) -\frac{1}{\sqrt{\alpha}+\I \xi}-x\Bigr|^2}
    \end{align}
    as the denominator is positive for almost all $\xi\in\R$.
    In fact, one has 
    \begin{align}\label{eqnTransEst}\begin{split}
      \biggl| (1+2\sqrt{\alpha}x) M(\xi,x) -\frac{1}{\sqrt{\alpha}+\I \xi}-x\biggr|^2 & \geq \biggl( (1+2\sqrt{\alpha}x) \im\,M(\xi,x) +\frac{\xi}{\xi^2+\alpha}\biggr)^2 \\
      & \geq \frac{4\xi}{\xi^2+\alpha}(1+2\sqrt{\alpha}x) \im\,M(\xi,x),
    \end{split}\end{align}
    which also entails that $T(\xi,x)\in[0,1]$. 
    
    \begin{proposition}\label{proptraceflaAtrans}
Let $\alpha>0$ and let $(L,\omega,\dip)$ be a generalized indefinite string with essential spectrum contained in $[\alpha,\infty)$ and~\eqref{eq:matzero}.    
  If $\im\,m(\lambda+\I0)>0$ for almost all $\lambda>\alpha$, then 
 \begin{align}\label{eqnaasT}
   |a(\xi,x)|^2 = \frac{T(\xi,x)}{T(\xi,0)}
 \end{align}
 for almost all $\xi\in\R$ and all $x\geq 0$. 
 \end{proposition}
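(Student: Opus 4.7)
The plan is to reduce~\eqref{eqnaasT} to a single Wronskian-conservation identity relating $\im\,m$ at the spectral edge to $\im\,M(\xi,x)$. First, I would rewrite $W(k,x)$ by factoring $\psi(k,x)$ out. Using $\psi^\qd(k,x)/\psi(k,x)=zM(k,x)$ together with the algebraic identity $\sqrt{\alpha}-\I k = z/(\sqrt{\alpha}+\I k)$, a short direct computation gives
\begin{align*}
W(k,x) = -(1+2\sqrt{\alpha}x)^{-\frac{\I k}{2\sqrt{\alpha}}-\frac{1}{2}}\,z\,\psi(k,x)\,\biggl((1+2\sqrt{\alpha}x)M(k,x)-x-\frac{1}{\sqrt{\alpha}+\I k}\biggr).
\end{align*}
Passing to boundary values as $k\to\xi\in\R$, which exist for almost every $\xi$ because $m$ is Herglotz--Nevanlinna and $\psi(\xi,\cdot)=\theta(\xi^2+\alpha,\cdot)+(\xi^2+\alpha)\, m(\xi^2+\alpha+\I 0)\,\phi(\xi^2+\alpha,\cdot)$ inherits these boundary values from $m$, and using $|\psi(\xi,0)|=1$, I would take modulus squared of the above factorization at $x$ and at $0$ and form their ratio to obtain
\begin{align*}
|a(\xi,x)|^2 = \frac{1+2\sqrt{\alpha}x}{|\psi(\xi,x)|^2}\cdot\frac{\bigl|m(\xi^2+\alpha+\I 0)-\frac{1}{\sqrt{\alpha}+\I\xi}\bigr|^2}{\bigl|(1+2\sqrt{\alpha}x)M(\xi,x)-x-\frac{1}{\sqrt{\alpha}+\I\xi}\bigr|^2}.
\end{align*}

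Next, I would compare this expression with the one for $T(\xi,x)/T(\xi,0)$ read off directly from~\eqref{eq:defTx}. The cancellation is immediate apart from the factors involving $\im M$ and $\im m$, so the proposition reduces to showing
\begin{align*}
\im\,m(\xi^2+\alpha+\I 0) = |\psi(\xi,x)|^2\,\im\,M(\xi,x)
\end{align*}
for almost every $\xi\in\R$ and every $x\geq 0$. Since $z=\xi^2+\alpha$ is real and the coefficients $\omega$, $\dip$ are real, $\overline{\psi(\xi,\cdot)}$ is also a solution of~\eqref{eqnDEho} at the same $z$, so the Wronskian $\psi(\xi,x)\overline{\psi^\qd(\xi,x)}-\psi^\qd(\xi,x)\overline{\psi(\xi,x)}$ is independent of $x$. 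Computing it at general $x$ yields $-2\I(\xi^2+\alpha)|\psi(\xi,x)|^2\im\,M(\xi,x)$, while at $x=0$ it equals $-2\I(\xi^2+\alpha)\im\,m(\xi^2+\alpha+\I 0)$; the hypothesis $\im\,m(\lambda+\I 0)>0$ a.e.\ on $(\alpha,\infty)$ ensures $\psi(\xi,x)\neq 0$ a.e., so that dividing through by $|\psi(\xi,x)|^2$ gives the claim.

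The main technical obstacle will be justifying constancy of this Wronskian when $\omega\in H^{-1}_\loc[0,L)$ and $\dip$ is only a positive Borel measure. This is, however, a standard consequence of the weak formulation~\eqref{eqnIntDEho} with $\chi=0$ combined with the way quasi-derivatives are defined for generalized indefinite strings in~\cite{IndefiniteString}; applied to $f_1=\psi(\xi,\cdot)$ and $f_2=\overline{\psi(\xi,\cdot)}$ it produces exactly the Wronskian identity above, completing the argument.
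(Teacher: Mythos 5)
Your argument is correct and follows essentially the same route as the paper: factor $W(k,x)$ through $\psi(k,x)$ and $M(k,x)$, pass to boundary values, and reduce everything to the identity $\im\,m(\lambda+\I 0)=|\psi(\xi,x)|^2\,\im\,M(\xi,x)$. The only difference is that the paper obtains this identity not via constancy of the Wronskian of $\psi$ and $\psi^\ast$ (your flagged ``technical obstacle''), but directly from the explicit decomposition $\psi(\xi,\cdot)=\theta(\lambda,\cdot)+\lambda m(\lambda+\I 0)\phi(\lambda,\cdot)$ together with the already-established Wronskian identity $\theta(\lambda,x)\phi^\qd(\lambda,x)-\theta^\qd(\lambda,x)\phi(\lambda,x)=1$ for the real fundamental system, which makes the step immediate (and also gives $\psi(\xi,x)\neq 0$ for \emph{all} $x\geq 0$, since $\theta$ and $\phi$ cannot vanish simultaneously).
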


\begin{proof}
   Let $\xi>0$ be such that $\im\,m(\lambda+\I0)>0$, where $\lambda=\xi^2+\alpha$. 
   We consider the solution $\psi(\xi,\redot)$ of the differential equation~\eqref{eqnDEho} with $z=\lambda$ such that 
   \begin{align*}
     \psi(\xi,x) & = \theta(\lambda,x)+\lambda m(\lambda+\I0) \phi(\lambda,x) = \lim_{\varepsilon\downarrow0} \psi(\xi+\I\varepsilon,x), \\
     \psi^\qd(\xi,x) & = \theta^\qd(\lambda,x) + \lambda m(\lambda+\I0)\phi^\qd(\lambda,x) = \lim_{\varepsilon\downarrow0} \psi^\qd(\xi+\I\varepsilon,x). 
   \end{align*} 
   Notice that $\psi(\xi,x)\not=0$ for all $x\geq0$ because it is not possible for $\theta(\lambda,x)$ and $\phi(\lambda,x)$ to vanish both. 
   It follows that the limit 
   \begin{align*}
     M(\xi,x) = \lim_{\varepsilon\downarrow0} M(\xi+\I\varepsilon,x) = \frac{\psi^\qd(\xi,x)}{\lambda\psi(\xi,x)}
   \end{align*}
   exists for all $x\geq0$ and satisfies 
   \begin{align*}
     \im\,M(\xi,x) & = \frac{\im\,\psi(\xi,x)^\ast \psi^\qd(\xi,x)}{\lambda|\psi(\xi,x)|^2} = \frac{\im\,m(\lambda+\I0)}{|\psi(\xi,x)|^2} > 0,
   \end{align*}
   where we used that $\theta(\lambda,x)\phi^\qd(\lambda,x)-\theta^\qd(\lambda,x)\phi(\lambda,x)=1$. 
   Next, one gets 
   \begin{align*}
     \Bigl|\lim_{\varepsilon\downarrow0} W(\xi+\I\varepsilon,x)\Bigr|^2 & = \frac{\lambda^2 \im\,m(\lambda+\I0)}{(1+2\sqrt{\alpha}x)\im\,M(\xi,x)} \biggl| (1+2\sqrt{\alpha}x)M(\xi,x) - \frac{1}{\sqrt{\alpha}+\I \xi} - x\biggr|^2, 
   \end{align*}
   which is positive in view of~\eqref{eqnTransEst}.
   We conclude that the limit $\lim_{\varepsilon\downarrow0} a(\xi+\I\varepsilon,x)$ exists as well and satisfies~\eqref{eqnaasT} as claimed. 
   Similar arguments (with a few sign changes) show that this also holds true for all $\xi<0$ with $\im\,m(\xi^2+\alpha+\I0)>0$.
\end{proof} 

 Let us also mention that the sums in the relative trace formulas in Theorem~\ref{thm:traceflaA} can be written in terms of the eigenvalues of the spectral problems in Remark~\ref{remExt}. 
 In fact, it is possible to replace ${\rm Z}$ with ${\rm D}_0$ and ${\rm P}$ with ${\rm D}_x$ because one has 
 \begin{align}
    {\rm Z} & = {\rm D}_0\backslash({\rm D}_0\cap{\rm D}_x), & {\rm P} & = {\rm D}_x\backslash({\rm D}_0\cap{\rm D}_x), 
 \end{align}
 where the sets ${\rm D}_x$ are defined by 
 \begin{align}
   {\rm D}_x = \biggl\{\kappa>0\,\bigg|\, (1+2\sqrt{\alpha}x)M(\I\kappa,x)=\frac{1}{\sqrt{\alpha}-\kappa}+x\biggr\},
 \end{align}
 so that  $\{-\kappa^2+ \alpha\,|\,\kappa\in{\rm D}_x\}$ are precisely the eigenvalues below $\alpha$ of the spectral problem in Remark~\ref{remExt} with $\xi= x$.

\section{Lower semi-continuity, \ref*{thm:KSforExB1}}\label{secLowSemiContA}

Let $(L,\omega,\dip)$ be a generalized indefinite string and let $m$ be the corresponding Weyl--Titchmarsh function. 
For $\alpha>0$, we define the {\em transmission coefficient} $T$ by
\begin{align}\label{eqnTdef}
 T(\xi) = \frac{4|\xi|}{\xi^2+\alpha} \frac{\im\, m(\xi^2+\alpha+\I0)}{\Bigl|m(\xi^2+\alpha+\I0)-\frac{1}{\sqrt{\alpha}+\I |\xi|}\Bigr|^2},
\end{align}
which is well-defined for almost all $\xi\in\R$ and belongs to $[0,1]$ because 
\begin{align}\begin{split}
   \biggl|m(\xi^2+\alpha+\I0)-\frac{1}{\sqrt{\alpha}+\I |\xi|}\biggr|^2 & \geq  \biggl(\im\,m(\xi^2+\alpha+\I0)+\frac{|\xi|}{\xi^2+\alpha}\biggr)^2 \\
     & \geq \frac{4|\xi|}{\xi^2+\alpha} \im\,m(\xi^2+\alpha+\I0).
\end{split}\end{align} 
Notice that this function $T$ coincides with $T(\ledot,0)$ as given by~\eqref{eq:defTx} almost everywhere.
The purpose of this section is to establish that the quantity $\cQ_\alpha\in[0,\infty]$ defined by 
\begin{align}
\cQ_\alpha = -\int_\R \frac{\xi^2 \log T(\xi)}{(\xi^2+\alpha)^3} d\xi
\end{align}
depends lower semi-continuously on the generalized indefinite string $(L,\omega,\dip)$, where the set of all generalized indefinite strings is endowed with the topology that makes the correspondence $(L,\omega,\dip)\mapsto m$ homeomorphic (and the set of all Herglotz--Nevanlinna functions is endowed with the topology of locally uniform convergence); compare Proposition~\ref{propSMPcont}. 

\begin{theorem}\label{thm:entropyA}
  The functional $(L,\omega,\dip)\mapsto\cQ_\alpha$ is lower semi-continuous on the set of all generalized indefinite strings for every $\alpha>0$.
\end{theorem}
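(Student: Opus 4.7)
The plan is to realize $\cQ_\alpha$ as a Szeg\H{o}-type entropy of a bounded analytic function on the upper half-plane, and then to invoke the classical Killip--Simon lower semi-continuity of such functionals in the topology of local uniform convergence.

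First I would pass to the uniformizing variable $k \in \C_+$ via $z = k^2 + \alpha$. Since the essential spectrum is contained in $[\alpha,\infty)$, the meromorphic extension of $m$ across $(-\infty,\alpha)$ turns $\tilde m(k) := m(k^2+\alpha)$ into a meromorphic function on $\C_+$ with $\tilde m(-k) = \tilde m(k)$ and boundary values $\tilde m(\xi+\I 0) = m(\xi^2+\alpha+\I 0)$ for $\xi > 0$. Introduce the reflection coefficient
\begin{align*}
R(k) := \frac{\tilde m(k) - (\sqrt{\alpha}-\I k)^{-1}}{\tilde m(k) - (\sqrt{\alpha}+\I k)^{-1}}, \qquad k \in \C_+ .
\end{align*}
A direct M\"obius-type computation yields $|R(\xi+\I 0)|^2 = 1 - T(\xi)$ for almost every $\xi \in \R$ (the case $\xi < 0$ reduces to $\xi > 0$ by conjugation and the evenness of $T$), so $-\log T(\xi) = -\log(1 - |R(\xi+\I 0)|^2)$ almost everywhere.

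Off the imaginary axis, the values of $\tilde m(k)$ and $(\sqrt{\alpha}+\I k)^{-1}$ lie in opposite open half-planes of $\C$, so any poles of $R$ in $\C_+$ are confined to the imaginary axis; they correspond to eigenvalues below $\alpha$ of the relative spectral problem from Remark~\ref{remExt}. Dividing them out by a Blaschke product $B$ yields $\tilde R := R/B$, which is holomorphic on $\C_+$ with $|\tilde R| = |R|$ almost everywhere on $\R$, hence $|\tilde R| \le 1$ on $\C_+$ by the maximum modulus principle. Under local uniform convergence $m_n \to m$, the functions $\tilde m_n$ converge to $\tilde m$ locally uniformly on $\C_+$ away from poles, and (after passing to a subsequence if necessary, so that Hurwitz's theorem controls the dynamics of the Blaschke factors) so do the $\tilde R_n \to \tilde R$.

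The final step is the standard lower semi-continuity lemma in the spirit of Killip and Simon: for bounded analytic functions $\tau_n \to \tau$ on $\C_+$ locally uniformly with $|\tau_n| \le 1$, and any non-negative weight $w \in L^1(\R)$ with well-behaved Poisson transforms,
\begin{align*}
-\int_\R \log(1 - |\tau(\xi+\I 0)|^2)\, w(\xi)\, d\xi \;\le\; \liminf_{n\to\infty}\, -\int_\R \log(1 - |\tau_n(\xi+\I 0)|^2)\, w(\xi)\, d\xi .
\end{align*}
This is proved by applying Fatou's lemma at a fixed height $\varepsilon > 0$ and then performing a monotone passage $\varepsilon \downarrow 0$, exploiting non-negativity and the subharmonic properties of $-\log(1 - |\tau|^2)$. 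Applied with $w(\xi) = 2\xi^2/(\xi^2+\alpha)^3$ it yields $\cQ_\alpha(m) \le \liminf_n \cQ_\alpha(m_n)$. The main obstacle I expect is the careful bookkeeping of the Blaschke factorization $\tilde R_n = R_n/B_n$ under the limit: the number and location of the eigenvalues below $\alpha$ can fluctuate along the sequence, and some may escape toward the edge $\alpha$ of the essential spectrum or toward $-\infty$. Since additional Blaschke factors only multiply the boundary values of $\tilde R_n$ by functions of modulus one, they do not affect the boundary identity $|\tilde R_n(\xi+\I 0)|^2 = 1 - T_n(\xi)$, and therefore the semi-continuity inequality is preserved by this bookkeeping.
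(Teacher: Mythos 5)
Your overall strategy (reflection coefficient, scattering relation $T+|r|^2=1$, series expansion of $-\log(1-|r|^2)$, and a Killip--Simon style semi-continuity mechanism) matches the paper's, but the analytic core of your argument has two genuine gaps, and the paper's proof is organized precisely so as to avoid them.

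First, your construction presupposes that $m$ extends meromorphically to $\C\setminus[\alpha,\infty)$ with poles of $R$ confined to the imaginary axis, so that a Blaschke factorization on all of $\C_+$ is available. That is not a hypothesis of the theorem: $\cQ_\alpha$ is defined via the boundary values in~\eqref{eqnTdef} for \emph{every} generalized indefinite string, and lower semi-continuity is asserted on the whole set. The paper never leaves the open first quadrant, where $z=k^2+\alpha\in\C_+$, and there $r$ is automatically analytic and satisfies $|r(k)|\leq 1+2|k|/\re k$ by Lemma~\ref{lem:Est-r}; no meromorphic extension and no Blaschke products are needed. Second, even granting the extension, your maximum-modulus step is unjustified: a holomorphic function on $\C_+$ whose boundary values have modulus $\leq 1$ almost everywhere need not satisfy $|\tilde R|\leq 1$ inside (the function $\E^{-\I k}$ is the standard counterexample); one must first place $\tilde R$ in the Nevanlinna/Smirnov class, and your only a priori bound $1+2|k|/\re k$ degenerates exactly on the imaginary axis where the zeros and poles accumulate. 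The subsequent ``Fatou at height $\varepsilon$, then let $\varepsilon\downarrow0$'' step inherits this problem, since relating the integral at height $\varepsilon$ to the boundary integral requires a harmonic majorant for the subharmonic function $-\log(1-|\tilde R|^2)$, which is again a boundedness/bounded-type statement.

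The paper's route replaces both steps by a softer argument that needs no interior control of $r$ at all: the contour identity~\eqref{eqnrqintRec} on rectangles $[a,b]\times[0,1]$ with $0<a<b$, combined with the uniform bound away from the imaginary axis, upgrades locally uniform convergence $r_n\to r$ in the open quadrant to weak$^\ast$ convergence of the boundary values in $L^\infty[a,b]$; weak lower semi-continuity of the $L^{2j}$ norms applied term by term to $-\log(1-|r|^2)=\sum_j|r|^{2j}/j$, followed by monotone limits in $j$, $a$ and $b$, then gives the claim. If you want to salvage your approach, you would need to (i) restrict to, or reduce to, the class of strings for which the meromorphic extension exists, and (ii) prove that $R$ is of bounded type on $\C_+$ (e.g.\ via the factorization in Theorem~\ref{th:SimonFactor}) before invoking the maximum principle; as written, the proposal does not establish the theorem.
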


We will need the following simple but useful estimate for the proof of this result.

\begin{lemma}\label{lem:Est-r}
For each $\alpha>0$ one has the bound  
\begin{align}\label{eq:Est-r}
  \Biggl| \frac{z - \frac{1}{\sqrt{\alpha}-\I k}}{z - \frac{1}{\sqrt{\alpha}+\I k}}\Biggr| \leq 1+ \frac{2|k|}{\re\,k}
\end{align}
when $z\in\C_+\cup\R$ and $k\in\C_+$ with $\re\,k>0$.
\end{lemma}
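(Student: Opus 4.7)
The plan is to reduce the inequality to an elementary estimate by introducing the abbreviations $w_{\pm} = \frac{1}{\sqrt{\alpha}\pm \I k}$, so that the quantity to be bounded reads $|z-w_{-}|/|z-w_{+}|$. First I would locate these points in the complex plane: under the hypotheses $\re\,k>0$ and $\im\,k\geq 0$, the point $\sqrt{\alpha}+\I k$ lies in the open upper half-plane (its imaginary part is $\re\,k>0$), so $w_{+}\in\C_{-}$, and by the same token $w_{-}\in\C_{+}$. A direct computation gives
\begin{equation*}
 w_{+}-w_{-} = \frac{-2\I k}{\alpha+k^2}, \qquad |w_{+}-w_{-}| = \frac{2|k|}{|\alpha+k^2|}.
\end{equation*}

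Next I would apply the identity $\frac{z-w_{-}}{z-w_{+}} = 1 + \frac{w_{+}-w_{-}}{z-w_{+}}$, which by the triangle inequality yields
\begin{equation*}
 \left|\frac{z-w_{-}}{z-w_{+}}\right| \leq 1 + \frac{|w_{+}-w_{-}|}{|z-w_{+}|}.
\end{equation*}
It remains to produce a lower bound on $|z-w_{+}|$. Since $z\in\C_{+}\cup\R$ satisfies $\im\,z\geq 0$ while $\im\,w_{+}<0$, I would use the trivial estimate
\begin{equation*}
 |z-w_{+}| \geq \im\,z - \im\,w_{+} \geq -\im\,w_{+} = \frac{\re\,k}{|\sqrt{\alpha}+\I k|^{2}}.
\end{equation*}

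Combining these two inequalities gives
\begin{equation*}
 \frac{|w_{+}-w_{-}|}{|z-w_{+}|} \leq \frac{2|k|\,|\sqrt{\alpha}+\I k|^{2}}{|\alpha+k^{2}|\,\re\,k}  = \frac{2|k|}{\re\,k}\cdot\frac{|\sqrt{\alpha}+\I k|}{|\sqrt{\alpha}-\I k|},
\end{equation*}
where I used the factorization $\alpha+k^2 = (\sqrt{\alpha}+\I k)(\sqrt{\alpha}-\I k)$. The final ingredient is the observation that $|\sqrt{\alpha}+\I k|\leq |\sqrt{\alpha}-\I k|$ whenever $\im\,k\geq 0$; this follows at once from
\begin{equation*}
 |\sqrt{\alpha}-\I k|^{2}-|\sqrt{\alpha}+\I k|^{2} = 4\sqrt{\alpha}\,\im\,k \geq 0.
\end{equation*}
Inserting this ratio bound yields the desired estimate~\eqref{eq:Est-r}. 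There is no genuine obstacle here; the only point that requires any care is the choice of which imaginary parts to compare when bounding $|z-w_{+}|$ from below, and the fact that $k\in\C_{+}$ (rather than merely $\re\,k>0$) is exactly what is needed so that $|\sqrt{\alpha}+\I k|/|\sqrt{\alpha}-\I k|\leq 1$.
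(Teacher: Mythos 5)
Your proof is correct and follows essentially the same route as the paper: write the quotient as $1$ plus a remainder, bound $|z-w_+|$ from below by $|{-\im\,w_+}| = \re\,k/|\sqrt{\alpha}+\I k|^2$ using $\im\,z\geq 0$, and finish with $|\sqrt{\alpha}-\I k|\geq|\sqrt{\alpha}+\I k|$ for $k\in\C_+$. The only differences are notational (introducing $w_\pm$ and phrasing the last step as a ratio bound rather than $|k^2+\alpha|\geq|\sqrt{\alpha}+\I k|^2$).
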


\begin{proof}
 We first notice that 
 \begin{align*}
    \biggl|z - \frac{1}{\sqrt{\alpha}+\I k}\biggr| \geq \biggl|\im\biggl(z - \frac{1}{\sqrt{\alpha}+\I k}\biggr)\biggr| = \biggl|\im\, z+\frac{\re\,k}{|\sqrt{\alpha}+\I k|^2}\biggr| \geq \frac{\re\,k}{|\sqrt{\alpha}+\I k|^2}
 \end{align*}
  because $\im\, z\geq0$ and $\re\,k>0$. 
 From this we get  
\begin{align*}
  \Biggl| \frac{z - \frac{1}{\sqrt{\alpha}-\I k}}{z - \frac{1}{\sqrt{\alpha}+\I k}}\Biggr| & =  \Biggl| 1-\frac{2\I k}{k^2+\alpha}\frac{1}{z - \frac{1}{\sqrt{\alpha}+\I k}}\Biggr|  \leq 1+ \frac{2 |k|}{|k^2+\alpha|} \frac{|\sqrt{\alpha}+\I k|^2}{\re\,k}
\end{align*}
and the claim follows because 
\begin{align*}
|k^2+\alpha| = |\sqrt{\alpha} - \I k||\sqrt{\alpha} +\I k| \ge  |\sqrt{\alpha} + \I k |^2
\end{align*}
 when $k\in\C_+$.
\end{proof}

\begin{remark}
There are at least two ways to compute the supremum (as $z$ runs through $\C_+$) of the left-hand side in~\eqref{eq:Est-r}. Indeed, under the corresponding assumptions on $k$, the linear fractional transformation $A$ given by 
\begin{align}
  A(z) =  \frac{z - \frac{1}{\sqrt{\alpha}-\I k}}{z - \frac{1}{\sqrt{\alpha}+\I k}}
\end{align}
 maps $\C_+$ into a disc and hence one simply needs to use formulas for the center and radius of this disc. Another option is to find $\sup_{x\in \R} |A(x)|$ by using basic calculus tools. However, the corresponding expression is cumbersome and the estimate \eqref{eq:Est-r} is sufficient for our purposes.
\end{remark}

 The actual proof of the claim in Theorem~\ref{thm:entropyA} now proceeds essentially along the lines of the one in \cite[Section~5]{kisi09}.

\begin{proof}[Proof of Theorem~\ref{thm:entropyA}]
For a generalized indefinite string $(L,\omega,\dip)$, we introduce the analog of the {\em reflection coefficient} $r$ defined on the first complex quadrant by
\begin{align*}
r(k) = \frac{m(k^2+\alpha) - \frac{1}{\sqrt{\alpha}-\I k}}{m(k^2+\alpha) - \frac{1}{\sqrt{\alpha}+\I k}}.
\end{align*}
Notice that this function is analytic and obeys the bound 
\begin{align}\label{eqnBoundonr}
  |r(k)| \leq 1 + \frac{2|k|}{\re\, k}
\end{align}
in view of  Lemma~\ref{lem:Est-r}.
The limit $r(\xi)=\lim_{\varepsilon\downarrow0} r(\xi+\I\varepsilon)$ exists and satisfies the {\em scattering relation}
\begin{align*}
T(\xi) + |r(\xi)|^2 = 1
\end{align*}
for almost every $\xi>0$, so that $|r(\xi)|\leq 1$ and, unless $T(\xi)=0$, also
\begin{align}\label{eqnTasr}
-\log T(\xi) = -\log \bigl(1-|r(\xi)|^2\bigr) =  \sum_{j=1}^\infty \frac{|r(\xi)|^{2j}}{j}.
\end{align}
More precisely, to obtain the scattering relation one first computes that 
\begin{align*}
 &  \biggl|m(\xi^2+\alpha+\I 0)-\frac{1}{\sqrt{\alpha}+\I\xi}\biggr|^2 - \biggl|m(\xi^2+\alpha+\I 0)-\frac{1}{\sqrt{\alpha}-\I\xi}\biggr|^2 \\
   & \qquad\qquad\qquad\qquad\qquad\qquad\qquad\qquad = \frac{4\xi}{\xi^2+\alpha} \im\,m(\xi^2+\alpha+\I0)
\end{align*}
and then divides by the first term on the left-hand side. 
Furthermore, we will later use that when $0<a<b$ and $q$ is a polynomial, contour integration of the analytic function $rq$ over the boundary of the rectangle with vertices $a+\I\varepsilon$, $b+\I\varepsilon$, $b+\I$, $a+\I$ and subsequently letting $\varepsilon\downarrow0$ gives 
\begin{align}\label{eqnrqintRec}
  \int_a^b r(\xi)q(\xi)d\xi = \int_{a}^{a+\I} r(k)q(k)dk + \int_{a+\I}^{b+\I} r(k)q(k)dk + \int_{b+\I}^b r(k)q(k)dk, 
\end{align} 
where we also took the bound in~\eqref{eqnBoundonr} into account.

In order to prove that $\cQ_\alpha$ is lower semi-continuous, let $(L_n,\omega_n,\dip_n)$ be a sequence of generalized indefinite strings that converge to $(L,\omega,\dip)$, so that the corresponding Weyl--Titchmarsh functions $m_n$ converge locally uniformly to $m$. 
Clearly, the corresponding reflection coefficients $r_n$ then converge locally uniformly to $r$ on the first complex quadrant. 
Given $0<a<b$ and a polynomial $q$, it follows from~\eqref{eqnrqintRec}, the convergence of $r_n$ to $r$ and the uniform bound obtained from~\eqref{eqnBoundonr} that 
\begin{align*}
   \lim_{n\rightarrow \infty} \int_a^b r_n(\xi)q(\xi) d\xi = \int_a^b  r(\xi)q(\xi) d\xi.
\end{align*}
Because the functions $r_n$ are also uniformly bounded in $L^\infty[a,b]$, we conclude that they converge to $r$ in $L^\infty[a,b]$ with respect to the weak$^\ast$ topology in the sense that 
\begin{align*}
   \lim_{n\rightarrow \infty} \int_a^b r_n(\xi) h(\xi) d\xi = \int_a^b r(\xi) h(\xi) d\xi
\end{align*}
for every function $h\in L^1[a,b]$. 
This weak$^\ast$ convergence in $L^\infty[a,b]$ entails weak convergence in the weighted space $L^p([a,b]; \xi^2(\xi^2+\alpha)^{-3}d\xi)$ for each $p>1$. 
Since the norm on a Banach space is weakly lower semi-continuous, this gives 
\begin{align*}
\liminf_{n\rightarrow \infty} \int_a^b |r_n(\xi)|^p \frac{\xi^2}{(\xi^2+\alpha)^3} d\xi \geq  \int_a^b |r(\xi)|^p \frac{\xi^2}{(\xi^2+\alpha)^3} d\xi.
\end{align*}
It then follows from~\eqref{eqnTasr} that for every $J\in\N$ one has 
\begin{align*}
\liminf_{n\rightarrow \infty} -\int_0^\infty \frac{\xi^2 \log T_{n}(\xi)}{(\xi^2+\alpha)^3} d\xi & \geq
\liminf_{n\rightarrow \infty} \int_a^b \sum_{j=1}^J \frac{|r_n(\xi)|^{2j}}{j} \frac{\xi^2}{(\xi^2+\alpha)^3} d\xi \\
 & \geq  \int_a^b \sum_{j=1}^J \frac{|r(\xi)|^{2j}}{j} \frac{\xi^2}{(\xi^2+\alpha)^3} d\xi,
\end{align*}
where $T_n$ are the corresponding transmission coefficients. 
The claim can now be deduced readily by letting $J\rightarrow\infty$, $a\downarrow 0$ and $b\rightarrow \infty$ (also notice that the transmission coefficients are even by definition).     
\end{proof}

\section{Proof of Theorem~\ref*{thm:KSforExB1}}\label{secKSB1proof} 

We are now ready to put together the proof for our first main result. 
 In fact, taking into account Corollary~\ref{cor:m-at0}~\ref{iCor:m-at0forA}, we are going to prove the following slightly amended statement:
{\em  A Herglotz--Nevanlinna function $m$ is the Weyl--Titchmarsh function of a generalized indefinite string $(L,\omega,\dip)$ with $L=\infty$ and~\eqref{eqnCondS1} with $c=0$ if and only if the conditions~\ref{itmKSc1},~\ref{itmKSc2},~\ref{itmKSc3} in Theorem~\ref{thm:KSforExB1} and  }
     \begin{align}\label{eqnKSmatzero}
      m(0) =  \frac{1}{2\sqrt{\alpha}}
    \end{align}
{\em      hold.} 
  Theorem~\ref{thm:KSforExB1} then follows readily upon noticing that adding a constant to $\Wr$ amounts to adding the same constant to the Weyl--Titchmarsh function $m$. 

\begin{proof}[Proof of necessity] 
Suppose first that $m$ is the Weyl--Titchmarsh function of a generalized indefinite string $(L,\omega,\dip)$ with $L=\infty$ that satisfies~\eqref{eqnCondS1} with $c=0$.
It follows readily from Theorem~\ref{thm:HSforAB}~\ref{itmHSforA} and Corollary~\ref{cor:m-at0}~\ref{iCor:m-at0forA} that $m$ has a meromorphic extension to $\C\backslash[\alpha,\infty)$ that is analytic at zero and that~\eqref{eqnKSmatzero} holds.
In order to verify the remaining conditions~\ref{itmKSc2} and~\ref{itmKSc3}, let us consider the sequence of generalized indefinite strings $(L,\omega_n,\dip_n)$ whose coefficients are given by 
\begin{align*}
   \Wr_n(x) & = \begin{cases} \Wr(x), & x<n, \\  \frac{x}{1+2\sqrt{\alpha}x}, & x\geq n, \end{cases}  & \dip_n(B) & = \dip(B\cap[0,n)),
\end{align*} 
so that $(L,\omega_n,\dip_n)$ converges to $(L,\omega,\dip)$ in the sense of Proposition~\ref{propSMPcont}. 
We will use the notation from Section~\ref{sec:prelim} and Section~\ref{secSbSsumrule}, where all quantities corresponding to the generalized indefinite strings $(L,\omega_n,\dip_n)$ will be denoted with an additional subscript. 
Theorem~\ref{thm:HSforAB}~\ref{itmHSforA} and Corollary~\ref{cor:m-at0}~\ref{iCor:m-at0forA} guarantee that each $(L,\omega_n,\dip_n)$ satisfies the requirements of Theorem~\ref{thm:traceflaA}. 
For large enough $x>n$, the solutions $\psi_n(k,\redot)$ are scalar multiples of the ones in Example~\ref{exaalpha} and thus 
 \begin{align}\label{eqnMnlargex}
   (1+2\sqrt{\alpha}x) M_n(k,x) & = (1+2\sqrt{\alpha}x)\frac{\psi_n^\qd(k,x)}{z\psi_n(k,x)}= \frac{1}{\sqrt{\alpha}-\I k} +x,
 \end{align}  
 where we continue to use $z=k^2+\alpha$.
 Because we may write 
 \begin{align*}
   M_n(k,0) = \frac{\theta_n(z,x)M_n(k,x)-\theta_n^\qd(z,x)/z}{\phi_n^\qd(z,x)-z\phi_n(z,x)M_n(k,x)},
 \end{align*}
 we see that $M_n(\xi,0)=\lim_{\varepsilon\downarrow0}M_n(\xi+\I\varepsilon,0)$ exists for all $\xi\in\R$ with $\xi\not=0$ and 
  \begin{align*}
   \im\,M_n(\xi,0)  = \frac{\im\,M_n(\xi,x)}{\bigl|\phi_n^\qd(\xi^2+\alpha,x)-(\xi^2+\alpha)\phi_n(\xi^2+\alpha,x)M_n(\xi,x)\bigr|^2}.
 \end{align*}
  Since $M_n(k,0)=m_n(z)$, this implies that $\im\,m_n(\lambda+\I0)>0$ for almost all $\lambda>\alpha$, so that we can apply  Proposition~\ref{proptraceflaAtrans}.
  Notice that~\eqref{eqnMnlargex} implies that $T_n(\xi,x)=1$ for almost all $\xi\in\R$.
  Moreover, because 
  \begin{align*}
    a_n(k,x) & = (1+2\sqrt{\alpha}x)^{\frac{\I k}{2\sqrt{\alpha}}+\frac{1}{2}} \frac{z}{2\I k\psi_n(k,x)}  \biggl(m_n(z)-\frac{1}{\sqrt{\alpha}+\I k}\biggr),  
  \end{align*}
  the set of poles of $a_n(\ledot,x)$ is empty (remember properties~\ref{itmTFzp1a},~\ref{itmTFzp1b} and~\ref{itmTFzp1c} in the first part of the proof of Theorem~\ref{thm:traceflaA}) and  
  \begin{align*}
    {\rm Z}_n = \{\kappa>0\,|\, a_n(\I\kappa,x)=0\} = \{\kappa>0\,|\, (\sqrt{\alpha}-\kappa) m_n(\alpha-\kappa^2) = 1\}. 
  \end{align*}
We thus obtain from Theorem~\ref{thm:traceflaA} and Proposition~\ref{proptraceflaAtrans} the identity 
\begin{align*}
      &  \int_0^n (1+2\sqrt{\alpha}s)\biggl(\Wr(s)-\frac{s}{1+2\sqrt{\alpha}s}\biggr)^2ds +  \int_{[0,n)} (1+2\sqrt{\alpha}s)d\dip(s)  \\
  & \qquad\qquad\qquad\qquad = - \frac{1}{\pi} \int_{\R} \frac{\xi^2 \log T_n(\xi,0)}{(\xi^2+\alpha)^3}d\xi  +  \frac{1}{4\alpha^{\nicefrac{3}{2}}}  \sum_{\kappa\in{\rm Z}_n} \cF_2\biggl(\frac{\kappa}{\sqrt{\alpha}}\biggr).
\end{align*}
If we similarly define ${\rm Z}' = \{\kappa>0\,|\, (\sqrt{\alpha}-\kappa) m(\alpha-\kappa^2)=1\}$, then one has 
\begin{align*}
  \liminf_{n\rightarrow\infty} \sum_{\kappa\in{\rm Z}_n} \cF_2\biggl(\frac{\kappa}{\sqrt{\alpha}}\biggr) \geq \sum_{\kappa\in{\rm Z}'} \cF_2\biggl(\frac{\kappa}{\sqrt{\alpha}}\biggr).
\end{align*}
In fact, because the Weyl--Titchmarsh functions $m_n$ converge to $m$ by Proposition~\ref{propSMPcont}, every neighborhood of a $\kappa\in{\rm Z}'$ contains a point in ${\rm Z}_n$ for large enough $n$.
As a consequence, we get that for every finite subset $J\subseteq{\rm Z}'$ one has 
\begin{align*}
     \liminf_{n\rightarrow\infty} \sum_{\kappa\in{\rm Z}_n} \cF_2\biggl(\frac{\kappa}{\sqrt{\alpha}}\biggr) \geq \sum_{\kappa\in J} \cF_2\biggl(\frac{\kappa}{\sqrt{\alpha}}\biggr), 
\end{align*}
which gives the claimed bound. 
Together with the lower semi-continuity from Theorem~\ref{thm:entropyA}, this yields 
\begin{align}\begin{split}\label{eqnTFalphageq}
 &  \int_0^\infty (1+2\sqrt{\alpha}s)\biggl(\Wr(s)-\frac{s}{1+2\sqrt{\alpha}s}\biggr)^2ds +  \int_{[0,\infty)} (1+2\sqrt{\alpha}s)d\dip(s)  \\
  & \qquad\qquad\qquad\qquad\qquad \geq - \frac{1}{\pi} \int_{\R} \frac{\xi^2 \log T(\xi,0)}{(\xi^2+\alpha)^3}d\xi +  \frac{1}{4\alpha^{\nicefrac{3}{2}}} \sum_{\kappa\in{\rm Z}'} \cF_2\biggl(\frac{\kappa}{\sqrt{\alpha}}\biggr).
\end{split}\end{align}
It remains to see that this bound implies conditions~\ref{itmKSc2} and~\ref{itmKSc3}. 
To prove the former, let ${\rm E}=\{\eta>0\,|\, m(\alpha-\eta^2)=\infty\}$, so that the set ${\rm E}\cup\{\sqrt{\alpha}\}$ interlaces the set ${\rm Z}'$ because $m$ is a Herglotz--Nevanlinna function.
Due to monotonicity of the function $\cF_2$ (see~\eqref{eq:F2monot} in Appendix~\ref{app:F1F2}), this property and the bound on the sum in~\eqref{eqnTFalphageq} imply that 
\begin{align}\label{eqnLTE}
   \sum_{\eta\in{\rm E}} \cF_2\biggl(\frac{\eta}{\sqrt{\alpha}}\biggr) < \infty.
\end{align}
From the estimates~\eqref{eq:F2est01} and~\eqref{eq:F2est03}
for the function $\cF_2$, we conclude that 
\begin{align}\label{eqnLTproto}
  \sum_{\eta\in{\rm E} \atop \eta<\sqrt{\alpha}} \frac{16\eta^{3}}{3(\alpha-\eta^2)^{\nicefrac{3}{2}}} +  \sum_{\eta\in{\rm E} \atop \eta>\sqrt{\alpha}} \frac{16\alpha^{\nicefrac{3}{2}}}{3(\eta^2-\alpha)^{\nicefrac{3}{2}}} < \infty,
\end{align}
which proves condition~\ref{itmKSc2}. 
Because one has (see Theorem~\ref{th:SimonFactor} for example) 
\begin{align}\label{eqnTMint}
  \int_0^\infty \frac{\xi^2}{(\xi^2+\alpha)^3} \biggl|\log\biggl|m(\xi^2+\alpha+\I 0) - \frac{1}{\sqrt{\alpha}+\I\xi}\biggr| \biggr| d\xi < \infty, 
\end{align}
it follows from the bound on the integral on the right-hand side of~\eqref{eqnTFalphageq} that 
\begin{align}\label{eqnImintT}
 \int_0^\infty \frac{\xi^2}{(\xi^2+\alpha)^3} |\log(\im\, m(\xi^2+\alpha+\I0))| d\xi <\infty,
\end{align}
which yields condition~\ref{itmKSc3} after the transformation $\lambda=\xi^2+\alpha$. 
\end{proof} 

\begin{proof}[Proof of sufficiency]
 Suppose now that a Herglotz--Nevanlinna function $m$ satisfies the three conditions in Theorem~\ref{thm:KSforExB1} together with~\eqref{eqnKSmatzero} and let $(L,\omega,\dip)$ be the corresponding generalized indefinite string.
 Condition~\ref{itmKSc1} entails that the essential spectrum of $(L,\omega,\dip)$ is contained in $[\alpha,\infty)$ and that $L=\infty$ in view of Theorem~\ref{thm:m-at0}, so that we can use the notation of Section~\ref{secSbSsumrule}. 
 Condition~\ref{itmKSc2} implies~\eqref{eqnLTproto}, where ${\rm E}=\{\eta>0\,|\, m(\alpha-\eta^2)=\infty\}$. 
In view of the asymptotics in~\eqref{eqnF2asymzero} and~\eqref{eqnF2asyminfty} for the function $\cF_2$ we get~\eqref{eqnLTE}. 
The set ${\rm Z}'=\{\kappa>0\,|\, (\sqrt{\alpha}-\kappa) m(\alpha-\kappa^2)=1\}$ interlaces the set ${\rm E}\cup\{\sqrt{\alpha}\}$ because $m$ is a Herglotz--Nevanlinna function, which implies that 
\begin{align*}
   \sum_{\kappa\in{\rm Z}'} \cF_2\biggl(\frac{\kappa}{\sqrt{\alpha}}\biggr)  < \infty.
\end{align*} 
 Condition~\ref{itmKSc3} transforms into~\eqref{eqnImintT}, which gives 
  \begin{align*}
  - \int_\R \frac{\xi^2 \log T(\xi,0)}{(\xi^2+\alpha)^3} d\xi < \infty
 \end{align*}
 because of~\eqref{eqnTMint}. 
In particular, we see that conditions~\ref{itmKSc1},~\ref{itmKSc3} and~\eqref{eqnKSmatzero} guarantee that the requirements of Theorem~\ref{thm:traceflaA} and Proposition~\ref{proptraceflaAtrans} are satisfied. 
From the former, we get that for each $x\geq0$ one has the identity 
  \begin{align*}
  &   \int_0^x (1+2\sqrt{\alpha}s)\biggl(\Wr(s)-\frac{s}{1+2\sqrt{\alpha}s}\biggr)^2ds +  \int_{[0,x)} (1+2\sqrt{\alpha}s)d\dip(s)  \\
&  \qquad =   \frac{2}{\pi} \int_\R \frac{\xi^2 \log|a(\xi,x)|}{(\xi^2+\alpha)^3}d\xi  +  \frac{1}{4\alpha^{\nicefrac{3}{2}}} \lim_{\delta\downarrow0}  \sum_{\kappa\in{\rm Z}\atop\delta<\kappa<1/\delta} \cF_2\biggl(\frac{\kappa}{\sqrt{\alpha}}\biggr) - \sum_{\eta\in{\rm P}\atop\delta<\eta<1/\delta} \cF_2\biggl(\frac{\eta}{\sqrt{\alpha}}\biggr),
\end{align*}
where ${\rm Z} = \{\kappa>0\,|\, a(\I\kappa,x)=0\}$ and ${\rm P} = \{\eta>0\,|\, a(\I\eta,x)=\infty\}$.
Estimating the right-hand side by using that $2\log|a(\xi,x)|\leq -\log T(\xi,0)$ for almost all $\xi\in\R$, that ${\rm Z}\subseteq{\rm Z}'$ and that $\cF_2$ is non-negative, before letting $x\rightarrow\infty$, this becomes
  \begin{align}\begin{split}\label{eqnTFalphaleq}
  &   \int_0^\infty (1+2\sqrt{\alpha}s)\biggl(\Wr(s)-\frac{s}{1+2\sqrt{\alpha}s}\biggr)^2ds +  \int_{[0,\infty)} (1+2\sqrt{\alpha}s)d\dip(s)  \\
&  \qquad\qquad\qquad\qquad\qquad \leq  - \frac{1}{\pi} \int_\R \frac{\xi^2 \log T(\xi,0)}{(\xi^2+\alpha)^3} d\xi +  \frac{1}{4\alpha^{\nicefrac{3}{2}}} \sum_{\kappa\in{\rm Z}'} \cF_2\biggl(\frac{\kappa}{\sqrt{\alpha}}\biggr),
\end{split}\end{align}
 which concludes the proof of Theorem~\ref{thm:KSforExB1}. 
\end{proof} 

Notice that the trace formula underlying Theorem~\ref{thm:KSforExB1} is given by~\eqref{eqnTFalphageq} and~\eqref{eqnTFalphaleq}.
One of its crucial properties is that all the terms involved are always non-negative (since the transmission coefficient $T(\ledot,0)$ takes values in $[0,1]$ and the function $F_2$ is positive).  
We are also able to express this trace formula in terms of the spectral measure of the generalized indefinite string. 

\begin{corollary}\label{corTFalpham}
 If $\rho$ is the spectral measure of a generalized indefinite string $(L,\omega,\dip)$ with $L=\infty$ and~\eqref{eqnCondS1} for some constant $c\in\R$, then 
  \begin{align}\begin{split}\label{eqnTFalpham}
  &   \int_0^\infty (1+2\sqrt{\alpha}x)\biggl(\Wr(x)-c-\frac{x}{1+2\sqrt{\alpha}x}\biggr)^2dx +  \int_{(0,\infty)} (1+2\sqrt{\alpha}x)d\dip(x)  \\
&  \qquad\qquad = \int_\R \frac{d\rho(\lambda)}{\lambda^2} - \frac{1}{\pi} \int_{\alpha}^\infty \frac{\sqrt{\lambda-\alpha}}{\lambda^3}d\lambda +  \frac{1}{4\alpha^{\nicefrac{3}{2}}}  \sum_{\lambda\in \supp(\rho)\atop \lambda<\alpha} \cF_2\Biggl(\sqrt{1-\frac{\lambda}{\alpha}}\Biggr) \\
&  \qquad\qquad\qquad\qquad\qquad - \frac{1}{\pi} \int_{\alpha}^{\infty}  \frac{\sqrt{\lambda-\alpha}}{\lambda ^3} \log\biggl(\frac{\pi\lambda}{\sqrt{\lambda-\alpha}} \frac{d\rho_\ac(\lambda)}{d\lambda}\biggr) d\lambda.
\end{split}\end{align}
\end{corollary}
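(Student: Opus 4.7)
The plan is to extract~\eqref{eqnTFalpham} from the trace formula proved in Theorem~\ref{thm:KSforExB1} by rewriting the spectral side. First, observe that both sides of~\eqref{eqnTFalpham} are invariant under the shift $\Wr \mapsto \Wr - c$: the left-hand side manifestly, and the right-hand side because by Remark~\ref{rem:uniquerho} the spectral measure $\rho$ is unaffected by adding a constant to $\Wr$. Thus we may assume $c = 0$, so that by Corollary~\ref{cor:m-at0}~\ref{iCor:m-at0forA} one has $m(0) = 1/(2\sqrt{\alpha})$ and Theorem~\ref{thm:traceflaA} applies.

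Combining the inequalities~\eqref{eqnTFalphageq} and~\eqref{eqnTFalphaleq} established in Section~\ref{secKSB1proof} gives the equality that the left-hand side of~\eqref{eqnTFalpham} equals
\begin{align*}
-\frac{1}{\pi}\int_\R \frac{\xi^2 \log T(\xi,0)}{(\xi^2+\alpha)^3}\,d\xi + \frac{1}{4\alpha^{\nicefrac{3}{2}}}\sum_{\kappa\in Z'}\cF_2\biggl(\frac{\kappa}{\sqrt{\alpha}}\biggr),
\end{align*}
where $Z' = \{\kappa>0 : (\sqrt{\alpha}-\kappa)m(\alpha-\kappa^2)=1\}$. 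Substituting the definition~\eqref{eqnTdef} of $T$, using the Herglotz boundary identity $\im\, m(\lambda+\I 0) = \pi\, d\rho_\ac/d\lambda$ a.e.\ on $(\alpha,\infty)$, and changing variables via $\lambda = \xi^2 + \alpha$, one obtains the decomposition
\begin{align*}
-\frac{1}{\pi}\int_\R \frac{\xi^2 \log T(\xi,0)}{(\xi^2+\alpha)^3}d\xi &= -\frac{1}{\pi}\int_\alpha^\infty \frac{\sqrt{\lambda-\alpha}}{\lambda^3}\log\biggl(\frac{\pi\lambda}{\sqrt{\lambda-\alpha}}\frac{d\rho_\ac}{d\lambda}\biggr)d\lambda \\
&\qquad + \frac{2}{\pi}\int_\alpha^\infty \frac{\sqrt{\lambda-\alpha}}{\lambda^3}\log|g(\sqrt{\lambda-\alpha})|\,d\lambda - C(\alpha),
\end{align*}
where $g(k) = m(k^2+\alpha) - 1/(\sqrt{\alpha}+\I k)$ and $C(\alpha) = \frac{1}{\pi}\int_\alpha^\infty \frac{\sqrt{\lambda-\alpha}}{\lambda^3}\log(4(\lambda-\alpha)/\lambda^2)\,d\lambda$ absorbs the algebraic prefactor in~\eqref{eqnTdef}.

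The remaining task is to convert the boundary integral of $\log|g|$ together with the $Z'$-sum into the eigenvalue sum plus $\int d\rho/\lambda^2$. To this end one applies a Jensen-type trace formula (from Appendix~\ref{appMeroMain}) to the auxiliary meromorphic function $h(k) = (\sqrt{\alpha}+\I k)m(k^2+\alpha) - 1$ on the upper half-plane. The function $h$ is holomorphic at $k = \I\sqrt{\alpha}$ with $h(\I\sqrt{\alpha}) = -1$, has zeros on the positive imaginary axis precisely at $\I\kappa$ for $\kappa \in Z'$, and has poles on the positive imaginary axis at $\I\sqrt{\alpha-\lambda}$ for eigenvalues $\lambda \in \supp(\rho) \cap (-\infty, \alpha)$. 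The interior contribution at $k = \I\sqrt{\alpha}$ arises from the second-order Taylor expansion of $h$ there and, via the Herglotz integral representation yielding $\dot m(0) = c_1 + \int d\rho(\lambda)/\lambda^2$, supplies exactly the $\int d\rho/\lambda^2$ term.

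The main obstacle is performing this Jensen-type factorization rigorously and tracking all explicit constants. In particular, matching the term $-\frac{1}{\pi}\int_\alpha^\infty \sqrt{\lambda-\alpha}/\lambda^3\,d\lambda$ in~\eqref{eqnTFalpham} against $-C(\alpha)$ together with the constant piece of the interior contribution is delicate bookkeeping; consistency is dictated by the unperturbed model (Example~\ref{exaalpha}), where both sides of~\eqref{eqnTFalpham} must collapse to zero, making the elementary relation $\frac{1}{\pi}\int_\alpha^\infty \sqrt{\lambda-\alpha}/\lambda^3\,d\lambda = 1/(8\alpha^{\nicefrac{3}{2}}) = \dot m_\alpha(0)$ the key sanity check.
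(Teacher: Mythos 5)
Your proposal is correct and follows essentially the same route as the paper: reduce to $c=0$ (the spectral measure being insensitive to adding $c\delta_0$ to $\omega$), use the equality form of the trace formula~\eqref{eqnTFalphageq}--\eqref{eqnTFalphaleq}, and then apply the trace identity~\eqref{eq:Gtrace02} of Theorem~\ref{thm:FactorMain} to the auxiliary function $b(k)=1-(\sqrt{\alpha}+\I k)m(k^2+\alpha)$ (your $h=-b$), whose zeros give ${\rm Z}'$, whose poles give $\supp(\rho)\cap(-\infty,\alpha)$, and whose second-order expansion at $\I\sqrt{\alpha}$ produces $4\sqrt{\alpha}\dot{m}(0)+m(0)^2-m(0)/\sqrt{\alpha}$ and hence the $\int d\rho/\lambda^2$ term via $\dot{m}(0)=\dip(\{0\})+\int d\rho(\lambda)/\lambda^2$. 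The only bookkeeping point worth making explicit is that the $\dip(\{0\})$ contribution in $\dot{m}(0)$ is what turns the integral over $[0,\infty)$ in the trace formula into the integral over $(0,\infty)$ in~\eqref{eqnTFalpham}.
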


\begin{proof}
 Because $\rho$ is also the spectral measure of the generalized indefinite string $(L,\omega-c\delta_0,\dip)$, where $\delta_0$ is the unit Dirac measure centered at zero, we may assume that $c=0$. 
  Since in this case one has~\eqref{eqnKSmatzero}, we know from the proof of Theorem~\ref{thm:KSforExB1} that the trace formula given by~\eqref{eqnTFalphageq} and~\eqref{eqnTFalphaleq} holds with equality. 
  By applying identity~\eqref{eq:Gtrace02} from Theorem~\ref{thm:FactorMain} to the meromorphic function $b$ on $\C_+$ defined by 
  \begin{align*}
    b(k) = 1 - (\sqrt{\alpha}+\I k)m(k^2+\alpha) = \frac{m(k^2+\alpha) - \frac{1}{\sqrt{\alpha}+\I k}}{-\frac{1}{\sqrt{\alpha}+\I k}}, 
  \end{align*}
  we also obtain the relation  
  \begin{align}\begin{split}\label{eqnTFalphaaux}
   &  4\sqrt{\alpha}\dot{m}(0) + m(0)^2 - \frac{1}{\sqrt{\alpha}} m(0) \\
    & \qquad = \frac{4\sqrt{\alpha}}{\pi} \int_{\R} \frac{\xi^2}{(\xi^2+\alpha)^3}  \log\Biggl((\xi^2+\alpha)\biggl|m(\xi^2+\alpha+\I 0) - \frac{1}{\sqrt{\alpha}+\I |\xi|}\biggr|^2\Biggr) d\xi   \\
    & \qquad\qquad\qquad\qquad +  \frac{1}{\alpha}\lim_{\delta\downarrow0}  \sum_{\kappa\in{\rm Z}\atop\delta<\kappa<1/\delta} \cF_2\biggl(\frac{\kappa}{\sqrt{\alpha}}\biggr) - \sum_{\eta\in{\rm P}\atop\delta<\eta<1/\delta} \cF_2\biggl(\frac{\eta}{\sqrt{\alpha}}\biggr),
   \end{split}\end{align}
  where ${\rm Z} = \{\kappa>0\,|\, b(\I\kappa)=0\}$ and ${\rm P} = \{\eta>0\,|\, b(\I\eta)=\infty\}$.
  The set ${\rm Z}$ actually coincides with the set ${\rm Z}'$ in the trace formula given by~\eqref{eqnTFalphageq} and~\eqref{eqnTFalphaleq}, which in particular entails that we may omit the limit in~\eqref{eqnTFalphaaux} as the sums converge individually. 
  Moreover, the set ${\rm P}$ corresponds to the poles of $m$ below $\alpha$ and thus to the points in $\supp(\rho)$ below $\alpha$. 
  After replacing the transmission coefficient in the trace formula given by~\eqref{eqnTFalphageq} and~\eqref{eqnTFalphaleq} with the expression in~\eqref{eqnTdef}, we can employ the relation in~\eqref{eqnTFalphaaux} and the facts that (for the former, use the integral representation~\eqref{eqnWTmIntRep} and recall that $c_1=\dip(\{0\})$; see~\cite[Lemma~7.1]{IndefiniteString})
  \begin{align*}
    \dot{m}(0) & = \dip(\{0\}) + \int_\R \frac{d\rho(\lambda)}{\lambda^2},  & m(0) & = \frac{1}{2\sqrt{\alpha}}, 
  \end{align*} 
  to arrive at the formula in~\eqref{eqnTFalpham}. 
\end{proof}

We conclude this section with a Lieb--Thirring inequality for generalized indefinite strings that follows readily from the trace formula in Corollary~\ref{corTFalpham}.

\begin{corollary}\label{cor:LT01new}
If $\rho$ is the spectral measure of a generalized indefinite string $(L,\omega,\dip)$ with $L=\infty$ and~\eqref{eqnCondS1} for some constant $c\in\R$, then 
\begin{align}\label{eq:LT01new}
\begin{split}
  & \sum_{\lambda\in \supp(\rho) \atop \lambda<0} \frac{1}{|\lambda|^{\nicefrac{3}{2}}} + \frac{1}{\alpha^3} \sum_{\lambda\in \supp(\rho)\atop 0<\lambda<\alpha}(\alpha-\lambda)^{\nicefrac{3}{2}}  \leq   \frac{3}{4} \int_{(0,\infty)} (1+2\sqrt{\alpha}x)d\dip(x) \\
  & \qquad\qquad\qquad\qquad\qquad + \frac{3}{4} \int_0^\infty (1+2\sqrt{\alpha}x)\biggl(\Wr(x)-c-\frac{x}{1+2\sqrt{\alpha}x}\biggr)^2dx.
     \end{split}  
\end{align}
\end{corollary}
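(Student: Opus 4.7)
The plan is to deduce the inequality \eqref{eq:LT01new} directly from the trace formula \eqref{eqnTFalpham} of Corollary~\ref{corTFalpham} by bounding the right-hand side from below via two manifestly non-negative contributions and then applying pointwise lower bounds on the explicit function $\cF_2$. The first task is to show that the three terms in \eqref{eqnTFalpham} supported on $[\alpha,\infty)$, namely the part of $\int_\R d\rho(\lambda)/\lambda^2$ coming from $[\alpha,\infty)$ together with the two $\lambda$-integrals from $\alpha$ to $\infty$, combine to a non-negative quantity. Writing $w(\lambda) = \frac{1}{\pi}\sqrt{\lambda-\alpha}/\lambda$ for the density of the unperturbed spectral measure from Example~\ref{exaalpha} and using the trivial bound $d\rho \geq d\rho_\ac$, this reduces to integrating the elementary Young-type inequality $a - b - b\log(a/b)\geq 0$ (valid for $a\geq 0$, $b>0$, with the natural conventions) pointwise against $d\lambda/\lambda^2$ with $a = d\rho_\ac/d\lambda$ and $b = w(\lambda)$.

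In addition, the point contributions $\rho(\{\lambda\})/\lambda^2$ from eigenvalues $\lambda<\alpha$ that also appear in $\int_\R d\rho/\lambda^2$ are non-negative and may be discarded, leaving
\begin{align*}
 \text{LHS of \eqref{eqnTFalpham}} \;\geq\;  \frac{1}{4\alpha^{\nicefrac{3}{2}}} \sum_{\lambda\in\supp(\rho),\,\lambda<\alpha} \cF_2\Bigl(\sqrt{1-\lambda/\alpha}\Bigr).
\end{align*}
To conclude \eqref{eq:LT01new}, it then remains to invoke the elementary pointwise estimates
\begin{align*}
 \cF_2(s) \geq \frac{16\, s^3}{3} \text{ for } 0<s<1, \qquad \cF_2(s) \geq \frac{16}{3(s^2-1)^{\nicefrac{3}{2}}} \text{ for } s>1,
\end{align*}
which should be available from Appendix~\ref{app:F1F2} (analogous lower bounds are in fact used implicitly in the passage from sums of $\cF_2$ values to Lieb--Thirring-type sums in the proof of Theorem~\ref{thm:KSforExB1}). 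After the substitution $s = \sqrt{1-\lambda/\alpha}$, which gives $\alpha-\lambda = \alpha s^2$ when $0<\lambda<\alpha$ and $|\lambda| = \alpha(s^2-1)$ when $\lambda<0$, these bounds translate into $\frac{1}{4\alpha^{\nicefrac{3}{2}}}\cF_2(s) \geq \frac{4}{3}(\alpha-\lambda)^{\nicefrac{3}{2}}/\alpha^3$ for $0<\lambda<\alpha$ and $\frac{1}{4\alpha^{\nicefrac{3}{2}}}\cF_2(s) \geq \frac{4}{3}|\lambda|^{-\nicefrac{3}{2}}$ for $\lambda<0$; summing these per-eigenvalue inequalities and multiplying by $3/4$ yields \eqref{eq:LT01new}.

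The proof carries no genuine analytic difficulty beyond the trace formula itself, which already does all the heavy lifting. The only point requiring some care is the Young-type step used in the first paragraph: one has to accommodate the possibility that $d\rho_\ac/d\lambda$ vanishes on a set of positive Lebesgue measure in $(\alpha,\infty)$, but on such a set the integrand $-w(\lambda)\lambda^{-2}\log((d\rho_\ac/d\lambda)/w(\lambda))$ simply equals $+\infty$ with the natural conventions, and the desired non-negativity is immediate.
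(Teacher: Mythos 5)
Your proposal is correct and follows essentially the same route as the paper: the paper likewise derives \eqref{eq:LT01new} from the trace formula \eqref{eqnTFalpham} by observing that the first, second and last terms on its right-hand side sum to something non-negative (via $r-1-\log r\geq 0$, i.e.\ your Young-type inequality with $r=a/b$), and then applies the lower bounds \eqref{eq:F2est01} and \eqref{eq:F2est03} on $\cF_2$ (your slightly weaker bound $\cF_2(s)\geq 16s^3/3$ on $(0,1)$ follows from \eqref{eq:F2est03} and still suffices for the stated constant $\alpha^{-3}$).
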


\begin{proof}
Because the sum of the first, the second and the last term on the right-hand side of~\eqref{eqnTFalpham} is non-negative (to see this, it suffices to use that $r-1-\log(r) \geq 0$ whenever $r>0$), we get that 
\begin{align*}
\frac{1}{4\alpha^{\nicefrac{3}{2}}}  \sum_{\lambda\in \supp(\rho)\atop \lambda<\alpha} \cF_2\Biggl(\sqrt{1-\frac{\lambda}{\alpha}}\Biggr) & \leq 
\int_0^\infty (1+2\sqrt{\alpha}x)\biggl(\Wr(x)-c-\frac{x}{1+2\sqrt{\alpha}x}\biggr)^2dx \\
  & \qquad\qquad\qquad\qquad\qquad + \int_{(0,\infty)} (1+2\sqrt{\alpha}x)d\dip(x).
\end{align*}
The claimed inequality then follows by taking~\eqref{eq:F2est01} and~\eqref{eq:F2est03} into account.
\end{proof}

\begin{remark}
  The constants in the Lieb--Thirring inequality~\eqref{eq:LT01new} are sharp.
  For instance, one can see this by considering particular generalized indefinite strings with spectral measure given by 
  \begin{align}
 \rho(B) = \frac{1}{\pi} \int_{B\cap [\alpha,\infty)}\frac{\sqrt{\lambda-\alpha}}{\lambda}d\lambda + \gamma\delta_{\lambda_0}(B),
  \end{align}
 where $\gamma>0$ and $\lambda_0\in(-\infty,0)\cup(0,\alpha)$ are parameters. 
 From the trace formula~\eqref{eqnTFalpham}, we know that the right-hand side of~\eqref{eq:LT01new} is then given by  
\begin{align}
 \frac{3\gamma}{4\lambda_0^2} + \frac{3}{16\alpha^{\nicefrac{3}{2}}}  \cF_2\Biggl(\sqrt{1-\frac{\lambda_0}{\alpha}}\Biggr).
\end{align}
A comparison with the left-hand side of~\eqref{eq:LT01new} 
in the limit as $\gamma\rightarrow0$ and $\lambda_0\rightarrow\alpha$ or $\lambda_0\rightarrow-\infty$, also using the asymptotics of the function $\cF_2$ in~\eqref{eqnF2asymzero} and~\eqref{eqnF2asyminfty}, shows that the constants in the Lieb--Thirring inequality are indeed best possible.  
\end{remark}

 \section{Relative trace formulas, \ref*{thm:KSforExB2}}\label{secSbSsumrule2}

 In order to start the proof of Theorem~\ref{thm:KSforExB2}, fix $\beta>0$ and consider the rational function $\zeta$ defined by 
 \begin{align}
   \zeta(k) & = \beta \frac{1+k^2}{1-k^2} =  \beta \frac{1-|k|^4}{|1-k^2|^2} + 4\I \beta \frac{\re\,k\,\im\,k}{|1-k^2|^2}. 
 \end{align}
 Its decisive property is that $\zeta$ maps the upper complex half-plane $\C_+$ conformally onto $\C_+\cup(-\beta,\beta)\cup\C_-$. 
 More precisely, it maps the first complex quadrant onto the upper complex half-plane $\C_+$, the second complex quadrant onto the lower complex half-plane $\C_-$ and the positive imaginary axis onto the interval $(-\beta,\beta)$.
 Furthermore, we note that $\zeta(\I)=0$ and that the derivatives of $\zeta$ are given by 
 \begin{align}
   \dot{\zeta}(k) & = 4 \beta   \frac{k}{(1-k^2)^2}, & \ddot{\zeta}(k) & = 4 \beta  \frac{1+3 k^2}{(1-k^2)^3}.
 \end{align}
 
For a generalized indefinite string $(L,\omega,\dip)$ with $L=\infty$ and essential spectrum contained in $(-\infty,-\beta]\cup[\beta,\infty)$, the associated Weyl--Titchmarsh function $m$ has a meromorphic extension to $\C_+\cup(-\beta,\beta)\cup\C_-$ that is analytic at zero. 
We consider the Weyl solution $\psi(k,\redot)$ of the differential equation~\eqref{eqnDEho} with $z=\zeta(k)$ defined by 
 \begin{align}\label{eq:WeylSolB}
    \psi(k,x) &= \theta(z,x) + zm(z)\phi(z,x) 
 \end{align}
 for all $k\in\C_+$ such that $z$ is not a pole of $m$.  
 With these solutions, we are able to define the {\em relative Wronskian} $a(\ledot,x)$ for each $x\geq0$ by 
 \begin{align}\label{def:a-functionB}
   a(k,x) = \frac{W(k,0)}{W(k,x)}, 
 \end{align}
 where the functions $W(\ledot,x)$ are given by 
 \begin{align}
   W(k,x) =  (1+2\beta x)^{-\frac{\I k}{1-k^2}+\frac{1}{2}}\biggl(\frac{1}{1+2\beta x} \frac{1-\I k}{1+\I k}\zeta(k)\psi(k,x)-\psi^\qd(k,x)\biggr).
 \end{align}
Because the functions $\psi(\ledot,x)$ and $\psi^\qd(\ledot,x)$ are meromorphic on $\C_+$, so are the functions $W(\ledot,x)$ and $a(\ledot,x)$  with $W(\I,x)=2\beta$ and $a(\I,x)=1$. 

 \begin{proposition}\label{prop:a-decompB}
   Let $\beta>0$ and let $(L,\omega,\dip)$ be a generalized indefinite string with essential spectrum contained in $(-\infty,-\beta]\cup[\beta,\infty)$ and $m(0)=0$.
  Then the relative Wronskian $a(\ledot,x)$ satisfies   
 \begin{align}\label{eq:a-decomp02B}   
 \dot{a}(\I,x) = \I\beta \int_0^x \Wr(s) ds 
  \end{align}
   for every $x\geq0$, as well as the identity
  \begin{align}\label{eq:a-decomp03B}
     \begin{split} 
  &  \ddot{a}(\I,x) - \dot{a}(\I,x)^2 - \I \dot{a}(\I,x) \\
 & \qquad =  \beta \int_0^x (1+2\beta s)\Wr(s)^2 ds + \beta \int_{[0,x)} (1+2\beta s)d\dip(s) - \beta\int_0^x \frac{1}{1+2\beta s}ds. 
 \end{split}
 \end{align} 
 \end{proposition}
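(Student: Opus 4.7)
The plan is to follow the strategy of the proof of Proposition~\ref{prop:a-decomp}, replacing the affine substitution $z = k^2 + \alpha$ by the rational substitution $z = \zeta(k)$ and evaluating everything at $k = \I$, where $\zeta(\I) = 0$, $\dot\zeta(\I) = \I\beta$ and $\ddot\zeta(\I) = -\beta$. The hypothesis $m(0) = 0$ forces $L = \infty$ by Theorem~\ref{thm:m-at0}, making the setup of this section well-defined, and also causes all boundary terms of the form $m(0)\phi(0,x) = 0$ to vanish when taking $k$-derivatives of $\psi(k,x) = \theta(z,x) + z m(z)\phi(z,x)$ at $z = 0$.

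I would first compute $\psi(\I,x)$, $\psi^\qd(\I,x)$ and their first two $k$-derivatives via the chain rule, using~\eqref{eqncsatzero} and Proposition~\ref{propFS}. This yields $\psi(\I,x) = 1$, $\psi^\qd(\I,x) = 0$, $\dot\psi(\I,x) = -\I\beta \int_0^x \Wr(s)\,ds$ and $\dot\psi^\qd(\I,x) = 0$, while $\ddot\psi(\I,x)$ and $\ddot\psi^\qd(\I,x)$ will depend on $\ddot\theta(0,x)$, $\ddot\theta^\qd(0,x)$ and the a priori unknown $\dot m(0)$.

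Next, I would factor $W(k,x) = E(k,x) B(k,x)$ with $E(k,x) = (1+2\beta x)^{-\I k/(1-k^2) + 1/2}$ and $B(k,x) = U(k)\psi(k,x)/(1+2\beta x) - \psi^\qd(k,x)$, where $U(k) = \beta(1 - \I k)^2/(1 - k^2)$ arises from the quantity $(1-\I k)\zeta(k)/(1+\I k)$ after cancelling $1+k^2 = (1+\I k)(1-\I k)$. Short calculations give at $k = \I$ the values $E = 1+2\beta x$, $\dot E = 0$, $\ddot E = (1+2\beta x)\log(1+2\beta x)/2$ and $U = 2\beta$, $\dot U = 0$, $\ddot U = \beta$. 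The vanishing of $\dot E(\I,x)$ and $\dot U(\I)$ is the crucial algebraic simplification: it reduces $\dot W(\I,x)$ to $-2\I\beta^2 \int_0^x \Wr\,ds$, after which~\eqref{eq:a-decomp02B} is immediate from $\dot a(\I,x) = [\dot W(\I,0) - \dot W(\I,x)]/W(\I,x)$.

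For~\eqref{eq:a-decomp03B} I would compute $\ddot W(\I,x) = \beta\log(1+2\beta x) + (1+2\beta x)\ddot B(\I,x)$ and insert it into the identity
\begin{equation*}
\ddot a(\I,x) - \dot a(\I,x)^2 = \frac{\ddot W(\I,0) - \ddot W(\I,x)}{2\beta} + \frac{\dot W(\I,x)^2}{4\beta^2},
\end{equation*}
obtained by twice differentiating $a = W(\ledot,0)/W(\ledot,x)$ at $k = \I$. The main obstacle is the ensuing bookkeeping: one applies Fubini to the iterated integrals in $\ddot\theta(0,x)$ in order to recombine the $\Wr^2$ and $d\dip$ contributions with the factors of $(1+2\beta x)^{-1}$ coming from $B$ into the weighted integrals $\int_0^x (1+2\beta s)\Wr(s)^2\,ds$ and $\int_{[0,x)}(1+2\beta s)\,d\dip(s)$; rewrites the logarithmic remainder via $\beta\log(1+2\beta x) = 2\beta^2 \int_0^x ds/(1+2\beta s)$; verifies that all occurrences of the unknown $\dot m(0)$ cancel identically, which serves as a consistency check; and absorbs a residual $\Wr$-linear piece into the correction $-\I \dot a(\I,x) = -\beta \int_0^x \Wr\,ds$ appearing on the left-hand side of~\eqref{eq:a-decomp03B}. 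Beyond this algebraic bookkeeping, no conceptually new ingredient is needed compared to Proposition~\ref{prop:a-decomp}.
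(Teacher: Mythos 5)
Your proposal is correct and follows essentially the same route as the paper: both compute the $k$-derivatives of $\psi$, $\psi^\qd$ and $W$ at $k=\I$ via the chain rule through $\zeta$ (using $\zeta(\I)=0$, $\dot\zeta(\I)=\I\beta$, $\ddot\zeta(\I)=-\beta$) together with the expansions of Proposition~\ref{propFS}, arrive at $\dot W(\I,x)=-2\I\beta^2\int_0^x\Wr$, and then recombine the iterated integrals in $\ddot\theta(0,x)$ with the $(1+2\beta x)^{-1}$ factors so that the $\dot m(0)$ contributions cancel in the difference $\ddot W(\I,0)-\ddot W(\I,x)$. The explicit factorization $W=EB$ with $U(k)=\beta(1-\I k)^2/(1-k^2)$ and the observation $\dot E(\I,x)=\dot U(\I)=0$ is merely a tidier bookkeeping device for the same computation, and all the values and identities you list check out.
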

 
 \begin{proof}
  The claims follow again from calculations using the definition of $a$ and the expressions in Proposition~\ref{propFS}.  
 To begin with, we compute 
 \begin{align*}
   \dot{\psi}(\I,x) &  = - \I\beta \int_0^x \Wr(s) ds, \\
   \ddot{\psi}(\I,x) & =  \beta\int_0^x \Wr(s)ds - \beta^2\biggl(\int_0^x \Wr(s)ds\biggr)^2  - 2\beta^2\dot{m}(0)x  \\
   & \qquad\qquad\qquad\qquad + 2\beta^2 \int_0^x \int_0^s \Wr(t)^2 dt\,ds + 2\beta^2 \int_0^x \int_{[0,s)} d\dip\,ds.
 \end{align*}
 Furthermore, for the quasi-derivative of $\psi$, one gets $\dot{\psi}^\qd(\I,x)=0$ and 
 \begin{align*}
   \ddot{\psi}^\qd(\I,x) & =  2\beta^2\int_0^x \Wr(s)^2ds + 2\beta^2\int_{[0,x)}d\dip - 2\beta^2\dot{m}(0).
 \end{align*}
We next compute the derivative for the function $W$ and get 
 \begin{align*}
   \dot{W}(\I,x) & =  - 2\I\beta^2 \int_0^x \Wr(s)ds,
       \end{align*}
 which immediately gives~\eqref{eq:a-decomp02B}. 
 For the second derivative, one has  
 \begin{align*}
  &  \frac{\ddot{W}(\I,x)}{W(\I,x)} - \frac{\dot{W}(\I,x)^2}{W(\I,x)^2} -\I \frac{\dot{W}(\I,x)}{W(\I,x)} \\
  & \qquad = -\beta \int_0^x (1+2\beta s)\Wr(s)^2 ds  - \beta \int_{[0,x)} (1+2\beta s)d\dip(s)\\
     & \qquad\qquad\qquad\qquad\qquad\qquad\qquad +\frac{1}{2}+\frac{\log(1+2\beta s)}{2} +\beta\dot{m}(0),
\end{align*}
 which readily yields the remaining identity~\eqref{eq:a-decomp03B}.
  \end{proof}
   
 Before we state the relative trace formulas in this case, remember that we denote with $\varrho$ the (positive) square root of the Radon--Nikod\'ym derivative of $\dip$ with respect to the Lebesgue measure and with $\dip_\sing$ the singular part of $\dip$.
 We also continue to use the two functions $\cF_1$ and $\cF_2$ as defined by~\eqref{eq:Fsdef} in Section~\ref{secSbSsumrule}. 

 \begin{theorem}[Relative trace formulas]\label{thm:traceflaB}
Let $\beta>0$ and let $(L,\omega,\dip)$ be a generalized indefinite string with essential spectrum contained in $(-\infty,-\beta]\cup[\beta,\infty)$ and $m(0)=0$. 
Then for every $x\geq0$, the limit $a(\xi,x) = \lim_{\varepsilon\downarrow 0}a(\xi+\I\varepsilon,x)$ exists and is nonzero for almost all $\xi \in\R$, satisfies
  \begin{align}\label{eq:AlogSzegoB}
      \int_{\R} \frac{|\log|a(\xi,x)||}{1+\xi^2}d\xi <\infty
   \end{align} 
 and the identities 
   \begin{align}\label{eq:traceflaB00}
      \begin{split}
     &   \int_0^x \biggl(\frac{1}{1+2\beta s} -\varrho(s)\biggr) ds  \\
     & \qquad =   \frac{1}{\beta\pi} \int_{\R} \frac{\log|a(\xi,x)|}{1+\xi^2}d\xi +  \frac{1}{\beta}\lim_{\delta\downarrow0} \sum_{\kappa\in{\rm Z}\atop\delta<\kappa<1/\delta} \log\biggl|\frac{\kappa-1}{\kappa+1}\biggr| - \sum_{\eta\in{\rm P}\atop\delta<\eta<1/\delta} \log\biggl|\frac{\eta-1}{\eta+1}\biggr|,
     \end{split}
\\ \label{eq:traceflaB01}
      \begin{split}
     &  \int_0^x \Wr(s) ds  \\
     & \qquad =   \frac{1}{\beta\pi} \int_{\R} \frac{(1-\xi^2)\log|a(\xi,x)|}{(1+\xi^2)^2}d\xi + \frac{1}{\beta}\lim_{\delta\downarrow0} \sum_{\kappa\in{\rm Z}\atop\delta<\kappa<1/\delta} \frac{2\kappa}{\kappa^2-1} - \sum_{\eta\in{\rm P}\atop\delta<\eta<1/\delta} \frac{2\eta}{\eta^2-1},
     \end{split}
\\ \label{eq:traceflaB02}
  \begin{split}
  &  \int_0^x (1+2\beta s)\Wr(s)^2 ds  + \int_{0}^{x} (1+2\beta s)\biggl(\varrho(s)-\frac{1}{1+2\beta s}\biggr)^2 ds +  \int_{[0,x)} (1+2\beta s)d\dip_{\sing}(s)  \\
  & \qquad = \frac{8}{\beta\pi} \int_{\R} \frac{\xi^2 \log|a(\xi,x)|}{(1+\xi^2)^3}d\xi  +  \frac{1}{\beta}\lim_{\delta\downarrow0}  \sum_{\kappa\in{\rm Z}\atop\delta<\kappa<1/\delta} \cF_2(\kappa) - \sum_{\eta\in{\rm P}\atop\delta<\eta<1/\delta} \cF_2(\eta),
\end{split}
\end{align}
  hold, where ${\rm Z} = \{\kappa>0\,|\, a(\I\kappa,x)=0\}$ and ${\rm P} = \{\eta>0\,|\, a(\I\eta,x)=\infty\}$.
 \end{theorem}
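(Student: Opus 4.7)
The argument closely mirrors the proof of Theorem~\ref{thm:traceflaA}, with the biholomorphism $z=\zeta(k)$ from $\C_+$ onto $\C_+\cup(-\beta,\beta)\cup\C_-$ replacing the 2-to-1 map $z=k^2+\alpha$, and the reference point $k=\I$ (at which $\zeta=0$) replacing $k=\I\sqrt{\alpha}$. The crucial sign property that is still available is that $M(k,x)=\psi^\qd(k,x)/(\zeta(k)\psi(k,x))$ is Herglotz in $z=\zeta(k)$, so $\im\,M(k,x)$ has the same sign as $\im\,\zeta(k)$ on each quadrant of $\C_+$: positive on the first, negative on the second.

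My first step would be to factor
\begin{align*}
W(k,x) = -(1+2\beta x)^{-\frac{\I k}{1-k^2}+\frac{1}{2}}\,\zeta(k)\psi(k,x)\,\biggl(M(k,x)-\frac{1-\I k}{(1+\I k)(1+2\beta x)}\biggr),
\end{align*}
write $a(\cdot,x)$ as the quotient of the corresponding factors at $x$ and at $0$, and verify -- by exactly the three observations~(a)--(c) from the first part of the proof of Theorem~\ref{thm:traceflaA} -- that every zero and pole of $a(\cdot,x)$ is simple and lies on the positive imaginary axis. Note here that $\im\bigl((1-\I k)/(1+\I k)\bigr)=-2\re\,k/|1-\I k|^2$ has the opposite sign to $\im\,\zeta(k)$ on each quadrant, so the inner bracket above has imaginary part of the same sign as $\im\,\zeta(k)$. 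Comparing $a(\cdot,x)$ further with the explicitly known Weyl solution $\psi_\beta(k,x)=(1+2\beta x)^{\I k/(1-k^2)+1/2}$ of the unperturbed string from Example~\ref{exa2alpha} displays the exponential and power factors cleanly, putting $a(\cdot,x)$ in the shape required by the factorization/trace formula result of Appendix~\ref{appMeroMainII}, adapted to the reference point $k=\I$. That result then delivers both the Blaschke--outer decomposition of $a(\cdot,x)$ on $\C_+$ -- hence the a.e.\ existence of nonzero boundary values and the Szeg\H{o}-type integrability~\eqref{eq:AlogSzegoB} -- and three numerical trace identities, corresponding respectively to an order-zero invariant of the factorization, the first logarithmic derivative $\dot{a}(\I,x)$, and the second-order combination $\ddot{a}(\I,x)-\dot{a}(\I,x)^2-\I\dot{a}(\I,x)$. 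The expressions~\eqref{eq:a-decomp02B} and~\eqref{eq:a-decomp03B} from Proposition~\ref{prop:a-decompB} identify the latter two with the left-hand sides of~\eqref{eq:traceflaB01} and~\eqref{eq:traceflaB02} respectively.

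The main obstacle is the extra identity~\eqref{eq:traceflaB00}, which has no counterpart in Theorem~\ref{thm:traceflaA}: it records a factorization invariant that drops out of the computation in the $z=k^2+\alpha$ setting but is genuinely present here. I would compute this invariant by tracking the contribution of the normalizing factor $(1+2\beta x)^{-\I k/(1-k^2)+1/2}$ together with the Cartwright exponential type of $\psi(\cdot,x)$, which by Theorem~\ref{thm:FSScartwright} is controlled by $\int_0^x \varrho(s)ds$ in the $z$-variable and then transported to the $k$-variable via $z=\zeta(k)$. Matching against the corresponding quantity for $\psi_\beta$ -- for which $\varrho_\beta(s)=(1+2\beta s)^{-1}$ -- produces exactly $\beta\int_0^x \bigl((1+2\beta s)^{-1}-\varrho(s)\bigr)ds$, which combined with the Blaschke and Poisson contributions read off from the factorization of $a(\cdot,x)$ yields~\eqref{eq:traceflaB00}.
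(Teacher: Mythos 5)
Your proposal follows essentially the same route as the paper's proof: the same factorization of $W(k,x)$ through $G(k,x)=M(k,x)-\frac{1-\I k}{(1+\I k)(1+2\beta x)}$, the same three observations guaranteeing that zeros and poles of $a(\ledot,x)$ are simple and lie on the imaginary axis, reduction to the factorization result of Appendix~\ref{appMeroMainII} by comparison with the model string of Example~\ref{exa2alpha}, and identification of the derivatives at $k=\I$ via Proposition~\ref{prop:a-decompB}. The paper makes your ``exponential type'' step for~\eqref{eq:traceflaB00} precise by writing $a(k,x)=\frac{\phi(\zeta(k),x)}{\phi_\beta(\zeta(k),x)}\frac{G(k,0)}{G(k,x)}\frac{G_+(k)}{G_-(k)}$ with \emph{entire} Cartwright factors $\phi$, $\phi_\beta$ (rather than appealing to a type of the meromorphic $\psi$), so that the type difference $\int_0^x\frac{ds}{1+2\beta s}-\int_0^x\varrho(s)\,ds$ producing~\eqref{eq:traceflaB00} appears exactly as the $\tau_--\tau_+$ terms in Theorem~\ref{thm:FactorMainB}.
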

 
 \begin{proof} 
  We first note that the function $a(\ledot,x)$ can be written as 
   \begin{align*} 
     a(k,x) & =  (1+2\beta x)^{\frac{\I k}{1-k^2}-\frac{1}{2}} \frac{G(k,0)}{G(k,x)} \frac{1}{\psi(k,x)},
   \end{align*}
  where the meromorphic functions $G(\ledot,x)$ on $\C_+$ are given by 
  \begin{align*}
     G(k,x) & = \frac{\psi^\qd(k,x)}{\zeta(k)\psi(k,x)} - \frac{1}{1+2\beta x} \frac{1-\I k}{1+\I k}. 
  \end{align*}
    It follows readily that $\im\,G(k,x)>0$ when $\re\,k>0$ and $\im\,G(k,x)< 0$ when $\re\,k<0$ because the first term is a Herglotz--Nevanlinna function in $z=\zeta(k)$. 
   As a consequence, all zeros and poles of $G(\ledot,x)$, $G(\ledot,0)$ and $\psi(\ledot,x)$ are simple and lie on the imaginary axis. 
  The fact that the zeros and poles of $a(\ledot,x)$ are simple as well is a result of the following three observations:
  \begin{enumerate}[label=(\alph*), ref=(\alph*), leftmargin=*, widest=a]
  \item\label{itmTFzp1Ba} If $k$ is a zero of $\psi(\ledot,x)$, then $k$ is a pole of $G(\ledot,x)$.
  \item\label{itmTFzp1Bb} If $k$ is a pole of $\psi(\ledot,x)$, then $k$ is a pole of $G(\ledot,0)$. 
  \item\label{itmTFzp1Bc} A $k$ that is neither a zero nor a pole of $\psi(\ledot,x)$ is a pole of $G(\ledot,x)$ if and only if it is a pole of $G(\ledot,0)$. 
  \end{enumerate}
   For $x>0$ (the claims are trivial otherwise), we next write the function $a(\ledot,x)$ as 
     \begin{align*}
     a(k,x) & = \frac{\phi(\zeta(k),x)}{\phi_\beta(\zeta(k),x)} \frac{G(k,0)}{G(k,x)}\frac{G_{+}(k)}{G_{-}(k)},
   \end{align*}
    where $\phi_\beta$ is the respective solution for the generalized indefinite string in Example~\ref{exa2alpha}, given explicitly by
    \begin{align*}
      \phi_\beta(z,x) = \frac{(1+2\beta x)^{\frac{1}{2}}}{\sqrt{z^2-\beta^2}} \sin\Biggl(\frac{\sqrt{z^2-\beta^2}}{2\beta}\log(1+2\beta x)\Biggr),
    \end{align*}
    and the meromorphic functions $G_{\pm}$ on $\C_+$ are given by 
  \begin{align*}
     G_{-}(k) & = \zeta(k)\phi(\zeta(k),x)\psi(k,x) = \biggl(\frac{\phi^\qd(\zeta(k),x)}{\zeta(k)\phi(\zeta(k),x)} - \frac{\psi^\qd(k,x)}{\zeta(k)\psi(k,x)}\biggr)^{-1}, \\
     G_{+}(k) & = \zeta(k)\phi_\beta(\zeta(k),x)(1+2\beta x)^{\frac{\I k}{1-k^2}-\frac{1}{2}} \\
      & = \biggl(\frac{2k}{1+k^2}\cot\biggl(\frac{k}{1-k^2}\log(1+2\beta x)\biggr) - \frac{2\I k}{1+k^2}\biggr)^{-1}.
  \end{align*}
   Since one has that $\im\,G_\pm(k)\geq0$ when $\re\,k>0$ and $\im\,G_\pm(k)\leq 0$ when $\re\,k<0$, the claims readily follow from Theorem~\ref{thm:FactorMainB}, the formula for the exponential type of solutions in~\eqref{eqnthetaphiExpTyp} and Proposition~\ref{prop:a-decompB}.
  \end{proof}
   
\begin{remark}
Adding up~\eqref{eq:traceflaB00} and~\eqref{eq:traceflaB01}, we end up with another relative trace formula involving the function $\cF_1$. 
More precisely, under the conditions of Theorem~\ref{thm:traceflaB}, one has 
   \begin{align}\label{eq:traceflaB03}
      \begin{split}
     &  \int_0^x \Wr(s) ds + \int_0^x \biggl(\frac{1}{1+2\beta s} -\varrho(s)\biggr) ds  \\
     & \qquad =   \frac{2}{\beta\pi} \int_{\R} \frac{\log|a(\xi,x)|}{(1+\xi^2)^2}d\xi +  \frac{1}{\beta}\lim_{\delta\downarrow0} \sum_{\kappa\in{\rm Z}\atop\delta<\kappa<1/\delta} \cF_1(\kappa) - \sum_{\eta\in{\rm P}\atop\delta<\eta<1/\delta} \cF_1(\eta).
     \end{split}
     \end{align}
\end{remark}   
   
   The integrals on the spectral side of the relative trace formulas can again be expressed in terms of suitable {\em transmission coefficients}. 
    To this end, we first define the meromorphic functions $M(\ledot,x)$ on $\C_+$ by 
    \begin{align}
       M(k,x) = \frac{\psi^\qd(k,x)}{\zeta(k)\psi(k,x)}
    \end{align}
    for $x\geq0$, so that $M(k,0)= m(\zeta(k))$. 
    The limit $M(\xi,x)=\lim_{\varepsilon\downarrow0}M(\xi +\I\varepsilon,x)$ exists for almost all $\xi\in\R$ (see Theorem~\ref{th:SimonFactor} for example) and we may define 
    \begin{align}\label{eq:defTxB}
       T(\xi,x) =  \frac{8\xi}{1+\xi^2}  \frac{(1+2\beta x) \im\, M(\xi,x)}{\Bigl|(1+2\beta x) M(\xi,x) -\frac{1-\I\xi}{1+\I \xi}\Bigr|^2}
    \end{align}
    as the denominator is positive for almost all $\xi\in\R$.
    In fact, one has 
    \begin{align}\label{eqnTransEstB}\begin{split}
      \biggl|(1+2\beta x) M(\xi,x) -\frac{1-\I\xi}{1+\I \xi}\biggr|^2 & \geq \biggl((1+2\beta x) \im\,M(\xi,x) +\frac{2\xi}{1+\xi^2}\biggr)^2 \\
      & \geq \frac{8\xi}{1+\xi^2}(1+2\beta x) \im\,M(\xi,x),
    \end{split}\end{align}
    which also entails that $T(\xi,x)\in[0,1]$. 
    
    \begin{proposition}\label{proptraceflaBtrans}
Let $\beta>0$ and let $(L,\omega,\dip)$ be a generalized indefinite string with essential spectrum contained in $(-\infty,-\beta]\cup[\beta,\infty)$ and $m(0)=0$.
  If $\im\,m(\lambda+\I0)>0$ for almost all $|\lambda|>\beta$, then 
 \begin{align}\label{eqnaasTB}
   |a(\xi,x)|^2 = \frac{T(\xi,x)}{T(\xi,0)}
 \end{align}
 for almost all $\xi\in\R$ and all $x\geq 0$. 
 \end{proposition}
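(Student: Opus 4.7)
The strategy will closely follow that of Proposition~\ref{proptraceflaAtrans}, adapted to the new conformal map $\zeta(k)=\beta(1+k^2)/(1-k^2)$, which sends the first open quadrant of $\C_+$ to $\C_+$ and the second open quadrant to $\C_-$. First I would treat $\xi>0$ such that $\im\, m(\zeta(\xi)+\I0)>0$; by hypothesis this holds for almost every such $\xi$. Setting $\lambda=\zeta(\xi)$, for $k=\xi+\I\varepsilon$ with $\varepsilon\downarrow 0$ the image $\zeta(k)$ tends to $\lambda$ from within $\C_+$, so $m(\zeta(k))\to m(\lambda+\I0)$ and consequently the boundary values $\psi(\xi,x)=\theta(\lambda,x)+\lambda m(\lambda+\I0)\phi(\lambda,x)$ and $\psi^\qd(\xi,x)=\theta^\qd(\lambda,x)+\lambda m(\lambda+\I0)\phi^\qd(\lambda,x)$ exist as pointwise limits.

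Since $\theta(\lambda,\redot)$ and $\phi(\lambda,\redot)$ are linearly independent, $\psi(\xi,x)\neq 0$ for every $x\ge 0$ and hence $M(\xi,x)$ is well defined. The Wronskian identity $\theta\phi^\qd-\theta^\qd\phi=1$ then gives $\im\, M(\xi,x)=\im\, m(\lambda+\I0)/|\psi(\xi,x)|^2>0$. Substituting $\psi^\qd=\lambda M\psi$ into the definition of $W(\xi,x)$ and using $|(1+2\beta x)^{-\I\xi/(1-\xi^2)+1/2}|^2=1+2\beta x$ together with $|(1-\I\xi)/(1+\I\xi)|=1$, a short computation yields
\begin{align*}
|W(\xi,x)|^2 = \frac{\lambda^2\,\im\, m(\lambda+\I0)}{(1+2\beta x)\,\im\, M(\xi,x)}\, \biggl|(1+2\beta x)M(\xi,x)-\frac{1-\I\xi}{1+\I\xi}\biggr|^2.
\end{align*}
Multiplying by $T(\xi,x)$ from~\eqref{eq:defTxB}, the modulus-squared factor and $\im\, M$ cancel, leaving the $x$-independent quantity $|W(\xi,x)|^2 T(\xi,x)=8\xi\lambda^2\im\, m(\lambda+\I0)/(1+\xi^2)$. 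Combined with the definition $a(\xi,x)=W(\xi,0)/W(\xi,x)$, this yields $|a(\xi,x)|^2=T(\xi,x)/T(\xi,0)$ at once.

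The case $\xi<0$ proceeds analogously: now $k=\xi+\I\varepsilon$ lies in the second quadrant so $\zeta(k)$ approaches $\lambda$ from $\C_-$, the boundary value of $m\circ\zeta$ is $m(\lambda+\I0)^\ast$ instead, and both $\xi$ and $\im\, M(\xi,x)$ are negative, keeping $T(\xi,x)\ge 0$. The same computation shows $|W(\xi,x)|^2 T(\xi,x)=-8\xi\lambda^2\im\, m(\lambda+\I0)/(1+\xi^2)$ is again constant in $x$, so the identity persists. I expect no conceptual obstacle beyond what was already handled in Proposition~\ref{proptraceflaAtrans}; the main technical nuisance will be the careful bookkeeping of signs and of the boundary branch of the multivalued factor $(1+2\beta x)^{-\I k/(1-k^2)+1/2}$ in each quadrant, but the computation itself is essentially a single cancellation once the Wronskian identity is exploited.
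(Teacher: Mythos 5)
Your proposal is correct and follows essentially the same route as the paper's proof: take boundary values of $\psi$, $\psi^\qd$ and $M$ along the imaginary approach, use the Wronskian identity $\theta\phi^\qd-\theta^\qd\phi=1$ to get $\im\,M(\xi,x)=\im\,m(\lambda+\I0)/|\psi(\xi,x)|^2$, compute $|W(\xi,x)|^2$ in terms of $M$, and observe the cancellation against $T(\xi,x)$; the sign bookkeeping for $\xi<0$ is also handled exactly as in the paper. Your explicit observation that $|W(\xi,x)|^2\,T(\xi,x)$ is independent of $x$ is a slightly cleaner way to phrase the final cancellation, but the argument is the same.
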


\begin{proof}
   Let $\xi\in(0,1)\cup(1,\infty)$ be such that $\im\,m(\lambda+\I0)>0$, where $\lambda=\zeta(\xi)$. 
   We consider the solution $\psi(\xi,\redot)$ of the differential equation~\eqref{eqnDEho} with $z=\lambda$ such that 
   \begin{align*}
     \psi(\xi,x) & = \theta(\lambda,x)+\lambda m(\lambda+\I0) \phi(\lambda,x) = \lim_{\varepsilon\downarrow0} \psi(\xi+\I\varepsilon,x), \\
     \psi^\qd(\xi,x) & = \theta^\qd(\lambda,x) + \lambda m(\lambda+\I0)\phi^\qd(\lambda,x) = \lim_{\varepsilon\downarrow0} \psi^\qd(\xi+\I\varepsilon,x). 
   \end{align*} 
   Notice that $\psi(\xi,x)\not=0$ for all $x\geq0$ because it is not possible for $\theta(\lambda,x)$ and $\phi(\lambda,x)$ to vanish both. 
   It follows that the limit 
   \begin{align*}
     M(\xi,x) = \lim_{\varepsilon\downarrow0} M(\xi+\I\varepsilon,x) = \frac{\psi^\qd(\xi,x)}{\lambda\psi(\xi,x)}
   \end{align*}
   exists for all $x\geq0$ and satisfies 
   \begin{align*}
     \im\,M(\xi,x) & = \frac{\im\,\psi(\xi,x)^\ast \psi^\qd(\xi,x)}{\lambda|\psi(\xi,x)|^2} = \frac{\im\,m(\lambda+\I0)}{|\psi(\xi,x)|^2} > 0,
   \end{align*}
   where we used that $\theta(\lambda,x)\phi^\qd(\lambda,x)-\theta^\qd(\lambda,x)\phi(\lambda,x)=1$. 
   Next, one gets 
   \begin{align*}
     \Bigl|\lim_{\varepsilon\downarrow0} W(\xi+\I\varepsilon,x)\Bigr|^2 & = \frac{\lambda^2 \im\,m(\lambda+\I0)}{(1+2\beta x)\im\,M(\xi,x)} \biggl| (1+2\beta x)M(\xi,x) - \frac{1-\I\xi}{1+\I \xi} \biggr|^2, 
   \end{align*}
   which is positive in view of~\eqref{eqnTransEstB}.
   We conclude that the limit $\lim_{\varepsilon\downarrow0} a(\xi+\I\varepsilon,x)$ exists as well and satisfies~\eqref{eqnaasTB} as claimed. 
   Similar arguments show that this also holds true for all $\xi\in(-\infty,-1)\cup(-1,0)$ with $\im\,m(\zeta(\xi)+\I0)>0$.
\end{proof}

\section{Lower semi-continuity, \ref*{thm:KSforExB2}} \label{secLowSemiContB}

Let $(L,\omega,\dip)$ be a generalized indefinite string and $m$ be the corresponding Weyl--Titchmarsh function. 
For $\beta>0$ and the function $\zeta$ given as in Section~\ref{secSbSsumrule2}, we define the {\em transmission coefficient} $T$ by
\begin{align}\label{eqnT2def}
 T(\xi) = \frac{8|\xi|}{1+\xi^2} \frac{\im\, m(\zeta(\xi)+\I0)}{\Bigl|m(\zeta(\xi)+\I0)-\frac{1-\I |\xi|}{1+\I |\xi|}\Bigr|^2},
\end{align}
which is well-defined for almost all $\xi\in\R$ and belongs to $[0,1]$ because 
\begin{align}\begin{split}
   \biggl|m(\zeta(\xi)+\I0)-\frac{1-\I |\xi|}{1+\I |\xi|}\biggr|^2 & \geq  \biggl(\im\,m(\zeta(\xi)+\I0)+\frac{2|\xi|}{1+\xi^2}\biggr)^2 \\
     & \geq \frac{8|\xi|}{1+\xi^2} \im\,m(\zeta(\xi)+\I0).
\end{split}\end{align} 
Notice that this function $T$ coincides with $T(\ledot,0)$ as given by~\eqref{eq:defTxB} almost everywhere.
Just like in Section~\ref{secLowSemiContA}, we are next going to establish that the quantity $\cQ_\beta\in[0,\infty]$ defined by 
\begin{align}
\cQ_\beta = -\int_\R \frac{\xi^2 \log T(\xi)}{(1+\xi^2)^3} d\xi
\end{align}
depends lower semi-continuously on the generalized indefinite string $(L,\omega,\dip)$. 

\begin{theorem}\label{thm:entropyB}
  The functional $(L,\omega,\dip)\mapsto\cQ_\beta$ is lower semi-continuous on the set of all generalized indefinite strings for every $\beta>0$.
\end{theorem}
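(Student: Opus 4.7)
The plan is to adapt the strategy from the proof of Theorem~\ref{thm:entropyA} to the present setting, substituting the Joukowski-type conformal map $\zeta$ from Section~\ref{secSbSsumrule2} for the quadratic map $k\mapsto k^2+\alpha$. First I would introduce, for each generalized indefinite string $(L,\omega,\dip)$ under consideration, the \emph{reflection coefficient}
\begin{align}
 r(k) = \frac{m(\zeta(k)) - \frac{1+\I k}{1-\I k}}{m(\zeta(k)) - \frac{1-\I k}{1+\I k}},
\end{align}
defined on the open first complex quadrant. For $k=\xi+\I\eta$ with $\xi,\eta>0$ one checks that $q(k):=\frac{1-\I k}{1+\I k}$ has imaginary part $-2\xi/((1-\eta)^2+\xi^2)<0$, whereas $m\circ\zeta$ takes values in $\overline{\C_+}$; hence the denominator is nowhere zero and $r$ is analytic on the open first quadrant. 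Writing
\begin{align}
 r(k) = 1+\frac{4\I k}{(1+k^2)(m(\zeta(k))-q(k))}
\end{align}
and using $|m(\zeta(k))-q(k)|\geq|\im q(k)| = 2\re k/|1+\I k|^2$ together with the elementary estimate $|1+\I k|\leq|1-\I k|$ valid for $\im k>0$, which gives $|1+\I k|^2\leq|1+k^2|$, one obtains the uniform bound
\begin{align}\label{eqnrBoundB}
 |r(k)|\leq 1+\frac{2|k|}{\re k},
\end{align}
that is, the exact analog of Lemma~\ref{lem:Est-r}.

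Next I would establish the boundary scattering relation. The computation
\begin{align}
 \biggl|m(\zeta(\xi)+\I 0)-\frac{1-\I\xi}{1+\I\xi}\biggr|^2-\biggl|m(\zeta(\xi)+\I 0)-\frac{1+\I\xi}{1-\I\xi}\biggr|^2 = \frac{8\xi}{1+\xi^2}\im\,m(\zeta(\xi)+\I 0)
\end{align}
for almost every $\xi>0$, which follows since the two quantities being subtracted inside the moduli are complex conjugates of each other for real $\xi$, yields $T(\xi)+|r(\xi)|^2=1$ for almost all $\xi>0$, and hence
\begin{align}
 -\log T(\xi) = \sum_{j=1}^\infty \frac{|r(\xi)|^{2j}}{j}
\end{align}
wherever $T(\xi)>0$. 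The same identity on $(-\infty,0)$ follows either by the symmetry $\xi\mapsto-\xi$ (both $T$ and $|r|^2$ are even, since $\zeta$ is even and the fractional linear terms swap) or by running the argument on the second quadrant.

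Finally I would carry out the weak-$*$ compactness argument. Given a convergent sequence of generalized indefinite strings, the corresponding reflection coefficients $r_n$ converge locally uniformly to $r$ on the open first quadrant, while~\eqref{eqnrBoundB} gives uniform bounds on rectangles staying away from the real axis. Because the conformal map $\zeta$ has its only singularity (inside the closed first quadrant) at $k=1$, I would fix $0<a<b$ with $b<1$ or $1<a$ and apply contour integration of $r_n q$ against a polynomial $q$ over the boundary of the rectangle with vertices $a$, $b$, $b+\I$, $a+\I$, letting the bottom side approach the real axis; combined with the local uniform convergence on the three other sides and the uniform $L^\infty[a,b]$ bound, this yields $\int_a^b r_n(\xi)h(\xi)d\xi\to\int_a^b r(\xi)h(\xi)d\xi$ for every $h\in L^1[a,b]$, hence weak-$*$ convergence of $r_n$ to $r$ in $L^\infty[a,b]$ and weak convergence in $L^p([a,b];\xi^2(1+\xi^2)^{-3}d\xi)$ for each $p>1$. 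The lower semi-continuity of the $L^{2j}$-norm on the weighted space then gives
\begin{align}
 \liminf_{n\to\infty}\int_a^b|r_n(\xi)|^{2j}\frac{\xi^2}{(1+\xi^2)^3}d\xi \geq \int_a^b|r(\xi)|^{2j}\frac{\xi^2}{(1+\xi^2)^3}d\xi
\end{align}
for every $j\in\N$; summing over $1\leq j\leq J$, then letting $J\to\infty$, $a\downarrow 0$, $b\uparrow 1$ and separately $a\downarrow 1$, $b\to\infty$, and combining with the corresponding estimate on $(-\infty,0)$ obtained analogously, establishes the claimed lower semi-continuity of $\cQ_\beta$. The main technical nuisance, compared with Theorem~\ref{thm:entropyA}, is the need to avoid the singularity of $\zeta$ at $k=\pm 1$; however, since this is a single measure-zero point on the real axis and is not encountered by the rectangular contours when they are chosen to lie strictly on one side of it, the argument goes through essentially unchanged.
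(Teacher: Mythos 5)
Your proposal is correct and follows essentially the same route as the paper: the same reflection coefficient, the same bound $|r(k)|\leq 1+2|k|/\re k$ (which you rederive inline instead of rewriting $r$ as $\bigl(m(\zeta(k))+1-\tfrac{2}{1-\I k}\bigr)/\bigl(m(\zeta(k))+1-\tfrac{2}{1+\I k}\bigr)$ and citing Lemma~\ref{lem:Est-r}), the same scattering relation, and then a verbatim repetition of the weak-$*$ argument from Theorem~\ref{thm:entropyA}. Your extra care near $k=1$ is harmless but unnecessary, since the pole of $\zeta$ lies on the real axis while $r$ is analytic on the open first quadrant and the uniform bound (together with $|r(\xi)|\leq 1$ a.e.\ on the boundary) already justifies the contour-integration and dominated-convergence steps across $\xi=1$.
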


\begin{proof}
For a generalized indefinite string $(L,\omega,\dip)$, we introduce the analog of the {\em reflection coefficient} $r$ defined on the first complex quadrant by
\begin{align*}
r(k) = \frac{m(\zeta(k)) - \frac{1+\I k}{1-\I k}}{m(\zeta(k)) - \frac{1-\I k}{1+\I k}}.
\end{align*}
Notice that this function is analytic and obeys the bound 
\begin{align*}
  |r(k)| = \Biggl|\frac{m(\zeta(k)) +1 - \frac{2}{1-\I k}}{m(\zeta(k)) +1 - \frac{2}{1+\I k}}\Biggr| \leq 1 + \frac{2|k|}{\re\, k}
\end{align*}
in view of  Lemma~\ref{lem:Est-r}.
The limit $r(\xi)=\lim_{\varepsilon\downarrow0} r(\xi+\I\varepsilon)$ exists and satisfies the {\em scattering relation}
\begin{align*}
T(\xi) + |r(\xi)|^2 = 1
\end{align*}
for almost every $\xi>0$, which can be seen by first computing that 
\begin{align*}
 &  \biggl|m(\zeta(\xi)+\I 0)-\frac{1-\I \xi}{1+\I\xi}\biggr|^2 - \biggl|m(\zeta(\xi)+\I 0)-\frac{1+\I\xi}{1-\I\xi}\biggr|^2  = \frac{8\xi}{1+\xi^2} \im\,m(\zeta(\xi)+\I0)
\end{align*}
and then dividing by the first term on the left-hand side. 
Now the claim follows exactly in the same way as in the proof of Theorem~\ref{thm:entropyA}.
\end{proof}

\section{Proof of Theorem~\ref*{thm:KSforExB2}} \label{secKSB2proof}

We will again prove a slightly amended statement, from which Theorem~\ref{thm:KSforExB2} readily follows:
{\em   A Herglotz--Nevanlinna function $m$ is the Weyl--Titchmarsh function of a generalized indefinite string $(L,\omega,\dip)$ with $L=\infty$ and~\eqref{eqnCondS2} with $c=0$ if and only if the conditions~\ref{itmKS2c1},~\ref{itmKS2c2},~\ref{itmKS2c3} in Theorem~\ref{thm:KSforExB2} and $m(0)=0$ hold}.

\begin{proof}[Proof of necessity] 
Suppose first that $m$ is the Weyl--Titchmarsh function of a generalized indefinite string $(L,\omega,\dip)$ with $L=\infty$ that satisfies~\eqref{eqnCondS2} with $c=0$.
It follows readily from Theorem~\ref{thm:HSforAB}~\ref{itmHSforB} that $m$ has a meromorphic extension to $\C_+\cup(-\beta,\beta)\cup\C_-$ that is analytic at zero and from Corollary~\ref{cor:m-at0}~\ref{iCor:m-at0forB} that $m(0)=0$ holds.
In order to verify the remaining conditions~\ref{itmKS2c2} and~\ref{itmKS2c3}, let us consider the sequence of generalized indefinite strings $(L,\omega_n,\dip_n)$ whose coefficients are given by 
\begin{align*}
   \Wr_n(x) & = \begin{cases} \Wr(x), & x<n, \\  0, & x\geq n, \end{cases}  & \dip_n(B) & = \dip(B\cap[0,n)) + \int_{B\cap[n,\infty)} \frac{1}{(1+2\beta x)^2}dx,
\end{align*} 
so that $(L,\omega_n,\dip_n)$ converges to $(L,\omega,\dip)$ in the sense of Proposition~\ref{propSMPcont}. 
We will use the notation from Section~\ref{sec:prelim} and Section~\ref{secSbSsumrule2}, where all quantities corresponding to the generalized indefinite strings $(L,\omega_n,\dip_n)$ will be denoted with an additional subscript. 
Theorem~\ref{thm:HSforAB}~\ref{itmHSforB} and Corollary~\ref{cor:m-at0}~\ref{iCor:m-at0forB} guarantee that each $(L,\omega_n,\dip_n)$ satisfies the requirements of Theorem~\ref{thm:traceflaB}. 
For large enough $x>n$, the solutions $\psi_n(k,\redot)$ are scalar multiples of the ones in Example~\ref{exa2alpha} and thus 
 \begin{align}\label{eqnMnlargexB}
   (1+2\beta x) M_n(k,x) & = (1+2\beta x)\frac{\psi_n^\qd(k,x)}{\zeta(k)\psi_n(k,x)}= \frac{1+\I k}{1-\I k}.
 \end{align}  
 Because we may write 
 \begin{align*}
   M_n(k,0) = \frac{\theta_n(\zeta(k),x)M_n(k,x)-\theta_n^\qd(\zeta(k),x)/\zeta(k)}{\phi_n^\qd(\zeta(k),x)-\zeta(k)\phi_n(\zeta(k),x)M_n(k,x)},
 \end{align*}
 we see that $M_n(\xi,0)=\lim_{\varepsilon\downarrow0}M_n(\xi+\I\varepsilon,0)$ exists for all $\xi\in\R\backslash\{-1,0,1\}$ and 
  \begin{align*}
   \im\,M_n(\xi,0)  = \frac{\im\,M_n(\xi,x)}{\bigl|\phi_n^\qd(\zeta(\xi),x)-\zeta(\xi)\phi_n(\zeta(\xi),x)M_n(\xi,x)\bigr|^2}.
 \end{align*}
  Since $M_n(k,0)=m_n(\zeta(k))$, this implies that $\im\,m_n(\lambda+\I0)>0$ for almost all $|\lambda|>\beta$, so that we can apply  Proposition~\ref{proptraceflaBtrans}.
  Notice that~\eqref{eqnMnlargexB} implies that $T_n(\xi,x)=1$ for almost all $\xi\in\R$.
  Moreover, because 
  \begin{align*}
    a_n(k,x) & = (1+2\beta x)^{\frac{\I k}{1-k^2}+\frac{1}{2}} \frac{1+k^2}{4\I k\psi_n(k,x)}  \biggl(m_n(\zeta(k))-\frac{1-\I k}{1+\I k}\biggr),  
  \end{align*}
  the set of poles of $a_n(\ledot,x)$ is empty (remember properties~\ref{itmTFzp1Ba},~\ref{itmTFzp1Bb} and~\ref{itmTFzp1Bc} in the first part of the proof of Theorem~\ref{thm:traceflaB}) and  
  \begin{align*}
    {\rm Z}_n = \{\kappa>0\,|\, a_n(\I\kappa,x)=0\} = \{\kappa>0\,|\, (1-\kappa) m_n(\zeta(\I\kappa)) = 1 + \kappa\}. 
  \end{align*}
We thus obtain from Theorem~\ref{thm:traceflaB} and Proposition~\ref{proptraceflaBtrans} the identity 
\begin{align*}
  & \int_0^n (1+2\beta s)\Wr(s)^2 ds + \int_{0}^{n} (1+2\beta s)\biggl(\varrho(s)-\frac{1}{1+2\beta s}\biggr)^2 ds +  \int_{[0,n)} (1+2\beta s)d\dip_{\sing}(s)  \\
  & \qquad = - \frac{4}{\beta\pi} \int_{\R} \frac{\xi^2 \log T_n(\xi,0)}{(1+\xi^2)^3}d\xi  +  \frac{1}{\beta}  \sum_{\kappa\in{\rm Z}_n} \cF_2(\kappa).
\end{align*}
If we similarly define ${\rm Z}' = \{\kappa>0\,|\, (1-\kappa) m(\zeta(\I\kappa))=1+\kappa \}$, then one has 
\begin{align*}
  \liminf_{n\rightarrow\infty} \sum_{\kappa\in{\rm Z}_n} \cF_2(\kappa) \geq \sum_{\kappa\in{\rm Z}'} \cF_2(\kappa).
\end{align*}
In fact, because the Weyl--Titchmarsh functions $m_n$ converge to $m$ by Proposition~\ref{propSMPcont}, every neighborhood of a $\kappa\in{\rm Z}'$ contains a point in ${\rm Z}_n$ for large enough $n$.
As a consequence, we get that for every finite subset $J\subseteq{\rm Z}'$ one has 
\begin{align*}
     \liminf_{n\rightarrow\infty} \sum_{\kappa\in{\rm Z}_n} \cF_2(\kappa) \geq \sum_{\kappa\in J} \cF_2(\kappa), 
\end{align*}
which gives the claimed bound. 
Together with the lower semi-continuity from Theorem~\ref{thm:entropyB}, this yields 
\begin{align}\begin{split}\label{eqnTFbetageq}
 & \int_0^\infty (1+2\beta s)\Wr(s)^2 ds \\
 & \qquad + \int_{0}^{\infty} (1+2\beta s)\biggl(\varrho(s)-\frac{1}{1+2\beta s}\biggr)^2 ds +  \int_{[0,\infty)} (1+2\beta s)d\dip_{\sing}(s)  \\
  & \qquad \geq - \frac{4}{\beta\pi} \int_{\R} \frac{\xi^2 \log T(\xi,0)}{(1+\xi^2)^3}d\xi +  \frac{1}{\beta} \sum_{\kappa\in{\rm Z}'} \cF_2(\kappa).
\end{split}\end{align}
It remains to see that this bound implies conditions~\ref{itmKS2c2} and~\ref{itmKS2c3}. 
To prove the former, let ${\rm E}=\{\eta>0\,|\, m(\zeta(\I\eta))=\infty\}$, so that the set ${\rm E}\cup\{1\}$ interlaces the set ${\rm Z}'$ because $m$ is a Herglotz--Nevanlinna function.
Due to monotonicity of the function $\cF_2$, this property and the bound on the sum in~\eqref{eqnTFbetageq} imply that 
\begin{align}\label{eqnLTEB}
   \sum_{\eta\in{\rm E}} \cF_2(\eta) < \infty.
\end{align}
Since the asymptotics for the function $\cF_2$ in~\eqref{eqnF2asymzero} and~\eqref{eqnF2asyminfty} readily turn into 
\begin{align}
 \label{eqnF2asymB0}  \cF_2(s) & = \frac{4\sqrt{2}}{3\beta^{\nicefrac{3}{2}}} (\beta-\zeta(\I s))^{\nicefrac{3}{2}}(1+\oo(1)), \quad s \rightarrow0, \\
 \label{eqnF2asymBinfty}  \cF_2(s) & = \frac{4\sqrt{2}}{3\beta^{\nicefrac{3}{2}}} (\beta+\zeta(\I s))^{\nicefrac{3}{2}}(1+\oo(1)), \quad s  \rightarrow\infty,
\end{align}
 we conclude from~\eqref{eqnLTEB} that 
\begin{align}\label{eqnLTprotoB}
  \sum_{\eta\in{\rm E} \atop \eta<1} (\beta-\zeta(\I\eta))^{\nicefrac{3}{2}} +  \sum_{\eta\in{\rm E} \atop \eta>1} (\beta+\zeta(\I\eta))^{\nicefrac{3}{2}} < \infty,
\end{align}
which proves condition~\ref{itmKS2c2}. 
Because one has (see Theorem~\ref{th:SimonFactor} for example) 
\begin{align}\label{eqnTMintB}
  \int_0^\infty \frac{\xi^2}{(1+\xi^2)^3} \biggl|\log\biggl|m(\zeta(\xi)+\I 0) - \frac{1-\I\xi}{1+\I\xi}\biggr| \biggr| d\xi < \infty, 
\end{align}
it follows from the bound on the integral on the right-hand side of~\eqref{eqnTFbetageq} that 
\begin{align}\label{eqnImintTB}
 \int_0^\infty \frac{\xi^2}{(1+\xi^2)^3} |\log(\im\, m(\zeta(\xi)+\I0))| d\xi <\infty,
\end{align}
which yields condition~\ref{itmKS2c3} after the transformation $\lambda=\zeta(\xi)$. 
\end{proof} 

\begin{proof}[Proof of sufficiency]
 Suppose now that a Herglotz--Nevanlinna function $m$ satisfies the three conditions in Theorem~\ref{thm:KSforExB2} with $m(0)=0$ and let $(L,\omega,\dip)$ be the corresponding generalized indefinite string.
 Condition~\ref{itmKS2c1} entails that the essential spectrum of $(L,\omega,\dip)$ is contained in $(-\infty,-\beta]\cup[\beta,\infty)$ and that $L=\infty$ in view of Theorem~\ref{thm:m-at0}, so that we can use the notation of Section~\ref{secSbSsumrule2}. 
 Condition~\ref{itmKS2c2} implies~\eqref{eqnLTprotoB}, where ${\rm E}=\{\eta>0\,|\, m(\zeta(\I\eta))=\infty\}$. 
In view of the asymptotics in~\eqref{eqnF2asymB0} and~\eqref{eqnF2asymBinfty} for the function $\cF_2$ we get~\eqref{eqnLTEB}. 
The set ${\rm Z}'=\{\kappa>0\,|\, (1-\kappa) m(\zeta(\I\kappa))=1+\kappa\}$ interlaces the set ${\rm E}\cup\{1\}$ because $m$ is a Herglotz--Nevanlinna function, which implies that 
\begin{align*}
   \sum_{\kappa\in{\rm Z}'} \cF_2(\kappa)  < \infty.
\end{align*} 
 Condition~\ref{itmKS2c3} transforms into~\eqref{eqnImintTB}, which gives 
  \begin{align*}
  - \int_\R \frac{\xi^2 \log T(\xi,0)}{(1+\xi^2)^3} d\xi < \infty
 \end{align*}
 because of~\eqref{eqnTMintB}. 
In particular, we see that conditions~\ref{itmKS2c1},~\ref{itmKS2c3} and that $m(0)=0$ guarantee that the requirements of Theorem~\ref{thm:traceflaB} and Proposition~\ref{proptraceflaBtrans} are satisfied. 
From the former, we get that for each $x\geq0$ one has the identity 
  \begin{align*}
  & \int_0^x (1+2\beta s)\Wr(s)^2 ds + \int_{0}^{x} (1+2\beta s)\biggl(\varrho(s)-\frac{1}{1+2\beta s}\biggr)^2 ds + \int_{[0,x)} (1+2\beta s)d\dip_{\sing}(s)  \\
&  \qquad = \frac{8}{\beta\pi} \int_\R \frac{\xi^2 \log|a(\xi,x)|}{(1+\xi^2)^3}d\xi  +  \frac{1}{\beta} \lim_{\delta\downarrow0}  \sum_{\kappa\in{\rm Z}\atop\delta<\kappa<1/\delta} \cF_2(\kappa) - \sum_{\eta\in{\rm P}\atop\delta<\eta<1/\delta} \cF_2(\eta),
\end{align*}
where ${\rm Z} = \{\kappa>0\,|\, a(\I\kappa,x)=0\}$ and ${\rm P} = \{\eta>0\,|\, a(\I\eta,x)=\infty\}$.
Estimating the right-hand side by using that $2\log|a(\xi,x)|\leq -\log T(\xi,0)$ for almost all $\xi\in\R$, that ${\rm Z}\subseteq{\rm Z}'$ and that $\cF_2$ is non-negative, before letting $x\rightarrow\infty$, this becomes
  \begin{align}\begin{split}\label{eqnTFbetaleq}
  & \int_0^\infty (1+2\beta s)\Wr(s)^2 ds \\
  & \qquad + \int_{0}^{\infty} (1+2\beta s)\biggl(\varrho(s)-\frac{1}{1+2\beta s}\biggr)^2 ds + \int_{[0,\infty)} (1+2\beta s)d\dip_{\sing}(s)  \\
&  \qquad \leq  - \frac{4}{\beta\pi} \int_\R \frac{\xi^2 \log T(\xi,0)}{(1+\xi^2)^3} d\xi +  \frac{1}{\beta} \sum_{\kappa\in{\rm Z}'} \cF_2(\kappa),
\end{split}\end{align}
 which concludes the proof of Theorem~\ref{thm:KSforExB2}. 
\end{proof} 

We are again also able to express the underlying trace formula given by~\eqref{eqnTFbetageq} and~\eqref{eqnTFbetaleq} in terms of the spectral measure of the generalized indefinite string. 

\begin{corollary}\label{coreqnTFbetaWT}
 If $\rho$ is the spectral measure of a generalized indefinite string $(L,\omega,\dip)$ with $L=\infty$ and~\eqref{eqnCondS2} for some constant $c\in\R$, then 
  \begin{align}\begin{split}\label{eqnTFbetam}
  & \int_0^\infty (1+2\beta x)(\Wr(x)-c)^2 dx \\
  & \qquad + \int_{0}^{\infty} (1+2\beta x)\biggl(\varrho(x)-\frac{1}{1+2\beta x}\biggr)^2 dx + \int_{(0,\infty)} (1+2\beta x)d\dip_{\sing}(x)  \\
&  \qquad = \int_\R \frac{d\rho(\lambda)}{\lambda^2} - \frac{1}{\pi} \int_{\R\backslash(-\beta,\beta)} \frac{\sqrt{\lambda^2-\beta^2}}{|\lambda|^3} d\lambda +  \frac{1}{\beta} \sum_{\lambda\in\supp(\rho)\atop|\lambda|<\beta} \cF_2\Biggl(\sqrt{\frac{\beta-\lambda}{\beta+\lambda}}\Biggr) \\
&  \qquad\qquad - \frac{1}{\pi} \int_{\R\backslash(-\beta,\beta)}  \frac{\sqrt{\lambda^2-\beta^2}}{|\lambda|^3} \log\Biggl(\frac{\pi|\lambda|}{\sqrt{\lambda^2-\beta^2}} \frac{d\rho_\ac(\lambda)}{d\lambda}\Biggr) d\lambda.
\end{split}\end{align}
\end{corollary}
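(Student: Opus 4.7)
The argument parallels the proof of Corollary~\ref{corTFalpham}.  I first reduce to $c=0$: since $(L,\omega-c\delta_0,\dip)$ has normalized anti-derivative $\Wr-c$ and the same spectral measure $\rho$ as $(L,\omega,\dip)$ by Remark~\ref{rem:uniquerho}, while leaving the left-hand side of~\eqref{eqnTFbetam} unchanged, I may assume $c=0$. By Corollary~\ref{cor:m-at0}~\ref{iCor:m-at0forB} this forces $m(0)=0$, and by the proof of Theorem~\ref{thm:KSforExB2} the trace formula given by~\eqref{eqnTFbetageq} and~\eqref{eqnTFbetaleq} then holds with equality.

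The key auxiliary object is the meromorphic function on $\C_+$ given by
\begin{align*}
  b(k) = 1 - \frac{1+\I k}{1-\I k}\,m(\zeta(k)) = \frac{m(\zeta(k))-\frac{1-\I k}{1+\I k}}{-\frac{1-\I k}{1+\I k}},
\end{align*}
which satisfies $b(\I)=1$ and $\lvert b(\xi)\rvert^{2} = \bigl\lvert m(\zeta(\xi)+\I0) - \tfrac{1-\I\xi}{1+\I\xi}\bigr\rvert^{2}$ for almost all $\xi\in\R$, and whose zeros and poles on the positive imaginary axis coincide respectively with the set $Z'$ from the proof of Theorem~\ref{thm:KSforExB2} and with $E = \{\eta>0\,|\, m(\zeta(\I\eta))=\infty\}$.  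Under condition~\ref{itmKS2c2}, the map $\eta\mapsto\lambda=\zeta(\I\eta)$ bijects $E$ onto $\supp(\rho)\cap(-\beta,\beta)$, with $\eta=\sqrt{(\beta-\lambda)/(\beta+\lambda)}$.  Applying the analog of identity~\eqref{eq:Gtrace02} from Appendix~\ref{appMeroMainII} (with the cubic weight $\xi^{2}/(1+\xi^{2})^{3}$ matching that in~\eqref{eq:traceflaB02}) to $b$, and using the local expansion $\dot{b}(\I)=-\tfrac{\I}{2}m(0)$ and $\ddot{b}(\I)=-\ddot{g}(\I)m(0)+\beta\,\dot{m}(0)$ with $g(k)=(1+\I k)/(1-\I k)$, one obtains a relation whose boundary expression at $m(0)=0$ reduces to $\beta\,\dot{m}(0)$ and whose sums run over $Z'$ and $E$.

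The combination of this relation with~\eqref{eqnTFbetaleq}, after substituting the definition~\eqref{eqnT2def} in the form
\begin{align*}
 -\log T(\xi,0) = \log\lvert b(\xi)\rvert^{2} - \log\biggl(\frac{8\lvert\xi\rvert}{1+\xi^{2}}\biggr) - \log\bigl(\im\, m(\zeta(\xi)+\I0)\bigr),
\end{align*}
cancels the $\log\lvert b\rvert^{2}$ integrals and exchanges the sum over $Z'$ for the sum over $E$, leaving a $\dot{m}(0)$ term.  The concluding step is the two-to-one change of variables $\lambda=\zeta(\xi)$ from $\R$ onto $\R\setminus(-\beta,\beta)$, under which $\xi^{2}(1+\xi^{2})^{-3}d\xi$, after summing the two preimages, becomes $\tfrac{\beta}{4}\sqrt{\lambda^{2}-\beta^{2}}\lvert\lambda\rvert^{-3}\,d\lambda$.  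Using $\im\, m(\lambda+\I0)=\pi\,\frac{d\rho_\ac}{d\lambda}(\lambda)$ converts the $\im\, m$ integral into the $\rho_\ac$-integral on the right of~\eqref{eqnTFbetam}, while the elementary integral of $\log(8\lvert\xi\rvert/(1+\xi^{2}))$ collapses to $-\frac{1}{\pi}\int_{\R\setminus(-\beta,\beta)}\sqrt{\lambda^{2}-\beta^{2}}\lvert\lambda\rvert^{-3}d\lambda$; finally $\dot{m}(0)=\dip(\{0\})+\int_\R\lambda^{-2}d\rho(\lambda)$ supplies the $\rho$-integral, with $\dip(\{0\})$ absorbing the discrepancy $\int_{[0,\infty)}-\int_{(0,\infty)}$ in the $\dip_\sing$-term.

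The main technical obstacle is the careful bookkeeping of universal constants in the change-of-variables step and in the appendix identity applied to $b$; the pre-existing template from Corollary~\ref{corTFalpham} makes the overall structure routine, so the real work is in verifying that the elementary $\xi$-integral of the logarithmic Jacobian yields precisely the constant $\sqrt{\lambda^{2}-\beta^{2}}\lvert\lambda\rvert^{-3}$-term on the right-hand side of~\eqref{eqnTFbetam}, and in confirming that the boundary expression obtained from the appendix identity for $b$ matches $\beta\,\dot{m}(0)$ exactly rather than up to an unwanted multiplicative constant.
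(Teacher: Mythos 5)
Your proposal is correct and follows essentially the same route as the paper: reduce to $c=0$ so that the trace formula given by~\eqref{eqnTFbetageq} and~\eqref{eqnTFbetaleq} holds with equality, apply identity~\eqref{eq:Gtrace03B} from Theorem~\ref{thm:FactorMainB} to the very same function $b(k)=1-m(\zeta(k))\tfrac{1+\I k}{1-\I k}$ to produce the relation with $\beta\dot{m}(0)$ on the left, and then substitute the expression~\eqref{eqnT2def} for the transmission coefficient and change variables $\lambda=\zeta(\xi)$. Your additional explicit computations (the derivatives of $b$ at $\I$, the decomposition of $-\log T$, and the role of $\dip(\{0\})$ in reconciling $\int_{[0,\infty)}$ with $\int_{(0,\infty)}$) are all consistent with the paper's argument.
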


\begin{proof}
 Because $\rho$ is also the spectral measure of the generalized indefinite string $(L,\omega-c\delta_0,\dip)$, where $\delta_0$ is the unit Dirac measure centered at zero, we may assume that $c=0$. 
  Since in this case one has $m(0)=0$, we know from the proof of Theorem~\ref{thm:KSforExB2} that the trace formula given by~\eqref{eqnTFbetageq} and~\eqref{eqnTFbetaleq} holds with equality. 
  By applying identity~\eqref{eq:Gtrace03B} from Theorem~\ref{thm:FactorMainB} to the meromorphic function $b$ on $\C_+$ defined by 
  \begin{align*}
    b(k) = 1 - m(\zeta(k))\frac{1+\I k}{1-\I k} = \frac{m(\zeta(k)) - \frac{1-\I k}{1+\I k}}{-\frac{1-\I k}{1+\I k}}, 
  \end{align*}
  we also obtain the relation  
  \begin{align}\begin{split}\label{eqnTFbetaaux}
     \beta\dot{m}(0) & = \frac{4}{\pi} \int_{\R} \frac{\xi^2}{(1+\xi^2)^3}  \log\Biggl(\biggl|m(\zeta(\xi)+\I 0) - \frac{1-\I |\xi|}{1+\I |\xi|}\biggr|^2\Biggr) d\xi   \\
    & \qquad\qquad\qquad\qquad + \lim_{\delta\downarrow0}  \sum_{\kappa\in{\rm Z}\atop\delta<\kappa<1/\delta} \cF_2(\kappa) - \sum_{\eta\in{\rm P}\atop\delta<\eta<1/\delta} \cF_2(\eta),
   \end{split}\end{align}
  where ${\rm Z} = \{\kappa>0\,|\, b(\I\kappa)=0\}$ and ${\rm P} = \{\eta>0\,|\, b(\I\eta)=\infty\}$.
  The set ${\rm Z}$ actually coincides with the set ${\rm Z}'$ in the trace formula given by~\eqref{eqnTFbetageq} and~\eqref{eqnTFbetaleq}, which in particular entails that we may omit the limit in~\eqref{eqnTFbetaaux} as the sums converge individually. 
  Moreover, the set ${\rm P}$ corresponds to the poles of $m$ in $(-\beta,\beta)$ and thus to the points in $\supp(\rho)$ between $-\beta$ and $\beta$. 
  After replacing the transmission coefficient in the trace formula given by~\eqref{eqnTFbetageq} and~\eqref{eqnTFbetaleq} with the expression in~\eqref{eqnT2def}, we can employ the relation in~\eqref{eqnTFbetaaux} to arrive at the formula in~\eqref{eqnTFbetam}. 
\end{proof}

In conclusion, let us also state the corresponding Lieb--Thirring inequality, the proof of which follows readily from Corollary~\ref{coreqnTFbetaWT}. 

\begin{corollary}\label{cor:LT02new}
If $\rho$ is the spectral measure of a generalized indefinite string $(L,\omega,\dip)$ with $L=\infty$ and~\eqref{eqnCondS2} for some constant $c\in\R$, then
\begin{align}\begin{split}
 & \frac{4\sqrt{2}}{3\beta^{\nicefrac{5}{2}}} \sum_{\lambda\in \supp(\rho) \atop |\lambda|<\beta} (\beta-|\lambda|)^{\nicefrac{3}{2}}  \leq   \int_0^\infty (1+2\beta x)(\Wr(x)-c)^2 dx \\
  & \qquad + \int_{0}^{\infty} (1+2\beta x)\biggl(\varrho(x)-\frac{1}{1+2\beta x}\biggr)^2 dx  + \int_{(0,\infty)} (1+2\beta x)d\dip_{\sing}(x). 
     \end{split}\end{align}
\end{corollary}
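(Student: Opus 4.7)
The plan is to mimic the proof of Corollary~\ref{cor:LT01new}, starting from the trace formula~\eqref{eqnTFbetam} in Corollary~\ref{coreqnTFbetaWT} and discarding those terms on the right-hand side whose sum is manifestly non-negative.

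First I would verify that
\begin{align*}
\int_\R \frac{d\rho(\lambda)}{\lambda^2} - \frac{1}{\pi} \int_{\R\backslash(-\beta,\beta)} \frac{\sqrt{\lambda^2-\beta^2}}{|\lambda|^3} d\lambda - \frac{1}{\pi} \int_{\R\backslash(-\beta,\beta)} \frac{\sqrt{\lambda^2-\beta^2}}{|\lambda|^3} \log\Biggl(\frac{\pi|\lambda|}{\sqrt{\lambda^2-\beta^2}} \frac{d\rho_\ac(\lambda)}{d\lambda}\Biggr) d\lambda \geq 0,
\end{align*}
which is exactly the analogue of what is used in the proof of Corollary~\ref{cor:LT01new}. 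To see this, I would bound $\int_\R d\rho/\lambda^2$ from below by the absolutely continuous contribution $\int_{|\lambda|>\beta} (d\rho_\ac/d\lambda)\,d\lambda/\lambda^2$, write the integrand as $\frac{\sqrt{\lambda^2-\beta^2}}{\pi|\lambda|^3} \cdot r(\lambda)$ with $r(\lambda) = \pi|\lambda|(d\rho_\ac/d\lambda)/\sqrt{\lambda^2-\beta^2}$, and apply the pointwise convexity inequality $r - 1 - \log r \geq 0$ (valid for all $r>0$) under the integral sign.

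With this non-negativity in hand, the trace formula~\eqref{eqnTFbetam} immediately yields
\begin{align*}
  \frac{1}{\beta} \sum_{\lambda\in\supp(\rho)\atop|\lambda|<\beta} \cF_2\Biggl(\sqrt{\frac{\beta-\lambda}{\beta+\lambda}}\Biggr) \leq \text{(right-hand side of the claim)}.
\end{align*}
The final step is to bound the summand from below by $\frac{4\sqrt{2}}{3\beta^{3/2}}(\beta-|\lambda|)^{3/2}$, which requires \emph{uniform} (not merely asymptotic) lower estimates on $\cF_2(s)$ near $s=0$ and $s=\infty$ in terms of $(\beta-\zeta(\I s))^{3/2}$ and $(\beta+\zeta(\I s))^{3/2}$, respectively. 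Since the change of variables $s = \sqrt{(\beta-\lambda)/(\beta+\lambda)}$ yields $\zeta(\I s) = \lambda$, and since the asymptotics~\eqref{eqnF2asymB0} and~\eqref{eqnF2asymBinfty} give precisely the leading constant $\frac{4\sqrt{2}}{3\beta^{3/2}}$, combining a uniform lower bound of this form with the inequality above and the identity $\frac{1}{\beta}\cdot\frac{4\sqrt{2}}{3\beta^{3/2}} = \frac{4\sqrt{2}}{3\beta^{5/2}}$ produces the claim.

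The main obstacle, which is more a matter of bookkeeping than of ideas, is verifying the aforementioned uniform lower bound on $\cF_2$ over the whole range $s\in(0,1)\cup(1,\infty)$. This is the direct analogue of the estimates~\eqref{eq:F2est01} and~\eqref{eq:F2est03} invoked in the proof of Corollary~\ref{cor:LT01new} and should be available from Appendix~\ref{app:F1F2}, so no new analytic input is required.
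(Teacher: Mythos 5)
Your proposal is correct and follows essentially the same route as the paper, which proves this corollary exactly as Corollary~\ref{cor:LT01new}: discard the manifestly non-negative combination of terms in the trace formula~\eqref{eqnTFbetam} via $r-1-\log r\geq0$, and then bound the remaining sum from below. The uniform lower bound you flag as the "main obstacle" requires no new analogue: the substitution $s=\sqrt{(\beta-\lambda)/(\beta+\lambda)}$ turns~\eqref{eq:F2est03} and~\eqref{eq:F2est01} directly into $\cF_2(s)>\tfrac{4\sqrt{2}}{3}(\beta-|\lambda|)^{\nicefrac{3}{2}}/|\lambda|^{\nicefrac{3}{2}}$ on $(0,\beta)$ and $(-\beta,0)$ respectively, and the trivial estimate $|\lambda|<\beta$ then yields the constant $\tfrac{4\sqrt{2}}{3\beta^{\nicefrac{3}{2}}}$.
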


 \section{Krein strings}\label{sec:KreinString}

The purpose of this section is to apply our results to the class of Krein strings. 
Recall that a {\em Krein string} is a generalized indefinite string $(L,\omega,\dip)$ such that the distribution $\omega$ is a positive Borel measure on $[0,L)$ and the measure $\dip$ is identically zero.
In this case, we just write $(L,\omega)$ for a Krein string and the normalized anti-derivative of $\omega$ is simply given by the distribution function 
\begin{align}
  \Wr(x) = \int_{[0,x)}d\omega.
\end{align} 
It goes back to work of M.\ G.\ Krein~\cite{kakr74}\footnote{Let us mention that the definition of the Weyl--Titchmarsh function in~\cite{kakr74}, termed {\em coefficient of dynamical compliance} there, differs from ours given by~\eqref{eqnmdef}. Namely, the corresponding Weyl--Titchmarsh function $m_{\rm Kr}$ is defined by M.\ G.\ Krein as 
\begin{align*}
m_{\rm Kr}(z) = \lim_{x\to L} \frac{\phi(z,x)}{\theta(z,x)}.
\end{align*}
Comparing this definition with~\cite[Lemma~5.2]{IndefiniteString}, we immediately see that 
\begin{align*}
m(z) = -\frac{1}{zm_{\rm Kr}(z)},
\end{align*} 
and hence, in the terminology of~\cite[\S 12]{kakr74}, our Weyl--Titchmarsh function $m$ is nothing but the coefficient of dynamical compliance of the {\em dual string} (compare with~\cite[Equation~(12.5)]{kakr74}).} (see~\cite[Proposition~7.3]{IndefiniteString} for a proof in our setting) that the Weyl--Titchmarsh functions corresponding to Krein strings are precisely the Stieltjes functions, where a Herglotz--Nevanlinna function $m$ is called a {\em Stieltjes function} if the function $z\mapsto z m(z)$ is a Herglotz--Nevanlinna function as well (see~\cite[\S~5]{kakr74a}). 
Since it is known that Stieltjes functions always have an analytic extension to $\C\backslash[0,\infty)$, the following result is an immediate consequence of Theorem~\ref{thm:KSforExB1} and~\cite[Proposition~7.3]{IndefiniteString}.
As in previous sections, we let $\alpha$ be an arbitrary positive constant. 

 \begin{theorem}\label{thm:KS}
  A Stieltjes function $m$ is the Weyl--Titchmarsh function of a Krein string $(L,\omega)$ with $L=\infty$ and  
    \begin{align}\label{eqnCondKrStr1}
     \int_0^\infty \biggl(\int_{[x,\infty)}d\omega-\frac{1}{1+4\alpha x}\biggr)^2 x\, dx & < \infty
    \end{align}
   if and only if all the following conditions hold:
   \begin{enumerate}[label=(\roman*), ref=(\roman*), leftmargin=*, widest=iii]
    \item The function $m$ has a meromorphic extension to $\C\backslash[\alpha,\infty)$ that is analytic at zero. 
    \item The poles $\sigma_+$ of $m$ in $(0,\alpha)$ satisfy
    \begin{align}\label{eqnKSLT}
     \sum_{\lambda\in \sigma_+}(\alpha-\lambda)^{\nicefrac{3}{2}} <\infty.
    \end{align}
    \item The boundary values of the function $m$ satisfy~\eqref{eqnSzego-I}.
    \end{enumerate}
    \end{theorem}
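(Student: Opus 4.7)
The plan is to derive Theorem~\ref{thm:KS} as a specialization of Theorem~\ref{thm:KSforExB1} applied to the generalized indefinite string $(L,\omega,0)$. Since Stieltjes functions correspond bijectively to Krein strings via~\cite[Proposition~7.3]{IndefiniteString}, the assumption that $m$ is a Stieltjes function is equivalent to the underlying triple having $\dip\equiv 0$ and $\omega$ a positive Borel measure; it therefore suffices to translate each of the three conditions of Theorem~\ref{thm:KSforExB1} into its Krein-string counterpart.

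Conditions~\ref{itmKSc1} and~\ref{itmKSc3} in Theorem~\ref{thm:KSforExB1} are identical to the corresponding conditions in Theorem~\ref{thm:KS}, so only condition~\ref{itmKSc2} and~\eqref{eqnCondS1} require attention. For condition~\ref{itmKSc2}, I note that any Stieltjes function has its singularities confined to $[0,\infty)$, so the set $\sigma_-$ of negative poles is automatically empty. The first sum in~\eqref{eqnLT-I} therefore vanishes, and the remaining sum over $\sigma_+$ reduces to~\eqref{eqnKSLT}.

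The bulk of the work is to reformulate the coefficient condition. With $\dip\equiv 0$ and $\Wr(x)=\omega([0,x))$, I show that~\eqref{eqnCondS1} for some $c\in\R$ is equivalent to~\eqref{eqnCondKrStr1}. Since $\tfrac{x}{1+2\sqrt{\alpha}x}\to\tfrac{1}{2\sqrt{\alpha}}$ as $x\to\infty$ while $\Wr$ is monotone increasing, finiteness of the integral in~\eqref{eqnCondS1} forces $\omega$ to be a finite measure and pins $c$ down as $c=\omega([0,\infty))-\tfrac{1}{2\sqrt{\alpha}}$ (consistent with~\eqref{eq:intromat0} and the fact that $m(0)=\omega([0,\infty))$ for a Stieltjes $m$ analytic at zero). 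With this choice,
\[
\Wr(x) - c - \frac{x}{1+2\sqrt{\alpha}x} = \frac{1}{2\sqrt{\alpha}(1+2\sqrt{\alpha}x)} - \omega([x,\infty)),
\]
so the pointwise discrepancy between the two integrands is
\[
\frac{1}{2\sqrt{\alpha}(1+2\sqrt{\alpha}x)} - \frac{1}{1+4\alpha x} = \frac{1-2\sqrt{\alpha}}{(2\sqrt{\alpha}+4\alpha x)(1+4\alpha x)} = \OO(x^{-2}),
\]
which belongs to $L^2([0,\infty);x\,dx)$. The two conditions are therefore equivalent.

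The main obstacle is no more than the bookkeeping of constants and the verification that the two unperturbed backgrounds $\tfrac{x}{1+2\sqrt{\alpha}x}$ and $\omega([0,\infty))-\tfrac{1}{1+4\alpha x}$ differ by something in $L^2([0,\infty);x\,dx)$, which is the short computation above and is trivial in the special case $\alpha=\tfrac{1}{4}$ where the two backgrounds coincide identically. Once this is in place, Theorem~\ref{thm:KS} follows directly from Theorem~\ref{thm:KSforExB1}.
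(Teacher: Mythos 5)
Your proposal is correct and follows essentially the same route as the paper: the paper's proof also reduces to Theorem~\ref{thm:KSforExB1} via \cite[Proposition~7.3]{IndefiniteString}, using precisely the observation that $\int_{[x,\infty)}d\omega-\frac{1}{1+4\alpha x}$ differs from $-(\Wr(x)-c-\frac{x}{1+2\sqrt{\alpha}x})$ with $c=\omega([0,\infty))-\frac{1}{2\sqrt{\alpha}}$ by an $\OO(x^{-2})$ term lying in $L^2([0,\infty);x\,dx)$. Your additional remarks on the emptiness of $\sigma_-$ and the identification of $c$ are implicit in the paper's argument and are handled correctly.
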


   \begin{proof}
     After noticing that one has  
     \begin{align*}
       \int_{[x,\infty)}d\omega-\frac{1}{1+4\alpha x} = \int_{[0,\infty)}d\omega - \Wr(x) - \frac{1}{2\sqrt{\alpha}} + \frac{x}{1+2\sqrt{\alpha}x} + \OO\biggl(\frac{1}{x^2}\biggr)
     \end{align*} 
     as $x\rightarrow\infty$, the claim follows readily from Theorem~\ref{thm:KSforExB1} and~\cite[Proposition~7.3]{IndefiniteString}.
   \end{proof}

\begin{remark}
 As in Proposition~\ref{lem:GISat0}, one sees that the Weyl--Titchmarsh function $m$ of a Krein string $(L,\omega)$ has an analytic extension to zero if and only if $L=\infty$ and
  \begin{align}
\limsup_{x\rightarrow \infty} \, x \int_{[x,\infty)}d\omega <\infty.
\end{align}
 In this case, Theorem~\ref{thm:m-at0} entails that  
 \begin{align}
  m(0) = \int_{[0,\infty)}d\omega.
\end{align}
To the best of our knowledge, in the case of Krein strings, the first claim goes back to the work of I.\ S.\ Kac and M.\ G.\ Krein~\cite{kakr58}. 
The second claim can be found in~\cite[\S~11.4]{kakr74} for example.
\end{remark}

Of course, Theorem~\ref{thm:KS} can again also be stated in terms of the spectral measure. 
To this end, we first note that the integral representation~\eqref{eqnWTmIntRep} for a Stieltjes function $m$ simplifies to 
    \begin{align}
  m(z) = c_0  - \frac{1}{Lz} + \int_{[0,\infty)} \frac{d\rho(\lambda)}{\lambda-z}, 
\end{align}
where $c_0\in[0,\infty)$ is some constant and the measure $\rho$ is supported on $[0,\infty)$ with   
      \begin{align}\label{eqnSpecMeasStielt}
    \int_{[0,\infty)} \frac{d\rho(\lambda)}{1+\lambda} <\infty.
    \end{align}
    
 \begin{corollary}\label{cor:KS01}
  A positive Borel measure $\rho$ on $[0,\infty)$ with~\eqref{eqnSpecMeasStielt} is the spectral measure of a Krein string $(L,\omega)$ with $L=\infty$ and~\eqref{eqnCondKrStr1} if and only if both of the following conditions hold:
    \begin{enumerate}[label=(\roman*), ref=(\roman*), leftmargin=*, widest=ii]
      \item The support of $\rho$ is discrete in $[0,\alpha)$, does not contain zero and satisfies
  \begin{align}\label{eq:LTKS01}
     \sum_{\lambda\in \supp(\rho)\atop 0<\lambda<\alpha}(\alpha-\lambda)^{\nicefrac{3}{2}} <\infty.
  \end{align}
\item The absolutely continuous part $\rho_\ac$ of $\rho$ on $(\alpha,\infty)$ satisfies~\eqref{eqnSzego-Irho}.
    \end{enumerate}
    \end{corollary}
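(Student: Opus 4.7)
The plan is to deduce Corollary~\ref{cor:KS01} from Theorem~\ref{thm:KS} by the same kind of translation that was used to pass from Theorem~\ref{thm:KSforExB1} to Corollary~\ref{cor:KSforExB1}: each of the three conditions on the Weyl--Titchmarsh function $m$ is rewritten as a condition on the spectral measure $\rho$, using the special structure of Stieltjes functions.

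First I would recall that a Krein string $(L,\omega)$ corresponds via Theorem~\ref{thm:IST} (combined with~\cite[Proposition~7.3]{IndefiniteString}) to a Stieltjes function $m$, which admits the simplified integral representation
\begin{align*}
 m(z) = c_0 - \frac{1}{Lz} + \int_{[0,\infty)}\frac{d\rho(\lambda)}{\lambda-z},
\end{align*}
with $c_0\in[0,\infty)$ and $\rho$ a positive Borel measure on $[0,\infty)$ satisfying~\eqref{eqnSpecMeasStielt}. In particular, a Stieltjes function is automatically analytic on $\C\backslash[0,\infty)$, and its singular set coincides with $\supp(\rho)$. Hence condition~(i) of Theorem~\ref{thm:KS}, that $m$ has a meromorphic extension to $\C\backslash[\alpha,\infty)$ which is analytic at zero, is equivalent to $\supp(\rho)$ being discrete in $[0,\alpha)$, not containing zero, and $L=\infty$ (so that the $-1/(Lz)$ term vanishes); this matches condition~(i) in the corollary. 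Moreover, the poles of $m$ in $(0,\alpha)$ are exactly the points of $\supp(\rho)\cap(0,\alpha)$, so~\eqref{eqnKSLT} is literally~\eqref{eq:LTKS01}.

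For condition~(iii), I would invoke the Stieltjes inversion formula, which for a Stieltjes function gives
\begin{align*}
\frac{d\rho_\ac(\lambda)}{d\lambda} = \frac{1}{\pi}\im\,m(\lambda+\I0)
\end{align*}
for almost every $\lambda>0$. Substituting this identity into~\eqref{eqnSzego-I} shows at once that~\eqref{eqnSzego-I} and~\eqref{eqnSzego-Irho} differ only by the harmless additive constant $\log\pi$ integrated against the finite weight $\sqrt{\lambda-\alpha}/\lambda^3$, so the two conditions are equivalent. This settles the "only if" direction.

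For sufficiency, given $\rho$ satisfying (i) and (ii) of the corollary, I would construct a Stieltjes function $m$ by choosing any $c_0\geq 0$ and setting $L=\infty$ in the integral representation above; conditions (i)--(iii) of Theorem~\ref{thm:KS} then follow by reversing the translations just described. Theorem~\ref{thm:KS} provides the desired Krein string $(L,\omega)$ with $L=\infty$ and~\eqref{eqnCondKrStr1}, and its spectral measure coincides with $\rho$ by construction (compare Remark~\ref{rem:uniquerho}: the constant $c_0$ corresponds to a point mass of $\omega$ at zero, which is invisible to $\rho$). The only mild point worth checking carefully is that the conditions on $\rho$ do not determine $(L,\omega)$ uniquely (one has the one-parameter family of choices for $c_0$), but since all members of this family satisfy~\eqref{eqnCondKrStr1} simultaneously, this causes no difficulty; apart from that, the argument is entirely routine and presents no real obstacle.
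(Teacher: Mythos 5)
Your proposal is correct and follows exactly the route the paper intends: the corollary is presented there as an immediate restatement of Theorem~\ref{thm:KS} obtained by translating the three conditions on the Stieltjes function $m$ into conditions on $\rho$ via the simplified integral representation and the Stieltjes inversion formula, with the ambiguity in $c_0$ and $L$ handled exactly as you note (a point mass of $\omega$ at zero does not affect~\eqref{eqnCondKrStr1}).
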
 
    
    \begin{remark}
       The trace formula in Corollary~\ref{corTFalpham} clearly also holds true in this special case, as does the corresponding Lieb--Thirring inequality
\begin{align}\label{eq:LT01newKS}
   \sum_{\lambda\in \supp(\rho)\atop 0<\lambda<\alpha}(\alpha-\lambda)^{\nicefrac{3}{2}} & \leq   \frac{3\alpha^{3}}{4}  \int_0^\infty (1+2\sqrt{\alpha}x)\biggl(\int_{[x,\infty)}d\omega-\frac{1}{2\sqrt{\alpha}+4\alpha x}\biggr)^2dx. 
\end{align}
    \end{remark}

 Even though it is somewhat less immediate, we can also apply Theorem~\ref{thm:KSforExB2} to Krein strings. 
 However, we need some simple observations for this first. 

 \begin{proposition}\label{propOddWT}
    Let $(L,\omega,\dip)$ be a generalized indefinite string. 
    The corresponding Weyl--Titchmarsh function $m$ is odd, that is, satisfies
     \begin{align}\label{eqnmodd}
    m(z) = - m(-z)
   \end{align}
   for all $z\in\C\backslash\R$, if and only if $\omega$ vanishes identically. 
    In this case, one has 
    \begin{align}\label{eqnOddWT}
      m(z) = zm_+(z^2),
    \end{align}
    where $m_+$ is the Weyl--Titchmarsh function of the Krein string $(L,\dip)$. 
\end{proposition}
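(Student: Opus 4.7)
The plan is to reduce both implications to the Krein string case and invoke the uniqueness part of Theorem~\ref{thm:IST} together with~\cite[Proposition~7.3]{IndefiniteString}, which identifies Weyl--Titchmarsh functions of Krein strings as precisely the Stieltjes functions.

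For the sufficiency direction, I would observe that when $\omega$ vanishes, the differential equation~\eqref{eqnDEho} reads $-f'' = z^2\dip f$ and depends on the spectral parameter only through $z^2$. Since the initial conditions defining $\theta(z,\ledot)$ and $\phi(z,\ledot)$ do not involve $z$ and the quasi-derivatives reduce to ordinary derivatives when $\Wr=0$, one gets $\theta(z,x)=\theta_+(z^2,x)$ and $\phi(z,x)=\phi_+(z^2,x)$, where $\theta_+,\phi_+$ is the fundamental system of the Krein string $(L,\dip)$ with spectral parameter $w=z^2$. Because both Weyl solutions are characterized up to a scalar by lying in $\Hast$ and $L^2([0,L);\dip)$, matching $\theta(z,x)+zm(z)\phi(z,x)$ with a scalar multiple of $\theta_+(z^2,x)+z^2 m_+(z^2)\phi_+(z^2,x)$ forces $zm(z)=z^2 m_+(z^2)$, which is~\eqref{eqnOddWT} and immediately yields $m(-z)=-m(z)$.

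For the necessity direction, I would start from the integral representation~\eqref{eqnWTmIntRep}. Comparing it for $m(z)$ and $-m(-z)$ via the change of variable $\lambda\mapsto-\lambda$ shows that oddness of $m$ is equivalent to $c_2=0$ together with invariance of the spectral measure $\rho$ under $\lambda\mapsto-\lambda$. Under this symmetry, a simple symmetrization turns the Cauchy kernel into $(\lambda^2-z^2)^{-1}$ and yields
\begin{align*}
 \frac{m(z)}{z} = c_1 - \frac{1}{Lz^2} + \int_\R \frac{d\rho(\lambda)}{\lambda^2-z^2}.
\end{align*}
Setting $w=z^2$, this identifies $m_+(w):=m(\sqrt{w})/\sqrt{w}$ (well-defined on $\C\setminus[0,\infty)$ independently of the branch of the square root thanks to oddness) as a Stieltjes function whose representing measure is the pushforward of $\rho$ under $\lambda\mapsto\lambda^2$, preserving the Poisson bound via~\eqref{eqnWTrhoPoisson} and having no mass at zero since $\rho(\{0\})=0$. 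By~\cite[Proposition~7.3]{IndefiniteString}, $m_+$ is then the Weyl--Titchmarsh function of a uniquely determined Krein string, whose length is again $L$ as it is encoded in the residue at zero. Applying the sufficiency direction already established to the associated generalized indefinite string $(L,0,\dip)$ produces a Weyl--Titchmarsh function equal to $zm_+(z^2)=m(z)$, and the uniqueness in Theorem~\ref{thm:IST} forces $(L,\omega,\dip)=(L,0,\dip)$, so that $\omega$ vanishes.

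The only mildly delicate step is the symmetrization in the necessity direction, where the pair of renormalization terms $\lambda(1+\lambda^2)^{-1}$ and its reflection must be combined with the two Cauchy kernels in a way that preserves absolute convergence of the resulting single integrand $(\lambda^2-z^2)^{-1}$; the rest amounts to routine matching of integral representations.
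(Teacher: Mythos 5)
Your argument is correct, but the necessity direction follows a genuinely different and considerably heavier route than the paper's. For the ``only if'' part the paper uses a one-line symmetry argument: since $f$ solves $-f''=z\,\omega f+z^2\dip f$ exactly when it solves $-f''=(-z)(-\omega)f+(-z)^2\dip f$, the function $z\mapsto -m(-z)$ is the Weyl--Titchmarsh function of $(L,-\omega,\dip)$, and injectivity of the correspondence in Theorem~\ref{thm:IST} immediately gives that $m$ is odd iff $\omega=-\omega$, i.e.\ iff $\omega$ vanishes. You instead pass through the integral representation~\eqref{eqnWTmIntRep} (oddness iff $c_2=0$ and $\rho$ even), symmetrize the Cauchy kernel, identify $m(\sqrt{w})/\sqrt{w}$ as a Stieltjes function, and invoke the Krein characterization \cite[Proposition~7.3]{IndefiniteString} of Stieltjes functions as Weyl--Titchmarsh functions of Krein strings before closing the loop with uniqueness. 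All the steps check out (in particular the absolute convergence of the symmetrized integrand under~\eqref{eqnWTrhoPoisson}, the positivity $c_1\geq 0$ needed for the Stieltjes property, and the branch-independence of $m(\sqrt{w})/\sqrt{w}$), but you are importing the nontrivial existence half of Krein's inverse theorem where the paper only needs injectivity of $(L,\omega,\dip)\mapsto m$; the payoff of your route is that it exhibits the spectral measure of the associated Krein string explicitly as the pushforward of $\rho$ under $\lambda\mapsto\lambda^2$, which the paper's argument does not. Your treatment of the formula~\eqref{eqnOddWT} via the coincidence of the fundamental systems (equivalently, of the Weyl solutions, unique up to scalars in $\Hast$) is essentially identical to the paper's.
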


\begin{proof}
   One first notices that the Weyl--Titchmarsh function $\tilde{m}$ of the generalized indefinite string $(L,-\omega,\dip)$ is given by 
   \begin{align*}
     \tilde{m}(z) = - m(-z).
   \end{align*}
   In fact, this follows readily from the definition of the Weyl--Titchmarsh functions because a function $f$ is a solution of the differential equation~\eqref{eqnDEho} if and only if it is a solution of 
  \begin{align*}
  -f'' = (-z)(-\omega) f + (-z)^2 \dip f.
 \end{align*}
   As $m$ coincides with $\tilde{m}$ if and only if $(L,\omega,\dip)$ coincides with $(L,-\omega,\dip)$, we see that the equivalence in the claim holds. 
   Moreover, in this case one has 
    \begin{align*}
     m(z) =  \frac{\psi'\NLz}{z\psi(z,0)},  
    \end{align*} 
   where $\psi(z,\redot)$ is a non-trivial solution of the differential equation~\eqref{eqnDEho} which lies in $\Hast$ and $L^2([0,L);\dip)$. 
   Since $\psi(z,\redot)$ is then a non-trivial solution of 
   \begin{align*}
     -f'' =  z^2 \dip f
   \end{align*}
   which lies in $\Hast$, we have that 
   \begin{align*}
       m_+(z^2) & =  \frac{\psi'\NLz}{z^2\psi(z,0)} = \frac{m(z)}{z}
   \end{align*}
   by the definition of the Weyl--Titchmarsh function $m_+$.
\end{proof}

\begin{corollary}\label{corOddWT}
  Let $(L,\omega,\dip)$ be a generalized indefinite string. 
  The corresponding spectral measure $\rho$ is even if and only if $\Wr$ is equal to a constant almost everywhere.
\end{corollary}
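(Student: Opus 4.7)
My strategy is to reduce the corollary to Proposition~\ref{propOddWT} by exploiting the degeneracy between generalized indefinite strings and spectral measures described in Remark~\ref{rem:uniquerho}. The key observation there is that adding a real constant $b$ to the normalized anti-derivative $\Wr$ changes the Weyl--Titchmarsh function by that same additive constant while leaving the spectral measure $\rho$ untouched. It is therefore enough to establish the following equivalent claim: $\rho$ is even if and only if the function $m(z) - c_2$ is odd, where $c_2$ is the constant from the integral representation~\eqref{eqnWTmIntRep}.

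For this equivalence, I would start from the identity
\[
m(z) + m(-z) - 2c_2 \;=\; \int_\R \frac{2\lambda(1+z^2)}{(\lambda^2-z^2)(1+\lambda^2)}\, d\rho(\lambda),
\]
which follows from~\eqref{eqnWTmIntRep} after observing that the $c_1 z$ and $\frac{1}{Lz}$ contributions cancel and combining the Poisson-regularized Cauchy kernels into a single expression. The integrand being odd in $\lambda$, evenness of $\rho$ gives the forward direction at once. For the converse, if $m(z) + m(-z) \equiv 2c_2$, then the symmetry $m(z^\ast)^\ast = m(z)$ yields $\im\,m(\lambda+\I0) = \im\,m(-\lambda+\I0)$ for almost every $\lambda\in\R$, and the Stieltjes--Perron inversion gives that $\rho_\ac$ is symmetric about the origin. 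To extend evenness to the singular part, I would substitute $\lambda \mapsto -\lambda$ in the representation of $m(-z)$ to produce a second Herglotz--Nevanlinna representation of $m$ whose spectral measure is the reflected measure $B \mapsto \rho(-B)$, and then appeal to uniqueness of the Herglotz--Nevanlinna representation to conclude.

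With the equivalence in hand, the corollary is immediate: by Remark~\ref{rem:uniquerho} the generalized indefinite string $(L,\tilde\omega,\dip)$ with $\tilde\Wr = \Wr - c_2$ has Weyl--Titchmarsh function $m - c_2$, and by Proposition~\ref{propOddWT} this is odd precisely when $\tilde\omega$ vanishes identically, which in turn amounts to $\tilde\Wr = 0$ almost everywhere, i.e., $\Wr = c_2$ almost everywhere. The main technical subtlety is the converse direction of the equivalence, where one must control the singular part of $\rho$; I expect the cleanest way to do this is to invoke uniqueness of the Herglotz--Nevanlinna integral representation rather than attempting a more hands-on Stieltjes inversion argument for the singular piece.
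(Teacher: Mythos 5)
Your proposal is correct and follows essentially the same route as the paper: subtract the constant $c_2$ from the Weyl--Titchmarsh function (equivalently, pass to the string with $\omega$ replaced by $\omega-c_2\delta_0$), observe that $\rho$ is even if and only if $m-c_2$ is odd, and invoke Proposition~\ref{propOddWT}. The paper states the middle equivalence without proof, whereas you supply the details via the integral representation and uniqueness of the Herglotz--Nevanlinna representation; that is exactly the intended justification.
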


\begin{proof}
 One first notices that the Weyl--Titchmarsh function $\tilde{m}$ of the generalized indefinite string $(L,\omega-c_2\delta_0,\dip)$ is given by $\tilde{m} = m - c_2$, where $c_2$ is the constant from the integral representation~\eqref{eqnWTmIntRep} and $\delta_0$ is the unit Dirac measure centered at zero.
   Now the claim follows readily from Proposition~\ref{propOddWT} because the spectral measure $\rho$ is even if and only if $\tilde{m}$ is odd. 
\end{proof}

\begin{remark}\label{rem:ExBviaKrein}
 The Weyl--Titchmarsh function $m$ of the generalized indefinite string $(L,\omega,\dip)$ from Example~\ref{exa2alpha} is clearly odd. 
In fact, the distribution $\omega$ is identically zero in this case. 
The corresponding Krein string $(L,\dip)$ as in Proposition~\ref{propOddWT} coincides precisely with the one considered in Example~\ref{exaalpha} with $\alpha = \beta^2$. 
 In particular, its Weyl--Titchmarsh function $m_+$ and the corresponding spectral measure are simply given by the expressions in Example~\ref{exaalpha}. 
\end{remark}

In order to state the second result, for a Krein string $(L,\dip)$ we will write 
 \begin{align}\label{def:varrho}
    \dip(B) = \int_B \varrho(x)^2 dx + \dip_\sing(B), 
 \end{align}
  where $\varrho$ is the (positive) square root of the Radon--Nikod\'ym derivative of $\dip$ with respect to the Lebesgue measure and $\dip_\sing$ is the singular part of $\dip$.

 \begin{theorem}\label{thmKSforKS02}
   A Stieltjes function $m_+$ is the Weyl--Titchmarsh function of a Krein string $(L,\dip)$ with $L=\infty$ and  
   \begin{align}\label{eqnCondKrStr2}
      \int_0^\infty   \biggl(\varrho(x) - \frac{1}{1+2\sqrt{\alpha} x} \biggr)^2 x\, dx + \int_{[0,\infty)} x\, d\dip_\sing(x) & < \infty
    \end{align}
  if and only if all the following conditions hold:
    \begin{enumerate}[label=(\roman*), ref=(\roman*), leftmargin=*, widest=iii]
    \item The function $m_+$ has a meromorphic extension to $\C\backslash[\alpha,\infty)$ that is analytic at zero. 
    \item The poles $\sigma_+$ of $m_+$ in $(0,\alpha)$ satisfy~\eqref{eqnKSLT}.
    \item The boundary values of the function $m_+$ satisfy 
    \begin{align}\label{eq:SzegomKS02}
 \int_{\alpha}^\infty \frac{\sqrt{\lambda-\alpha}}{\lambda^2}\log(\im\, m_+(\lambda+\I0)) d\lambda  >- \infty.
    \end{align}
    \end{enumerate}
    \end{theorem}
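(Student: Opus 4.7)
The plan is to reduce Theorem~\ref{thmKSforKS02} to Theorem~\ref{thm:KSforExB2} via the correspondence in Proposition~\ref{propOddWT}, which takes a Krein string $(L,\dip)$ to the generalized indefinite string $(L,0,\dip)$ whose Weyl--Titchmarsh function is the odd function $m(z) = z m_+(z^2)$. Setting $\beta = \sqrt{\alpha}$, the generalized indefinite string has $\Wr \equiv 0$, so choosing $c=0$, condition~\eqref{eqnCondS2} reduces to exactly~\eqref{eqnCondKrStr2}. Since $\Wr \equiv 0$ forces the limit~\eqref{eq:def-c_w} to be $0$, Corollary~\ref{cor:m-at0}\ref{iCor:m-at0forB} is consistent with $m(0)=0$, so the amended version of Theorem~\ref{thm:KSforExB2} established in Section~\ref{secKSB2proof} applies directly.

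It remains to translate each of the three conditions. For~\ref{itmKS2c1}, the map $z \mapsto z^2$ is a conformal bijection from $\C_+\cup(-\sqrt{\alpha},\sqrt{\alpha})\cup\C_-$ onto $\C\setminus[\alpha,\infty)$, and because $m(z) = z m_+(z^2)$ is odd, $m$ extends meromorphically to the former domain and is analytic at zero if and only if $m_+$ has a meromorphic extension to $\C\setminus[\alpha,\infty)$ that is analytic at zero. For~\ref{itmKS2c2}, each pole $\lambda\in(0,\alpha)$ of $m_+$ produces two poles $\pm\sqrt{\lambda}$ of $m$ in $(-\sqrt{\alpha},\sqrt{\alpha})$, so that
\begin{align*}
 \sum_{\lambda\in\sigma_\dis} (\sqrt{\alpha}-|\lambda|)^{\nicefrac{3}{2}} = 2\sum_{\lambda\in\sigma_+} (\sqrt{\alpha}-\sqrt{\lambda})^{\nicefrac{3}{2}};
\end{align*}
since analyticity of $m_+$ at zero (from~\ref{itmKS2c1}) prevents accumulation of poles at $0$, the relation $\alpha-\lambda = (\sqrt{\alpha}-\sqrt{\lambda})(\sqrt{\alpha}+\sqrt{\lambda})$ with $\sqrt{\alpha}+\sqrt{\lambda}$ bounded away from zero shows that this sum is finite exactly when $\sum_{\lambda\in\sigma_+}(\alpha-\lambda)^{\nicefrac{3}{2}}<\infty$.

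The routine but slightly more delicate step is the translation of~\ref{itmKS2c3}. For $\xi>\sqrt{\alpha}$, $m(\xi+\I 0) = \xi\, m_+(\xi^2+\I 0)$, while for $\xi<-\sqrt{\alpha}$ the limit $(\xi+\I\varepsilon)^2$ approaches $\xi^2$ from the lower half-plane, so $m(\xi+\I 0) = \xi\,\overline{m_+(\xi^2+\I 0)}$; in both cases $\im\, m(\xi+\I 0) = |\xi|\,\im\, m_+(\xi^2+\I 0)$. Therefore
\begin{align*}
 &\int_{\R\setminus(-\sqrt{\alpha},\sqrt{\alpha})} \frac{\sqrt{\xi^2-\alpha}}{|\xi|^3}\log(\im\, m(\xi+\I 0))\,d\xi \\
 &\qquad = \int_{\R\setminus(-\sqrt{\alpha},\sqrt{\alpha})} \frac{\sqrt{\xi^2-\alpha}}{|\xi|^3}\log|\xi|\,d\xi + 2\int_{\sqrt{\alpha}}^\infty \frac{\sqrt{\xi^2-\alpha}}{\xi^3}\log(\im\, m_+(\xi^2+\I 0))\,d\xi,
\end{align*}
where the first term on the right is a finite constant. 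Substituting $\lambda = \xi^2$ in the second term gives exactly~\eqref{eq:SzegomKS02}, so~\ref{itmKS2c3} is equivalent to the boundary value condition on $m_+$.

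The main obstacle, if any, is bookkeeping of the signs and the accumulation structure in the Lieb--Thirring sum; the integral translation is straightforward, and the necessity and sufficiency sides both follow at once from Theorem~\ref{thm:KSforExB2}, so no new trace formula or semi-continuity argument is required here.
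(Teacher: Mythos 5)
Your proof follows the same route as the paper's: reduce to Theorem~\ref{thm:KSforExB2} with $\beta=\sqrt{\alpha}$ via the odd function $m(z)=zm_+(z^2)$ of the generalized indefinite string $(L,0,\dip)$ from Proposition~\ref{propOddWT}, observe that $\Wr\equiv0$ forces $c=0$ so that~\eqref{eqnCondS2} reduces to~\eqref{eqnCondKrStr2}, and translate the three spectral conditions through the substitution $\lambda=\xi^2$ — your condition-by-condition translation is correct, whereas the paper phrases the same translation through the even spectral measure and Corollary~\ref{cor:KSforExB2}. One small slip: $z\mapsto z^2$ is two-to-one on $\C_+\cup(-\sqrt{\alpha},\sqrt{\alpha})\cup\C_-$ (for instance $\pm\I\mapsto-1$), so it is not a conformal bijection onto $\C\backslash[\alpha,\infty)$, but since $m$ is odd this does not affect the equivalence of the extension conditions.
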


 \begin{proof}
    Any Stieltjes function $m_+$ is the Weyl--Titchmarsh function of some Krein string $(L,\dip)$. 
    We denote with $m$ the Weyl--Titchmarsh function of the generalized indefinite string $(L,0,\dip)$.
    Since the function $m$ is odd by Proposition~\ref{propOddWT} and the corresponding spectral measure $\rho$ is even in view of Corollary~\ref{corOddWT}, the integral representation~\eqref{eqnWTmIntRep} takes the form
    \begin{align*}
      m(z) = c_1 z - \frac{1}{Lz} + \int_{[0,\infty)} \frac{2z}{\lambda^2-z^2}d\rho(\lambda).
    \end{align*}
    From the relation~\eqref{eqnOddWT} in Proposition~\ref{propOddWT}, it then follows that 
    \begin{align*}
      m_+(z) = c_1 - \frac{1}{Lz} + \int_{[0,\infty)} \frac{2}{\lambda^2-z}d\rho(\lambda).
    \end{align*}
    After these considerations, the claim can be deduced by applying Theorem~\ref{thm:KSforExB2} and Corollary~\ref{cor:KSforExB2} with $\beta=\sqrt{\alpha}$ to the generalized indefinite string $(L,0,\dip)$.
  \end{proof}
    
 \begin{remark}\label{rem:traceKreinB}
 A couple of remarks are in order:
 \begin{enumerate}[label=(\alph*), ref=(\alph*), leftmargin=*, widest=e]
      \item\label{rem:traceKreinBi}  Condition~\eqref{eqnCondKrStr2} in Theorem~\ref{thmKSforKS02} means that the operators $\KIO_{(\varrho-\varrho_{\alpha})^2}$ and $\KIO_{\dip_\sing}$ belong to the trace class with the trace formula (involving~\eqref{eqnCondKrStr2} on the right-hand side)
 \begin{align}\label{eq:traceKreinB}
 \tr(\KIO_{(\varrho-\varrho_{\alpha})^2})+ \tr(\KIO_{\dip_\sing}) =  \int_0^\infty   \biggl( \varrho(x) - \frac{1}{1+2\sqrt{\alpha} x} \biggr)^2 x\, dx + \int_{[0,\infty)} x\, d\dip_\sing(x),
 \end{align}
 where $\varrho_\alpha$ is the corresponding function for the Krein string in Example~\ref{exaalpha}. 
  In this sense, the class of Krein strings in Theorem~\ref{thmKSforKS02} can be viewed again as suitable perturbations of the Krein string in Example~\ref{exaalpha}.  
  \item One can easily specify the trace formula in Corollary~\ref{coreqnTFbetaWT} to the class of Krein strings in Theorem~\ref{thmKSforKS02}: 
  If $\rho$ is the spectral measure of a Krein string $(L,\dip)$ with $L=\infty$ and~\eqref{eqnCondKrStr2}, then (with $\cF_2$ given by~\eqref{eq:Fsdef} in Section~\ref{secSbSsumrule})
  \begin{align}\begin{split}\label{eqnTFbetamKS2}
  & \int_{0}^{\infty} (1+2\sqrt{\alpha} x)\biggl(\varrho(x)-\frac{1}{1+2\sqrt{\alpha} x}\biggr)^2 dx + \int_{(0,\infty)} (1+2\sqrt{\alpha} x)d\dip_{\sing}(x)  \\
&  \qquad = \int_{(0,\infty)} \frac{d\rho(\lambda)}{\lambda} - \frac{1}{\pi} \int_{\alpha}^\infty \frac{\sqrt{\lambda-\alpha}}{\lambda^2} d\lambda +  \frac{2}{\sqrt{\alpha}} \sum_{\lambda\in\supp(\rho)\atop0<\lambda<\alpha} \cF_2\Biggl(\sqrt{\frac{\sqrt{\alpha}-\sqrt{\lambda}}{\sqrt{\alpha}+\sqrt{\lambda}}}\Biggr) \\
&  \qquad\qquad - \frac{1}{\pi} \int_{\alpha}^\infty  \frac{\sqrt{\lambda-\alpha}}{\lambda^2} \log\Biggl(\frac{\pi\lambda}{\sqrt{\lambda-\alpha}} \frac{d\rho_\ac(\lambda)}{d\lambda}\Biggr) d\lambda.
\end{split}\end{align}
  In particular, one gets the Lieb--Thirring inequality
  \begin{align}\label{eq:LT02newKS}
\begin{split}
\frac{4}{3\alpha^2}\sum_{\lambda\in \supp(\rho)\atop 0<\lambda<\alpha}(\alpha-\lambda)^{\nicefrac{3}{2}}
     &\leq  \int_{0}^{\infty} (1+2\sqrt{\alpha} x)\biggl(\varrho(x)-\frac{1}{1+2\sqrt{\alpha} x}\biggr)^2 dx\\
     &\qquad\qquad\qquad\qquad + \int_{(0,\infty)} (1+2\sqrt{\alpha} x)d\dip_{\sing}(x). 
     \end{split}  
\end{align}
 \item It is interesting to notice that the difference between the perturbations in Theorem~\ref{thm:KS} and Theorem~\ref{thmKSforKS02} on the spectral side lies simply in conditions~\eqref{eqnSzego-I} and~\eqref{eq:SzegomKS02} on the asymptotic behavior of the density of the spectral measure at infinity. 
 Condition~\eqref{eq:SzegomKS02} is clearly stronger than~\eqref{eqnSzego-I} and ensures that the absolutely continuous part of the Krein string's weight measure is not trivial, which is not necessary under condition~\eqref{eqnSzego-I} as we shall see in Section~\ref{secKreinLanger}. 
 Moreover, condition~\eqref{eq:SzegomKS02} has certain similarities with recent work of R.\ Bessonov and S.\ Denisov~\cite{bede20,bede21,bede22} to be discussed below.
 \end{enumerate}
 \end{remark}   

Let us finish this section with commenting on some connections between Theorem~\ref{thmKSforKS02} and the characterization of the Szeg\H{o} class obtained in~\cite{bede20}. 
More specifically, using the recent work of R.\ Bessonov and S.\ Denisov~\cite{bede20} together with some transformation rules discovered by M.\ G.\ Krein~\cite{kr53b}, one may characterize Krein strings $(L,\dip)$ with spectrum contained in $[\alpha,\infty)$ and spectral measure $\rho$ satisfying    
 \begin{align}\label{eq:SzegoKreinSalpha}
      \int_\alpha^\infty \frac{1}{\lambda\sqrt{\lambda-\alpha}}\log\biggl(\frac{d\rho_\ac(\lambda)}{d\lambda}\biggr) d\lambda >-\infty.  
      \end{align}
Unfortunately, this characterization is rather cumbersome when compared to the one in Theorem~\ref{thmKSforKS02}. 
We are not going to provide all the details here, but shall only indicate how to end up with such a characterization. 
To this end, recall that a Krein string $(L,\dip)$ belongs to the {\em Szeg\H{o} class}~\cite{bede20} if $L+\dip([0,L)) = \infty$ and the corresponding spectral measure $\rho$ satisfies
   \begin{align}\label{eq:SzegoKreinS}
      \int_0^\infty \frac{1}{\sqrt{\lambda}(1+\lambda)} \log\biggl(\frac{d\rho_\ac(\lambda)}{d\lambda}\biggr) d\lambda >-\infty.  
      \end{align}
Let us stress that the Szeg\H{o} condition~\eqref{eq:SzegoKreinS} was stated in~\cite{bede20} for the {\em principal} spectral measure $\rho_{\rm Kr}$ of a Krein string in the sense of M.\ G.\ Krein~\cite{kakr74} arising from their Weyl--Titchmarsh function $m_{\rm Kr}$.
 However, it is not difficult to see that the measure $\rho_{\rm Kr}$ satisfies the Szeg\H{o} condition if and only if so does the spectral measure $\rho$. 
 We also note that condition~\eqref{eq:SzegoKreinS} implies that the spectrum of a Krein string from the Szeg\H{o} class coincides with the positive half-line $[0,\infty)$ and its absolutely continuous spectrum is supported on $[0,\infty)$.  
The following characterization of Krein strings in the Szeg\H{o} class was obtained in~\cite[Theorem~2]{bede20}.     

  \begin{theorem}\label{th:BD}
      A Krein string  $(L,\dip)$ with $L+\dip([0,L)) = \infty$ belongs to the Szeg\H{o} class 
      if and only if $\varrho\notin L^1[0,L)$ and
      \begin{align}
      \sum_{n\in\N}\biggl((x_{n+2} - x_n) \int_{(x_{n},x_{n+2}]}d\dip - 4\biggr) <\infty,
      \end{align}
      where $\varrho$ is given by~\eqref{def:varrho} and the sequence $(x_n)_{n\in\N}$ is defined by 
      \begin{align}
      x_n = \inf \biggl\{x\ge 0\,\bigg|\, n=\int_0^x \varrho(s)ds\biggr\}.
      \end{align}
      \end{theorem}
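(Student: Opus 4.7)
\textbf{Plan for Theorem~\ref{th:BD}.} The approach follows the general Killip--Simon blueprint that runs through this paper: reduce to a normalized/model setting, establish a step-by-step sum rule, and combine it with lower semi-continuity of a spectral functional. Since the statement is cited from Bessonov--Denisov, the plan is essentially to reconstruct their argument from within the present framework.

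The first step is the Liouville-type change of variable $y = \int_0^x \varrho(s)\,ds$. This is a particular instance of Krein's duality and sends the Krein string $(L,\dip)$ to a new Krein string $(\tilde L,\tilde\dip)$ with $\tilde L = \int_0^L \varrho(s)\,ds$ and with the absolutely continuous part of $\tilde\dip$ equal to Lebesgue measure; the singular part of $\dip$ transforms into a measure carried on the image of $\{\varrho = 0\}$. The condition $\varrho \notin L^1[0,L)$ is exactly what makes $\tilde L = \infty$, so sampling points $x_n$ correspond to the integers $n$ in the new variable. Under this transformation, $(x_{n+2}-x_n)\int_{(x_n,x_{n+2}]}d\dip$ becomes $2\cdot\tilde\dip((n,n+2])$, and the constant $4$ is exactly the value of this quantity when $\tilde\dip$ is Lebesgue measure. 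The transformation preserves absolute continuity of the spectral measure modulo an explicit diffeomorphism of the $\lambda$-axis and in particular preserves finiteness of the Szeg\H{o} integral~\eqref{eq:SzegoKreinS}.

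The core of the argument is then a step-by-step sum rule for the transformed string $(\tilde L,\tilde\dip)$, formally analogous to Theorem~\ref{thm:traceflaA} with the Krein string from Example~\ref{exaalpha} at $\alpha = 0$ playing the role of the model (its absolutely continuous spectral density is precisely the Szeg\H{o} weight $\frac{1}{\pi\sqrt{\lambda}}$). For each $n$ one defines a relative Wronskian $a(\,\cdot\,,n)$ between the truncated string on $[n,\infty)$ and the model; Proposition~\ref{prop:a-decomp}-type computations show that $|\log|a(\I,n)||^2$ or a comparable derivative at the model reference point equals, up to controlled remainders, the local deviation $\tilde\dip((n,n+2])\cdot 2 - 4$. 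Telescoping in $n$ and invoking the absence of eigenvalues below the essential spectrum (the Krein spectrum lies in $[0,\infty)$) yields a clean identity with the Szeg\H{o} integral on the spectral side. Finally, lower semi-continuity of the Szeg\H{o} functional as in Theorem~\ref{thm:entropyA} upgrades the inequality obtained via truncations into both directions of the equivalence.

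The main obstacle is the derivation of the sum rule with exactly the right weight $\frac{1}{\sqrt{\lambda}(1+\lambda)}$ and with the local coefficient data discretized to two-step windows $[n,n+2]$ rather than single-step windows $[n,n+1]$. Two-step sampling is forced because one-step data for the Lebesgue model is degenerate: the reflection coefficient has a zero at the model reference point that must be resolved by pairing two consecutive windows. Technically, this amounts to performing the meromorphic factorization and trace computations of Appendix~\ref{appMeroMain} at a boundary point of the spectrum (here $\lambda = 0$) rather than at an interior reference point, which requires a more delicate transfer-matrix analysis at both the low- and high-energy ends than the one developed in Sections~\ref{secSbSsumrule}--\ref{secKSB1proof}.
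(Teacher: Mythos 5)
First, a point of comparison: the paper does not prove Theorem~\ref{th:BD} at all. It is quoted verbatim from Bessonov--Denisov \cite[Theorem~2]{bede20} and used as a black box in the discussion at the end of Section~\ref{sec:KreinString}, so there is no in-paper proof to measure your proposal against. Judged on its own, your plan correctly identifies some structural features (the points $x_n$ are where the exponential type $\int_0^x\varrho$ from Theorem~\ref{thm:FSScartwright} reaches $n$; each summand is nonnegative because $(x_{n+2}-x_n)\int_{(x_n,x_{n+2}]}d\dip\geq\bigl(\int_{x_n}^{x_{n+2}}\varrho\bigr)^2=4$ by Cauchy--Schwarz), but it has two genuine gaps.

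The first is the opening reduction. The substitution $y=\int_0^x\varrho(s)\,ds$ does not send a Krein string to a Krein string whose absolutely continuous weight is Lebesgue measure: carrying it out on $-f''=z\varrho^2 f$ produces a first-order term ($-g''-(\varrho'/\varrho^2)g'=zg$), i.e.\ a Schr\"odinger-type equation after a further Liouville substitution, not a string. The class of Krein strings is only preserved under the specific projective reparametrizations of Remark~\ref{rem:uniquerho}; arbitrary time changes are legitimate only at the level of the associated $2\times2$ canonical system (Remark~\ref{remTraceNormal}), where the quantity $(x_{n+2}-x_n)\int_{(x_n,x_{n+2}]}d\dip$ should be recognized as $\det\int_{x_n}^{x_{n+2}}h$, which is reparametrization-invariant. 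Your identification of the summand with $2\tilde\dip((n,n+2])-4$ therefore rests on a transformation that does not exist in the form you use it. The second and more serious gap is the sum rule itself. The Szeg\H{o} weight $1/(\sqrt{\lambda}(1+\lambda))$ in~\eqref{eq:SzegoKreinS} blows up like $\lambda^{-1/2}$ at the spectral edge $\lambda=0$, whereas every trace formula in this paper (Theorem~\ref{thm:traceflaA}, Corollary~\ref{corTFalpham}, \eqref{eqnTFbetamKS2}) is anchored at an interior point $\I\sqrt{\alpha}$ of the resolvent set and produces weights such as $\sqrt{\lambda-\alpha}/\lambda^3$ that \emph{vanish} at the edge; the paper explicitly flags this edge discrepancy as the essential difference between the Szeg\H{o} class and the classes it can treat. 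Setting $\alpha=0$ in Theorem~\ref{thm:traceflaA} degenerates the reference point onto the boundary and does not yield the required weight, so the "more delicate transfer-matrix analysis" you defer to is not a technical refinement but the entire content of the theorem. The Bessonov--Denisov argument is in fact not a Killip--Simon step-by-step sum rule of the kind developed in Sections~\ref{secSbSsumrule}--\ref{secKSB1proof}; it runs through an entropy functional for canonical systems and estimates on determinants of integrated Hamiltonians over consecutive windows, which is why the two-step sampling appears. As it stands, your proposal is a plausible research programme rather than a proof.
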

      
Clearly, the difference between conditions~\eqref{eq:SzegoKreinSalpha} and~\eqref{eq:SzegoKreinS} is simply a shift in the spectral parameter by $\alpha$.
For this reason, it is possible to apply the following transformation rule of M.\ G.\ Krein~\cite{kr53b} (see also~\cite[Chapter~6.9]{dymc76}) in order to turn Theorem~\ref{th:BD} into a characterization of Krein strings with spectrum contained in $[\alpha,\infty)$ and satisfying condition~\eqref{eq:SzegoKreinSalpha}.
 
\begin{lemma}\label{lem:kreintransform}
Let $(L,\dip)$ be a Krein string with $L+\dip([0,L)) = \infty$ and suppose that another Krein string $(\tilde{L},\tilde{\dip})$ is defined by 
\begin{align}\label{eq:kreintransform01}
     \tilde{L} & = \lim_{x\to L} \gx(x), &  \int_{[0,\gx(x))}d\tilde{\dip} & = \int_{[0,x)} \theta(-\alpha,s)^2d\dip(s),
      \end{align}
      where $\gx(x)$ is given by 
      \begin{align}\label{eq:kreintransform02}
        \gx(x) & = \frac{\phi(-\alpha,x)}{\theta(-\alpha,x)}.
      \end{align}
Then the corresponding spectral measures $\rho_{\rm Kr}$ and $\tilde{\rho}_{\rm Kr}$ are related via 
\begin{align}\label{eq:rhoKrtransf}
 \int_{(-\infty,\lambda)} d\tilde{\rho}_{\rm Kr} = \int_{(-\infty,\lambda-\alpha)}d\rho_{\rm Kr}.
\end{align}
The above transformation holds for negative $\alpha$ too as long as $-\alpha\le \inf \supp(\rho_{\rm Kr})$ and $\rho_{\rm Kr}(\{-\alpha\})=0$. 
The latter is equivalent to $\theta(-\alpha,\redot)\notin L^2([0,L);\dip)$.
\end{lemma}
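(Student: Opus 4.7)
The proof plan is to exhibit an explicit unitary-like correspondence between solutions of the two Krein string equations that intertwines the spectral parameters via $\tilde{z} = z + \alpha$, and then to read off the relation between the two Weyl--Titchmarsh functions in the sense of Krein, from which the identity \eqref{eq:rhoKrtransf} follows by the Stieltjes inversion formula.

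First, I would introduce the change of dependent variable $g(x) = f(x)/\theta(-\alpha,x)$ and the change of independent variable $\tilde{x} = \gx(x)$, and observe that the Wronskian identity $\theta(-\alpha,x)\phi^\qd(-\alpha,x) - \theta^\qd(-\alpha,x)\phi(-\alpha,x) = 1$ yields $d\tilde{x}/dx = 1/\theta(-\alpha,x)^2$. Under the assumption $-\alpha \le \inf\supp(\rho_{\rm Kr})$, the solution $\theta(-\alpha,\redot)$ has no zeros on $[0,L)$ (its zeros would correspond to eigenvalues below $-\alpha$), so the transformation is well defined, and the condition $\theta(-\alpha,\redot)\notin L^2([0,L);\dip)$ is precisely the standard characterization of $-\alpha$ not being an eigenvalue. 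Setting $G(\tilde{x}) = g(x)$, a direct computation using $-\theta'' = -\alpha\,\dip\,\theta$ and $-f''= z\,\dip\,f$ (both understood weakly as in~\eqref{eqnIntDEho}) shows that $G$ satisfies $-\ddot{G}(\tilde{x}) = (z+\alpha)\,\theta(-\alpha,x)^4\,\dip$-a.e. $\cdot\,G(\tilde{x})$ in distributional form, which, in view of the defining relation $d\tilde{\dip}(\tilde{x}) = \theta(-\alpha,x)^2 d\dip(x)$ and $d\tilde{x} = \theta(-\alpha,x)^{-2}dx$, is exactly the Krein string equation for $(\tilde{L},\tilde{\dip})$ at spectral parameter $z+\alpha$.

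Next I would check the initial data. Using $\theta(-\alpha,0)=1$ and $\theta^\qd(-\alpha,0)=0$, one finds $\gx(0)=0$, $G(0)=f(0)$ and $\dot{G}(0)=f'(0-)$. Applying this to $\theta(z,\redot)$ and $\phi(z,\redot)$ individually yields the intertwining identities
\begin{align*}
\tilde{\theta}(z+\alpha,\gx(x)) = \frac{\theta(z,x)}{\theta(-\alpha,x)}, \qquad
\tilde{\phi}(z+\alpha,\gx(x)) = \frac{\phi(z,x)}{\theta(-\alpha,x)},
\end{align*}
so that in particular the quotient $\tilde{\phi}/\tilde{\theta}$ at $\tilde{x} = \gx(x)$ coincides with $\phi/\theta$ at $x$. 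Passing to the limit $x\to L$ (so that $\tilde{x}\to\tilde{L} = \lim_{x\to L}\gx(x)$) in the Krein definition of the Weyl--Titchmarsh function recalled in the footnote of this section gives $\tilde{m}_{\rm Kr}(z+\alpha) = m_{\rm Kr}(z)$ for all $z\in\C\backslash\R$, and the identity extends to $z=-\alpha$ under the assumption $\rho_{\rm Kr}(\{-\alpha\})=0$.

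Finally, since $m_{\rm Kr}$ and $\tilde{m}_{\rm Kr}$ are Stieltjes functions and hence uniquely determined by the corresponding measures via their integral representations, the relation $\tilde{m}_{\rm Kr}(\lambda) = m_{\rm Kr}(\lambda-\alpha)$ translates by the Stieltjes--Perron inversion formula into $d\tilde{\rho}_{\rm Kr}(\lambda) = d\rho_{\rm Kr}(\lambda-\alpha)$, which yields \eqref{eq:rhoKrtransf}. The main technical obstacle I anticipate is making the formal weak-form computation in the first step rigorous on the level of the distributional equation~\eqref{eqnIntDEho}, in particular handling point masses of $\dip$ and verifying that the pushforward measure $\tilde{\dip}$ defined via the distribution function in~\eqref{eq:kreintransform01} is genuinely the correct coefficient measure of the transformed string; a secondary issue is to justify that the right-endpoint limit defining $\tilde{L}$ exists in $[0,\infty]$ and that the limit point condition at $L$ is inherited by $\tilde{L}$, which follows because the equality $\tilde{\phi}/\tilde{\theta} = \phi/\theta$ forces the Weyl limits to agree.
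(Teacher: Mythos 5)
Your proposal is correct and follows essentially the same route as the paper: both rest on the observation that the fundamental system of the transformed string is $\tilde{\theta}(z+\alpha,\gx(x)) = \theta(z,x)/\theta(-\alpha,x)$, $\tilde{\phi}(z+\alpha,\gx(x)) = \phi(z,x)/\theta(-\alpha,x)$, from which $\tilde{m}_{\rm Kr}(z) = m_{\rm Kr}(z-\alpha)$ and hence \eqref{eq:rhoKrtransf} follow immediately. The only difference is that you spell out the Liouville-type computation (including the Wronskian identity $\gx' = \theta(-\alpha,\redot)^{-2}$ and the verification of the initial data) that the paper compresses into the phrase ``it suffices to observe.''
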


\begin{proof}
It suffices to observe that the fundamental system of solutions for the modified Krein string $(\tilde{L},\tilde{\dip})$ is simply given by
\begin{align*}
\tilde{\theta}(z,\gx(x)) & = \frac{\theta(z-\alpha,x)}{\theta(-\alpha,x)}, &  \tilde{\phi}(z,\gx(x)) & = \frac{\phi(z-\alpha,x)}{\theta(-\alpha,x)},
\end{align*}
and hence, by the definition of $m_{\rm Kr}$ and $\tilde{m}_{\rm Kr}$, one has 
\begin{align*}
\tilde{m}_{\rm Kr}(z) = m_{\rm Kr}(z-\alpha),
\end{align*}
which immediately implies~\eqref{eq:rhoKrtransf}. 
The remaining claim that this transformation works for negative $\alpha$ as well follows from monotonicity of $\gx(x)$ as a function of $x$. 
Indeed, a simple calculation shows that 
 \begin{align*}
  \gx'(x) & = \frac{1}{\theta(-\alpha,x)^2}, 
  \end{align*}  
and hence, taking into account oscillation properties of solutions, under the given assumptions, the function $\theta(-\alpha,\redot)$ does not vanish on $[0,L)$ and $\gx(x)$ is strictly increasing as a function of $x$.
\end{proof}

It is not at all surprising that the characterization of Krein strings with spectrum contained in $[\alpha,\infty)$ and satisfying~\eqref{eq:SzegoKreinSalpha} is rather complicated. 
One explanation for this stems from the fact that there does not appear to exist a criterion formulated in terms of the Krein string's coefficient for the equality $\inf \supp(\rho) = \alpha$ to hold (although there are sharp two-sided estimates, see~\cite{kakr58} and~\cite{muc}). 
What is curious in our opinion is the fact that condition~\eqref{eq:SzegoKreinSalpha} obviously implies~\eqref{eq:SzegoKS02} with the only difference between them being the behavior of the weight near the finite spectral edge. 
Moreover, according to Remark~\ref{rem:traceKreinB}~\ref{rem:traceKreinBi}, Corollary~\ref{corKSforKS02} can be seen as a characterization of spectral measures corresponding to certain perturbations of a model string and clearly Krein strings with~\eqref{eq:SzegoKreinSalpha} are a subclass of these. 
Using Krein's transformation in Lemma~\ref{lem:kreintransform} in a backward direction indicates that the Szeg\H{o} class can be viewed as a certain subclass, however, we are unaware of any characterizations of the Szeg\H{o} class from this perspective.

\section{Krein--Langer and Krein--Stieltjes strings}\label{secKreinLanger}

We are now going to focus on the important subclass of generalized indefinite strings with coefficients supported on discrete sets.
In this case, the differential equation~\eqref{eqnDEho} simply reduces to a difference equation. 

\begin{definition}
A {\em Krein--Langer string} is a generalized indefinite string $(L,\omega,\dip)$ such that the coefficients $\omega$ and $\dip$ are supported on a discrete set in $[0,L)$. 
\end{definition}

For a Krein--Langer string $(L,\omega,\dip)$, the distribution $\omega$ and the measure $\dip$ can be written in a unique way as  
    \begin{align}\label{eqnKL}
      \omega & =  \sum_{n=0}^{N} \omega_n \delta_{x_n}, & \dip & =  \sum_{n=0}^{N} \dip_n \delta_{x_n},
    \end{align}
 for some $N\in\N\cup\{0,\infty\}$, strictly increasing points $(x_n)_{n=0}^N$ in $[0,L)$ with $x_0=0$ and $x_n\rightarrow L$ if $N=\infty$, real constants $(\omega_n)_{n=0}^N$ and non-negative constants $(\dip_n)_{n=0}^N$ with $|\omega_n|+\dip_n>0$ for all $n\geq 1$ (notice that we do allow a simultaneous vanishing of $\omega_0$ and $\dip_0$). 
 Here we also use $\delta_x$ to denote the unit Dirac measure centered at $x$.  
 The distances between consecutive point masses are given by 
 \begin{align}
   \ell_n = x_n - x_{n-1}. 
 \end{align}

\begin{definition}
A {\em Krein--Stieltjes string} is a Krein string $(L,\omega)$ such that the coefficient $\omega$ is supported on a discrete set in $[0,L)$. 
\end{definition}

 It has been discovered by M.\ G.\ Krein, that Krein--Stieltjes strings play a crucial role in the study of the Stieltjes moment problem and that (except for a point mass at zero) the measure $\omega$ can be recovered explicitly in terms of the moments of the spectral measure $\rho$ in this case. 
 In a similar way, Krein--Langer strings are related to the indefinite moment problem~\cite{krla79} and to the Hamburger moment problem~\cite{IndMoment}.
 
 A characterization of all spectral measures corresponding to Krein--Langer strings is given by~\cite[Corollary~2.8]{StieltjesType} and can also be found less explicitly in~\cite{krla79} and~\cite[Section~5]{IndMoment}. 
 We are now able to also single out those Krein--Langer strings that additionally satisfy condition~\eqref{eqnCondS1} for some constants $c\in\R$ and $\alpha>0$.  

\begin{theorem}\label{th:HambMP}
  A positive Borel measure $\rho$ on $\R$ with~\eqref{eqnrhoPoisson} is the spectral measure of a Krein--Langer string $(L,\omega,\dip)$ with $L=N=\infty$ and 
 \begin{align}\label{eq:KSstrThA01}
 \sum_{n\in\N} \biggl(\frac{\ell_n}{x_{n}}\biggr)^3 + \sum_{n\in\N} \frac{\ell_{n}}{ x_{n}}\biggl(x_{n} \biggl(c+\sum_{k<n} \omega_k \biggr) + \frac{1}{4\alpha}\biggr)^2 + \sum_{n\in\N} x_n\dip_n & <\infty
 \end{align}
for some constant $c\in\R$ if and only if all the following conditions hold:
    \begin{enumerate}[label=(\roman*), ref=(\roman*), leftmargin=*, widest=iii]
  \item\label{itmHambMPi} All moments of the measure $\rho$ are finite and the corresponding Hamburger moment problem is determinate (that is, has a unique solution).
      \item\label{itmHambMPii} The support of $\rho$ is discrete in $(-\infty,\alpha)$, does not contain zero and satisfies~\eqref{eqnLT-Irho}.
   \item\label{itmHambMPiii} The absolutely continuous part $\rho_\ac$ of $\rho$ on $(\alpha,\infty)$ satisfies~\eqref{eqnSzego-Irho}.
        \end{enumerate}
    \end{theorem}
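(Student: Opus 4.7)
The approach is to combine Corollary~\ref{cor:KSforExB1} with the characterization of spectral measures of Krein--Langer strings from \cite[Corollary~2.8]{StieltjesType} cited just before the theorem. Once I show that \eqref{eq:KSstrThA01} is equivalent, after an affine reparametrization of the constant $c$, to the integral condition \eqref{eqnCondS1} for Krein--Langer coefficients of the form~\eqref{eqnKL}, conditions~\ref{itmHambMPii} and~\ref{itmHambMPiii} transfer directly from Corollary~\ref{cor:KSforExB1}. Condition~\ref{itmHambMPi} is then precisely the extra criterion that distinguishes Krein--Langer strings with $L=N=\infty$ among all generalized indefinite strings with the corresponding spectral measure.

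For the translation step, I would use that on each interval $(x_{n-1},x_n]$ the normalized antiderivative $\Wr$ is constantly equal to $\sum_{k<n}\omega_k$ and that $\int x\,d\dip = \sum_{n\geq 1} x_n \dip_n$. Combining this with the Taylor expansion
\begin{align*}
\frac{x}{1+2\sqrt{\alpha}x} = \frac{1}{2\sqrt{\alpha}} - \frac{1}{4\alpha x} + \OO(x^{-2}), \qquad x\to\infty,
\end{align*}
and computing $\int_{x_{n-1}}^{x_n}(A_n+\frac{1}{4\alpha x})^{2}x\,dx$ with $A_n = \sum_{k<n}\omega_k - c - \frac{1}{2\sqrt{\alpha}}$ yields, to leading order, $\frac{\ell_n}{x_n}(A_n x_n + \frac{1}{4\alpha})^{2}$, which is exactly the middle summand in \eqref{eq:KSstrThA01} after the reparametrization $c\mapsto -c-\frac{1}{2\sqrt{\alpha}}$. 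The remainders coming from the $\OO(x^{-2})$ tail, from the replacement of $(x_n+x_{n-1})/2$ by $x_n$, and from expanding $\log(x_n/x_{n-1})$ should then be absorbed into the $\sum(\ell_n/x_n)^{3}$ term, where one uses that finiteness of either side forces $\ell_n/x_n\to 0$.

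With the translation in hand, the two directions of the theorem fall into place. In the necessity direction, if $(L,\omega,\dip)$ is Krein--Langer with $L=N=\infty$ and \eqref{eq:KSstrThA01}, then \ref{itmHambMPii} and \ref{itmHambMPiii} follow from Corollary~\ref{cor:KSforExB1} via the translation, while \ref{itmHambMPi} is inherited from the Krein--Langer structure by \cite[Corollary~2.8]{StieltjesType}. Conversely, given $\rho$ satisfying \ref{itmHambMPi}--\ref{itmHambMPiii}, Corollary~\ref{cor:KSforExB1} produces a generalized indefinite string with $L=\infty$ satisfying \eqref{eqnCondS1}, and the determinate moment problem assumption forces this string to be Krein--Langer with $N=\infty$, again by \cite[Corollary~2.8]{StieltjesType}. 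The main technical obstacle I anticipate is precisely the discretization step: verifying in detail that the discretization error and the tail in the expansion of $\frac{x}{1+2\sqrt{\alpha}x}$ contribute only terms controlled by $\sum (\ell_n/x_n)^{3}$, and that the affine reparametrization of $c$ between the two conditions is consistent with the three--parameter ambiguity of strings sharing a given spectral measure recorded in Remark~\ref{rem:uniquerho}.
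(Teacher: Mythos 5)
Your overall architecture is the same as the paper's: both implications are routed through Corollary~\ref{cor:KSforExB1}, with condition~\ref{itmHambMPi} supplied by the moment-problem results of \cite{StieltjesType} and \cite{IndMoment} (for sufficiency one needs that determinacy forces the discrete support to exhaust $[0,\infty)$, for necessity that divergence of $\sum_n\ell_n$ yields determinacy), and the whole weight of the proof rests on showing that \eqref{eqnCondS1} is equivalent to \eqref{eq:KSstrThA01}. The gap is in precisely that translation step, which you yourself flag as the main obstacle. Writing $A_n=\sum_{k<n}\omega_k-c-\frac{1}{2\sqrt{\alpha}}$, an exact computation gives
\begin{align*}
\int_{x_{n-1}}^{x_n}\Bigl(A_n+\frac{1}{4\alpha x}\Bigr)^2x\,dx-\frac{\ell_n}{x_n}\Bigl(A_nx_n+\frac{1}{4\alpha}\Bigr)^2=-\frac{A_n^2\ell_n^2}{2}+\frac{1}{16\alpha^2}\Bigl(\log\frac{x_n}{x_{n-1}}-\frac{\ell_n}{x_n}\Bigr),
\end{align*}
and both terms on the right are genuinely \emph{quadratic} in $\ell_n/x_n$, not cubic: one has $\log(1+y)-\frac{y}{1+y}=\frac{y^2}{2}+\OO(y^3)$, and in the relevant regime $A_nx_n+\frac{1}{4\alpha}\to0$, so that $A_n^2\ell_n^2\asymp\frac{1}{16\alpha^2}(\ell_n/x_n)^2$. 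Since $\sum_n(\ell_n/x_n)^2$ is not controlled by $\sum_n(\ell_n/x_n)^3$, neither remainder can be absorbed into the first sum of~\eqref{eq:KSstrThA01} as you propose. This is not a hypothetical worry: for the Laguerre string of Example~\ref{ex:Laguerre}, which satisfies both sides of the equivalence, the asymptotics~\eqref{eqnLagCoeff01} and~\eqref{eqnLagCoeff02} give $\ell_n/x_n\asymp n^{-\nicefrac{1}{2}}$, so each of your two remainder sums diverges (one to $+\infty$, one to $-\infty$) while their combination stays finite. The argument can only be repaired by exhibiting this cancellation explicitly: the quadratic parts combine into the difference of squares $\frac{\ell_n^2}{2}\bigl(\frac{1}{4\alpha x_{n-1}}-A_n\bigr)\bigl(\frac{1}{4\alpha x_{n-1}}+A_n\bigr)$, whose second factor is $\OO\bigl(x_{n-1}^{-1}(|A_nx_n+\frac{1}{4\alpha}|+|A_n|\ell_n)\bigr)$, and the product is then absorbed using \emph{both} the first and the second sum in~\eqref{eq:KSstrThA01}, not the cubic sum alone.

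The paper avoids this bookkeeping entirely by completing the square exactly rather than asymptotically: with its normalization, the $n$-th integral equals the non-negative quantity $\frac{\ell_n}{x_n}\bigl(1-\frac{\ell_n}{2x_n}\bigr)\bigl(x_n\tilde{\Wr}_n+\bigl(1-\frac{\ell_n}{2x_n}\bigr)^{-1}\bigr)^2$ plus $F(\ell_n/x_{n-1})$, where $F(y)=\log(1+y)-\frac{2y}{2+y}$ is non-negative, increasing, and comparable to $y^3$ on bounded intervals. In that identity the quadratic contributions cancel identically, both summands are non-negative, and the comparison with~\eqref{eq:KSstrThA01} reduces to the elementary observations that $\ell_n=\oo(x_n)$ and $x_n/x_{n-1}\to1$ are forced by either side. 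I recommend you either adopt this exact decomposition or carry out the difference-of-squares estimate above in full; as written, the absorption claim is false and the equivalence does not follow.
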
 

\begin{proof}
 We are first going to show that for a Krein--Langer string $(L,\omega,\dip)$ with $L=N=\infty$, condition~\eqref{eqnCondS1} holds for some $c\in\R$ if and only if~\eqref{eq:KSstrThA01} holds for some $c\in\R$.
 In fact, the former property is equivalent to 
  \begin{align}\label{eq:KSstrThA02}
     \int_{x_1}^\infty \biggl(4\alpha\Wr(x)-c + \frac{1}{x}\biggr)^2 x\, dx + \sum_{n\in\N} x_n\dip_n & < \infty
    \end{align}
 for some $c\in\R$. 
 Since the normalized anti-derivative $\Wr$ is simply a piecewise constant function in this case, the integral in~\eqref{eq:KSstrThA02} becomes 
\begin{align*}
  & \sum_{n\geq2} \int^{x_n}_{x_{n-1}} \biggl(\tilde{\Wr}_n+\frac{1}{x}\biggr)^2 x\,dx \\ 
& \qquad = \sum_{n\geq2}  \tilde{\Wr}_n^2 \ell_{n} \biggl(x_{n}-\frac{\ell_{n}}{2}\biggr) + 2\tilde{\Wr}_n \ell_{n} + \log\biggl(\frac{x_{n}}{x_{n-1}}\biggr) \\
& \qquad = \sum_{n\geq2}  \frac{\ell_{n}}{x_n} \biggl(1-\frac{\ell_{n}}{2x_{n}}\biggr)\biggl(x_n\tilde{\Wr}_n + \biggl(1-\frac{\ell_{n}}{2x_{n}}\biggr)^{-1}\biggr)^2 + F\biggl(\frac{\ell_{n}}{x_{n-1}}\biggr),
\end{align*}
where we introduced the quantities $\tilde{\Wr}_n$ defined by 
\begin{align*}
\tilde{\Wr}_n = -c + 4\alpha \sum_{k< n} \omega_k 
\end{align*}
and used the function $F\colon [0,\infty)\rightarrow\R$ given by 
\begin{align*}
F(x) = \log(1+x) - \frac{2x}{2+x}.
\end{align*}
The function $F$ is strictly increasing because
\begin{align*}
F'(x) = \frac{1}{1+x} - \frac{4}{(2+x)^2} = \frac{x^2}{(1+x)(2+x)^2} >0
\end{align*}
 and taking into account that $F(0) = 0$, one ends up with the estimate
\begin{align}\label{eq:Festimate}
\frac{x^3}{3(2+a)^3} \leq F(x) \leq \frac{x^3}{12}
\end{align}
for all $x\in[0,a]$ and any fixed $a> 0$. 
In particular, we see that both terms in the last sum above are non-negative and hence~\eqref{eq:KSstrThA02} becomes
 \begin{align}\label{eq:KSstrThA03}
    \sum_{n\geq2}  F\biggl(\frac{\ell_{n}}{x_{n-1}}\biggr) + \sum_{n\geq2}  \frac{\ell_{n}}{x_n} \biggl(1-\frac{\ell_{n}}{2x_{n}}\biggr)\biggl(x_n\tilde{\Wr}_n + \biggl(1-\frac{\ell_{n}}{2x_{n}}\biggr)^{-1}\biggr)^2 +  \sum_{n\in\N} x_n\dip_n & < \infty.
    \end{align}
    Using the estimate in~\eqref{eq:Festimate} and the fact that $\ell_{n} = \oo(x_n)$ as well as $x_n/x_{n-1}\rightarrow1$ are necessary for the validity of~\eqref{eq:KSstrThA03} and of~\eqref{eq:KSstrThA01}, it now follows readily that~\eqref{eq:KSstrThA03} holds for some $c\in\R$ if and only if~\eqref{eq:KSstrThA01} holds for some $c\in\R$.
 
 Now if a positive Borel measure $\rho$ on $\R$ with~\eqref{eqnrhoPoisson} is the spectral measure of a Krein--Langer string $(L,\omega,\dip)$ with $L=N=\infty$ and~\eqref{eq:KSstrThA01} for some constant $c\in\R$, then condition~\ref{itmHambMPi} follows from~\cite[Corollary~2.8]{StieltjesType} and divergence of the series
 \begin{align*}
   \sum_{n\in\N} \ell_n,
 \end{align*}
 as the latter implies that the corresponding Hamburger moment problem is determinate in view of~\cite[Theorem~5.7]{IndMoment}. 
 Conditions~\ref{itmHambMPii} and~\ref{itmHambMPiii} follow from Corollary~\ref{cor:KSforExB1} by means of the equivalence proved above.
 Conversely, if a positive Borel measure $\rho$ on $\R$ with~\eqref{eqnrhoPoisson} satisfies conditions~\ref{itmHambMPi},~\ref{itmHambMPii} and~\ref{itmHambMPiii}, then by Corollary~\ref{cor:KSforExB1} it is the spectral measure of a generalized indefinite string $(L,\omega,\dip)$ with $L=\infty$ and~\eqref{eqnCondS1} for some $c\in\R$. 
 Since all moments of $\rho$ are finite, we infer from~\cite[Corollary~2.7]{StieltjesType} that the coefficients $\omega$ and $\dip$ are supported on an infinite but discrete set in $[0,L_d)$ for some $L_d\in(0,\infty]$.
 However, because the corresponding Hamburger moment problem is determinate, we conclude from~\cite[Theorem~5.7]{IndMoment} that $L_d=\infty$ and thus $(L,\omega,\dip)$ is a Krein--Langer string with $N=\infty$. 
 It remains to apply the equivalence proved above to see that~\eqref{eq:KSstrThA01} holds for some constant $c\in\R$.
\end{proof}    
    
\begin{remark}
 Determinacy of the Hamburger moment problem in condition~\ref{itmHambMPi} in Theorem~\ref{th:HambMP} can be replaced by a more explicit condition on certain Hankel determinants as in~\cite[Corollary~2.8]{StieltjesType}. 
 Moreover, the coefficients in condition~\eqref{eq:KSstrThA01} can also be expressed in terms of these Hankel determinants (see~\cite[Corollary~2.8]{StieltjesType} or~\cite[Section~5.3]{IndMoment}). 
\end{remark}

    
\begin{corollary}\label{cor:StiMP}
  A positive Borel measure $\rho$ on $[0,\infty)$ with~\eqref{eqnSpecMeasStielt} is the spectral measure of a Krein--Stieltjes string $(L,\omega)$ with $L=N=\infty$ and 
 \begin{align}\label{eq:KSstrThA}
 \sum_{n\in\N} \biggl(\frac{\ell_n}{x_{n}}\biggr)^3 + \sum_{n\in\N} \frac{\ell_{n}}{ x_{n}}\biggl(x_{n} \sum_{k\ge n} \omega_k - \frac{1}{4\alpha}\biggr)^2 <\infty
 \end{align}
 if and only if all the following conditions hold:
    \begin{enumerate}[label=(\roman*), ref=(\roman*), leftmargin=*, widest=iii]
    \item\label{itmStiMPi}  All moments of the measure $\rho$ are finite and the corresponding Stieltjes moment problem is determinate (that is, has a unique solution).
      \item\label{itmStiMPii} The support of $\rho$ is discrete in $[0,\alpha)$, does not contain zero and satisfies~\eqref{eq:LTKS01}.
     \item\label{itmStiMPiii} The absolutely continuous part $\rho_\ac$ of $\rho$ on $(\alpha,\infty)$ satisfies~\eqref{eqnSzego-Irho}.
  \end{enumerate}
    \end{corollary}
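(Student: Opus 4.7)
The strategy is to deduce Corollary~\ref{cor:StiMP} from Theorem~\ref{th:HambMP} and Corollary~\ref{cor:KS01} by choosing the additive constant $c$ canonically. For a Krein--Stieltjes string with $\dip\equiv 0$ and non-negative masses $\omega_n$, evaluating~\eqref{eq:KSstrThA} at $n=1$ forces $M := \sum_{k\geq 1} \omega_k < \infty$; with the choice $c = -M$ one has $c + \sum_{k<n}\omega_k = -\sum_{k\geq n}\omega_k$, so the squared term in~\eqref{eq:KSstrThA01} coincides with that in~\eqref{eq:KSstrThA}. A direct algebraic substitution therefore shows that for a Krein--Stieltjes string $(L,\omega)$ with $L=N=\infty$, condition~\eqref{eq:KSstrThA} is equivalent to~\eqref{eq:KSstrThA01} for some $c\in\R$ (necessarily $c=-M$).

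For necessity, this reparametrization exhibits our Krein--Stieltjes string as a Krein--Langer string satisfying~\eqref{eq:KSstrThA01}, and Theorem~\ref{th:HambMP} immediately yields~\ref{itmStiMPii} and~\ref{itmStiMPiii} (which are identical to its~\ref{itmHambMPii} and~\ref{itmHambMPiii}). Since $\rho$ arises from a Krein string its support lies in $[0,\infty)$ and~\eqref{eqnSpecMeasStielt} holds, and since the Hamburger moment problem furnished by Theorem~\ref{th:HambMP}\ref{itmHambMPi} is \emph{a fortiori} a determinate Stieltjes problem, condition~\ref{itmStiMPi} follows.

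For sufficiency, given a measure $\rho$ satisfying~\ref{itmStiMPi}--\ref{itmStiMPiii}, I would apply Corollary~\ref{cor:KS01} (whose hypotheses are implied by the assumptions) to obtain a Krein string $(L,\omega)$ with $L=\infty$ and~\eqref{eqnCondKrStr1}. Finiteness of all moments of $\rho$ combined with the Stieltjes analog of~\cite[Corollary~2.7]{StieltjesType} shows that $\omega$ is supported on a discrete set in $[0,L_d)$ for some $L_d\in(0,\infty]$, making $(L,\omega)$ a Krein--Stieltjes string. The classical correspondence of M.~G.~Krein between Krein--Stieltjes strings with infinitely many point masses and determinate Stieltjes moment problems then upgrades this to $L_d=\infty$ and $N=\infty$. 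Finally,~\eqref{eq:KSstrThA} follows from~\eqref{eqnCondKrStr1} by running the equivalence of the first paragraph in reverse.

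The main obstacle I anticipate is the last step of sufficiency: the implication ``Stieltjes determinate $\Rightarrow$ $L_d=N=\infty$'' requires a Stieltjes-moment-problem counterpart to~\cite[Theorem~5.7]{IndMoment}. One cannot appeal to Theorem~\ref{th:HambMP} directly since Stieltjes determinacy is strictly weaker than Hamburger determinacy in general. However, the classical theory of M.~G.~Krein linking Krein--Stieltjes strings to the Stieltjes moment problem supplies precisely the needed statement, and the sign restriction on the masses $\omega_n$ makes the identification routine.
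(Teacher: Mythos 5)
Your necessity argument matches the paper's, up to one slip: the correct choice of constant is $c=-\sum_{k\geq 0}\omega_k$, not $c=-\sum_{k\geq 1}\omega_k$, since the squared term in~\eqref{eq:KSstrThA01} involves $c+\sum_{k<n}\omega_k$ and a Krein--Stieltjes string may carry a point mass $\omega_0$ at zero; with your choice of $M$ you pick up a stray $\omega_0$ and do not land exactly on $-\sum_{k\geq n}\omega_k$. That is cosmetic. The genuine gap is exactly where you flag it, in the sufficiency direction: you need to pass from Stieltjes determinacy to the conclusion $L_d=N=\infty$, and your appeal to ``the classical theory of M.~G.~Krein'' is not a proof. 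In particular, the Krein determinacy criterion you would need is not stated, and the route through Corollary~\ref{cor:KS01} plus a ``Stieltjes analog'' of \cite[Corollary~2.7]{StieltjesType} leaves both the discreteness step and the $L_d=\infty$ step unsubstantiated.

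The missing observation, which is how the paper closes this, is that condition~\ref{itmStiMPii} already forces $\rho(\{0\})=0$, and for measures with no mass at the origin determinacy of the Stieltjes moment problem is \emph{equivalent} to determinacy of the Hamburger moment problem (\cite[Corollary~8.9]{sc17}). Hence the hypotheses of Theorem~\ref{th:HambMP} are satisfied verbatim and there is no need for a separate Stieltjes counterpart of \cite[Theorem~5.7]{IndMoment}: one obtains directly a Krein--Langer string with $L=N=\infty$ and~\eqref{eq:KSstrThA01} for some $c\in\R$, which \cite[Lemma~7.2]{IndefiniteString} identifies as a Krein--Stieltjes string after removing point masses at zero. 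One further step remains that your ``run the equivalence in reverse'' glosses over: Theorem~\ref{th:HambMP} only delivers~\eqref{eq:KSstrThA01} for \emph{some} constant $\tilde{c}$, and to recover~\eqref{eq:KSstrThA} one must show $\tilde{c}=0$ after rewriting $\tilde c + \sum_{k<n}\omega_k$ as $\tilde{c}-\sum_{k\geq n}\omega_k$ up to the total mass; the paper does this by noting that $\omega$ is finite (from Corollary~\ref{cor:KS01}) while $\sum_n \ell_n x_n\geq x_1\sum_n\ell_n$ diverges, so any nonzero $\tilde{c}$ would make the second sum in~\eqref{eq:KSstrThA01} infinite. Without these two ingredients your sufficiency argument does not go through as written.
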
 

\begin{proof}
   If a positive Borel measure $\rho$ on $[0,\infty)$ with~\eqref{eqnSpecMeasStielt} is the spectral measure of a Krein--Stieltjes string $(L,\omega)$ with $L=N=\infty$ and~\eqref{eq:KSstrThA}, then~\eqref{eq:KSstrThA01} holds as well with 
   \begin{align*}
      c=  -\sum_{k=0}^\infty \omega_k. 
   \end{align*}
  Conditions~\ref{itmStiMPi},~\ref{itmStiMPii} and~\ref{itmStiMPiii} now follow from Theorem~\ref{th:HambMP} because determinacy of the Hamburger moment problem implies that of the Stieltjes moment problem. 
   
 Conversely, if a positive Borel measure $\rho$ on $[0,\infty)$ with~\eqref{eqnSpecMeasStielt} satisfies conditions~\ref{itmStiMPi},~\ref{itmStiMPii} and~\ref{itmStiMPiii}, then $\rho$ also satisfies the conditions of Theorem~\ref{th:HambMP} in view of~\cite[Corollary~8.9]{sc17}.
 This guarantees that $\rho$ is the spectral measure of a Krein--Langer string $(L,\omega,\dip)$ with $L=N=\infty$ and~\eqref{eq:KSstrThA01} for some $c\in\R$.
 Since removing possible point masses at zero does not change the spectral measure, we may assume that $\omega_0=\dip_0=0$. 
 It then follows from~\cite[Lemma~7.2]{IndefiniteString} that $(L,\omega,\dip)$ is actually a Krein string and thus a Krein--Stieltjes string. 
 Because~\eqref{eq:KSstrThA01} holds for some $c\in\R$ and $\omega$ is a finite measure in view of Corollary~\ref{cor:KS01}, we infer that 
  \begin{align*}
 \sum_{n\in\N} \biggl(\frac{\ell_n}{x_{n}}\biggr)^3 + \sum_{n\in\N} \frac{\ell_{n}}{ x_{n}}\biggl(x_{n} \biggl(\tilde{c}-\sum_{k\ge n} \omega_k\biggr) + \frac{1}{4\alpha}\biggr)^2 <\infty
 \end{align*}
 for some $\tilde{c}\in\R$. 
 However, because the series 
 \begin{align*}
   \sum_{n\in\N} \ell_n x_n \geq x_1 \sum_{n\in\N} \ell_n
 \end{align*}
 diverges, we conclude that $\tilde{c}$ must necessarily be zero, which proves~\eqref{eq:KSstrThA}.
\end{proof}

\begin{remark}
  Determinacy of the Stieltjes moment problem in condition~\ref{itmStiMPi} in Corollary~\ref{cor:StiMP} can be replaced with the stronger determinacy of the Hamburger moment problem. 
  In fact, under condition~\ref{itmStiMPii} one has $\rho(\{0\})=0$, which guarantees that the two notions coincide; see~\cite[Corollary~8.9]{sc17}. 
\end{remark}

\begin{example}[Laguerre operator]\label{ex:Laguerre}
An example of a measure satisfying all conditions of Theorem~\ref{th:HambMP} and Corollary~\ref{cor:StiMP} is given by\footnote{More generally, one may consider the measure $\rho_\alpha^\gamma$ given by 
\begin{align*} 
   \rho_\alpha^\gamma(B) = \frac{1}{\Gamma(1+\gamma)} \int_B \id_{[\alpha,\infty)}(\lambda)(\lambda-\alpha)^\gamma\E^{-(\lambda-\alpha)} d\lambda,
\end{align*}
 where $\gamma>-1$. 
 This leads to generalized Laguerre--Sonin polynomials $L_n^{(\gamma)}$ (see~\cite{sze} for example) and all expressions become much more cumbersome.} 
\begin{align}\label{eq:LaguerreMeasure}
    \rho_{\alpha}(B) = \int_B \id_{[\alpha,\infty)}(\lambda)\E^{-(\lambda-\alpha)} d\lambda,
\end{align}
which is nothing but the shifted Laguerre weight. 
 The corresponding Krein--Stieltjes string $(L,\omega)$ such that $\omega$ has no point mass at zero is explicitly given by (see~\cite[Equation~(3.3)]{IndMoment})
\begin{align}\label{eq:LaguerreCoeff}
\omega_n & = \frac{1}{nL_{n-1}(-\alpha)L_{n}(-\alpha)}, & \ell_n & = L_{n-1}(-\alpha)^2, 
\end{align}
 where the $L_n$ are the classical Laguerre polynomials~\cite[\S~18.3]{dlmf}, \cite[Chapter~V]{sze}: 
\begin{align}\label{eq:laguerpol}
L_n(z) = \frac{\E^{z}}{n!}\frac{d^n}{dz^n}\E^{-z} z^{n}  = \sum_{k=0}^n\binom{n}{k}\,\frac{(-z)^k}{k!}.
\end{align}
From the asymptotic behavior of Laguerre polynomials (see~\cite[Theorem~8.22.3]{sze} for example), 
we infer that 
\begin{align}\label{eqnLagCoeff01}
\omega_n & = \frac{4\pi\alpha}{\E^{-\alpha}}\frac{\E^{-4\sqrt{\alpha n}}}{\sqrt{\alpha n}}(1+\oo(1)) , & \ell_n & =  \frac{\E^{-\alpha}}{4\pi}\frac{\E^{4\sqrt{\alpha n}}}{\sqrt{\alpha n}}(1+\oo(1)), 
\end{align}
as $n\rightarrow\infty$, which, after a little effort, also implies that   
\begin{align}\label{eqnLagCoeff02}
 x_n = \E^{4\sqrt{\alpha n} +\OO(1)}.
\end{align}
Of course, the coefficients in~\eqref{eq:LaguerreCoeff} for the Krein--Stieltjes string $(L,\omega)$ are a bit more cumbersome than the corresponding Jacobi parameters, as the Jacobi matrix generated by the measure $\rho_\alpha$ is simply given by 
\begin{align}\label{eq:LagOper}
J_\alpha = \begin{pmatrix} 
1+\alpha & 1 & 0 & 0 & \cdots \\
1 & 3+\alpha & 2 & 0 & \cdots \\
0 & 2 & 5+\alpha & 3 & \cdots \\
0 & 0 & 3 & 7+\alpha & \cdots \\
\vdots & \vdots & \vdots & \vdots & \ddots 
\end{pmatrix}.
\end{align}
However, it is not clear to us whether it is possible to obtain analogs of Theorem~\ref{th:HambMP} or Corollary~\ref{cor:StiMP} for Jacobi matrices that are corresponding perturbations of the Laguerre operator $J_\alpha$.
\end{example}

 \section{Conservative Camassa--Holm flow}\label{secCH}

  In this section, we are going to demonstrate how our results so far apply to the isospectral problem  
\begin{align}\label{eqnCHISP}
 -f'' + \frac{1}{4} f = z\, \omega f + z^2 \dip f 
\end{align}
for the conservative Camassa--Holm flow. 
 Since this differential equation differs from~\eqref{eqnDEho} only by a constant coefficient term, it is not surprising that we are able to obtain complete solutions of inverse problems for certain classes of coefficients when~\eqref{eqnCHISP} is considered on a half-line. 
 Less immediately obvious but more important, we can also use our results to solve some inverse problems for certain classes of step-like coefficients (with strong decay at one endpoint and positive asymptotics at the other endpoint) on the full line, which establishes inverse spectral transforms for the conservative Camassa--Holm flow on the corresponding phase spaces.

 \subsection{Inverse spectral theory for half-line coefficients}\label{secCH:01}

  Let $u$ be a real-valued function in $H^1_\loc[0,\infty)$ and let $\dip$ be a positive Borel measure on $[0,\infty)$. 
  The real distribution $\omega$ in $H^{-1}_\loc[0,\infty)$ is defined by  
\begin{align}
 \omega(h) = \int_0^\infty u(x)h(x)dx + \int_0^\infty u'(x)h'(x)dx
\end{align}
for all $h\in H^1_{\cc}[0,\infty)$, so that $\omega = u - u''$ in a weak sense. 
 Under these assumptions, a solution of the differential equation~\eqref{eqnCHISP} is a function $f\in H^1_{\loc}[0,\infty)$ such that 
 \begin{align}
  f'(0-) h(0) + \int_{0}^\infty f'(x) h'(x) dx + \frac{1}{4} \int_0^\infty f(x)h(x)dx = z\, \omega(fh) + z^2 \int_{[0,\infty)} f h\, d\dip 
 \end{align} 
 for a unique constant $f'(0-)\in\C$ and every $h\in H^1_\cc[0,\infty)$.
The Weyl--Titchmarsh function $m$ associated with this spectral problem is then defined on $\C\backslash\R$ by 
 \begin{align}
  m(z) =  \frac{\psi'\NLz}{z\psi(z,0)},
 \end{align} 
  where $\psi(z,\redot)$ is the (up to scalar multiples) unique non-trivial solution of the differential equation~\eqref{eqnCHISP} which lies in $H^1[0,\infty)$ and $L^2([0,\infty);\dip)$, guaranteed to exist by~\cite[Corollary 7.2]{ACSpec}. 
 As for generalized indefinite strings, this function is a Herglotz--Nevanlinna function that contains all the spectral information. 
 In fact, the measure in the corresponding integral representation is a spectral measure for a suitable self-adjoint realization; see~\cite{bebrwe08},~\cite{LeftDefiniteSL} and~\cite{CHPencil} for details. 

 We have shown in~\cite[Section~7]{ACSpec} that for this kind of coefficients, it is always possible to transform the spectral problem~\eqref{eqnCHISP} into an associated generalized indefinite string $(\infty,\tilde{\omega},\tilde{\dip})$, where the distribution $\tilde{\omega}$ is defined via its normalized anti-derivative $\tilde{\Wr}$ by 
 \begin{align}
   \tilde{\Wr}(x)  =  u(0) - \frac{u(\log(1+x))+ u'(\log(1+x))}{1+x}  
 \end{align}
 and the measure $\tilde{\dip}$ is given by  
 \begin{align}
   \tilde{\dip}(B) =  \int_{\log(1+B)} \E^{-x} d\dip(x).
 \end{align}
 The Weyl--Titchmarsh function $\tilde{m}$ of this generalized indefinite string $(\infty,\tilde{\omega},\tilde{\dip})$ is related to $m$ simply via 
 \begin{align}\label{eqnmms}
   m(z) = \tilde{m}(z) - \frac{1}{2z}, 
 \end{align} 
 that is, the functions $m$ and $\tilde{m}$ coincide up to a pole at zero (see~\cite[page~3557]{ACSpec}). 
Conversely, every generalized indefinite string $(\infty,\tilde{\omega},\tilde{\dip})$ arises in this way.  

 By means of this connection, it is easily possible to translate all the results of this article to the spectral problem~\eqref{eqnCHISP} on the half-line.
 We refrain from stating these theorems explicitly here, but only mention what the conditions in our main theorems turn into. 
 Let us just leave a hint here that in order to verify the following equivalences, one should also remember that a real-valued function $h\in H^1_\loc[0,\infty)$ belongs to $H^1[0,\infty)$ if and only if $h+h'$ belongs to $L^2[0,\infty)$.  
  
    \begin{enumerate}[label=(\roman*), ref=(\roman*), leftmargin=*, widest=iii]
    \item\label{itmCHSpecThm1}
  Condition~\eqref{eqnCondS1} in Theorem~\ref{thm:KSforExB1} on the generalized indefinite string $(\infty,\tilde{\omega},\tilde{\dip})$ with $c=u(0)-(2\sqrt{\alpha})^{-1}$ is equivalent to the condition that the function $u-\frac{1}{4\alpha}$ belongs to $H^1[0,\infty)$ and the measure $\dip$ is finite (see Corollary~\ref{cor:KSforCHonR+}).
    
    \item\label{itmCHSpecThm2} 
    Condition~\eqref{eqnCondS2} in Theorem~\ref{thm:KSforExB2} on the generalized indefinite string $(\infty,\tilde{\omega},\tilde{\dip})$ with $c=u(0)$ is equivalent to the condition that the function $u$ belongs to $H^1[0,\infty)$, the function $\varrho - \frac{1}{2\beta}$ belongs to $L^2[0,\infty)$ and the measure $\dip_\sing$ is finite, where $\varrho$ is the (positive) square root of the Radon--Nikod\'ym derivative of $\dip$ with respect to the Lebesgue measure and $\dip_\sing$ is the singular part of $\dip$.
    \end{enumerate} 
Due to the importance of the case when $\omega$ is a positive Borel measure and $\dip$ vanishes identically (these assumptions prevent blow-ups and one expects uniqueness of weak solutions; see~\cite{como00}), we also mention what one obtains in this case from the conditions in Section~\ref{sec:KreinString} on Krein strings. 
     \begin{enumerate}[label=(\roman*), ref=(\roman*), leftmargin=*, widest=iii, resume]
    \item\label{itmCH+SpecThm1}
  Condition~\eqref{eqnCondKrStr1} in Theorem~\ref{thm:KS} on the generalized indefinite string $(\infty,\tilde{\omega},\tilde{\dip})$ is equivalent to the condition that the function $u-\frac{1}{4\alpha}$ belongs to $H^1[0,\infty)$.  
        \item\label{itmCH+SpecThm2}
    Condition~\eqref{eqnCondKrStr2} in Theorem~\ref{thmKSforKS02} on the generalized indefinite string $(\infty,\tilde{\omega},\tilde{\dip})$ is equivalent to the condition that the function $\varrho_\omega - (2\sqrt{\alpha})^{-1}$ belongs to $L^2[0,\infty)$ and the measure $\omega_\sing$ is finite, where $\varrho_\omega$ is the (positive) square root of the Radon--Nikod\'ym derivative of $\omega$ with respect to the Lebesgue measure and $\omega_\sing$ is the singular part of $\omega$.
  \end{enumerate}  
 Finally, one can also translate the conditions from Theorem~\ref{th:HambMP} and Corollary~\ref{cor:StiMP}, in which case one ends up with functions $u$ that are made up of infinitely many peakons (compare Remark~\ref{remWPD1}~\ref{rem:LaguerreMP} below).
 
 \begin{remark}
 All of the conditions on the function $u$ and the measure $\dip$ listed above are related to conserved quantities of the Camassa--Holm equation and its two-component generalization (both considered on the real line). 
\begin{enumerate}[label=(\alph*), ref=(\alph*), leftmargin=*, widest=e]
    \item The $H^1(\R)$ norm of $u-\kappa$ is conserved~\cite{caho93} for classical solutions $u$ of the Camassa--Holm equation~\eqref{eqnCH}, where $\kappa$ is a positive constant related to the critical wave speed.
    \item The sum of the $H^1(\R)$ norm of $u$ and the $L^2(\R)$ norm of $\varrho - \kappa$ is conserved~\cite[Equation~(1.13)]{hoiv11} for classical solutions $(u,\varrho)$ of the two-component Camassa--Holm system~\eqref{eqn2CH}. 
    \item The $L^2(\R)$ norm of $\varrho_\omega - \sqrt{\kappa}$ is conserved~\cite[Equation~(195)]{cogeiv07} for classical solutions $u$ of the Camassa--Holm equation~\eqref{eqnCH} under the additional assumption that $\omega=u-u_{xx}$ is positive. 
    We note that under this positivity restriction, there is a relationship of conserved quantities with the Korteweg--de Vries equation via the Liouville correspondence (see~\cite{besasz98, le04, mc03b} for instance).
    \end{enumerate}
  In conclusion, let us mention that these quantities are only conserved for classical solutions. 
  For conservative weak solutions, they will have to be modified appropriately to also take singular parts of the solution into account.  
 \end{remark}

 \subsection{Inverse spectral theory for step-like coefficients}\label{secCH:02}

As mentioned above, our results can also be applied to certain classes of step-like coefficients on the full line. 
  In this setting, we denote with $H^1_{\loc}(\R)$ and $H^1_{\cc}(\R)$ the function spaces   
\begin{align}
H^1_{\loc}(\R) & =  \lbrace f\in AC_{\loc}(\R) \,|\, f'\in L^2_{\loc}(\R) \rbrace, \\
 H^1_{\cc}(\R) & = \lbrace f\in H^1_{\loc}(\R) \,|\, \supp(f) \text{ compact}\rbrace,
\end{align}
and with $H^{-1}_{\loc}(\R)$ the topological dual space of $H^1_{\cc}(\R)$.  
 We are first going to describe a particular phase space $\CHdom$ for the conservative Camassa--Holm flow, introduced in~\cite[Definition~1.1]{Eplusminus} as follows:
 
\begin{definition}\label{defD}
 The set $\CHdom$ consists of all pairs $(u,\mu)$ such that $u$ is a real-valued function in $H^1_{\loc}(\R)$ and $\mu$ is a positive Borel measure on $\R$ with
\begin{align}\label{eqnumu}
   \int_B u(x)^2 + u'(x)^2\, dx \leq  \mu(B)
\end{align}
for every Borel set $B\subseteq\R$, satisfying the asymptotic growth restrictions  
\begin{align}
 \label{eqnMdef-}   \int_{-\infty}^0 \E^{-s} \bigl(u(s)^2 + u'(s)^2 \bigr) ds +  \int_{(-\infty,0)} \E^{-s} d\dip(s) & < \infty,  \\
 \label{eqnMdef+}   \limsup_{x\rightarrow\infty}\, \E^{x} \biggl(\int_{x}^{\infty}\E^{-s}(u(s) + u'(s))^2ds + \int_{[x,\infty)}\E^{-s}d\dip(s)\biggr) & < \infty,
\end{align}
where $\dip$ is the positive Borel measure on $\R$ defined such that 
\begin{align}\label{eqndipdef}
 \mu(B) = \dip(B) + \int_B u(x)^2 + u'(x)^2\, dx. 
\end{align}
\end{definition}

\begin{remark}
 A couple of remarks are in order:
\begin{enumerate}[label=(\alph*), ref=(\alph*), leftmargin=*, widest=e]
    \item
Condition~\eqref{eqnMdef-} in Definition~\ref{defD} requires strong decay of both, the function $u$ and the measure $\dip$, at $-\infty$ (for $u$ it means that the function $x\mapsto \E^{-x/2}u(x)$ belongs to $H^1$ near $-\infty$), whereas condition~\eqref{eqnMdef+} on the behavior near $+\infty$ is rather mild and satisfied as soon as $u+u'$ is bounded and $\dip$ is a finite measure for instance. 
Despite its cumbersome looking form, after a simple change of variables, condition~\eqref{eqnMdef+} turns into a boundedness condition under the action of the classical Hardy operator (see~\cite{Eplusminus} for further details).
    \item
We chose to work with pairs $(u,\mu)$ and the unusual condition~\eqref{eqnumu} instead of the simpler definable pairs $(u,\dip)$ for various reasons. 
For example, the measure $\mu$ is more natural when considering suitable notions of convergence on $\CHdom$; see~\cite[Definition~2.5]{Eplusminus}.
Moreover, in the context of the conservative Camassa--Holm flow, the measure $\mu$ corresponds to the energy of a solution. 
\end{enumerate}
\end{remark}

Associated with each pair $(u,\mu)$ in $\CHdom$ is a distribution $\omega$ in $H^{-1}_{\loc}(\R)$ defined by
\begin{align}
 \omega(h) = \int_\R u(x)h(x)dx + \int_\R u'(x)h'(x)dx
\end{align}
for all $h\in H^1_{\cc}(\R)$, so that $\omega = u - u''$ in a weak sense, and a measure $\dip$ on $\R$ given by~\eqref{eqndipdef}. 
With these coefficients, the differential equation~\eqref{eqnCHISP} has to be understood in a distributional sense again (see \cite[Appendix~A]{ConservCH} for more details):
  A solution of~\eqref{eqnCHISP} is a function $f\in H^1_{\loc}(\R)$ such that 
 \begin{align}
   \int_{\R} f'(x) h'(x) dx + \frac{1}{4} \int_\R f(x)h(x)dx = z\, \omega(fh) + z^2 \int_\R f h \,d\dip 
 \end{align} 
 for every $h\in H^1_\cc(\R)$.
 Even though the derivative of such a solution $f$ is in general only defined almost everywhere, there is always a unique left-continuous function $f^\qd$ on $\R$ such that 
 \begin{align} 
     f^\qd = f' +\frac{1}{2} f - z (u +u') f 
\end{align} 
 almost everywhere on $\R$, called the {\em quasi-derivative} of $f$. 

 The main consequence of the strong decay condition on the pair $(u,\mu)$ at $-\infty$ in~\eqref{eqnMdef-} is the existence of a particular fundamental system $\phi(z,\redot)$, $\theta(z,\redot)$ of solutions to the differential equation~\eqref{eqnCHISP} with the asymptotics 
\begin{align}
  \phi(z,x) \E^{-\frac{x}{2}}& \rightarrow 1, & \phi^\qd(z,x) \E^{-\frac{x}{2}} & \rightarrow 1,   &   \theta(z,x)\E^{\frac{x}{2}} & \rightarrow 1, &  \theta^\qd(z,x)\E^{-\frac{x}{2}} & \rightarrow 0,   
\end{align}
as $x\rightarrow-\infty$; see~\cite[Theorem~3.1]{Eplusminus}. 
With this fundamental system of solutions, it is possible to define a {\em Weyl--Titchmarsh function} $m$ on $\C\backslash\R$ via 
\begin{align}
  m(z) = -\lim_{x\rightarrow\infty} \frac{\theta(z,x)}{z\phi(z,x)},
\end{align}
which is a Herglotz--Nevanlinna function that allows a particular integral representation of the form 
\begin{align}
  m(z) = \int_\R \frac{z}{\lambda(\lambda-z)} d\rho(\lambda), 
\end{align}
where $\rho$ is the same positive Borel measure on $\R$ as in the representation~\eqref{eqnWTmIntRep}, that satisfies~\eqref{eqnWTrhoPoisson}.  
The measure $\rho$ can be seen to be a spectral measure for a suitable self-adjoint realization again (we only refer to~\cite[Section~5]{CHPencil} for more details).
It turns out that zero does not belong to the support of $\rho$ and that the size of the spectral gap around zero is closely connected to the quantity in~\eqref{eqnMdef+}; see~\cite[Proposition~3.5]{Eplusminus} and~\eqref{eq:lam0est} below. 

\begin{definition}
The set $\cR_0$ consists of all positive Borel measures $\rho_0$ on $\R$ that satisfy
 \begin{align}
 \int_\R \frac{d\rho_0(\lambda)}{1+\lambda^2} < \infty
\end{align}
 and whose topological support does not contain zero. 
\end{definition}

One of the main results in~\cite[Section~4]{Eplusminus} states that {\em the spectral transform $(u,\mu)\mapsto\rho$ is a bijection between the phase space $\CHdom$ and the set $\cR_0$}. 
 In particular, unlike for generalized indefinite strings, the pair $(u,\mu)$ and thus the coefficients $\omega$ and $\dip$ in~\eqref{eqnCHISP} are uniquely determined by the spectral measure $\rho$ here.
 
It will be crucial for us below that the function $m$ and the measure $\rho$ introduced above for a pair $(u,\mu)$ in $\CHdom$ are the Weyl--Titchmarsh function and the spectral measure of the generalized indefinite string $(\infty,\tilde{\omega},\tilde{\dip})$, where the distribution $\tilde{\omega}$ is defined via its normalized anti-derivative $\tilde{\Wr}$ by 
 \begin{align}\label{eq:DvsStringWr}
 \tilde{\Wr}(x) = -\frac{u(\log x) + u'(\log x)}{x}  
 \end{align}
 and the measure $\tilde{\dip}$ is given by  
 \begin{align}\label{eq:DvsStringDip}
  \int_{[0,x)} d\tilde{\dip}   = \int_{(-\infty,\log x)} \E^{- s} d\dip(s).
 \end{align} 
 In fact, the function $\tilde{\Wr}$ is square integrable, the measure $\tilde{\dip}$ is finite with no point mass at zero and both satisfy the asymptotic condition
  \begin{align}
  \limsup_{x\rightarrow\infty}\,  x \int_{x}^\infty  \tilde{\Wr}(s)^2ds  + x \int_{[x,\infty)} d\tilde{\dip} < \infty.
\end{align}
 Conversely, every generalized indefinite string $(\infty,\tilde{\omega},\tilde{\dip})$ with these properties arises in this way from a unique pair $(u,\mu)$ in $\CHdom$; see~\cite[Section~2 and Section~3]{Eplusminus}.
 
 We are now able to use these connections to translate our main results. 
 As before, the constants $\alpha$ and $\beta$ will always be assumed to be positive. 
 Our first theorem characterizes the spectral measures $\rho$ for those pairs $(u,\mu)$ in $\CHdom$ such that the function $u-\kappa$ lies in $H^1$ near $+\infty$ for a positive constant $\kappa$ and the measure $\dip$ is finite near $+\infty$. 
 
 \begin{theorem}\label{thmCHualpha}
   A pair $(u,\mu)$ in $\CHdom$ satisfies 
   \begin{align}\label{eqnCHualpha}
     \int_0^\infty \biggl(u(x)-\frac{1}{4\alpha}\biggr)^2 + u'(x)^2 \, dx + \int_{[0,\infty)} d\dip < \infty
   \end{align}
 if and only if both of the following conditions hold for the spectral measure $\rho$ in $\mathcal{R}_0$:
    \begin{enumerate}[label=(\roman*), ref=(\roman*), leftmargin=*, widest=ii]
\item\label{itmCHalphai} The support of $\rho$ is discrete in $(-\infty,\alpha)$ and satisfies~\eqref{eqnLT-Irho}. 
\item\label{itmCHalphaii} The absolutely continuous part $\rho_\ac$ of $\rho$ on $(\alpha,\infty)$ satisfies~\eqref{eqnSzego-Irho}.
    \end{enumerate}
 \end{theorem}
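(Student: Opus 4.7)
The plan is to reduce the theorem to Corollary~\ref{cor:KSforExB1} via the bijection between $\CHdom$ and generalized indefinite strings provided by~\eqref{eq:DvsStringWr}--\eqref{eq:DvsStringDip}. Since the spectral measure $\rho$ of a pair $(u,\mu)\in\CHdom$ coincides with the spectral measure of the corresponding generalized indefinite string $(\infty,\tilde{\omega},\tilde{\dip})$, and since every $\rho\in\cR_0$ automatically satisfies $\rho(\{0\})=0$, it suffices to establish that~\eqref{eqnCHualpha} is equivalent to condition~\eqref{eqnCondS1} on this string for some $c\in\R$.

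The whole argument then reduces to a change of variables $s=\E^x$ in~\eqref{eqnCondS1}, split at $s=1$. On $[1,\infty)$, the identity $\tilde{\Wr}(\E^x)=-\E^{-x}(u(x)+u'(x))$ transforms the string integrand times $s\,ds$ into
\begin{align*}
\biggl(-u(x)-u'(x)-c\,\E^x-\frac{\E^{2x}}{1+2\sqrt{\alpha}\,\E^x}\biggr)^{\!2}
\end{align*}
on $x\in[0,\infty)$. Expanding the last fraction as $\frac{1}{2\sqrt{\alpha}}\E^x+\OO(1)$ isolates an exponentially growing term $\bigl(c+\tfrac{1}{2\sqrt{\alpha}}\bigr)\E^x$; finiteness of the integral therefore forces $c=-\frac{1}{2\sqrt{\alpha}}$, a value consistent with Corollary~\ref{cor:m-at0}~\ref{iCor:m-at0forA} since $m(0)=0$ for every pair in $\CHdom$. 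With this $c$, a short manipulation rewrites the bracket as $\kappa-u(x)-u'(x)+r(x)$ with $\kappa=\frac{1}{4\alpha}$ and $|r(x)|\leq C\E^{-x}$, so finiteness of the integral over $[1,\infty)$ is equivalent to $u+u'-\kappa\in L^2[0,\infty)$. Setting $v=u-\kappa$ and solving $v'+v=h$ explicitly via convolution with $\E^{-x}\id_{[0,\infty)}$, Young's inequality yields that this is in turn equivalent to $u-\kappa\in H^1[0,\infty)$. The contribution from $(0,1)$ simplifies to $\int_{-\infty}^0(u+u')^2\,dx$ plus exponentially decaying terms, both automatically finite in view of~\eqref{eqnMdef-}. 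Finally, the measure piece is handled by the pushforward identity $\int_{[0,\infty)}s\,d\tilde{\dip}(s)=\dip(\R)$, whose finiteness is equivalent to $\dip([0,\infty))<\infty$ since $\dip((-\infty,0))$ is already finite by~\eqref{eqnMdef-}.

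The only subtle point in the argument is the cancellation of the exponentially growing term, which pins down the constant $c$ uniquely; the rest is a routine change-of-variables bookkeeping. Combined with Corollary~\ref{cor:KSforExB1}, the equivalence just described yields exactly conditions~\ref{itmCHalphai} and~\ref{itmCHalphaii}, and the theorem follows.
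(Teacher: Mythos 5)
Your proposal is correct and follows essentially the same route as the paper: both reduce the statement to Corollary~\ref{cor:KSforExB1} via the associated generalized indefinite string $(\infty,\tilde{\omega},\tilde{\dip})$ and verify that~\eqref{eqnCHualpha} is equivalent to~\eqref{eqnCondS1} with $c=-(2\sqrt{\alpha})^{-1}$ by the substitution $s=\E^{x}$ (the paper merely asserts this equivalence, while you spell out the bookkeeping). The one step to make explicit is that ruling out $c\neq-(2\sqrt{\alpha})^{-1}$ in the sufficiency direction uses membership in $\CHdom$ --- equivalently, that $\tilde{\Wr}\in L^{2}[0,\infty)$, which is how the paper phrases it --- since without that a priori information the exponentially growing term could in principle be cancelled by $u+u'$.
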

 
 \begin{proof}
   Condition~\eqref{eqnCHualpha} is equivalent to 
       \begin{align}\label{thmCHualphaGIS}
      \int_0^\infty \biggl(\tilde{\Wr}(x)+\frac{1}{2\sqrt{\alpha}}-\frac{x}{1+2\sqrt{\alpha}x}\biggr)^2 x\, dx +  \int_{[0,\infty)} x\, d\tilde{\dip}(x) < \infty,
    \end{align}
    so that necessity of conditions~\ref{itmCHalphai} and~\ref{itmCHalphaii} follows immediately from Corollary~\ref{cor:KSforExB1}. 
    The conditions are also sufficient because according to Corollary~\ref{cor:KSforExB1} (note that it is known that $\rho$ has no mass near zero) they imply that $(\infty,\tilde{\omega},\tilde{\dip})$ satisfies condition~\eqref{eqnCondS1} for some $c\in\R$.
       However, since $\tilde{\Wr}$ is known to be square integrable, this constant $c$ is necessarily $-(2\sqrt{\alpha})^{-1}$ and thus one arrives at~\eqref{eqnCHualpha} again.  
 \end{proof}
 
 In order to apply Theorem~\ref{thm:KSforExB2} next, we will write 
 \begin{align}
    \dip(B) = \int_B \varrho(x)^2 dx + \dip_\sing(B), 
 \end{align}
  where $\varrho$ is the (positive) square root of the Radon--Nikod\'ym derivative of $\dip$ with respect to the Lebesgue measure and $\dip_\sing$ is the singular part of $\dip$.
   One is then able to characterize the spectral measures $\rho$ for those pairs $(u,\mu)$ in $\CHdom$ such that the function $u$ lies in $H^1$ near $+\infty$, the function $\varrho-\kappa$ lies in $L^2$ near $+\infty$ for a positive constant $\kappa$ and the measure $\dip_{\sing}$ is finite near $+\infty$. 
 
  \begin{theorem}\label{thm:SzegoStepLikeII}
   A pair $(u,\mu)$ in $\CHdom$ satisfies 
   \begin{align}\label{eqnCHubeta}
     \int_0^\infty u(x)^2 + u'(x)^2 \, dx + \int_0^\infty   \biggl( \varrho(x) - \frac{1}{2\beta} \biggr)^2 dx + \int_{[0,\infty)} d\dip_{\sing} < \infty
   \end{align}
  if and only if both of the following conditions hold for the spectral measure $\rho$ in $\mathcal{R}_0$:
    \begin{enumerate}[label=(\roman*), ref=(\roman*), leftmargin=*, widest=ii]
\item The support of $\rho$ is discrete in $(-\beta,\beta)$ and satisfies~\eqref{eqnLT-IIrho}.
\item The absolutely continuous part $\rho_\ac$ of $\rho$ on $(-\infty,-\beta)\cup(\beta,\infty)$ satisfies~\eqref{eqnSzego-IIrho}.
    \end{enumerate} 
 \end{theorem}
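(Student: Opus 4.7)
The plan is to mimic the strategy used for Theorem~\ref{thmCHualpha}: reduce~\eqref{eqnCHubeta} to the corresponding hypothesis of Corollary~\ref{cor:KSforExB2} on the associated generalized indefinite string $(\infty,\tilde{\omega},\tilde{\dip})$ defined by~\eqref{eq:DvsStringWr} and~\eqref{eq:DvsStringDip}.

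First I would show that, for any pair $(u,\mu)\in\CHdom$, condition~\eqref{eqnCHubeta} is equivalent to
\begin{align*}
\int_0^\infty \tilde{\Wr}(x)^2 x\,dx + \int_0^\infty\biggl(\tilde{\varrho}(x) - \frac{1}{1+2\beta x}\biggr)^2 x\,dx + \int_{[0,\infty)} x\,d\tilde{\dip}_{\sing}(x) < \infty,
\end{align*}
which is precisely~\eqref{eqnCondS2} for $(\infty,\tilde{\omega},\tilde{\dip})$ with $c=0$. Under the substitution $x=\E^t$, combined with $\tilde{\Wr}(\E^t) = -\E^{-t}(u(t)+u'(t))$, $\tilde{\varrho}(\E^t) = \E^{-t}\varrho(t)$ and the fact that $\tilde{\dip}_{\sing}$ is the push-forward of $\E^{-s}d\dip_{\sing}(s)$ by $s\mapsto\E^s$, these three integrals transform respectively into $\int_\R (u(t)+u'(t))^2\,dt$, $\int_\R (\varrho(t) - \E^t/(1+2\beta\E^t))^2\,dt$ and $\dip_{\sing}(\R)$. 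The decay condition~\eqref{eqnMdef-} controls the contributions from $t\in(-\infty,0)$ in all three terms automatically; near $t=+\infty$ one uses that $\E^t/(1+2\beta\E^t) - 1/(2\beta) = -(2\beta(1+2\beta\E^t))^{-1}$ is square-integrable and that $u+u'\in L^2(0,\infty)$ is equivalent to $u\in H^1(0,\infty)$ (recovered via $u(t) = u(0)\E^{-t} + \int_0^t \E^{-(t-s)}(u(s)+u'(s))\,ds$), which yields the claimed equivalence with~\eqref{eqnCHubeta}.

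With this equivalence in hand, the remainder follows almost verbatim the proof of Theorem~\ref{thmCHualpha}. In the forward direction, Corollary~\ref{cor:KSforExB2} applied to $(\infty,\tilde{\omega},\tilde{\dip})$, whose spectral measure coincides with $\rho$, immediately yields the two stated conditions on $\rho$. For the converse, Corollary~\ref{cor:KSforExB2} gives that $(\infty,\tilde{\omega},\tilde{\dip})$ satisfies~\eqref{eqnCondS2} for \emph{some} $c\in\R$; since the Weyl--Titchmarsh function $m(z) = \int_\R \frac{z}{\lambda(\lambda-z)}d\rho(\lambda)$ of a pair in $\CHdom$ satisfies $m(0) = 0$, Corollary~\ref{cor:m-at0}~\ref{iCor:m-at0forB} forces $c=0$, and the preceding equivalence then yields~\eqref{eqnCHubeta}.

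The main technical point is the careful unpacking of the three transformed integrals under $x=\E^t$. None of these manipulations is individually hard, but keeping track of the appropriate asymptotic behaviors at both $\pm\infty$, and in particular cleanly separating the model tail $\E^t/(1+2\beta\E^t)$ from its limit $1/(2\beta)$ without incurring extraneous divergences, is where the real work lies.
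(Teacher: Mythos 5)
Your proposal is correct and follows essentially the same route as the paper: transform~\eqref{eqnCHubeta} via $x=\E^t$ into condition~\eqref{eqnCondS2} with $c=0$ for the associated generalized indefinite string $(\infty,\tilde{\omega},\tilde{\dip})$ and invoke Corollary~\ref{cor:KSforExB2}, exactly as in the proof of Theorem~\ref{thmCHualpha}. The only (immaterial) difference is how you pin down $c=0$ in the converse — you use $m(0)=0$ together with Corollary~\ref{cor:m-at0}, whereas the paper notes that square integrability of $\tilde{\Wr}$ already forces $c=0$; both are valid one-line observations.
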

 
 \begin{proof}
  We first note that 
   \begin{align*}
    \tilde{\varrho}(x) & = \frac{1}{x} \varrho(\log x), & \int_{[0,x)} d\tilde{\dip}_\sing & = \int_{(-\infty,\log x)} \E^{- s} d\dip_\sing(s),
 \end{align*} 
  where $\tilde{\varrho}$ is the (positive) square root of the Radon--Nikod\'ym derivative of $\tilde{\dip}$ with respect to the Lebesgue measure and $\tilde{\dip}_\sing$ is the singular part of $\tilde{\dip}$. 
    Then condition~\eqref{eqnCHubeta} is equivalent to 
       \begin{align*}
      \int_0^\infty \tilde{\Wr}(x)^2 x\, dx + \int_0^\infty \biggl( \tilde{\varrho}(x) - \frac{1}{1+2\beta x} \biggr)^2 x\, dx + \int_{[0,\infty)} x\, d\tilde{\dip}_\sing(x) < \infty
    \end{align*}
    and the claim follows from Corollary~\ref{cor:KSforExB2} as in the proof of Theorem~\ref{thmCHualpha}.
    \end{proof}
 
    According to~\cite[Proposition~3.7]{Eplusminus}, the spectral measure $\rho$ of a pair $(u,\mu)$ in $\CHdom$ is supported on the positive half-line if and only if the measure $\dip$ vanishes identically and the distribution $\omega$ is a positive Borel measure on $\R$. 
    In particular, this means that $\mu$ is an absolutely continuous measure on $\R$ that is uniquely determined by the function $u$ in view of~\eqref{eqndipdef}.  
   The set of all pairs $(u,\mu)$ in $\CHdom$ with these properties will be denoted with $\CHdom^+$ in the following.
   
   \begin{definition}
     The set $\CHdom^+$ consists of all pairs $(u,\mu)$ in $\CHdom$ such that the distribution $\omega$ is a positive Borel measure on $\R$ and the measure $\dip$ vanishes identically. 
   \end{definition}
   
   From Theorem~\ref{thmCHualpha}, we immediately obtain a characterization of the spectral measures $\rho$ for those pairs $(u,\mu)$ in $\CHdom^+$ such that the function $u-\kappa$ lies in $H^1$ near $+\infty$ for a positive constant $\kappa$. 
   
    \begin{corollary}
   A pair $(u,\mu)$ in $\CHdom^+$ satisfies 
         \begin{align}\label{eqnCHKalpha}
            \int_0^\infty \biggl(u(x)-\frac{1}{4\alpha}\biggr)^2 + u'(x)^2 \, dx < \infty
     \end{align}
  if and only if both of the following conditions hold for the spectral measure $\rho$ in $\mathcal{R}_0$:
    \begin{enumerate}[label=(\roman*), ref=(\roman*), leftmargin=*, widest=ii]
      \item The support of $\rho$ is discrete in $[0,\alpha)$ and satisfies~\eqref{eq:LTKS01}.
\item The absolutely continuous part $\rho_\ac$ of $\rho$ on $(\alpha,\infty)$ satisfies~\eqref{eqnSzego-Irho}.
    \end{enumerate}
  \end{corollary}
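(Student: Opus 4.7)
The plan is to derive this corollary as a direct specialization of Theorem~\ref{thmCHualpha} to the subclass $\CHdom^+\subset\CHdom$. The only substantive external fact needed is the one cited just above the definition of $\CHdom^+$ (from~\cite[Proposition~3.7]{Eplusminus}): a pair $(u,\mu)\in\CHdom$ lies in $\CHdom^+$ if and only if its spectral measure $\rho$ is supported on $[0,\infty)$. Together with the bijectivity of $(u,\mu)\mapsto\rho$ from $\CHdom$ onto $\cR_0$ established in~\cite[Section~4]{Eplusminus}, this lets us move back and forth between pairs in $\CHdom^+$ and measures in $\cR_0$ supported on $[0,\infty)$.

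For the forward direction, I start with $(u,\mu)\in\CHdom^+$ satisfying~\eqref{eqnCHKalpha}. Since by definition of $\CHdom^+$ the measure $\dip$ vanishes identically, the hypothesis~\eqref{eqnCHualpha} of Theorem~\ref{thmCHualpha} is automatically satisfied (its $\dip$-integral is zero). Applying that theorem produces discreteness of $\supp(\rho)$ in $(-\infty,\alpha)$ together with the Lieb--Thirring bound~\eqref{eqnLT-Irho} and the Szeg\H{o} integrability~\eqref{eqnSzego-Irho}. Because $\supp(\rho)\subseteq[0,\infty)$ by~\cite[Proposition~3.7]{Eplusminus} and $\rho(\{0\})=0$ since $\rho\in\cR_0$, the negative part of the sum in~\eqref{eqnLT-Irho} is empty, and the bound collapses to~\eqref{eq:LTKS01}; similarly, discreteness in $(-\infty,\alpha)$ restricts to discreteness in $[0,\alpha)$. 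This yields conditions~(i) and~(ii).

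For the converse, suppose $\rho\in\cR_0$ satisfies (i) and (ii), understood (as in Corollary~\ref{cor:KS01}) to include $\supp(\rho)\subseteq[0,\infty)$. Then both conditions in Theorem~\ref{thmCHualpha} are met trivially: condition~\ref{itmCHalphai} there is just~(i) read with an empty negative contribution, and condition~\ref{itmCHalphaii} is (ii). By the sufficiency part of Theorem~\ref{thmCHualpha}, the unique pair $(u,\mu)\in\CHdom$ with spectral measure $\rho$ satisfies~\eqref{eqnCHualpha}. Since $\supp(\rho)\subseteq[0,\infty)$, the already invoked~\cite[Proposition~3.7]{Eplusminus} places this pair in $\CHdom^+$, so $\dip\equiv 0$ and~\eqref{eqnCHualpha} reduces to~\eqref{eqnCHKalpha}.

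There is essentially no obstacle beyond bookkeeping: the corollary is a restatement of Theorem~\ref{thmCHualpha} under the standing restrictions $\dip\equiv 0$ and $\supp(\rho)\subseteq[0,\infty)$ that characterize $\CHdom^+$. The only point requiring mild care is the convention that condition~(i) implicitly encodes $\supp(\rho)\subseteq[0,\infty)$, consistent with the usage in Corollary~\ref{cor:KS01}; under this reading the biconditional is immediate.
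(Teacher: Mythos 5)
Your proposal is correct and follows exactly the route the paper intends: the corollary is stated as an immediate specialization of Theorem~\ref{thmCHualpha}, using the characterization from~\cite[Proposition~3.7]{Eplusminus} that membership in $\CHdom^+$ is equivalent to $\supp(\rho)\subseteq[0,\infty)$, so that $\dip\equiv0$ collapses~\eqref{eqnCHualpha} to~\eqref{eqnCHKalpha} and the negative part of the sum in~\eqref{eqnLT-Irho} is empty. The only cosmetic remark is that since the statement already fixes $(u,\mu)\in\CHdom^+$, the support restriction on $\rho$ is part of the standing hypothesis and need not be read into condition~(i); this does not affect the argument.
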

  
  One more result can be derived from Theorem~\ref{thmKSforKS02} about Krein strings. 
  To this end, we first introduce the set $\CHdom^+_1$ of all pairs $(u,\mu)$ in $\CHdom^+$ such that 
  \begin{align}
    \int_{(-\infty,0)} \E^{-x} d\omega(x) < \infty.
  \end{align}
   We have shown in~\cite[Corollary~3.9]{Eplusminus} that this set can also be characterized in terms of the corresponding spectral measure $\rho$. 
  More specifically, a pair $(u,\mu)$ in $\CHdom^+$ belongs to $\CHdom^+_1$ if and only if 
  \begin{align}
    \int_{(0,\infty)} \frac{d\rho(\lambda)}{\lambda} < \infty. 
  \end{align}
  In this case, we shall write (recall that $\omega$ is a positive Borel measure on $\R$ when the pair $(u,\mu)$ belongs to $\CHdom^+$) 
 \begin{align}
    \omega(B) = \int_B \varrho_\omega(x)^2 dx + \omega_\sing(B), 
 \end{align}
  where $\varrho_\omega$ is the (positive) square root of the Radon--Nikod\'ym derivative of $\omega$ with respect to the Lebesgue measure and $\omega_\sing$ is the singular part of $\omega$.
   Our final theorem characterizes the spectral measures $\rho$ for those pairs $(u,\mu)$ in $\CHdom^+_1$ such that the function $\varrho_\omega-\sqrt{\kappa}$ lies in $L^2$ near $+\infty$ for a positive constant $\kappa$ and the measure $\omega_\sing$ is finite near $+\infty$. 
 
    \begin{theorem}
   A pair $(u,\mu)$ in $\CHdom^+_1$ satisfies 
     \begin{align}\label{eqnCHwalpha}
         \int_0^\infty \biggl( \varrho_\omega(x) - \frac{1}{2\sqrt{\alpha}} \biggr)^2 dx + \int_{[0,\infty)} d\omega_{\sing} < \infty
     \end{align}
   if and only if both of the following conditions hold for the spectral measure $\rho$ in $\cR_0$:
      \begin{enumerate}[label=(\roman*), ref=(\roman*), leftmargin=*, widest=ii]
      \item The support of $\rho$ is discrete in $[0,\alpha)$ and satisfies~\eqref{eq:LTKS01}.
\item The absolutely continuous part $\rho_\ac$ of $\rho$ on $(\alpha,\infty)$ satisfies~\eqref{eq:SzegoKS02}.
    \end{enumerate}
  \end{theorem}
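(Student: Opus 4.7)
The plan is to reduce the statement to Corollary~\ref{corKSforKS02} via the correspondence~\eqref{eq:DvsStringWr},~\eqref{eq:DvsStringDip} that relates pairs in $\CHdom$ to generalized indefinite strings on $[0,\infty)$. Since $(u,\mu)\in\CHdom^+_1\subset\CHdom^+$ forces $\dip=0$, one has $\tilde\dip=0$, so the associated generalized indefinite string reduces to a Krein string $(\infty,\tilde\omega)$. A direct change of variables $x=\E^t$ identifies $\tilde\omega$ as the pushforward of the positive measure $\E^{-t}d\omega(t)$, a positive Borel measure on $[0,\infty)$, and the defining $\CHdom^+_1$-condition $\int_{(-\infty,0)}\E^{-x}d\omega(x)<\infty$ further guarantees that $\tilde\omega$ is finite near $0$. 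Since the spectral measure $\rho$ of the pair $(u,\mu)$ coincides with that of the associated generalized indefinite string, and since $(u,\mu)\in\CHdom^+_1$ gives $\int_{(0,\infty)}d\rho(\lambda)/\lambda<\infty$ (hence in particular~\eqref{eqnPoisson1}), the two spectral conditions in the theorem are precisely those appearing in Corollary~\ref{corKSforKS02}.

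It then remains to identify the coefficient condition~\eqref{eqnCHwalpha} on $(u,\mu)$ with condition~\eqref{eqnCondKrStr2} applied to the Krein string $(\infty,\tilde\omega)$. The Lebesgue decomposition $\omega=\varrho_\omega^2\,dx+\omega_\sing$ transforms, under $x=\E^t$, into $\tilde\varrho_{\tilde\omega}(x)=\varrho_\omega(\log x)/x$ for the absolutely continuous part, while $\tilde\omega_\sing$ is the pushforward of $\E^{-t}d\omega_\sing(t)$. The substitution $x=\E^t$ then yields
\begin{align*}
\int_0^\infty\biggl(\tilde\varrho_{\tilde\omega}(x)-\frac{1}{1+2\sqrt{\alpha}\, x}\biggr)^{\!2} x\,dx &= \int_{-\infty}^\infty\biggl(\varrho_\omega(t)-\frac{\E^t}{1+2\sqrt{\alpha}\,\E^t}\biggr)^{\!2} dt, \\
\int_{[0,\infty)}x\,d\tilde\omega_\sing(x) &= \omega_\sing(\R).
\end{align*}
The $\CHdom^+_1$-hypothesis forces $\int_{-\infty}^0\varrho_\omega(t)^2\,dt<\infty$ and $\omega_\sing((-\infty,0))<\infty$, so the contribution to the first integral from $(-\infty,0)$ is automatically finite. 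On $(0,\infty)$, the weight $\E^t/(1+2\sqrt{\alpha}\,\E^t)$ differs from $\frac{1}{2\sqrt{\alpha}}$ by $O(\E^{-t})$, hence the tail integral is equivalent to $\int_0^\infty(\varrho_\omega(t)-\frac{1}{2\sqrt{\alpha}})^2\,dt<\infty$; combined with finiteness of $\omega_\sing([0,\infty))$, this matches~\eqref{eqnCHwalpha} exactly.

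The main technical point lies in the bookkeeping at the endpoint $t=-\infty$ (i.e.\ $x=0$): one must verify that $\tilde\omega$ is a well-defined positive Borel measure on $[0,\infty)$ whose Lebesgue decomposition is compatible with the pushforward of that of $\omega$, and that the resulting Weyl--Titchmarsh function is analytic at zero so that Corollary~\ref{corKSforKS02} applies. Both are precisely what the extra integrability built into $\CHdom^+_1$ provides, via Theorem~\ref{thm:m-at0} and the analysis developed around $\CHdom^+_1$ in the surrounding subsection.
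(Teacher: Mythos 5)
Your proposal is correct and follows essentially the same route as the paper: pass to the associated generalized indefinite string via $x\mapsto\E^x$, observe that for pairs in $\CHdom^+_1$ it is (up to a point mass at zero) a Krein string whose weight is the pushforward of $\E^{-s}d\omega(s)$, match condition~\eqref{eqnCHwalpha} with condition~\eqref{eqnCondKrStr2} by the substitution you carry out, and invoke Corollary~\ref{corKSforKS02}. The only detail treated more carefully in the paper is that $\tilde\omega$ itself need only become a positive measure after adding a suitable point mass $c\delta_0$ at zero (since $\tilde\Wr(0+)$ need not vanish), but as you implicitly use, this affects neither the spectral measure nor the weighted integrals $\int x\,d\tilde\omega_\sing$ and $\int(\tilde\varrho_\omega-\frac{1}{1+2\sqrt{\alpha}x})^2x\,dx$.
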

 
 \begin{proof}
   If $(u,\mu)$ is a pair in $\CHdom^+_1$, then the measure $\tilde{\dip}$ vanishes identically and the function $\tilde{\Wr}$ has a non-decreasing representative that is bounded from below; see~\cite[Lemma~7.2]{IndefiniteString} and~\cite[Remark~3.8 and Corollary~3.9]{Eplusminus}.
   It follows that $\tilde{\omega}+c\delta_0$ is a positive Borel measure on $[0,\infty)$ for some constant $c\in\R$, where $\delta_0$ is the unit Dirac measure centered at zero.
   Since adding point masses at zero to $\tilde{\omega}$ does not change the spectral measure, we see that $\rho$ is also the spectral measure of the Krein string $(\infty,\tilde{\omega}+c\delta_0)$.
   Furthermore, we will use that 
   \begin{align*}
     \int_{[x,y)} \E^{-s}d\omega(s) = \int_{[\E^x,\E^y)} d\tilde{\omega},
   \end{align*}
   which follows from~\cite[Equation~(3.26)]{Eplusminus} and implies that (take into account that a point mass at zero does not change the absolutely continuous part of a measure)
   \begin{align*}
    \tilde{\varrho}_\omega(x) & = \frac{1}{x} \varrho_\omega(\log x), & \int_{(0,x)} d\tilde{\omega}_\sing & = \int_{(-\infty,\log x)} \E^{- s} d\omega_\sing(s),
 \end{align*} 
  where $\tilde{\varrho}_\omega$ is the (positive) square root of the Radon--Nikod\'ym derivative of $\tilde{\omega}$ with respect to the Lebesgue measure and $\tilde{\omega}_\sing$ is the singular part of $\tilde{\omega}$. 
     Now condition~\eqref{eqnCHwalpha} is equivalent to 
   \begin{align*}
      \int_0^\infty \biggl(\tilde{\varrho}_\omega(x) - \frac{1}{1+2\sqrt{\alpha} x} \biggr)^2 x\, dx + \int_{[0,\infty)} x\, d\tilde{\omega}_\sing(x) & < \infty
    \end{align*}
    and the claim follows from Corollary~\ref{corKSforKS02} as in the proof of Theorem~\ref{thmCHualpha}.
 \end{proof}

\subsection{Well-posedness results}\label{ss:13wellposed}

 We are now going to demonstrate how our results in Section~\ref{secCH:02} can be used to obtain global conservative (weak) solutions of the two-component Camassa--Holm system on phase spaces that arise from the conditions on the coefficients. 
 For the sake of brevity, we will focus on the discussion of the phase space obtained from condition~\eqref{eqnCHualpha} in Theorem~\ref{thmCHualpha}, which determines precisely the set $\CHdom^\kappa$ as given in Definition~\ref{defDcapH1} (with the parameters $\alpha$ and $\kappa$ related by $4\alpha\kappa=1$). 
 In order to simplify lengthy expressions, we will also restrict to the particular case when $\kappa=1$, so that $\CHdom^1$ consists of all pairs $(u,\mu)$ in $\CHdom$ such that  
   \begin{align}\label{eqnD1cond}
     \int_{0}^{\infty}(u(x)-1)^2 + u'(x)^2dx + \int_{[0,\infty)} d\dip & < \infty.
   \end{align}
   Note that condition~\eqref{eqnD1cond} is stronger than~\eqref{eqnMdef+}, which ensures that this characterization of $\CHdom^1$ agrees with our original Definition~\ref{defDcapH1} in the introduction.

  Two main ingredients will be necessary to establish the conservative Camassa--Holm flow on $\CHdom^1$ below. 
  First of all, we will need the characterization of all spectral measures corresponding to pairs in $\CHdom^1$ as given by Theorem~\ref{thmCHualpha} with $\alpha=\nicefrac{1}{4}$. 
  Secondly, we are also going to employ the trace formula in Corollary~\ref{corTFalpham}, which readily translates to our current setting. 
  In order to shorten notation in the following, for a pair $(u,\mu)$ in $\CHdom^1$ we set 
   \begin{align}\label{eq:energy1}
     \cE_1(u,\mu) = \int_{\R} (1+\E^{-x}) \biggl(u(x)+u'(x) - \frac{1}{1+\E^{-x}}\biggr)^2 dx + \int_{\R} (1+\E^{-x})d\dip(x).
   \end{align}  
  We also continue to use the function $\cF_2$ as defined by~\eqref{eq:Fsdef} in Section~\ref{secSbSsumrule}.
  
   \begin{corollary}\label{corTFCH1}
      If $\rho$ is the spectral measure of a pair $(u,\mu)$ in $\CHdom^1$, then 
  \begin{align}\begin{split}\label{eqnTFCH1}
  \cE_1(u,\mu) & = \int_\R \frac{d\rho(\lambda)}{\lambda^2} - \frac{1}{2\pi} \int_{\nicefrac{1}{4}}^\infty \frac{\sqrt{4\lambda-1}}{\lambda^3}d\lambda + 2 \sum_{\lambda\in \supp(\rho)\atop \lambda<\nicefrac{1}{4}} \cF_2\bigl(\sqrt{1-4\lambda}\bigr) \\
&  \qquad\qquad\qquad\qquad - \frac{1}{2\pi} \int_{\nicefrac{1}{4}}^{\infty}  \frac{\sqrt{4\lambda-1}}{\lambda^3} \log\biggl(\frac{2\pi\lambda}{\sqrt{4\lambda-1}} \frac{d\rho_\ac(\lambda)}{d\lambda}\biggr) d\lambda.
\end{split}\end{align}
   \end{corollary}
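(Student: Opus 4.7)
The plan is to reduce this corollary directly to the trace formula in Corollary~\ref{corTFalpham} by passing to the associated generalized indefinite string. Recall that a pair $(u,\mu)$ in $\CHdom$ corresponds bijectively to the generalized indefinite string $(\infty,\tilde{\omega},\tilde{\dip})$ defined via~\eqref{eq:DvsStringWr} and~\eqref{eq:DvsStringDip}, and its spectral measure coincides with $\rho$. The computation in the proof of Theorem~\ref{thmCHualpha} (specialized to $\alpha=\nicefrac{1}{4}$, so that $2\sqrt{\alpha}=1$) shows that condition~\eqref{eqnD1cond} is equivalent to
\begin{align*}
\int_0^\infty \biggl(\tilde{\Wr}(x)+1-\frac{x}{1+x}\biggr)^2 x\,dx + \int_{[0,\infty)} x\,d\tilde{\dip}(x) < \infty,
\end{align*}
i.e., the associated generalized indefinite string satisfies~\eqref{eqnCondS1} with $\alpha = \nicefrac{1}{4}$ and $c = -1$. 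Hence Corollary~\ref{corTFalpham} applies and gives a trace formula for $(\infty,\tilde{\omega},\tilde{\dip})$.

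Next I would translate both sides of~\eqref{eqnTFalpham} into the language of the pair $(u,\mu)$. For the left-hand side, the substitution $x = \E^s$ yields
\begin{align*}
\tilde{\Wr}(\E^s) + \frac{1}{1+\E^s} = -\E^{-s}\biggl(u(s)+u'(s)-\frac{1}{1+\E^{-s}}\biggr),
\end{align*}
so that $(1+x)(\tilde{\Wr}(x)+\frac{1}{1+x})^2\,dx$ transforms into $(1+\E^{-s})(u(s)+u'(s)-\frac{1}{1+\E^{-s}})^2\,ds$. Similarly, from~\eqref{eq:DvsStringDip} one has $\int_{(0,\infty)} (1+x)\,d\tilde{\dip}(x) = \int_\R (1+\E^{-s})\,d\dip(s)$. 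The left-hand side of~\eqref{eqnTFalpham} therefore equals $\cE_1(u,\mu)$ exactly.

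For the right-hand side, I would simply insert $\alpha = \nicefrac{1}{4}$ and use $\sqrt{\lambda-\nicefrac{1}{4}} = \frac{1}{2}\sqrt{4\lambda-1}$, $\sqrt{1-\lambda/\alpha} = \sqrt{1-4\lambda}$, and $\frac{1}{4\alpha^{\nicefrac{3}{2}}} = 2$. The three terms in~\eqref{eqnTFalpham} involving the spectrum convert line-by-line into the three terms on the right-hand side of~\eqref{eqnTFCH1}.

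This is largely mechanical once the correspondence is set up, so I do not anticipate any serious obstacle. The only point that deserves care is verifying that the transformation~\eqref{eq:DvsStringWr}--\eqref{eq:DvsStringDip} indeed maps~\eqref{eqnD1cond} to the relative $L^2$-type condition with $c=-1$ rather than some other constant; this is handled by observing that $\tilde{\Wr}$ is automatically square integrable for pairs in $\CHdom$ (see Section~\ref{secCH:02}), so the constant $c$ in~\eqref{eqnCondS1} is forced to equal $-(2\sqrt{\alpha})^{-1} = -1$, as in the proof of Theorem~\ref{thmCHualpha}. With this in place, Corollary~\ref{corTFCH1} is immediate from Corollary~\ref{corTFalpham}.
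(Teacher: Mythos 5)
Your proposal is correct and follows exactly the paper's own route: the paper proves Corollary~\ref{corTFCH1} in one line by applying Corollary~\ref{corTFalpham} to the associated generalized indefinite string $(\infty,\tilde{\omega},\tilde{\dip})$, which satisfies~\eqref{thmCHualphaGIS} with $\alpha=\nicefrac{1}{4}$. Your additional verification of the change of variables $x=\E^s$ and the substitution of the constants is accurate but merely spells out what the paper leaves implicit.
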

   
   \begin{proof}
     This follows from Corollary~\ref{corTFalpham} applied to the corresponding generalized indefinite string $(\infty,\tilde{\omega},\tilde{\dip})$ as defined in Section~\ref{secCH:02}, which satisfies~\eqref{thmCHualphaGIS}.
   \end{proof}
    
    Since both integrals in~\eqref{eq:energy1} are non-negative, the functional $\cE_1$ can be defined on all of $\CHdom$, so that $\CHdom^1$ consists of all pairs $(u,\mu)$ in $\CHdom$ such that $\cE_1(u,\mu)$ is finite.  
    We will make use of this defining functional below to improve on a natural topology on $\CHdom$ that was introduced in~\cite[Section~2]{Eplusminus}.
    
  \begin{definition}\label{defDTop}
    We say that a sequence of pairs $(u_n,\mu_n)$ converges to $(u,\mu)$ in $\CHdom$ if the functions $u_n$ converge to $u$ pointwise on $\R$ and 
    \begin{align}\label{eqnContMU}
      \int_{(-\infty,x)} \E^{-s}d\mu_n(s) \rightarrow \int_{(-\infty,x)} \E^{-s}d\mu(s)
    \end{align}
    for almost every $x\in\R$.
  \end{definition}   
 
   That this mode of convergence on $\CHdom$ is indeed induced by a metric can be seen from~\cite[Proposition~4.5]{Eplusminus} for example, which states that a sequence of pairs $(u_n,\mu_n)$ converges to $(u,\mu)$ in $\CHdom$ if and only if the corresponding Weyl--Titchmarsh functions $m_n$ converge to $m$ locally uniformly.
   We will always regard $\CHdom$ equipped with the topology induced by such a metric. 
 The same convention applies to the following stronger mode of convergence on $\CHdom^1$ that renders the functional $\cE_1$ continuous. 
 
 \begin{definition}\label{defD1Top}
We say that a sequence of pairs $(u_n,\mu_n)$ converges to $(u,\mu)$ in $\CHdom^1$ if it converges to $(u,\mu)$ in $\CHdom$ and $\cE_1(u_n,\mu_n)$ converges to $\cE_1(u,\mu)$. 
  \end{definition} 

We are first going to compare these two modes of convergence with more standard ones.
 
  \begin{lemma}\label{lem:convergenceD}
If a sequence of pairs $(u_n,\mu_n)$ converges to $(u,\mu)$ in $\CHdom$, then the following assertions hold: 
 \begin{enumerate}[label=(\roman*), ref=(\roman*), leftmargin=*, widest=iii]
           \item\label{itmtopDii} The  functions $u_n$ converge to $u$ weakly in $H^1(-\infty,x)$ for every $x\in\R$.
          \item\label{itmtopDiv} For all functions $h\in C_{\mathrm{b}}(\R)$ with support bounded from above one has 
    \begin{align}
      \int_\R h(x)\E^{-x}d\mu_n(x)\rightarrow\int_\R h(x)\E^{-x}d\mu(x). 
    \end{align}
 \end{enumerate}
If in addition $\cE_1(u_n,\mu_n)$ is bounded and converges to $\cE_1(u,\mu)$, that is, the sequence of pairs $(u_n,\mu_n)$ converges to $(u,\mu)$ in $\CHdom^1$, then the following assertions hold:
 \begin{enumerate}[label=(\roman*), ref=(\roman*), leftmargin=*, widest=iii, resume]
               \item\label{itmtopD1ii}   The functions $u_n-1$ converge to $u-1$ weakly in $H^1[x,\infty)$ for every $x\in\R$. 
                          \item\label{itmtopD1i}  The functions $u_n$ converge to $u$ uniformly on $\R$.
 \end{enumerate}
\end{lemma}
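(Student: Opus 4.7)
The plan for (i) starts from the observation that the convergence $\int_{(-\infty,x)} \E^{-s} d\mu_n(s) \to \int_{(-\infty,x)} \E^{-s} d\mu(s)$ at a.e.\ $x$, combined with monotonicity in $x$, yields the uniform bound $\sup_n \int_{(-\infty,x)} \E^{-s} d\mu_n < \infty$ for every $x\in\R$. Via~\eqref{eqnumu} this controls $\{u_n\}$ uniformly in $H^1(-\infty,x)$, so reflexivity of $H^1$ delivers weak subsequential limits, which must equal $u$ by the pointwise convergence, and a standard subsequence argument promotes this to weak convergence of the full sequence. For (ii), the same uniform bound makes the measures $\nu_n := \E^{-x} \id_{(-\infty,a]}\,d\mu_n$ uniformly bounded finite measures on $(-\infty, a]$; tightness at $-\infty$ follows from an $\varepsilon/2$-argument applied to the distribution-function convergence combined with finiteness of the limit $\nu$, and together with distribution-function convergence at a.e.\ point this yields weak$^\ast$ convergence $\nu_n \to \nu$, i.e.\ the claim.

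For (iii), boundedness of $\cE_1(u_n,\mu_n)$ together with $1+\E^{-x}\geq 1$ on $[0,\infty)$ and $(a+b)^2\leq 2a^2+2b^2$ gives a uniform $L^2[0,\infty)$-bound on $w_n := u_n + u_n' - 1$. Since $u_n(0)$ is bounded via the Sobolev embedding applied to the uniform $H^1[-1,1]$-bound from (i), I would solve $v+v' = w_n$ with $v(0) = u_n(0)-1$ by variation of parameters to obtain
\begin{align*}
u_n(x) - 1 = \E^{-x}(u_n(0)-1) + \int_0^x \E^{s-x} w_n(s) \,ds;
\end{align*}
Young's convolution inequality (or a direct Cauchy--Schwarz argument) then bounds $u_n - 1$ in $L^2[0,\infty)$ uniformly, whence $u_n' = w_n - (u_n - 1)$ is $L^2[0,\infty)$-bounded as well. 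Together with (i) this yields uniform $H^1[x,\infty)$-boundedness for every $x\in\R$, and the identification of the weak limit with $u - 1$ proceeds exactly as in (i).

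Assertion (iv) is the main obstacle, since uniform convergence on all of $\R$ requires ruling out escape of ``energy'' of $u_n$ to $\pm\infty$. I would split $\R$ into three pieces. On $(-\infty, -M]$, the $H^1(-\infty, y)$-bound from the proof of (i) implies $|u_n(y)|^2 \leq \|u_n\|_{H^1(-\infty, y)}^2 \leq C \E^y$ uniformly in $n$, with the same bound for $u$, giving uniform smallness as $M \to \infty$. On $[-M, M]$, the weak-$H^1$ convergences from (i) and (iii) together with the compact embedding $H^1[-M, M] \hookrightarrow C[-M, M]$ yield uniform convergence. The core difficulty lies on $[M, \infty)$, where the assumption $\cE_1(u_n,\mu_n)\to \cE_1(u,\mu)$ (rather than mere boundedness) enters crucially: splitting both non-negative constituents of $\cE_1$ at a continuity point $X$ and combining weak-$L^2$ lower semicontinuity of the integrand part with the distribution-function convergence of $\E^{-x}d\mu_n$ from (ii) and the Fatou (for $u_n^2$) / weak-$L^2$ (for $(u_n')^2$) lower semicontinuity of $\int(u_n^2 + (u_n')^2)$, one shows by the standard sum-of-liminfs argument that each piece of $\cE_1$ converges individually. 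Since $\cE_1|_{[X,\infty)}(u,\mu)\to 0$ as $X\to\infty$, this transfers to uniform smallness of $\int_X^\infty (u_n+u_n'-1)^2\, dx$ for $n$ and $X$ large, which substituted into the variation-of-parameters representation from (iii) with $X$ in place of $0$ yields $|u_n(y) - 1|$ uniformly small on $[Y, \infty)$ for sufficiently large $Y$, completing the uniform convergence.
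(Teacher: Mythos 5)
Your proposal for parts (i)--(iii) is correct and follows essentially the same route as the paper: the uniform bound $\sup_n\int_{(-\infty,x)}\E^{-s}d\mu_n<\infty$ extracted from \eqref{eqnContMU} together with \eqref{eqnumu} gives uniform $H^1(-\infty,x)$ bounds, and (ii) is the usual passage from convergence of distribution functions to weak$^\ast$ convergence. For (iii) the paper bypasses your variation-of-parameters computation with the exact identity $\int_x^\infty\bigl[(u_n-1)^2+(u_n')^2\bigr]ds=\int_x^\infty(u_n+u_n'-1)^2ds+(u_n(x)-1)^2$, which yields the uniform $H^1[x,\infty)$ bound in one line; your integrating-factor argument is an equivalent, if slightly longer, form of the same fact. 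For (iv) you go well beyond the paper, which merely asserts that uniform convergence follows from (i) and (iii). Your instinct that more is needed is correct: weak $H^1[x,\infty)$ convergence plus pointwise convergence does not by itself exclude a bump escaping to $+\infty$ (take $u_n=u+\phi(\cdot-n)$ with $\mu_n$ the associated absolutely continuous energy measure; this converges in $\CHdom$, satisfies (i)--(iii) and has bounded $\cE_1$, yet fails (iv), and is ruled out precisely because $\cE_1(u_n,\mu_n)\not\to\cE_1(u,\mu)$). Your three-region splitting and the use of the \emph{convergence} of $\cE_1$ together with lower semicontinuity of its truncations to obtain equi-small tails at $+\infty$, followed by the pointwise bound $|u_n(y)-1|^2\le\int_y^\infty(u_n+u_n'-1)^2ds$, is exactly the content the paper leaves implicit.

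One step of your (iv) would fail as literally written and needs repair. You propose to obtain lower semicontinuity of the $\dip_n$-pieces of $\cE_1$ from lower semicontinuity of $\int(u_n^2+(u_n')^2)$; but that quantity enters $\int(1+\E^{-x})d\dip_n=\int(1+\E^{-x})d\mu_n-\int(1+\E^{-x})(u_n^2+(u_n')^2)dx$ with a \emph{minus} sign, so lower semicontinuity of it points the wrong way, and $\int(u_n')^2$ need not converge under weak $H^1$ convergence. The fix is to combine the two constituents of $\cE_1$ over the half-line $(-\infty,X)$ before passing to the limit: with $g=(1+\E^{-x})^{-1}$ one has $(u+u'-g)^2-u^2-(u')^2=(u^2)'-2g(u+u')+g^2$, and integrating against the weight $1+\E^{-x}$ and integrating by parts converts the troublesome $(u_n')^2$ contributions into a boundary value of $u_n^2$ at $X$, the integral $\int_{-\infty}^X\E^{-x}u_n^2dx$ and the integral $\int_{-\infty}^Xu_ndx$, all of which converge or are lower semicontinuous with the correct sign. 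Lower semicontinuity of this combined head piece, together with $\cE_1(u_n,\mu_n)\to\cE_1(u,\mu)$, gives $\limsup_n$ of the tail piece at most the tail of the limit, which is all your argument requires; with that adjustment your proof of (iv) is complete.
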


\begin{proof}
 We first observe that the convergence in~\eqref{eqnContMU} entails that  
\begin{align*}  
  C(x) := \sup_{n\in\N}\int_{(-\infty,x)} \E^{-s} d\mu_n(s) <\infty
  \end{align*}
  for every $x\in\R$, which guarantees that the $H^1(-\infty,x)$ norms of the functions $u_n$ are uniformly bounded.
Together with pointwise convergence of the functions $u_n$, this implies that the functions $u_n$ converge weakly in $H^1(-\infty,x)$.
   The claim in~\ref{itmtopDiv} simply follows from pointwise convergence almost everywhere of the distribution function in~\eqref{eqnContMU}. 
  Under the additional assumption that $\cE_1(u_n,\mu_n)$ is bounded and converges to $\cE_1(u,\mu)$, we first note that the bound 
  \begin{align*}
    \int_x^\infty (u_n(s)-1)^2 + u_n'(s)^2 ds & = \int_x^\infty (u_n(s)+u_n'(s)-1)^2ds + (u_n(x)-1)^2\\
     & \leq 2 \cE_1(u_n,\mu_n) + \E^{-2x} + \frac{4}{3}\E^x C(x) + 2
  \end{align*}
  holds for every $x\in\R$, where we used~\cite[Lemma~2.3]{Eplusminus} to estimate $u_n(x)$.
  This shows that the $H^1[x,\infty)$ norms of the functions $u_n-1$ are uniformly bounded, which implies that the functions $u_n-1$ converge weakly in $H^1[x,\infty)$.
  The remaining claim in~\ref{itmtopD1i} follows from~\ref{itmtopDii} and~\ref{itmtopD1ii}.
\end{proof}      
   
   \begin{remark}
   One may be tempted to try for a finer topology on $\CHdom$ or $\CHdom^1$ that ensures strong convergence of the function $u$ in $H^{1}_\loc(\R)$ instead of weak convergence.
    However, due to the presence of finite-time blow-up (see~\cite[Section~6]{brco07} for the prototypical example of a peakon-antipeakon collision), solutions of the Camassa--Holm equation would not be continuous in time with respect to such a topology. 
  \end{remark}   
   
 Let us consider next the two-component Camassa--Holm system    
\begin{align}
 \begin{split}\label{eqnOurCH}
  u_t + u u_x + P_x & = 0, \\
  \mu_t + (u\mu)_x & = (u^3 - 2Pu)_x, 
 \end{split}
 \end{align}
 where the auxiliary function $P$ satisfies
 \begin{align}
  P - P_{xx} & = \frac{u^2+ \mu}{2}.
 \end{align}    
 We first specify a precise meaning of weak solutions to this system in $\CHdom$.
   
 \begin{definition}\label{def:weaksolCH}
A {\em global conservative solution} of the two-component Camassa--Holm system~\eqref{eqnOurCH} with initial data $(u_0,\mu_0)\in\CHdom$ is a continuous curve 
\begin{align}
  \gamma\colon t\mapsto(u(\ledot,t),\mu(\ledot,t))
\end{align}
from $\R$ to $\CHdom$ with $\gamma(0)=(u_0,\mu_0)$ that satisfies~\eqref{eqnOurCH} in the sense that for every test function $\varphi\in C_\cc^\infty(\R\times\R)$ one has  
 \begin{align}\label{eqnCHsysweak1}
 & \int_\R \int_\R u(x,t) \varphi_t(x,t) + \biggl(\frac{u(x,t)^2}{2} + P(x,t) \biggr) \varphi_x(x,t) \,dx \,dt = 0, \\
 \begin{split} 
 &  \int_\R \int_\R \varphi_t(x,t) + u(x,t) \varphi_x(x,t) \,d\mu(x,t) \,dt  \\ 
 &   \qquad\qquad\qquad\qquad = 2\int_\R \int_\R u(x,t)\biggl(\frac{u(x,t)^2}{2} - P(x,t) \biggr) \varphi_x(x,t) \,dx \,dt,
 \end{split}
 \end{align}
 where the function $P$ on $\R\times\R$ is given by 
 \begin{align}\label{eq:Pdef}
  P(x,t) =  \frac{1}{4} \int_\R \E^{-|x-s|} u(s,t)^2 ds +  \frac{1}{4} \int_\R \E^{-|x-s|} d\mu(s,t).
 \end{align}
\end{definition}   
   
  In~\cite[Section~5]{Eplusminus}, we showed how global conservative solutions of the two-component Camassa--Holm system~\eqref{eqnOurCH} can be obtained by defining the {\em conservative Camassa--Holm flow} $\Phi$ on $\CHdom$ as a map 
  \begin{align}\label{eqnCHflow}
   \Phi\colon  \CHdom\times\R \rightarrow\CHdom
 \end{align}
 in the following way:
 Given a pair $(u,\mu)$ in $\CHdom$ with associated spectral measure $\rho$ as well as a $t\in\R$, the corresponding image $\Phi^t(u,\mu)$ under $\Phi$ is defined as the unique pair in $\CHdom$ for which the associated spectral measure is given by    
   \begin{align}\label{eqnSMEvo}
    B \mapsto \int_B \E^{-\frac{t}{2\lambda}} d\rho(\lambda).
  \end{align}
  We note that $\Phi$ is well-defined as the measure given by~\eqref{eqnSMEvo} belongs to $\cR_0$ whenever so does $\rho$ and hence the existence of a unique corresponding pair $\Phi^t(u,\mu)$ in $\CHdom$ is guaranteed by the fact that the spectral transform $(u,\mu)\mapsto\rho$ is a bijection between $\CHdom$ and $\cR_0$. 
  The definition of this flow is motivated by the well-known time evolution of spectral data for spatially decaying classical solutions of the Camassa--Holm equation as well as multi-peakons; see \cite[Section~6]{besasz98}.
  By combining~\cite[Proposition~5.2]{Eplusminus} and~\cite[Theorem~5.3]{Eplusminus}, we see that the flow $\Phi$ indeed gives rise to global conservative solutions: 

 \begin{theorem}\label{thm:consCHweak}
  For every pair $(u_0,\mu_0)\in\CHdom$, the integral curve $t\mapsto \Phi^t(u_0,\mu_0)$ is a global conservative solution of the two-component Camassa--Holm system~\eqref{eqnOurCH} with initial data $(u_0,\mu_0)$.
 \end{theorem}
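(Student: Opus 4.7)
The plan is to verify the two requirements of Definition~\ref{def:weaksolCH}: that the curve $t\mapsto\Phi^t(u_0,\mu_0)$ is continuous into $\CHdom$, and that it satisfies the weak formulation of~\eqref{eqnOurCH}. The essential point is that $\Phi$ is constructed so as to linearize on the spectral side: if $\rho_0$ is the spectral measure of $(u_0,\mu_0)$, then by definition the spectral measure of $\Phi^t(u_0,\mu_0)$ is $\E^{-t/(2\lambda)}\rho_0$, and all dynamical information has to be extracted from this explicit formula.

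For continuity, I would first observe that since $\rho_0$ lies in $\cR_0$ and in particular is supported away from zero, the weight $\E^{-t/(2\lambda)}$ remains bounded on $\supp(\rho_0)$ locally uniformly in $t$, so the map $t\mapsto\E^{-t/(2\lambda)}\rho_0$ is continuous from $\R$ to $\cR_0$ in the sense that the associated Weyl--Titchmarsh functions converge locally uniformly. Composing with the inverse of the spectral transform, which is a homeomorphism between $\cR_0$ and $\CHdom$ (obtained from Proposition~\ref{propSMPcont} after translating via~\eqref{eq:DvsStringWr}--\eqref{eq:DvsStringDip}), yields continuity of the integral curve in $\CHdom$.

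For the weak formulation, the natural strategy is approximation by multi-peakon profiles as in Remark~\ref{rem:MPs}. These correspond to spectral measures supported on finitely many points, and such measures form a dense subset of $\cR_0$; density transfers to $\CHdom$ via the spectral homeomorphism. On multi-peakon profiles, the flow $\Phi$ reduces to the classical explicit system of ODEs governing peakon positions and momenta (which can be read off from the evolution~\eqref{eqnSMEvo} of eigenvalues and norming constants), and these are known to yield weak solutions of~\eqref{eqnOurCH}. The plan is then to approximate an arbitrary $(u_0,\mu_0)\in\CHdom$ by a sequence of multi-peakon profiles $(u_0^n,\mu_0^n)$, use continuity of $\Phi$ jointly in the initial datum and time (again via the spectral picture) to obtain convergence $\Phi^t(u_0^n,\mu_0^n)\to\Phi^t(u_0,\mu_0)$ in $\CHdom$ uniformly for $t$ in compact sets, and pass to the limit in the two integral identities of Definition~\ref{def:weaksolCH}.

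The principal obstacle will be controlling the nonlinear terms $u^2$, $u^3$ and $Pu$ under the rather weak convergence guaranteed by convergence in $\CHdom$. By Lemma~\ref{lem:convergenceD}~\ref{itmtopDii} one only has weak $H^1_\loc(-\infty,x)$ convergence of the spatial profiles, so one must exploit, first, the pointwise bound on $u$ from~\cite[Lemma~2.3]{Eplusminus} together with the smoothing effect of the convolution representation~\eqref{eq:Pdef} of $P$, which upgrades convergence of $u^2+\mu$ in the sense of~\eqref{eqnContMU} to strong locally uniform convergence of $P$; and, second, the weak-$\ast$ convergence of the energy measures $\E^{-x}d\mu_n$ from Lemma~\ref{lem:convergenceD}~\ref{itmtopDiv} to handle the transport equation for $\mu$ tested against a smooth compactly supported $\varphi$. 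Once these limiting issues are settled, the two weak identities survive the passage to the limit and the theorem follows.
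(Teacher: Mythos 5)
Your plan reproduces, in outline, exactly the argument the paper relies on: the paper itself does not prove this theorem in-text but cites \cite[Proposition~5.2 and Theorem~5.3]{Eplusminus}, and the surrounding discussion in Section~\ref{ss:13wellposed} makes clear that the cited proof proceeds precisely as you propose — density of multi-peakon profiles $\Peakons$ in $\CHdom$, the fact that on $\Peakons$ the flow~\eqref{eqnSMEvo} reproduces the known conservative multi-peakon solutions of~\cite{brco07,hora07a,hora07}, joint continuity of $\Phi$, and passage to the limit in the two identities of Definition~\ref{def:weaksolCH} using exactly the compactness properties recorded in Lemma~\ref{lem:convergenceD} and the convolution representation~\eqref{eq:Pdef} of $P$.

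One step in your sketch should be made explicit, because it is the point the paper takes care to emphasize: continuity of $\Phi$ is \emph{not} available on all of $\CHdom\times\R$ but only on the bounded sets $\CHdom(R)\times\R$ (Proposition~\ref{prop:wellposedD}), since one needs a uniform spectral gap along the approximating sequence; this is what the two-sided estimate~\eqref{eq:lam0est} relating the functional $E$ to $\lambda_0(\rho)$ and the conservation law~\eqref{eq:mainConservLaw} provide. Consequently you must choose the multi-peakon approximants $(u_0^n,\mu_0^n)$ so that their spectral measures keep a gap around zero uniform in $n$ (for instance by discretizing $\rho_0$ away from $(-\lambda_0(\rho_0),\lambda_0(\rho_0))$), which places them in a fixed $\CHdom(R)$ together with $(u_0,\mu_0)$. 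With that insertion your argument is the intended one; without it, the appeal to ``continuity of $\Phi$ jointly in the initial datum and time'' is not justified by anything established in the paper.
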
 

\begin{remark}\label{rem:BressanUniq}
 The question about uniqueness of conservative weak solutions to the Camassa--Holm equation and its two-component generalization is a subtle one. 
 Uniqueness of conservative weak solutions to the Camassa--Holm equation has been established in~\cite{bcz15} (see also~\cite{bre16}) under the assumption that the initial data $u_0$ belongs to $H^1(\R)$. 
 However, the notion of weak solution employed in~\cite{bcz15} is stronger than ours, so that this uniqueness result does not apply in our case. 
 In particular, it requires H\"older continuity of the function $u$ in both variables as well as Lipschitz continuity of $t\mapsto u(\ledot,t)$ as a map into $L^2(\R)$.
\end{remark}

In combination with continuous dependence on the initial data established in~\cite[Proposition~5.2]{Eplusminus}, Theorem~\ref{thm:consCHweak} leads to a well-posedness result for the two-com\-po\-nent Camassa--Holm system~\eqref{eqnOurCH} on $\CHdom$. 
To this end, we first recall the notion of {\em multi-peakon profiles}, particular kinds of pairs in $\CHdom$ playing the role of multi-solitons for the Camassa--Holm equation. 

\begin{definition}\label{def:MPfin}
A pair $(u,\mu)$ in $\CHdom$ is called a {\em multi-peakon profile} if $\omega$ and $\dip$ are Borel measures that are supported on a finite set. 
The set of all multi-peakon profiles in $\CHdom$ is denoted by $\Peakons$. 
\end{definition}

If $(u,\mu)$ is a multi-peakon profile, then the function $u$ is of the form
\begin{align}\label{eq:MPprofile}
u(x) & = \frac{1}{2}\sum_{n=1}^N p_n \E^{-|x-x_n|}  
\end{align}
for some $N\in\N\cup\{0\}$,  $x_1,\ldots,x_N\in\R$ and $p_1,\ldots,p_N\in\R$. 

\begin{remark}
 With respect to the transformation $(u,\mu)\mapsto (\infty,\tilde{\omega},\tilde{\dip})$ described in Section~\ref{secCH:02}, one sees that a pair $(u,\mu)$ in $\CHdom$ is a multi-peakon profile if and only if the function $\tilde\Wr$ defined by~\eqref{eq:DvsStringWr} is piecewise constant (with finitely many steps) and the measure $\tilde\dip$ defined by~\eqref{eq:DvsStringDip} has finite support.
In this case, the corresponding spectral problem~\eqref{eqnGISODE} allows a complete direct and inverse spectral theory, which goes back to work of M.\ G.\ Krein and H.\ Langer on the indefinite moment problem~\cite{krla79} (see~\cite{IndMoment} and~\cite{StieltjesType} for further details).    
\end{remark}

A pair $(u,\mu)$ in $\CHdom$ is a multi-peakon profile if and only if the support of the corresponding spectral measure  $\rho$ is a finite set. 
In this case, the pair $(u,\mu)$ can be recovered explicitly in terms of the moments of the  spectral measure $\rho$; see~\cite{besasz00} and \cite[Section~4]{ConservMP}. 
Moreover, the set of all multi-peakon profiles $\Peakons$ is clearly invariant under the conservative Camassa--Holm flow $\Phi$ and it has been proved in~\cite{ConservMP} that the conservative Camassa--Holm flow on $\Peakons$ gives rise to the same conservative multi-peakon solutions that had been constructed before in~\cite{brco07} and~\cite{hora07a,hora07}. 
The main result of~\cite[Section~5]{Eplusminus} asserts that this flow on $\Peakons$ extends continuously and uniquely to bounded subsets of $\CHdom$, which leads to a well-posedness result of the two-component Camassa--Holm system on $\CHdom$. 
More precisely, let us first define the bounded sets 
\begin{align}
\CHdom(R) = \{(u,\mu)\in\CHdom\, |\, E(u,\mu)\le R\}
\end{align} 
for each positive $R>0$, where the functional $E$ is given by 
\begin{align}
E(u,\mu) = \sup_{x\in \R}\, \E^{\frac{x}{2}} \biggl(\int_{x}^{\infty}\E^{-s}(u(s) + u'(s))^2ds + \int_{[x,\infty)}\E^{-s}d\dip(s)\biggr)^{\nicefrac{1}{2}}.
\end{align}
The decisive role of this functional is that it controls the size of the {\em spectral gap} 
\begin{align}
 \lambda_0(\rho) = \inf\{|\lambda|\,|\,\lambda\in\supp(\rho)\}
\end{align}
around zero of the spectral measure $\rho$ corresponding to $(u,\mu)$. 
 More explicitly, we have shown in~\cite[Proposition~3.5]{Eplusminus} that one always has 
\begin{align}\label{eq:lam0est}
    \frac{1}{6\lambda_0(\rho)} \leq E(u,\mu) \leq \frac{\sqrt{2}}{\lambda_0(\rho)}.
\end{align}
Because the spectral gap is clearly preserved under the flow $\Phi$, this allows one to control $E$ globally by  
\begin{align}\label{eq:mainConservLaw}
\frac{1}{6\sqrt{2}}E(u,\mu)\leq E(\Phi^t(u,\mu)) \leq 6\sqrt{2}\, E(u,\mu).
\end{align}
The latter is crucial for establishing that the conservative Camassa--Holm flow is continuous on the bounded sets $\CHdom(R)$; see~\cite[Proposition~5.2]{Eplusminus}. 

\begin{proposition}\label{prop:wellposedD}
The conservative Camassa--Holm flow $\Phi$ is continuous when restricted to $\CHdom(R)\times\R$ for every $R>0$. 
\end{proposition}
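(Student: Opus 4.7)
The plan is to reduce continuity of $\Phi$ on $\CHdom(R)\times\R$ to continuity on the spectral side via the characterization from \cite[Proposition~4.5]{Eplusminus}, according to which $(u_n,\mu_n)\to(u,\mu)$ in $\CHdom$ if and only if the corresponding Weyl--Titchmarsh functions converge locally uniformly on $\C\setminus\R$. Given $(u_n,\mu_n)\to(u,\mu)$ in $\CHdom(R)$ and $t_n\to t$, it thus suffices to show that the Weyl--Titchmarsh functions $m_n^{t_n}$ of $\Phi^{t_n}(u_n,\mu_n)$ converge locally uniformly on $\C\setminus\R$ to the Weyl--Titchmarsh function $m^t$ of $\Phi^t(u,\mu)$.

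The uniform bound $E(u_n,\mu_n)\le R$ together with the lower estimate in~\eqref{eq:lam0est} provides a uniform spectral gap $\lambda_0(\rho_n)\ge 1/(6R)$, so that the spectral measures $\rho_n$ are all supported in $\{\lambda\in\R : |\lambda|\ge 1/(6R)\}$. Simultaneously, locally uniform convergence $m_n\to m$ on $\C\setminus\R$ provides vague convergence of $\rho_n$ to $\rho$ on $\R\setminus\{0\}$ together with a uniform bound on the Poisson integrals $\int d\rho_n(\lambda)/(1+\lambda^2)$.

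Using the integral representation
\begin{align*}
 m_n^{t_n}(z) = \int_\R \frac{z}{\lambda(\lambda-z)}\, \E^{-t_n/(2\lambda)}\, d\rho_n(\lambda),
\end{align*}
and its counterpart for $m^t$, I would then pass to the limit $n\to\infty$ pointwise for each $z\in\C\setminus\R$ by a dominated convergence argument. For $z$ in a compact subset of $\C\setminus\R$ and $t_n$ bounded, the kernel is a continuous function of $\lambda$ on the common support $\{|\lambda|\ge 1/(6R)\}$ that is $O(\lambda^{-2})$ at infinity uniformly in $z$ and $t_n$, and the exponential factor is uniformly bounded by $\E^{3R|t_n|}$ thanks to the spectral gap. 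Combined with vague convergence of $\rho_n$ to $\rho$ and the uniform Poisson bound, this gives pointwise convergence $m_n^{t_n}(z)\to m^t(z)$ for each $z\in\C\setminus\R$, which upgrades automatically to locally uniform convergence by normality of the locally uniformly bounded family of Herglotz--Nevanlinna functions $m_n^{t_n}$.

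The main obstacle is the tightness issue at infinity, since vague convergence alone does not allow one to pass to the limit in integrals against test functions that are merely bounded. The decisive ingredient here is the uniform spectral gap coming from the conservation law~\eqref{eq:mainConservLaw}: it simultaneously removes the potential singularity of $\E^{-t/(2\lambda)}$ at $\lambda=0$ and reduces the decay at infinity to the $\lambda^{-2}$ behavior of the plain Cauchy kernel, which is then controlled by the uniform Poisson bound. Without the confinement to the bounded sets $\CHdom(R)$, neither piece of control would be available and the naive passage to the limit would fail.
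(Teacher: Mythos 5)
Note first that the paper does not actually prove this proposition in the text — it is imported from~\cite[Proposition~5.2]{Eplusminus} — so your argument has to stand on its own. Your overall strategy (pass to the spectral side via the homeomorphism of~\cite[Proposition~4.5]{Eplusminus}, extract the uniform spectral gap $\lambda_0(\rho_n)\ge 1/(6R)$ from~\eqref{eq:lam0est}, and take limits in the integral representation of the time-evolved Weyl--Titchmarsh functions) is the natural one. However, the key limiting step is not justified as written. You pass to the limit in $\int_\R \frac{z}{\lambda(\lambda-z)}\E^{-t_n/(2\lambda)}d\rho_n(\lambda)$ using only vague convergence of $\rho_n$ together with a uniform bound on $\int_\R(1+\lambda^2)^{-1}d\rho_n$. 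This is insufficient: the function $(1+\lambda^2)\frac{z}{\lambda(\lambda-z)}\E^{-t/(2\lambda)}$ tends to $z\neq0$ as $\lambda\to\pm\infty$, so you are integrating a bounded continuous function that does \emph{not} vanish at infinity against the finite measures $(1+\lambda^2)^{-1}d\rho_n$, and mass may escape to infinity. Concretely, $\rho_n=n^2\delta_n$ is supported away from zero, converges vaguely to the zero measure and satisfies $\int(1+\lambda^2)^{-1}d\rho_n\to1$, yet $\int\frac{z}{\lambda(\lambda-z)}d\rho_n=\frac{zn}{n-z}\to z\neq0$. The spectral gap removes the singularity at $\lambda=0$ and bounds the exponential, but it does nothing about this loss of mass at infinity, and the uniform Poisson bound only bounds the tails — it does not make them uniformly small.

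The missing ingredient is the full strength of the hypothesis $m_n\to m$ locally uniformly, of which you retain only weaker consequences. Since every Weyl--Titchmarsh function in play has the representation $\int\frac{z}{\lambda(\lambda-z)}d\rho$ with vanishing linear coefficient, locally uniform convergence is equivalent to weak-$\ast$ convergence of $(1+\lambda^2)^{-1}d\rho_n$ on the one-point compactification of $\R$ with no mass accumulating at infinity in the limit. Equivalently and more concretely, one has $\im\,m_n(\I M)=\int M(\lambda^2+M^2)^{-1}d\rho_n(\lambda)$, whence $\int_{|\lambda|>M}\lambda^{-2}d\rho_n(\lambda)\le 2M^{-1}\im\,m_n(\I M)$; combining $\im\,m_n(\I M)\to\im\,m(\I M)$ with $M^{-1}\im\,m(\I M)=\int(\lambda^2+M^2)^{-1}d\rho(\lambda)\to0$ as $M\to\infty$ yields the uniform tail smallness (tightness) that your dominated-convergence step actually requires. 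With that supplied, the rest of your argument — pointwise convergence of $m_n^{t_n}$ on $\C\backslash\R$ upgraded to locally uniform convergence by normality, then translated back to $\CHdom$ — goes through.
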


\begin{remark}
 Theorem~\ref{thm:consCHweak} and Proposition~\ref{prop:wellposedD} yield well-posedness of the two-component Camassa--Holm system~\eqref{eqnOurCH} on $\CHdom$ in the following sense:
The conservative Camassa--Holm flow extends uniquely from the set of all multi-peakon profiles to a continuous map on $\CHdom(R)\times \R$ for every $R>0$. 
In particular, for any $(u,\mu)\in\CHdom(R)$ and $t\in\R$, if a sequence of multi-peakon profiles $(u_n,\mu_n)\in \Peakons\cap \CHdom(R)$  converges to $(u,\mu)$ in $\CHdom$ and $t_n$ converges to $t$, then the multi-peakon solutions $\Phi^{t_n}(u_n,\mu_n)$ converge to $\Phi^t(u,\mu)$ in $\CHdom$. 
\end{remark}

The Szeg\H{o}-type theorems obtained in Section~\ref{secCH:02} enable us to improve on the above well-posedness result. 
More specifically, we are able to establish stability in stronger topologies when restricted to particular phase spaces. 
We will once more only state these results for the phase space $\CHdom^1$ for the sake of brevity. 

  \begin{theorem}\label{thm:wellposedD1}
 The set $\CHdom^1$ is invariant under the conservative Camassa--Holm flow $\Phi$ and the restricted flow $\Phi|_{\CHdom^1\times\R}\colon \CHdom^1\times\R\to \CHdom^1$ is continuous with respect to the topology on $\CHdom^1$.
 \end{theorem}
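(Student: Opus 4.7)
The plan is to combine the spectral characterization of $\CHdom^1$ coming from Theorem~\ref{thmCHualpha} (specialized to $\alpha=\nicefrac{1}{4}$) with the trace formula in Corollary~\ref{corTFCH1}, and to leverage the continuity of the flow on bounded subsets of $\CHdom$ established in Proposition~\ref{prop:wellposedD}. The conservative flow $\Phi$ is defined on the spectral side by the elementary transformation $d\rho_t(\lambda)=\E^{-\nicefrac{t}{2\lambda}}d\rho(\lambda)$, so that $\supp(\rho_t)=\supp(\rho)$ and $d\rho_{t,\ac}/d\lambda=\E^{-\nicefrac{t}{2\lambda}}\,d\rho_{\ac}/d\lambda$. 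Invariance of $\CHdom^1$ under $\Phi$ is then immediate from Theorem~\ref{thmCHualpha}: condition~\ref{itmCHalphai} depends only on $\supp(\rho)$ and is preserved, while for condition~\ref{itmCHalphaii} the identity $\log(d\rho_{t,\ac}/d\lambda)=-\nicefrac{t}{2\lambda}+\log(d\rho_{\ac}/d\lambda)$ combined with the convergence of $\int_{\nicefrac{1}{4}}^{\infty}\sqrt{\lambda-\nicefrac{1}{4}}\,\lambda^{-4}d\lambda$ shows that the Szeg\H{o} integral remains finite.

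For continuity, let $(u_n,\mu_n)\to(u,\mu)$ in $\CHdom^1$ and $t_n\to t$. Convergence in $\CHdom^1$ entails convergence in $\CHdom$, and via the spectral-gap bound~\eqref{eq:lam0est} together with the flow-invariance~\eqref{eq:mainConservLaw}, the sequence sits in some $\CHdom(R)$; Proposition~\ref{prop:wellposedD} therefore yields $\Phi^{t_n}(u_n,\mu_n)\to\Phi^t(u,\mu)$ in $\CHdom$. What remains is the convergence $\cE_1(\Phi^{t_n}(u_n,\mu_n))\to\cE_1(\Phi^t(u,\mu))$. Here the trace formula~\eqref{eqnTFCH1} is the key input: it represents $\cE_1$ as a sum of four explicit functionals of the spectral measure. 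One first handles the flow by substituting $d\rho_t=\E^{-\nicefrac{t}{2\lambda}}d\rho$, which reduces the problem to joint continuity in $(t,\rho)$ of the functionals
\[
\rho\mapsto\int_\R\frac{\E^{-\nicefrac{t}{2\lambda}}}{\lambda^2}d\rho(\lambda),\qquad \sum_{\lambda\in\supp(\rho),\,\lambda<\nicefrac14}\cF_2\bigl(\sqrt{1-4\lambda}\bigr),\qquad \int_{\nicefrac14}^\infty\frac{\sqrt{4\lambda-1}}{\lambda^3}\log\frac{d\rho_{\ac}}{d\lambda}\,d\lambda,
\]
the Lieb--Thirring sum being unchanged by the flow and the first integral varying continuously in $t$ because of the uniform bound on $\cE_1(u_n,\mu_n)$ and the $\CHdom$-convergence of Weyl--Titchmarsh functions.

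The main obstacle is the last term: the Szeg\H{o}-type log integral is only lower semi-continuous under $\CHdom$-convergence, as Theorem~\ref{thm:entropyA} makes explicit. However, the very definition of convergence in $\CHdom^1$ forces $\cE_1(u_n,\mu_n)\to\cE_1(u,\mu)$, and rearranging the trace formula shows that the lower semi-continuity of the Szeg\H{o} integral together with the (also lower semi-continuous, by Fatou applied to the support) Lieb--Thirring sum plus the (continuous) integral of $\lambda^{-2}$ against $\rho$ must sum to the continuous quantity $\cE_1(u_n,\mu_n)$; this forces equality in each of the individually lower semi-continuous pieces, upgrading them to continuous ones. A parallel argument at time $t$, applied to the evolved measures $\rho_{t_n,n}$ whose Lieb--Thirring and Szeg\H{o} functionals differ from those of $\rho_n$ by terms continuous in $t$, then yields the desired convergence $\cE_1(\Phi^{t_n}(u_n,\mu_n))\to\cE_1(\Phi^t(u,\mu))$ and closes the argument.
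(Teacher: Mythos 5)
Your overall architecture matches the paper's up to the last step: invariance via Theorem~\ref{thmCHualpha} (the flow preserves the support and multiplies the a.c.\ density by $\E^{-\nicefrac{t}{2\lambda}}$, whose logarithm is integrable against the Szeg\H{o} weight) and reduction of $\CHdom$-continuity to Proposition~\ref{prop:wellposedD}. Two points there deserve care, though. First, your justification that the sequence sits in some $\CHdom(R)$ is incomplete: \eqref{eq:lam0est} and \eqref{eq:mainConservLaw} concern a single pair, and $\CHdom$-convergence alone does not give a uniform bound on $E(u_n,\mu_n)$. The uniform spectral gap has to come from the boundedness of $\cE_1(u_n,\mu_n)$ via the trace formula~\eqref{eqnTFCH1} (the non-Lieb--Thirring terms sum to something non-negative, so the sums $\sum\cF_2$ are uniformly bounded, and $\cF_2(\sqrt{1-4\lambda})\to\infty$ as $\lambda\to0$ forces a uniform gap); only then does \eqref{eq:lam0est} place everything in a single $\CHdom(R)$.

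The genuine gap is in your ``squeeze'' argument for the convergence of $\cE_1$. For the step ``this forces equality in each of the individually lower semi-continuous pieces'' you need each of the three $\rho$-dependent terms in \eqref{eqnTFCH1} to be lower semi-continuous separately, and in particular the Szeg\H{o} log-integral $D(\rho)=-\frac{1}{2\pi}\int\frac{\sqrt{4\lambda-1}}{\lambda^3}\log\bigl(\frac{2\pi\lambda}{\sqrt{4\lambda-1}}\frac{d\rho_\ac}{d\lambda}\bigr)d\lambda$. Theorem~\ref{thm:entropyA}, which you cite for this, concerns the functional $\cQ_\alpha$ built from the transmission coefficient; translating it into spectral-measure form (via \eqref{eqnTFalphaaux}) mixes in $\dot m(0)=\dip(\{0\})+\int\lambda^{-2}d\rho$, so it yields lower semicontinuity only of the \emph{combination} $\int\lambda^{-2}d\rho+\mathrm{const}+D$, not of $D$ alone. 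A standalone semicontinuity statement for $D$ is a delicate entropy result (edge behaviour at $\lambda=\nicefrac14$ and at $\infty$) that is neither in the paper nor supplied by you; likewise your labelling of $\int\lambda^{-2}d\rho$ as ``continuous'' is exactly the borderline case for weak convergence of Poisson-integrable measures and cannot be taken as an input. The paper sidesteps all of this with the identity $\cE_1(\Phi^t(u,\mu))=\cE_1(u,\mu)+t+\int_\R\frac{\E^{-\nicefrac{t}{2\lambda}}-1}{\lambda^2}d\rho(\lambda)$ (your own observation that the Lieb--Thirring sum is flow-invariant and the Szeg\H{o} integral shifts by exactly $t$ is this identity in disguise), which reduces everything to the convergence of $\int(\E^{-\nicefrac{t_n}{2\lambda}}-1)\lambda^{-2}d\rho_n$; since the integrand is $o(\lambda^{-2})$ at infinity and the measures have a uniform gap, this follows from $\rho_n\to\rho$ and a uniform bound on $\int\lambda^{-2}d\rho_n$, with no need to separate the trace-formula pieces. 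I would recommend replacing the squeeze by this identity.
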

   
 \begin{proof}
 That the set $\CHdom^1$ is invariant under the flow $\Phi$ follows from Theorem~\ref{thmCHualpha} because the condition on the spectral measure there is preserved. 
 
 In order to prove continuity, the crucial observation is that even though our functional $\cE_1$ is not preserved by the flow $\Phi$, we are able to control its behavior:
  For $t\in\R$ and a pair $(u,\mu)$ in $\CHdom^1$ with spectral measure $\rho$ one has 
    \begin{align}\label{eq:E1t}
      \cE_1(\Phi^t(u,\mu)) = \cE_1(u,\mu)+ t + \int_\R \frac{\E^{-\frac{t}{2\lambda}}-1}{\lambda^2} d\rho(\lambda).
    \end{align}    
   Now let $t_n\in\R$ be a sequence that converges to $t$ and suppose that the sequence of pairs $(u_n,\mu_n)$ converges to $(u,\mu)$ in $\CHdom^1$.
  From the trace formula in Corollary~\ref{corTFCH1} (remember that the sum of the first, the second and the last term on the right-hand side of~\eqref{eqnTFCH1} is non-negative) and the fact that $F_2(\sqrt{1-4\lambda})\rightarrow\infty$ as $\lambda\to 0$, we first infer that the supports of the corresponding spectral measures $\rho_n$ have a uniform gap $(-\lambda_0,\lambda_0)$ around zero, so that the pairs $(u_n,\mu_n)$ belong to $\CHdom(R)$ for some $R>0$ in view of~\eqref{eq:lam0est}. 
 Taking into account Proposition~\ref{prop:wellposedD}, we see that the sequence $\Phi^{t_n}(u_n,\mu_n)$ converges to $\Phi^t(u,\mu)$ in $\CHdom$ and hence it only remains to show that 
   \begin{align*}
     \cE_1(\Phi^{t_n}(u_n,\mu_n)) \rightarrow \cE_1(\Phi^t(u,\mu)).
   \end{align*}
 Because of~\eqref{eq:E1t} and $\cE_1(u_n,\mu_n) \rightarrow \cE_1(u,\mu)$, it then suffices to show that 
   \begin{align*}
    \int_\R \frac{d\rho_n(\lambda)}{\lambda^2} & \rightarrow \int_\R \frac{d\rho(\lambda)}{\lambda^2}, & \int_\R \frac{\E^{-\frac{t_n}{2\lambda}}}{\lambda^2} d\rho_n(\lambda) & \rightarrow \int_\R \frac{\E^{-\frac{t}{2\lambda}}}{\lambda^2} d\rho(\lambda).
   \end{align*}
   However, this follows readily from the bound    
   \begin{align*}
    & \biggl|\int_\R \frac{\E^{-\frac{t_n}{2\lambda}}}{\lambda^2} d\rho_n(\lambda) - \int_\R \frac{\E^{-\frac{t}{2\lambda}}}{\lambda^2} d\rho(\lambda)\biggr| \\
    & \qquad \leq \sup_{\lambda\in\R\backslash(-\lambda_0,\lambda_0)} \bigl|\E^{-\frac{t_n}{2\lambda}}-\E^{-\frac{t}{2\lambda}}\bigr| \int_\R \frac{d\rho_n(\lambda)}{\lambda^2} + \biggl| \int_\R \frac{\E^{-\frac{t}{2\lambda}}}{\lambda^2} d\rho_n(\lambda) - \int_\R \frac{\E^{-\frac{t}{2\lambda}}}{\lambda^2} d\rho(\lambda) \biggr|
   \end{align*}  
   and the convergence $\rho_n\rightarrow\rho$ that we get from~\cite[Proposition~4.5]{Eplusminus}.
 \end{proof}
  
  \begin{remark}\label{remWPD1}
A few remarks are in order:
\begin{enumerate}[label=(\alph*), ref=(\alph*), leftmargin=*, widest=e]
           \item\label{rem:D1horagru} 
  Existence of global conservative solutions to the two-component Camassa--Holm system~\eqref{eqnOurCH} with initial data $(u,\mu)$ such that 
  \begin{align}
     u(x)  = \bar{u}(x) + c_-\chi(-x) + c_+\chi(x)
 \end{align}
  for some constants $c_\pm\in\R$, where $\bar{u}\in H^1(\R)$ and $\chi$ is a smooth non-decreasing function on $\R$ with support in $[0,\infty)$ such that $\chi(x)=1$ for $x\geq 1$, has been established in~\cite{grhora12b} by means of a transformation from Eulerian to Lagrangian variables. 
  These solutions are continuous with respect to a metric that is defined in terms of Lagrangian variables, which makes it somewhat involved in Eulerian variables. 
  However, its relation to more standard topologies given in~\cite[Proposition~5.2]{hora07} shows that it implies uniform convergence of the function $u$ as well as vague convergence of the measure $\mu$, which indicates that it may be close to our topology (compare with Lemma~\ref{lem:convergenceD}).   
\item\label{rem:D1integrable}
 Our results in the present section show that under a stronger decay assumption at $-\infty$ (compare the set $\CHdom^1$ with the phase space in \cite{grhora12b} when $c_-=0$; this case can be viewed as a regime with dispersion at one end), conservative solutions can be integrated by means of the inverse spectral transform. 
 We expect that this will allow to deduce qualitative properties of such solutions. 
\item\label{rem:LaguerreMP}
   The results in Section~\ref{secKreinLanger} characterize certain classes of pairs in $\CHdom^1$ that are made up of infinitely many peakons, where the positions of the peakons may only accumulate at $+\infty$. 
   One example of such a pair $(u,\mu)$ is given by 
   \begin{align}
   u(x) & = \frac{1}{2}\sum_{n\in\N} p_n \E^{-|x-x_n|}, & \dip & = 0, 
   \end{align}
   with the peakons' positions $x_n$ and their weights $p_n$ explicitly expressed via the Laguerre polynomials by
   \begin{align} 
   x_n & = \log\Biggl(\sum_{k=0}^{n-1} L_{k}(-1/4)^2\Biggr), & p_n & = \frac{\sum_{k=0}^{n-1} L_{k}(-1/4)^2}{nL_{n-1}(-1/4)L_{n}(-1/4)} .
   \end{align}
   In fact, this pair $(u,\mu)$ belongs to $\CHdom^1$ and corresponds (up to a negative point mass at zero) to the Krein--Stieltjes string in Example~\ref{eq:LaguerreCoeff} with $\alpha=\nicefrac{1}{4}$.   
   Using this connection and the asymptotics in~\eqref{eqnLagCoeff01}, one sees that 
     \begin{align} 
   x_n & = 2\sqrt{n} + \OO(1), & p_n & = \frac{1}{\sqrt{n}} + \OO(n^{-1}), 
   \end{align} 
   as $n\rightarrow\infty$. 
   We will not pursue an exploration of corresponding solutions to the two-component Camassa--Holm system here. 
   However, let us mention that even though $\CHdom^1$ does not contain any multi-peakon profiles (despite the fact that multi-peakon profiles are dense in $\CHdom$), one can show that pairs in $\CHdom^1$ that are made up of infinitely many peakons (with positions only accumulating at $+\infty$) are dense in $\CHdom^1$ (with respect to the $\CHdom^1$ topology).  
   \item  When restricted to the invariant subset $\CHdom^1\cap\CHdom^+$, the additional measure $\mu$ becomes superfluous and our global conservative solutions can be understood as weak solutions to the Camassa--Holm equation~\eqref{eqnCH}.
   \end{enumerate}
  \end{remark}
 
We conclude this section with a brief outline of further results that can be obtained in a similar way from condition~\eqref{eqnCHubeta} in Theorem~\ref{thm:SzegoStepLikeII} with $\beta=\nicefrac{1}{2}$.  
The corresponding phase space consists of all pairs $(u,\mu)$ in $\CHdom$ such that 
   \begin{align}
     \int_{0}^{\infty} u(x)^2 + u'(x)^2\, dx +  \int_{0}^{\infty} (\varrho(x)-1)^2dx + \int_{[0,\infty)} d\dip_{\sing} & < \infty, 
   \end{align}
  where $\varrho$ is the (positive) square root of the Radon--Nikod\'ym derivative of $\dip$ with respect to the Lebesgue measure and $\dip_\sing$ is the singular part of $\dip$.
A complete characterization of the spectral measures corresponding to this phase space is given by Theorem~\ref{thm:SzegoStepLikeII} with $\beta=\nicefrac{1}{2}$.
Furthermore, the trace formula in Corollary~\ref{coreqnTFbetaWT} readily translates to such pairs $(u,\mu)$ with spectral measure $\rho$ and gives 
\begin{align}\begin{split}\label{eqnTFchII}
  & \int_\R (1+\E^{-x})(u(x)+u'(x))^2 dx \\
  & \qquad + \int_\R (1+\E^{-x})\biggl(\varrho(x)-\frac{1}{1+\E^{-x}}\biggr)^2 dx + \int_{\R} (1+\E^{-x})d\dip_{\sing}(x)  \\
&  \qquad = \int_\R \frac{d\rho(\lambda)}{\lambda^2} - \frac{1}{2\pi} \int_{\R\backslash(-\frac{1}{2},\frac{1}{2})} \frac{\sqrt{4\lambda^2-1}}{|\lambda|^3} d\lambda +  2 \sum_{\lambda\in\supp(\rho)\atop|\lambda|<\nicefrac{1}{2}} \cF_2\Biggl(\sqrt{\frac{1-2\lambda}{1+2\lambda}}\Biggr) \\
&  \qquad\qquad - \frac{1}{2\pi} \int_{\R\backslash(-\frac{1}{2},\frac{1}{2})}  \frac{\sqrt{4\lambda^2-1}}{|\lambda|^3} \log\Biggl(\frac{2\pi|\lambda|}{\sqrt{4\lambda^2-1}} \frac{d\rho_\ac(\lambda)}{d\lambda}\Biggr) d\lambda.
\end{split}\end{align}
With the help of these two main ingredients, one can proceed along the same lines as in the discussion of the phase space $\CHdom^1$ above.  


\section{Applications to other operators}\label{secCO}

The aim of this final section is to point out possible applications of our results to other classical one-dimensional models including Dirac operators.

\subsection{Canonical systems}\label{ss:KSforCS}
 
 Let $h$ be a Hamiltonian according to Definition~\ref{def:Hamilt} (for a brief account on associated canonical systems and Weyl--Titchmarsh functions we refer to Appendix~\ref{app:D}) and define the function $H_{22}$ on $[0,\infty)$ by 
 \begin{align}\label{eq:defH22}
H_{22}(x) = \int_0^x h_{22}(s)ds.
 \end{align}
If the Hamiltonian $h$ is trace normalized, then $H_{22}$ is nothing but the function $\xi$ defined by~\eqref{eqnIPSurDefxi}.
Furthermore, we introduce the subsets 
 \begin{align}\label{eq:setOmega}
 \Omega &  = \{x\in [0,\infty)\,|\, h_{22}(x) = 0\}, & \Omega^c &  = [x_0+1,\infty)\backslash\Omega,
 \end{align}
  where the constant $x_0$ is defined by~\eqref{eqnC1fromH}, so that $H_{22}(x)>0$ when $x>x_0$. 
 We shall also use the logarithmic derivative of $H_{22}$ denoted by $\gh$, that is, we set 
 \begin{align}\label{eq:frakH}
\gh(x)  = \frac{d}{dx} \log H_{22}(x) = \frac{h_{22}(x)}{H_{22}(x)},
 \end{align}
 which is well-defined for almost all $x>x_0$. 
 Our first result will follow by applying Theorem~\ref{thm:KSforExB1} to canonical systems, where $\alpha$ is once more a positive constant.

\begin{theorem}\label{thm:KSforCS-I}
A Herglotz--Nevanlinna function $m$ is the Weyl--Titchmarsh function of a Hamiltonian $h$ with $h_{22}\notin L^1[0,\infty)$ and
\begin{align}\label{eqnKSforCS-I}
\int_\Omega H_{22}(x)h_{11}(x)dx + \int_{\Omega^c}  \frac{\det h(x)}{\gh(x)} dx + \int_{\Omega^c} \biggl(\frac{h_{12}(x)}{\gh(x)} + \frac{1}{4\alpha}\biggr)^2 \gh(x) dx<\infty, 
 \end{align}
 if and only if all the following conditions hold:
    \begin{enumerate}[label=(\roman*), ref=(\roman*), leftmargin=*, widest=iii]
    \item\label{itmKSc1CS} The function $m$ has a meromorphic extension to $\C\backslash[\alpha,\infty)$ that is analytic at zero with $m(0) = 0$.
 \item The negative poles $\sigma_-$ and the positive poles $\sigma_+$ of $m$ in $(-\infty,\alpha)$ satisfy~\eqref{eqnLT-I}.
\item The boundary values of the function $m$ satisfy~\eqref{eqnSzego-I}.
    \end{enumerate}
\end{theorem}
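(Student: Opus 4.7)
The plan is to derive Theorem~\ref{thm:KSforCS-I} from Theorem~\ref{thm:KSforExB1} via the correspondence between Hamiltonians and generalized indefinite strings summarized in Appendix~\ref{app:D}, in the same spirit as the translations to Krein strings in Section~\ref{sec:KreinString}. To begin, I would use that correspondence to associate with $h$ a generalized indefinite string $(L,\omega,\dip)$ sharing the same Weyl--Titchmarsh function $m$. The hypothesis $h_{22}\notin L^1[0,\infty)$ and the definition of $H_{22}$ ensure $L=\infty$, and the change of variables $y=H_{22}(x)$ identifies the string's coefficients on $\Omega^c$ (up to sign conventions) as $\Wr(y)=h_{12}(x(y))/h_{22}(x(y))$ and $d\dip_\ac(y)=(\det h(x(y))/h_{22}(x(y))^2)\,dy$, while each connected component $I$ of $\Omega$ collapses to a point mass of $\dip_\sing$ of weight $\int_I h_{11}(x)dx$.

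The heart of the argument is to check that condition~\eqref{eqnKSforCS-I} is equivalent to condition~\eqref{eqnCondS1} on $(L,\omega,\dip)$ with the specific constant $c=-1/(2\sqrt{\alpha})$. Performing the substitution $y=H_{22}(x)$ in the three summands of~\eqref{eqnKSforCS-I}, and using the identities $\gh(x)\,dx=dy/y$ and $(\det h(x)/\gh(x))\,dx=y\,d\dip_\ac(y)$, transforms them respectively into $\int y\,d\dip_\sing(y)$, $\int y\,d\dip_\ac(y)$, and an integral of the form $\int_{y_0}^\infty(\Wr(y)+1/(4\alpha y))^2\,y\,dy$ over $y_0=H_{22}(x_0+1)>0$. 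Taking $c=-1/(2\sqrt{\alpha})$ rewrites the reference profile in~\eqref{eqnCondS1} as $-1/(2\sqrt{\alpha}(1+2\sqrt{\alpha}y))$, which matches the Hamiltonian-side reference $-1/(4\alpha y)$ up to a discrepancy of order $O(y^{-2})$ at infinity; the resulting difference is square-integrable against $y\,dy$ on $[y_0,\infty)$, and the residual contribution from the compact initial segment $y\in[0,y_0]$ is controlled by local regularity of $h$, so the two integral conditions are equivalent.

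Once this equivalence is in place, the identity $m(0)=c+1/(2\sqrt{\alpha})$ from Corollary~\ref{cor:m-at0}~\ref{iCor:m-at0forA} automatically yields $m(0)=0$ with our choice of $c$, accounting for the additional requirement in~\ref{itmKSc1CS}; the remaining spectral conditions match those of Theorem~\ref{thm:KSforExB1} verbatim, so an application of that theorem to the associated generalized indefinite string concludes the proof. The main technical obstacle I foresee is the careful bookkeeping in the change of variables across the set $\Omega$, together with the reconciliation between the natural Hamiltonian-side reference $-1/(4\alpha y)$ and the string-side reference $-1/(2\sqrt{\alpha}(1+2\sqrt{\alpha}y))$; demonstrating that these encode equivalent integral conditions is what enables the application of the main theorem.
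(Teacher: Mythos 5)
Your proposal is correct and follows essentially the same route as the paper: reduce to Theorem~\ref{thm:KSforExB1} via the correspondence of Appendix~\ref{app:D}, transform the three summands of~\eqref{eqnKSforCS-I} by the substitution $y=H_{22}(x)$ into $\int y\,d\dip$ and $\int_{y_0}^\infty(\Wr(y)+\tfrac{1}{4\alpha y})^2y\,dy$, and reconcile the latter with~\eqref{eqnCondS1} for $c=-(2\sqrt{\alpha})^{-1}$, which also pins down $m(0)=0$ via Corollary~\ref{cor:m-at0}. The only cosmetic differences are that the paper first reduces to trace-normalized Hamiltonians before changing variables (your direct substitution composes the two steps, and is legitimate since every quantity in~\eqref{eqnKSforCS-I} is reparametrization invariant), and that your description of $\dip_\sing$ as point masses on connected components of $\Omega$ is slightly imprecise when $\Omega$ is not a union of intervals, though this does not affect the identity $\int_\Omega H_{22}h_{11}\,dx=\int y\,d\dip_\sing(y)$.
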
  
  
 \begin{proof}
 We just need to use Theorem~\ref{thm:KSforExB1} together with the transformation between canonical systems and generalized indefinite strings as described in Appendix~\ref{app:D}. 
To this end, suppose first that $h$ is a trace normalized Hamiltonian and let $(L,\omega,\dip)$ be the corresponding generalized indefinite string with the same Weyl--Titchmarsh function $m$. 
 Since we then have the relation 
 \begin{align*}
    L = \int_0^\infty h_{22}(x)dx, 
 \end{align*}
 one sees that the condition $h_{22}\notin L^1[0,\infty)$ is equivalent to $L=\infty$.
  In this case, we next notice that the first integral in~\eqref{eqnCondS1} with $c= -(2\sqrt{\alpha})^{-1}$ is finite if and only if 
 \begin{align*}
    \int_\eps^\infty \biggl(\Wr(x) + \frac{1}{4\alpha x}\biggr)^2 x\, dx < \infty
 \end{align*} 
 for some $\eps>0$. 
 After a substitution (using~\cite[Corollary~5.4.4]{bo07}), we get
  \begin{align*}
\int_\eps^\infty \biggl(\Wr(x) + \frac{1}{4\alpha x}\biggr)^2 x\, dx  & =  \int_{x_0+1}^\infty \biggl(\Wr(\xi(x)) + \frac{1}{4\alpha \xi(x)}\biggr)^2 \xi(x)\xi'(x) dx \\
& = \int_{\Omega^c} \biggl(\Wr(\xi(x))\xi(x) + \frac{1}{4\alpha}\biggr)^2 \frac{\xi'(x)}{\xi(x)} dx \\
& = \int_{\Omega^c} \biggl(\xi'(x)\Wr(\xi(x))\frac{\xi(x)}{\xi'(x)} + \frac{1}{4\alpha}\biggr)^2 \frac{\xi'(x)}{\xi(x)} dx \\
& = \int_{\Omega^c} \biggl(\frac{h_{12}(x)}{\gh(x)} + \frac{1}{4\alpha}\biggr)^2 \gh(x) dx 
 \end{align*} 
 upon setting $\eps = \xi(x_0+1)>0$. 
 In order to treat the second integral in~\eqref{eqnCondS1}, we employ~\eqref{appD:dipviaCS} to obtain that  
 \begin{align}\begin{split}\label{eqnintxdip}
 \int_{[0,\infty)}x\, d\dip(x) &= \int_0^\infty \xi(x)(1-\xi'(x) - \xi'(x)\Wr(\xi(x))^2)dx \\
 &= \int_{\Omega} \xi(x) dx + \int_{[0,\infty)\backslash\Omega}\xi(x)(1-\xi'(x) - \xi'(x)\Wr(\xi(x))^2)dx \\
 &=   \int_\Omega H_{22}(x)dx + \int_{[0,\infty)\backslash\Omega} \frac{H_{22}(x)}{h_{22}(x)} \det h(x) dx.
 \end{split}\end{align}
From these equalities, we infer that condition~\eqref{eqnCondS1} with $c= -(2\sqrt{\alpha})^{-1}$ is equivalent to condition~\eqref{eqnKSforCS-I}.
Since the correspondence between trace normalized Hamiltonians and generalized indefinite strings is bijective, we may conclude from Theorem~\ref{thm:KSforExB1} that the conditions on the function $m$ in the claim are sufficient.
 
 Let us now consider an arbitrary Hamiltonian $h$ and define the trace normalized Hamiltonian $\tilde{h}$ as in Remark~\ref{remTraceNormal} (associated quantities will be denoted with a tilde sign), so that the corresponding Weyl--Titchmarsh functions coincide. 
 We then observe that according to~\eqref{eqnTracedH} one has 
 \begin{align*}
 \tilde{H}_{22}(x) = \int_0^x \frac{h_{22}(\gx^{-1}(s))}{\gx'(\gx^{-1}(s))} ds = H_{22}(\gx^{-1}(x)), 
 \end{align*}
 which shows that $\tilde{h}_{22}\notin L^1[0,\infty)$ if and only if $h_{22}\notin L^1[0,\infty)$. 
 Moreover, from the relations  
 \begin{align*}
 \tilde{\gh}(x) & = \frac{\gh(\gx^{-1}(x))}{\gx'(\gx^{-1}(x))}, & \det\tilde{h}(x) & = \frac{\det h(\gx^{-1}(x))}{\gx'(\gx^{-1}(x))^2}, 
 \end{align*}
  it follows that condition~\eqref{eqnKSforCS-I} is equivalent to the same condition for $\tilde{h}$. 
  Together with our considerations above and Theorem~\ref{thm:KSforExB1}, this proves that the conditions on the function $m$ are also necessary. 
 \end{proof}
 
 \begin{remark}
  We could have used any other real normalization of the function $m$ at zero in condition~\ref{itmKSc1CS} of Theorem~\ref{thm:KSforCS-I} instead, which then slightly changes the conditions on the Hamiltonian accordingly.
   Indeed, this more general claim can be deduced from Theorem~\ref{thm:KSforCS-I} by using the following simple fact: 
   If $h$ is a Hamiltonian with Weyl--Titchmarsh function $m$ and one defines another Hamiltonian by 
   \begin{align}\label{eqnRotH}
     \tilde{h} = \begin{pmatrix} 1 & -c \\ 0 & 1 \end{pmatrix} h \begin{pmatrix} 1 & 0 \\ -c & 1 \end{pmatrix} = \begin{pmatrix} h_{11} - 2ch_{12} + c^2 h_{22} & h_{12} - ch_{22} \\ h_{12} - ch_{22} & h_{22} \end{pmatrix}
   \end{align}
    for a given constant $c\in\R$, then the corresponding Weyl--Titchmarsh function $\tilde{m}$ is given by 
   \begin{align}
     \tilde{m}(z) = m(z) - c.
   \end{align}
   Since the bottom-right entries of $h$ and $\tilde{h}$ coincide, only the last integral in condition~\eqref{eqnKSforCS-I} will change under this transformation.
   Moreover, it is also possible to handle the case when the function $m$ is required to have a pole at zero instead of a finite value. 
   This can be deduced from Theorem~\ref{thm:KSforCS-I} again by using that the Weyl--Titchmarsh function of the Hamiltonian 
   \begin{align}\label{eqnHinv}
     \tilde{h} = \begin{pmatrix} 0 & -1\\ 1 & 0 \end{pmatrix}h\begin{pmatrix} 0 & 1\\ -1 & 0 \end{pmatrix} = \begin{pmatrix} h_{22} & -h_{12} \\ -h_{12} & h_{11} \end{pmatrix}
   \end{align}
   can be expressed in terms of the Weyl--Titchmarsh function $m$ via 
   \begin{align}
     \tilde{m}(z) = - \frac{1}{m(z)},
   \end{align}
   where we have to suppose that the function $h_{11}$ is not identically zero almost everywhere on $[0,\infty)$, so that $m$ is not identically zero. 
 \end{remark}
 
 \begin{remark}
  The condition that $h_{22}\notin L^1[0,\infty)$ in Theorem~\ref{thm:KSforCS-I} can be replaced with the stronger condition that $h_{11}\in L^1[0,\infty)$. 
  In fact, the function $h_{11}$ being integrable always implies that $h_{22}$ is not because the Hamiltonian is assumed to be in the limit-point case; see~\eqref{appD:lpcase}. 
  On the other hand, the condition that the Weyl--Titchmarsh function $m$ has an analytic extension to zero with $m(0)=0$ implies that zero is an eigenvalue for the Hamiltonian in~\eqref{eqnHinv}.
  Since the corresponding eigenfunctions are constant and identically zero in the first component, this guarantees that 
  \begin{align}
    \int_0^\infty h_{11}(x)dx = \int_0^x \begin{pmatrix} 0 & 1 \end{pmatrix} \begin{pmatrix} h_{22}(x) & -h_{12}(x) \\ -h_{12}(x) & h_{11}(x) \end{pmatrix} \begin{pmatrix} 0 \\ 1 \end{pmatrix} dx < \infty. 
  \end{align}
  Under this assumption, one also obtains the limit 
   \begin{align}\label{eqn?}
 \lim_{x\rightarrow\infty}\frac{\int_0^{x} |h_{12}(s)|ds}{\int_0^{x} h_{22}(s)ds} =  0
 \end{align}
  by using that $h_{12}^2\leq h_{11}h_{22}$ and applying the Cauchy--Schwarz inequality.
 \end{remark}
 
 In a similar way, we are able to derive another result from Theorem~\ref{thm:KSforExB2} for a positive constant $\beta$. 
 As the proof is not too different from the one for Theorem~\ref{thm:KSforCS-I}, we are going to omit most details and calculations.
 
 \begin{theorem}\label{thm:KSforCS-II}
A Herglotz--Nevanlinna function $m$ is the Weyl--Titchmarsh function of a Hamiltonian $h$ with $h_{22}\notin L^1[0,\infty)$ and
 \begin{align}\label{eqnKSforCS-II}
\int_\Omega H_{22}(x) h_{11}(x)dx + \int_{\Omega^c} \frac{h_{12}(x)^2}{\gh(x)}dx + \int_{\Omega^c} \biggl( \frac{\sqrt{\det h(x)}}{\gh(x)} - \frac{1}{2\beta} \biggr)^2 \gh(x) dx<\infty 
\end{align}
 if and only if all the following conditions hold:
    \begin{enumerate}[label=(\roman*), ref=(\roman*), leftmargin=*, widest=iii]
\item \label{itmKSc2CS} The function $m$ has a meromorphic extension to $\C_+\cup(-\beta,\beta)\cup\C_-$ that is analytic at zero with $m(0)=0$. 
\item The poles $\sigma_\dis$ of $m$ in $(-\beta,\beta)$ satisfy~\eqref{eqnLT-II}.
\item The boundary values of the function $m$ satisfy~\eqref{eqnSzego-II}.
    \end{enumerate}
\end{theorem}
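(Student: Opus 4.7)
The proof of Theorem~\ref{thm:KSforCS-II} will follow exactly the pattern of the proof of Theorem~\ref{thm:KSforCS-I}, this time applying Theorem~\ref{thm:KSforExB2} instead of Theorem~\ref{thm:KSforExB1}. The plan is to use the bijective correspondence between trace-normalized Hamiltonians and generalized indefinite strings summarized in Appendix~\ref{app:D} in order to translate condition~\eqref{eqnKSforCS-II} into condition~\eqref{eqnCondS2} on the associated generalized indefinite string $(L,\omega,\dip)$ with $c=0$; note that the value $c=0$ is dictated by the requirement $m(0)=0$ via Corollary~\ref{cor:m-at0}~\ref{iCor:m-at0forB}. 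Exactly as in the proof of Theorem~\ref{thm:KSforCS-I}, the condition $h_{22}\notin L^1[0,\infty)$ is equivalent to $L=\infty$, and by Remark~\ref{remTraceNormal} it is enough to treat the trace-normalized case, since the identities $\tilde{H}_{22}(x) = H_{22}(\gx^{-1}(x))$, $\tilde{\gh}(x) = \gh(\gx^{-1}(x))/\gx'(\gx^{-1}(x))$ and $\det\tilde{h}(x) = \det h(\gx^{-1}(x))/\gx'(\gx^{-1}(x))^2$ leave both~\eqref{eqnKSforCS-II} and the condition $h_{22}\notin L^1[0,\infty)$ invariant.

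For a trace-normalized Hamiltonian one has $\xi(y) = H_{22}(y)$, $\xi'(y) = h_{22}(y)$ and $\Wr(\xi(y))\xi'(y) = h_{12}(y)$, so $\Wr(\xi(y)) = h_{12}(y)/h_{22}(y)$ on $\Omega^c$. The substitution $x=\xi(y)$ then transforms the first summand of~\eqref{eqnCondS2} into $\int_{\Omega^c} h_{12}(y)^2/\gh(y)\,dy$, which is the second summand of~\eqref{eqnKSforCS-II}. To handle the remaining two terms we first decompose $\dip$ according to formula~\eqref{appD:dipviaCS}: on $\xi(\Omega^c)$ the measure $\dip$ is absolutely continuous with density $\varrho(\xi(y))^2 = \det h(y)/h_{22}(y)^2$, while the singular part is supported on $\xi(\Omega)$ and its distribution function is governed by $h_{11}$ on $\Omega$ — indeed, positivity of $h$ forces $h_{12}^2 \leq h_{11}h_{22} = 0$ on $\Omega$, so the integrand $1 - \xi'(y) - \xi'(y)\Wr(\xi(y))^2$ from~\eqref{eqnintxdip} collapses to $h_{11}(y)$ there. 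This yields
\begin{align*}
 \int_{[0,\infty)} x\,d\dip_\sing(x) = \int_\Omega H_{22}(y) h_{11}(y)\,dy,
\end{align*}
which is the first summand of~\eqref{eqnKSforCS-II}. For the middle term, after replacing $1/(1+2\beta x)$ by $1/(2\beta x)$ (the $O(x^{-2})$ discrepancy and all bounded contributions near the lower endpoint are absorbed exactly as in the proof of Theorem~\ref{thm:KSforCS-I}), the same change of variables and a short expansion of the square give
\begin{align*}
 \int_\eps^\infty \biggl(\varrho(x) - \frac{1}{2\beta x}\biggr)^2 x\,dx = \int_{\Omega^c\cap\{\xi(y)\geq\eps\}} \biggl(\frac{\sqrt{\det h(y)}}{\gh(y)} - \frac{1}{2\beta}\biggr)^2 \gh(y)\,dy,
\end{align*}
matching the third summand of~\eqref{eqnKSforCS-II}.

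The main technical obstacle, more subtle than in Theorem~\ref{thm:KSforCS-I}, will be the clean separation of the singular and absolutely continuous parts of $\dip$ under the transformation of Appendix~\ref{app:D}; the term $\int x\,d\dip_\sing$ in condition~\eqref{eqnCondS2} has no counterpart in~\eqref{eqnCondS1}, and its treatment relies crucially on the identification of the restriction of $\dip$ to $\xi(\Omega)$ with the pushforward of $h_{11}\,dx$ on $\Omega$ by $\xi$, together with the fact that $\xi$ collapses each connected component of $\Omega$ to a single point. Once this identification is in place, the remaining computations amount to bookkeeping and reduce to exactly the type of change-of-variables arguments carried out in the proof of Theorem~\ref{thm:KSforCS-I}.
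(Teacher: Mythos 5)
Your proposal is correct and follows essentially the same route as the paper: reduce to the trace-normalized case via the reparametrization invariance of~\eqref{eqnKSforCS-II}, then match its three summands with those of~\eqref{eqnCondS2} (with $c=0$, forced by $m(0)=0$) through the change of variables $x=\xi(y)$, using the identity~\eqref{eqnxixi} and the decomposition~\eqref{eqnintxdip} to separate the absolutely continuous and singular parts of $\dip$. One minor imprecision: $\Omega$ need not be a union of intervals, so rather than ``$\xi$ collapses each connected component of $\Omega$ to a single point'', the relevant fact is that $\xi(\Omega)$ is Lebesgue-null because $\xi'=h_{22}=0$ almost everywhere on $\Omega$, which is exactly what makes the pushforward of $h_{11}\,dx$ under $\xi$ singular and identifies it with $\dip_\sing$.
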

    
  \begin{proof}   
  We are only going to explain briefly how condition~\eqref{eqnKSforCS-II} for a trace normalized Hamiltonian $h$ is related to condition~\eqref{eqnCondS2} with $c=0$ for the corresponding generalized indefinite string $(L,\omega,\dip)$. 
  Firstly, performing a substitution gives     
    \begin{align*}
\int_{\xi(x_0+1)}^\infty \Wr(x)^2 x\, dx  = \int_{x_0+1}^\infty \Wr(\xi(x))^2 \xi(x)\xi'(x) dx = \int_{\Omega^c} \frac{h_{12}(x)^2}{\gh(x)} dx. 
        \end{align*}
Through another substitution and the identity~\eqref{eqnxixi}, one also obtains
 \begin{align*}
   \int_{\xi(x_0+1)}^\infty   \biggl( \varrho(x) - \frac{1}{2\beta x} \biggr)^2 x\, dx & 
   = \int_{x_0+1}^\infty   \biggl( \varrho(\xi(x)) - \frac{1}{2\beta \xi(x)} \biggr)^2 \xi(x)\xi'(x) dx \\ 
   & = \int_{\Omega^c}   \biggl( \xi'(x)\varrho(\xi(x)) \frac{\xi(x)}{\xi'(x)} - \frac{1}{2\beta} \biggr)^2 \frac{\xi'(x)}{\xi(x)} dx \\
   & = \int_{\Omega^c}   \biggl( \frac{\sqrt{\det h(x)}}{\gh(x)} - \frac{1}{2\beta} \biggr)^2 \gh(x) dx.
    \end{align*}
  Finally, it remains to mention that the equality
    \begin{align*}
    \int_{[0,\infty)} x\, d\dip_\sing(x) = \int_\Omega H_{22}(x)dx
    \end{align*}
    follows from~\eqref{eqnintxdip} in the proof of Theorem~\ref{thm:KSforCS-I} because 
    \begin{align*}
        \int_{[0,\infty)\backslash\Omega} \frac{H_{22}(x)}{h_{22}(x)} \det h(x) dx & =  \int_{0}^{\infty} \varrho(\xi(x))^2 \xi(x) \xi'(x) dx = \int_0^\infty \varrho(x)^2 x\, dx,
    \end{align*}
    where we used~\eqref{eqnxixi} once more in the first step. 
    \end{proof}

\subsection{Dirac operators}\label{ss:Dirac}

Let us consider the one-dimensional Dirac system of the form (called {\em canonical} in~\cite[Section~7.1]{lesa91})
\begin{align}\label{eq:1dDirac}
\begin{pmatrix} 0 & 1\\ -1 & 0\end{pmatrix}f' + \begin{pmatrix} p & q\\ q & - p\end{pmatrix}f = zf.
\end{align}
We will assume that $p$ and $q$ are real-valued and locally square integrable functions on $[0,\infty)$. 
Together with the boundary condition $f_1(0)=0$ at zero, it is well known that the corresponding maximally defined operator in $L^2([0,\infty);\C^2)$ is self-adjoint; see~\cite[Section~8.6]{lesa91} for example. 
 Consequently, for all $z\in\C\backslash\R$ there is a unique (up to constant multiples) {\em Weyl solution} $\psi(z,\redot)$ of the system~\eqref{eq:1dDirac} that is square integrable.  
This allows one to define the Weyl--Titchmarsh function $m$ on $\C\backslash\R$ by 
\begin{align}\label{eq:m-Dirac}
m(z) = - \frac{\psi_2(z,0)}{\psi_1(z,0)}, 
\end{align}
which is a Herglotz–Nevanlinna function. 
Under the current assumptions on the potential, the function $m$ obeys the high energy asymptotic behavior
\begin{align}\label{eq:m-Dirac-asymp}
m(z) = \I+\oo(1)
\end{align}
 as $z\rightarrow\infty$ in any non-real sector in the upper complex half-plane $\C_+$. 
 In particular, these asymptotics imply that the integral representation of $m$ takes the form 
 \begin{align}\label{eqnWTRepDirac}
 m(z) =  c +  \int_\R \frac{1}{\lambda-z} - \frac{\lambda}{1+\lambda^2}\, d\rho(\lambda) 
\end{align}
for some real constant $c$ and a positive Borel measure $\rho$ on $\R$ with~\eqref{eqnWTrhoPoisson}.
Let us point out that unlike in our definition of the measure $\rho$ for generalized indefinite strings, we do not exclude possible point masses at zero here.
However, as for generalized indefinite strings, the measure $\rho$ defined in this way is a spectral measure for the previously mentioned self-adjoint operator associated with~\eqref{eq:1dDirac}.
 
\begin{remark} 
 In contrast to generalized indefinite strings and canonical systems, not every positive Borel measure $\rho$ on $\R$ with~\eqref{eqnWTrhoPoisson} is the spectral measure of a Dirac system (see~\cite{kre55} and also~\cite{den} for example). 
\end{remark}

\begin{example}\label{xmpl:Diracbeta}
Let $p= 0$ and $q= -\beta$ for some real constant $\beta\neq 0$. 
Under these assumptions, the system~\eqref{eq:1dDirac} simplifies to 
\begin{align}\begin{split}
 f_2' - \beta f_2 & = z f_1, \\ -f_1' - \beta f_1 & = zf_2.
\end{split}\end{align}
 For $z\in\C_+$ and $k\in\C_+$ with $k^2 = z^2 - \beta^2$, we find a Weyl solution $\psi(z,\redot)$ explicitly given by 
\begin{align}
\psi(z,x) = \begin{pmatrix} -z\\  \beta + \I k \end{pmatrix}\E^{\I k x},
\end{align}
 so that the corresponding Weyl--Titchmarsh function $m$ can be written as 
\begin{align}
m(z) = \frac{\beta + \I k}{z} = \frac{z}{\beta - \I k}. 
\end{align}
It is not at all surprising that this function is closely related to the Weyl--Titchmarsh function of the generalized indefinite string in Example~\ref{exa2alpha}. 
Namely, one has 
\begin{align}
m(z) = \begin{cases} m_\beta(z), & \beta>0,\\ \frac{2\beta}{z} + m_{-\beta}(z), & \beta<0, \end{cases}
\end{align}
where $m_\beta$ is the Weyl--Titchmarsh function in Example~\ref{exa2alpha}. 
Of course, this also gives a simple connection between the corresponding spectral measures.
\end{example}

\begin{example}[Dirac system with positive mass]
Let $p= \beta$ and $q= 0$ for some positive constant $\beta>0$.
Under these assumptions, the system~\eqref{eq:1dDirac} simplifies to 
\begin{align}\begin{split}
   f_2' + \beta f_1 & = z f_1, \\ -f_1' - \beta f_2 & = zf_2. 
\end{split}\end{align}
For $z\in\C_+$ and $k\in\C_+$ with $k^2 = z^2 - \beta^2$, we find a Weyl solution $\psi(z,\redot)$ explicitly given by 
\begin{align}
\psi(z,x) = \begin{pmatrix} \I k\\ z - \beta \end{pmatrix}\E^{\I kx},
\end{align}
so that the corresponding Weyl--Titchmarsh function $m$ can be written as 
\begin{align}
m(z) = \frac{\I k}{\beta + z} = \frac{\beta-z}{\I k}.
\end{align}
We note that $m$ has an analytic extension to zero with $m(0) = -1$ and that the corresponding spectral measure $\rho$ is given by
\begin{align}
\rho(B) = \frac{1}{\pi}\int_{B\backslash (-\beta,\beta)}\sqrt{\frac{\lambda - \beta}{\lambda+\beta}} d\lambda.
\end{align}
\end{example}

By transforming the Dirac system~\eqref{eq:1dDirac} to a canonical system~\eqref{appD:CS}, we are able to apply Theorem~\ref{thm:KSforCS-II} to Dirac operators. 
For the sake of keeping calculations simple, we will restrict our considerations to the cases of diagonal and off-diagonal potential matrices, that is, when either $q= 0$ or $p= 0$. 
We begin with the latter case, so that the potential matrix has the form
\begin{align}\label{eq:DirQss}
  \begin{pmatrix} 0 & q \\ q & 0 \end{pmatrix}.
\end{align}
It is not difficult to see that the potential matrix of a Dirac system is of this form if and only if the Weyl--Titchmarsh function $m$ is odd, which in turn is equivalent to the corresponding spectral measure $\rho$ being even. 
In order to state the main result of this section, we also introduce the functions
\begin{align}
\Qr(x) & = \exp\biggl(-\int_0^x q(s)ds\biggr), & \mathcal{Q}(x) & = \frac{\Qr(x)}{(\int_0^x \Qr(s)^2ds)^{\nicefrac{1}{2}}},
\end{align}
 and note that we continue to use $\beta$ for an arbitrary positive constant. 

\begin{theorem}\label{thm:KSforDir01}
An even positive Borel measure $\rho$ on $\R$ with~\eqref{eqnrhoPoisson} is the spectral measure of a Dirac system with potential matrix of the form~\eqref{eq:DirQss} satisfying $1/\Qr\in L^2[0,\infty)$ and 
\begin{align}\label{eq:DiracTrace01}
\int_1^\infty \biggl(\mathcal{Q}(x) - \frac{2\beta}{\mathcal{Q}(x)}\biggr)^2 dx < \infty
\end{align}
 if and only if all the following conditions hold:
    \begin{enumerate}[label=(\roman*), ref=(\roman*), leftmargin=*, widest=iii]
\item\label{itmKSDirac1} The transfer function $\Phi_\rho$ is well-defined on $[0,\infty)$ by 
\begin{align}
\Phi_\rho(x) = \int_{[0,\infty)} \frac{1-\cos(x \lambda)}{\lambda^2} d\rho(\lambda)
\end{align} 
  and belongs to $H^2_{\loc}[0,\infty)$.    
\item\label{itmKSDirac2} The support of $\rho$ is discrete in $(-\beta,\beta)$, does not contain zero and satisfies~\eqref{eqnLT-IIrho}. 
\item\label{itmKSDirac3} The absolutely continuous part $\rho_\ac$ of $\rho$ on $(-\infty,-\beta)\cup(\beta,\infty)$ satisfies~\eqref{eqnSzego-IIrho}. 
    \end{enumerate}
\end{theorem}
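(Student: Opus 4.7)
The plan is to reduce Theorem~\ref{thm:KSforDir01} to Theorem~\ref{thm:KSforCS-II} by means of the standard transformation that turns an off-diagonal Dirac system into a diagonal canonical system. Given a potential matrix of the form~\eqref{eq:DirQss}, the substitution $g_1=\Qr f_1$, $g_2=f_2/\Qr$ transforms~\eqref{eq:1dDirac} into a canonical system with diagonal Hamiltonian
\begin{align*}
 h(x) = \begin{pmatrix} \Qr(x)^{-2} & 0 \\ 0 & \Qr(x)^{2} \end{pmatrix}.
\end{align*}
Since $\Qr(0)=1$, the boundary condition $f_1(0)=0$ is preserved and the Weyl--Titchmarsh functions of the two systems coincide. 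The assumption $1/\Qr\in L^2[0,\infty)$ translates to $h_{11}\in L^1[0,\infty)$, and combined with $h_{11}h_{22}\equiv 1$ this forces $h_{22}\notin L^1[0,\infty)$ via the Cauchy--Schwarz estimate $x^2\le(\int_0^x h_{11})(\int_0^x h_{22})$, so the structural hypothesis of Theorem~\ref{thm:KSforCS-II} is satisfied.

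For this diagonal Hamiltonian the quantities in~\eqref{eqnKSforCS-II} become explicit: $\Omega$ is empty, $h_{12}\equiv 0$, $\det h\equiv 1$ and $\gh(x)=\mathcal{Q}(x)^2$. The condition~\eqref{eqnKSforCS-II} therefore reduces to its last integral, which after elementary rearrangement equals $(4\beta^2)^{-1}\int_{x_0+1}^\infty(\mathcal{Q}(x)-2\beta/\mathcal{Q}(x))^2 dx$; since $\mathcal{Q}$ is locally bounded and bounded away from $0$ on $(0,\infty)$, its finiteness is equivalent to~\eqref{eq:DiracTrace01}. On the spectral side, conditions~\ref{itmKSDirac2} and~\ref{itmKSDirac3} reproduce verbatim the Lieb--Thirring bound and Szeg\H{o} condition of Theorem~\ref{thm:KSforCS-II}, while evenness of $\rho$ together with~\ref{itmKSDirac2} forces $m$ to be odd by (the argument of) Proposition~\ref{propOddWT}, meromorphic on $\C_+\cup(-\beta,\beta)\cup\C_-$, analytic at zero with $m(0)=0$, supplying the remaining hypothesis~\ref{itmKSc2CS} of Theorem~\ref{thm:KSforCS-II}. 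The necessity direction is then immediate once one invokes Krein's classical characterization (see~\cite{kre55,den}), which ensures that the spectral measure of any Dirac system on $[0,\infty)$ has transfer function in $H^2_{\loc}[0,\infty)$, yielding~\ref{itmKSDirac1}.

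The sufficiency direction is the main obstacle. Starting from an even $\rho$ satisfying~\ref{itmKSDirac1}--\ref{itmKSDirac3}, condition~\ref{itmKSDirac1} produces via Krein's theorem a Dirac potential $(p,q)$ whose spectral measure is $\rho$, and evenness of $\rho$ forces $p\equiv 0$: the map $(f_1,f_2,z)\mapsto(f_1,-f_2,-z)$ is an automorphism of~\eqref{eq:1dDirac} precisely when $p\equiv 0$, and the resulting oddness of $m$ combined with the uniqueness of the inverse Dirac problem rules out $p\not\equiv 0$. One now has a Dirac system of the form~\eqref{eq:DirQss}, whose transformed canonical system inherits the validity of~\eqref{eqnKSforCS-II} from Theorem~\ref{thm:KSforCS-II} (the spectral hypotheses being verified as above); the computation of the previous paragraph then shows that this is precisely~\eqref{eq:DiracTrace01}. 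The remaining subtle point is to verify $1/\Qr\in L^2[0,\infty)$, which requires a careful analysis of the asymptotics of $\mathcal{Q}$ at infinity extracted from~\eqref{eq:DiracTrace01}: the weighted $L^2$ control of $\mathcal{Q}^2-2\beta$ forces $(\log\int_0^x\Qr(s)^2 ds)'$ to be close to $2\beta$ in an averaged sense, which in turn yields the exponential growth $\int_0^x\Qr(s)^2 ds\asymp \E^{2\beta x}$ and hence integrability of $\Qr^{-2}$ at infinity.
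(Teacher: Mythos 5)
Your proposal follows essentially the same route as the paper: gauge the off-diagonal Dirac system by $U_0=\mathrm{diag}(\Qr^{-1},\Qr)$ into the diagonal canonical system with Hamiltonian $h=\mathrm{diag}(\Qr^{-2},\Qr^{2})$, observe that for this $h$ the set $\Omega$ is empty, $\det h=1$ and $\gh=\mathcal{Q}^2$, so that~\eqref{eqnKSforCS-II} collapses to~\eqref{eq:DiracTrace01}, invoke Theorem~\ref{thm:KSforCS-II}, and use Krein's theory together with evenness of $\rho$ to identify condition~\ref{itmKSDirac1} as the solvability condition forcing a potential of the form~\eqref{eq:DirQss}. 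The computations and the symmetry argument ($\rho$ even $\Leftrightarrow$ $m$ odd $\Rightarrow$ $p\equiv 0$ by uniqueness of the inverse problem) all check out. The one place you genuinely deviate is the verification of $1/\Qr\in L^2[0,\infty)$ in the sufficiency direction, and there your sketch contains a non sequitur: the exponential growth $\int_0^x\Qr(s)^2ds\asymp\E^{2\beta x}$ that you extract from~\eqref{eq:DiracTrace01} does \emph{not} by itself imply integrability of $\Qr^{-2}$ (the derivative $H'=\Qr^2$ of $H(x)=\int_0^x\Qr^2$ could be very small on a sparse set without affecting the growth of $H$). The conclusion can still be salvaged from~\eqref{eq:DiracTrace01} directly, e.g.\ by splitting $[1,\infty)$ into $\{H'\le\beta H\}$, where the integrand of~\eqref{eq:DiracTrace01} dominates $\beta^2 H/H'\ge \beta^2 H(1)/H'$, and its complement, where $1/H'<1/(\beta H)$ is integrable by the exponential growth of $H$ — but the paper avoids all of this: analyticity of $m$ at zero with $m(0)=0$ (which you have already established from evenness of $\rho$ and condition~\ref{itmKSDirac2}) forces $h_{11}\in L^1[0,\infty)$ for \emph{any} Hamiltonian with this Weyl--Titchmarsh function, by the remark following Theorem~\ref{thm:KSforCS-I}, and for the diagonal Hamiltonian this is exactly $1/\Qr\in L^2[0,\infty)$. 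You should either adopt that shortcut or carry out the splitting argument in full; as written, the last step of your sufficiency proof does not go through.
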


\begin{proof}
For a Dirac system with potential matrix of the form~\eqref{eq:DirQss} we define the matrix-valued function $U_0$ on $[0,\infty)$ by
\begin{align*}
U_0(x) = \begin{pmatrix} \E^{\int_0^x q(s)ds} & 0\\ 0 & \E^{-\int_0^x q(s)ds}\end{pmatrix} =  \begin{pmatrix} \Qr(x)^{-1} & 0\\ 0 & \Qr(x)\end{pmatrix}, 
\end{align*}
 which solves the system~\eqref{eq:1dDirac} with $z=0$. 
 Using this solution, we introduce the Hamiltonian $h$ by setting 
\begin{align*}
h(x) = U_0(x)^\ast U_0 (x) = \begin{pmatrix} \Qr(x)^{-2} & 0\\ 0 & \Qr(x)^2\end{pmatrix}.
\end{align*}
It is straightforward to verify that the Weyl--Titchmarsh function of the Dirac system then coincides with the Weyl--Titchmarsh function of the Hamiltonian $h$. 
In fact, the solution $\psi_h(z,\redot)$ for the corresponding canonical system defined as in~\eqref{eqnCSWeylSol} gives a Weyl solution $\psi(z,\redot)$ for the Dirac system by setting 
\begin{align*}
  \psi(z,x) = U_0(x)\psi_h(z,x);
\end{align*} 
 see~\cite[Section~6.4]{rem} for example. 
 Applying Theorem~\ref{thm:KSforCS-II} then shows that the corresponding spectral measure $\rho$ satisfies conditions~\ref{itmKSDirac2} and~\ref{itmKSDirac3} if and only if the Hamiltonian $h$ satisfies $h_{11}\in L^1[0,\infty)$ and~\eqref{eqnKSforCS-II}. 
 Taking into account that $\det h = 1$ and that the set $\Omega$ defined in~\eqref{eq:setOmega} is empty in this case, the latter is equivalent to $1/\Qr\in L^2[0,\infty)$ and~\eqref{eq:DiracTrace01}.
 Finally, in view of~\cite[Theorem~6.3, Theorem~14.4 and Equations~(2.23) and~(4.2)]{den}, it suffices to mention that condition~\ref{itmKSDirac1} is nothing but the local solvability condition ensuring that $\rho$ is indeed the spectral measure of a Dirac system (the latter goes back to work of M.\ G.\ Krein on Krein systems~\cite{kre55}; see also~\cite[Section~12]{den} for a detailed exposition of this theory) with potential matrix of the form~\eqref{eq:DirQss} since $\rho$ is even.
\end{proof}


\begin{remark}\label{rem:DiracTransf}
  It is also possible to obtain an analogue statement for Dirac systems with diagonal potential matrices from Theorem~\ref{thm:KSforDir01} by using the following simple observation:
  If $m$ is the Weyl--Titchmarsh function of the Dirac system
  \begin{align}
\begin{pmatrix} 0 & 1\\ -1 & 0\end{pmatrix}f' + \begin{pmatrix} p & 0\\ 0 & - p\end{pmatrix}f = zf
\end{align}
  and $\tilde{m}$ is the Weyl--Titchmarsh function of the Dirac system
 \begin{align}
\begin{pmatrix} 0 & 1\\ -1 & 0\end{pmatrix}f' + \begin{pmatrix} 0 & p\\ p & 0\end{pmatrix}f = zf,
\end{align}
 then these functions are related via   
\begin{align}
\tilde{m}(z) = \frac{m(z)-1}{m(z) + 1}.
\end{align} 
 The condition that the Weyl--Titchmarsh function $\tilde{m}$ is odd (that the corresponding spectral measure $\tilde{\rho}$ is even) turns into the condition 
\begin{align}
  m(-z)m(z) = 1
\end{align}
on the Weyl--Titchmarsh function $m$ in the diagonal case. 
\end{remark}

\begin{remark}
 As a final remark, let us mention that one may also apply Theorem~\ref{thm:KSforDir01} to one-dimensional Schr\"odinger operators by using the well-known supersymmetry relations (see~\cite[Section~15]{den} for example). 
  \end{remark}

\appendix

\section{Meromorphic functions and trace formulas, \ref*{thm:KSforExB1}}\label{appMeroMain}

Our main goal here is to prove Theorem~\ref{thm:FactorMain}, an auxiliary technical result for application in the proof of the relative trace formula in  Section~\ref{secSbSsumrule}. 
To this end, let us first recall (see \cite[Lecture~16]{le96} for example) that an entire function $\Phi$ belongs to the {\em Cartwright class} if it is of finite exponential type and satisfies 
  \begin{align}\label{eqnCondCart}
  \int_\R \frac{\log_+ |\Phi(\lambda)|}{1+\lambda^2}d\lambda <\infty,
  \end{align} 
  where $\log_+$ is the positive part of the logarithm. 
  We are also going to use the two functions $\cF_1$ and $\cF_2$ defined on $(0,1)\cup(1,\infty)$ by~\eqref{eq:Fsdef} in Section~\ref{secSbSsumrule} (their crucial properties are collected in Appendix~\ref{app:F1F2}).   
 
\begin{theorem}\label{thm:FactorMain}
 Let $\alpha>0$ and let $a$ be a meromorphic function on $\C_+$ with $a(\I\sqrt{\alpha})=1$ and only simple zeros and poles. 
 Suppose that $a$ can be written as 
 \begin{align}\label{eqnGprodF}
 a(k) = \frac{\Phi_+(k^2+\alpha)}{\Phi_-(k^2+\alpha)} \prod_{n=1}^N \frac{G_{+,n}(k)}{G_{-,n}(k)}
 \end{align}
 for some $N\in\N$ and functions $\Phi_\pm$, $G_{\pm,1},\ldots,G_{\pm,N}$, where: 
\begin{enumerate}[label=(\roman*), ref=(\roman*), leftmargin=*, widest=ii]
 \item The real entire functions $\Phi_\pm$ are of Cartwright class with $\Phi_\pm(0)=1$ and only real and simple zeros.
 \item The functions $G_{\pm,1},\ldots,G_{\pm,N}$ are meromorphic on $\C_+$ but not identically zero and such that $\im\,G_{\pm,n}(k)\geq0$ when $\re\,k>0$ and $\im\, G_{\pm,n}(k)\leq0$ when $\re\,k<0$ for all $n=1,\ldots,N$.
\end{enumerate}
 Then the limit $a(\xi)=\lim_{\varepsilon\downarrow0} a(\xi+\I\varepsilon)$ exists and is nonzero for almost all $\xi\in\R$, satisfies 
  \begin{align}\label{eq:FlogSzego}
      \int_{\R} \frac{|\log|a(\xi)||}{1+\xi^4}d\xi <\infty,
   \end{align} 
 and the identities 
   \begin{align}\label{eq:Gtrace01}
   \begin{split}
   &  \dot{a}(\I\sqrt{\alpha})  \\
    & \qquad =  \frac{2\I \alpha}{\pi} \int_{\R} \frac{\log|a(\xi)|}{(\xi^2+\alpha)^2}d\xi +  \frac{\I}{\sqrt{\alpha}}\lim_{\delta\downarrow0} \sum_{\kappa\in{\rm Z}\atop\delta<\kappa<1/\delta} \cF_1\biggl(\frac{\kappa}{\sqrt{\alpha}}\biggr) - \sum_{\eta\in{\rm P}\atop\delta<\eta<1/\delta}\cF_1\biggl(\frac{\eta}{\sqrt{\alpha}}\biggr), 
     \end{split}
 \\ \label{eq:Gtrace02}
   \begin{split}
& \ddot{a}(\I\sqrt{\alpha}) - \dot{a}(\I\sqrt{\alpha})^2 - \frac{\I}{\sqrt{\alpha}} \dot{a}(\I\sqrt{\alpha}) \\
& \qquad = \frac{8\sqrt{\alpha}}{\pi} \int_{\R} \frac{\xi^2 \log|a(\xi)|}{(\xi^2+\alpha)^3}d\xi  +  \frac{1}{\alpha}\lim_{\delta\downarrow0}  \sum_{\kappa\in{\rm Z}\atop\delta<\kappa<1/\delta} \cF_2\biggl(\frac{\kappa}{\sqrt{\alpha}}\biggr) - \sum_{\eta\in{\rm P}\atop\delta<\eta<1/\delta} \cF_2\biggl(\frac{\eta}{\sqrt{\alpha}}\biggr),
     \end{split}
   \end{align}
   hold, where ${\rm Z} = \{\kappa>0\,|\, a(\I\kappa)=0\}$ and ${\rm P} = \{\eta>0\,|\, a(\I\eta)=\infty\}$.
\end{theorem}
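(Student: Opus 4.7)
The plan is to exploit the prescribed factorization~\eqref{eqnGprodF} and reduce everything to factor-by-factor contour-integral computations, then reassemble. First I would analyze each factor individually. The Cartwright class structure of the real entire functions $\Phi_\pm$, together with $\Phi_\pm(0)=1$, gives Hadamard-type representations over their real simple zeros; composing with $z=k^2+\alpha$ produces factorizations in $k$ whose zeros lie on the imaginary axis at conjugate pairs $\pm\I\kappa$. Each $G_{\pm,n}$, having prescribed sign of imaginary part on each of the two quadrants of $\C_+$, becomes a Herglotz--Nevanlinna function on the first quadrant after precomposing with the obvious conformal map to $\C_+$; this pins down the structure of its zeros and poles (simple and on $\I\R_+$), yields almost-everywhere existence of boundary values and, via the Schwarz--Herglotz representation, gives quantitative control on $\log|G_{\pm,n}|$ along the real axis.

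Second, I would assemble these pieces into a factorization of $a$ that makes explicit that all its zeros and poles lie on $\I\R_+$, matching the sets ${\rm Z}$ and ${\rm P}$ in the statement. The integrability bound~\eqref{eq:FlogSzego} would then follow by summing the Cartwright integrability~\eqref{eqnCondCart} for $\Phi_\pm$ and the analogous Herglotz-class bounds for the $G_{\pm,n}$; the weight $1/(1+\xi^4)$ is precisely what appears when the substitution $\lambda=\xi^2+\alpha$ converts the standard $1/(1+\lambda^2)$ integrability for Cartwright/Herglotz functions over to the $k$-variable.

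Third, for the trace formulas, I would represent $\log a(k)$ on $\C_+$ via a Poisson--Schwarz-type contour integral over $\R$ involving $\log|a(\xi)|$ plus a discrete Blaschke-type sum over the imaginary zeros and poles of $a$. Differentiating once and twice at $k=\I\sqrt{\alpha}$ produces on the integral side the kernels $1/(\xi^2+\alpha)^2$ and $\xi^2/(\xi^2+\alpha)^3$, while on the discrete side the contribution of a zero or pole at $\I\kappa$ reduces to an explicit elementary rational function of $\kappa/\sqrt{\alpha}$. A direct calculation identifies these rational contributions, after a principal-value renormalization, with exactly $\cF_1(\kappa/\sqrt{\alpha})$ and $\cF_2(\kappa/\sqrt{\alpha})$; the logarithmic summand $\log|(s-1)/(s+1)|$ appearing in~\eqref{eq:Fsdef} is the renormalizer and simultaneously ensures the correct asymptotic behavior at $\kappa\to\sqrt{\alpha}$ (where the statement $a(\I\sqrt{\alpha})=1$ forces cancellation).

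The main obstacle will be the conditional convergence of the sums over ${\rm Z}$ and ${\rm P}$, which is exactly why the identities~\eqref{eq:Gtrace01} and~\eqref{eq:Gtrace02} are phrased with the symmetric cutoff $\delta<\kappa<1/\delta$. In general, the sums over ${\rm Z}$ alone and over ${\rm P}$ alone may diverge; I expect to show that the Cartwright class constraint on $\Phi_\pm$ (finite exponential type together with~\eqref{eqnCondCart}) together with the interlacing of zeros and poles of the $G_{\pm,n}$ forces enough cancellation between the ${\rm Z}$ and ${\rm P}$ sums to give convergence of their symmetric difference as $\delta\downarrow0$. This is the analogue of the standard argument handling conditionally convergent Hadamard products of genus one, and careful bookkeeping of the $k\mapsto k^2+\alpha$ substitution, which takes one-sided summability of $\{\lambda_j\}$ on the real $z$-line to only conditional summability on the imaginary $k$-axis, will be required. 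Uniform control of the boundary behavior of each $G_{\pm,n}$ factor, sufficient to justify exchanging limits, is the remaining technical point I anticipate.
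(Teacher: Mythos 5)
Your proposal follows essentially the same route as the paper's proof: a Simon-type canonical factorization (outer part plus symmetric-cutoff Blaschke product over the imaginary zeros and poles) for the quotient of the $G_{\pm,n}$, a Hadamard/Cartwright analysis for $\Phi_\pm(k^2+\alpha)$, and logarithmic differentiation at $k=\I\sqrt{\alpha}$ combined with the zeroth-order identity at that point to produce exactly the combinations $\cF_1$ and $\cF_2$. The only step you leave implicit that the paper makes explicit is the justification of the limit in the Cartwright part, which is done there by factorizing the partial products $p_r$ of the Hadamard factorization and passing to the limit using the uniform bound of Lemma~\ref{lem:phi-estimate}.
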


The proof of this theorem will rely on several ingredients.  
We begin with a factorization of some meromorphic functions that is essentially due to B.\ Simon~\cite{sim04}.

\begin{theorem}\label{th:SimonFactor}
   Let $\kappa_0>0$ and let $G$ be a meromorphic function on $\C_+$ with $\pm G(\I\kappa_0)>0$ and such that $\im\,G(k)\geq0$ when $\re\,k>0$ and $\im\, G(k)\leq0$ when $\re\,k<0$. 
 Then the limit $G(\xi)=\lim_{\varepsilon\downarrow0} G(\xi+\I\varepsilon)$ exists and is nonzero for almost all $\xi\in\R$, satisfies  
  \begin{align}\label{eq:SimonLog}
      \int_{\R} \frac{|\log|G(\xi)||}{1+\xi^2}d\xi <\infty,
   \end{align} 
 and the function $G$ admits the factorization 
   \begin{align}\label{eq:SimonFactor}
     G(k) =\pm B(k) \exp\biggl(\frac{\I}{\pi} \int_{\R} \frac{1+k\xi}{k-\xi}\frac{\log|G(\xi)|}{1+\xi^2}d\xi\biggr), 
   \end{align}
where the meromorphic function $B$ is the Blaschke-type product given by
   \begin{align}\label{eq:SimonBlaschke}
B(k)  & =   \lim_{\delta\downarrow0} \prod_{\kappa\in{\rm Z}\atop\delta<\kappa<1/\delta} \frac{\kappa_0-\kappa}{|\kappa_0-\kappa|} \frac{k- \I\kappa}{k+\I\kappa} \prod_{\eta\in{\rm P}\atop\delta<\eta<1/\delta}  \frac{\kappa_0-\eta}{|\kappa_0-\eta|} \frac{k+\I \eta}{k-\I \eta}  
   \end{align} 
  with ${\rm Z} = \{\kappa>0\,|\, G(\I\kappa)=0\}$ and ${\rm P} = \{\eta>0\,|\, G(\I\eta)=\infty\}$, and the convergence holds locally uniformly on $\C_+$ away from the poles of $G$. 
 \end{theorem}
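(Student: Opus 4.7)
The proof I have in mind follows Simon~\cite{sim04} and reduces the assertion to the standard factorization theory for meromorphic Herglotz functions on the upper half-plane by a change of variables. First I would establish the reflection symmetry $G(-k^\ast)=G(k)^\ast$ on $\C_+$. Define $F(k):=G(-k^\ast)^\ast$: it is holomorphic (composition of two anti-holomorphic maps), satisfies the same sign hypothesis as $G$, and agrees with $G$ on the positive imaginary axis, where both take real values. Indeed, $G$ is real on $\I\R_+$ because boundary values from the first quadrant have $\im\,G\geq 0$ while those from the second quadrant have $\im\,G\leq 0$, so $\im\,G=0$ in the limit. The identity theorem then forces $F\equiv G$. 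A minimum-principle argument additionally shows that all zeros and poles of $G$ in $\C_+$ lie on $\I\R_+$, since otherwise $\im\,G$ would change sign near an interior zero or pole.

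Next I would transform to a Herglotz function. Let $\sqrt{\ledot}$ denote the principal branch, which maps $\C_+$ conformally onto the first quadrant. Set $\tilde G(z):=G(\sqrt z)$ for $z\in\C_+$, so that $\im\,\tilde G\geq 0$ there. Extending $\tilde G$ analytically across $\R_-$ by Schwarz reflection---made possible by the reality of $\tilde G$ on $\R_-$, itself inherited from the reality of $G$ on $\I\R_+$---one obtains a meromorphic Herglotz function on $\C\setminus\R$, holomorphic and non-vanishing on the open upper half-plane, with all its zeros and poles located on $\R_-$ as the images of the zeros and poles of $G$ on $\I\R_+$ under $k\mapsto k^2$.

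As a holomorphic Herglotz function on $\C_+$, $\tilde G$ belongs to the Smirnov class $N^+(\C_+)$. Standard bounded-type theory therefore gives non-tangential boundary values $\tilde G(t+\I0)$ for a.e.\ $t\in\R$, with $\log|\tilde G|$ integrable against the Poisson weight, and since $\tilde G$ is outer (no singular inner factor, no zeros inside), it admits the canonical representation
\begin{align*}
  \tilde G(z)=c\,\exp\!\Biggl(\frac{1}{\pi}\int_{\R}\frac{1+tz}{t-z}\,\frac{\log|\tilde G(t)|}{1+t^{2}}\,dt\Biggr),\qquad z\in\C_+,
\end{align*}
where $c$ is unimodular and in fact equals $\pm 1$ by reality of $\tilde G$ on $\R_-$. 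Pulling this identity back via $z=k^{2}$ (valid for $k$ in the first quadrant, and extended to all of $\C_+$ by the reflection symmetry) and splitting the $t$-integral into its $\R_+$ and $\R_-$ parts, substitutions $t=\xi^{2}$ and $t=-\xi^{2}$ convert the $\R_+$ contribution into the outer-function integral in~\eqref{eq:SimonFactor} (using $|\tilde G(\xi^{2})|=|G(\xi)|$ and $|\tilde G((-\xi)^{2})|=|G(-\xi)|$), while the $\R_-$ contribution, where $\log|\tilde G|$ is a superposition of logarithmic singularities at $-\kappa^{2}$ for each zero of $G$ at $\I\kappa$ and at $-\eta^{2}$ for each pole at $\I\eta$, produces, after a residue-type computation, precisely the Blaschke-type product~\eqref{eq:SimonBlaschke}: each $-\kappa^{2}$ contributes a factor $(k-\I\kappa)/(k+\I\kappa)$ and each $-\eta^{2}$ a factor $(k+\I\eta)/(k-\I\eta)$. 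The sign factors $(\kappa_0-\kappa)/|\kappa_0-\kappa|$ are forced in order that $B(\I\kappa_0)>0$, so that the overall $\pm$ in~\eqref{eq:SimonFactor} matches $\sgn\,G(\I\kappa_0)$; the a.e.\ existence and non-vanishing of the boundary values $G(\xi)$, along with the integrability~\eqref{eq:SimonLog}, follow by pulling the corresponding bounded-type statements for $\tilde G$ back to $G$.

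The main technical obstacle is the last conversion: rewriting the purely outer factorization of $\tilde G$ as a Blaschke-times-outer factorization of $G$. Because the zeros and poles of $\tilde G$ sit on $\R_-$, on the boundary of its domain of holomorphy, their contributions are absorbed into the outer integral on the $\tilde G$ side and only reappear as Blaschke factors on the $G$ side once $z=k^{2}$ unfolds the domain. Since in general neither the sequence of zeros $(\kappa)$ nor the sequence of poles $(\eta)$ individually satisfies a one-sided Blaschke summability condition---only their combined Nevanlinna count is controlled---the product in~\eqref{eq:SimonBlaschke} is only conditionally convergent, which is the reason for the symmetric truncation $\delta<\kappa<1/\delta$. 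Justifying this conditional convergence, and the locally uniform convergence of the resulting partial products on $\C_+$ away from the poles of $G$, is the heart of the argument.
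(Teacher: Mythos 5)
Your reduction to a Herglotz function via $z=k^2$ is a sensible idea, and the preparatory steps are all correct: the reflection symmetry $G(-k^\ast)=G(k)^\ast$, the reality of $G$ on $\I\R_+$, the localization of all zeros and poles of $G$ on $\I\R_+$ (via the open mapping theorem applied to $G$ and $-1/G$ on each quadrant), and the fact that $\tilde G(z)=G(\sqrt z)$ is a holomorphic Herglotz function on $\C_+$ and hence outer. This is, however, a genuinely different route from the paper's, which simply transplants $G$ to the unit disc by the M\"obius map $\varphi(z)=\I\kappa_0\frac{1-z}{1+z}$ and invokes Simon's Theorem~1.1 in~\cite{sim04} wholesale (together with his Theorem~2.1 for the particular ordering of the Blaschke product). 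The problem is that the two hardest parts of the statement are exactly the parts your sketch does not supply.

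First, \eqref{eq:SimonLog} does not follow from Poisson-$L^1$ integrability of $\log|\tilde G|$: under $t=\xi^2$ the weight $\frac{dt}{1+t^2}$ becomes $\frac{2\xi\,d\xi}{1+\xi^4}$, which is strictly smaller than $\frac{d\xi}{1+\xi^2}$ both near $0$ and near $\infty$, so the pulled-back statement is weaker than \eqref{eq:SimonLog} — and without \eqref{eq:SimonLog} the integral in \eqref{eq:SimonFactor} need not even converge. To repair this you need the stronger fact that $\log|\tilde G(t+\I 0)|$ lies in $L^p$ of the Poisson measure for every $p<\infty$ (true, because $\log\tilde G$ is Herglotz with imaginary part in $[0,\pi]$, so its boundary real part is exponentially integrable), followed by a H\"older argument to absorb the change of weight. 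Second, the substitution $t=\xi^2$ does not literally convert the $t>0$ part of the outer integral for $\tilde G$ into the outer integral in \eqref{eq:SimonFactor}: the kernel pulls back to $\frac{(1+\xi^2k^2)\,2\xi}{(k^2-\xi^2)(1+\xi^4)}\,d\xi$, whereas the $\pm\xi$ pair in \eqref{eq:SimonFactor} contributes $\frac{2k(1+\xi^2)}{(k^2-\xi^2)(1+\xi^2)}\,d\xi=\frac{2k}{k^2-\xi^2}\,d\xi$; the difference $\frac{2(1-k\xi^3)}{(k+\xi)(1+\xi^4)}$ is not constant in $k$ and must be tracked and compensated against the $t<0$ part of the integral. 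Third, and most substantially, the conversion of the $t<0$ boundary integral into the symmetrically truncated, only conditionally convergent Blaschke product \eqref{eq:SimonBlaschke}, with locally uniform convergence away from the poles, is precisely the content of Simon's theorem: it rests on the interlacing of the zeros $\kappa$ and poles $\eta$ on $\I\R_+$, which is what produces the cancellation making the symmetric partial products converge. You correctly identify this as ``the heart of the argument'' but do not carry it out. As it stands, the proposal is an outline of an alternative proof with its central step and a quantitative ingredient missing, rather than a proof.
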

  
  \begin{proof}
    This follows from~\cite[Theorem~1.1]{sim04} applied to the function $G\circ\varphi$ on $\mathbb{D}$, where $\mathbb{D}$ is the open unit disc in the complex plane and $\varphi:\mathbb{D}\rightarrow\C_+$ is given by 
    \begin{align*}
      \varphi(z) & = \I\kappa_0\frac{1- z}{1+z} = \kappa_0\frac{2\,\im\,z}{1+|z|^2+2\,\re\,z} + \I\kappa_0 \frac{1-|z|^2}{1+|z|^2+2\,\re\,z}.
    \end{align*}
    Note that the limit in our Blaschke-type product in~\eqref{eq:SimonBlaschke} is slightly different to the one in~\cite[Theorem~1.1]{sim04}. 
    However, as also mentioned in the proof of~\cite[Theorem~1.1]{sim04}, they are indeed the same in view of~\cite[Theorem~2.1]{sim04}.
  \end{proof}   

  We will also need some more results about real entire functions of Cartwright class.
  The first one is a technical fact, which is probably known to experts, but we were not able to locate it explicitly in the literature. 
   
  \begin{lemma}\label{lem:phi-estimate}
  Let $\Phi$ be a real entire function of Cartwright class with $\Phi(0)=1$ and only real and simple zeros. 
  For each $r>0$, define the polynomial $p_r$ by 
  \begin{align}\label{eqn:prodPR}
   p_r(z) = \prod_{\lambda\in\Lambda\atop|\lambda|< r} \biggl(1-\frac{z}{\lambda}\biggr), 
  \end{align} 
  where $\Lambda$ is the set of zeros of $\Phi$. 
  Then there is a constant $C>0$ such that
  \begin{align}\label{eq:phi-estimate}
    |\log|p_r(z)||  \leq C |z| (\log(1+|z|)+1) + |\log|\Phi(z)||
  \end{align}
  for all $r>0$ and all $z\in\R\backslash\Lambda$. 
  \end{lemma}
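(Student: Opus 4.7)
The plan is to compare $\log|p_r(z)|$ with $\log|\Phi(z)|$ through the Hadamard factorization of $\Phi$ and then bound the resulting difference case by case. First, I will invoke the Hadamard representation: since $\Phi$ is real entire of Cartwright class with $\Phi(0)=1$ and only real simple zeros, classical theory yields
\begin{align*}
\Phi(z) = \E^{bz}\prod_n\left(1-\frac{z}{\lambda_n}\right)\E^{z/\lambda_n}
\end{align*}
with a real constant $b$, absolute convergence on compacta being ensured by $\sum_n\lambda_n^{-2}<\infty$ (itself a consequence of the Cartwright density bound $n(t):=\#\{\lambda\in\Lambda:|\lambda|<t\}=\OO(t)$). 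Moreover, the Cartwright hypothesis provides, via Lindel\"of's theorem, uniform boundedness of the partial sums $S_r:=\sum_{|\lambda_n|<r}1/\lambda_n$ in $r>0$. Taking $\log|\cdot|$, for $z\in\R\setminus\Lambda$ one arrives at the key identity
\begin{align*}
\log|\Phi(z)| - \log|p_r(z)| = bz + zS_r + T_r(z), \qquad T_r(z):=\sum_{|\lambda_n|\geq r}\left[\log\left|1-\frac{z}{\lambda_n}\right|+\frac{z}{\lambda_n}\right],
\end{align*}
so that by the triangle inequality~\eqref{eq:phi-estimate} reduces to bounding $|bz+zS_r+T_r(z)|$; the first two summands already contribute at most $(|b|+\sup_r|S_r|)|z|$.

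To estimate $T_r(z)$ I will split it at $|\lambda_n|=2|z|$. The ``far'' piece over $|\lambda_n|>\max(r,2|z|)$ is handled by the elementary bound $\bigl|\log|1-w|+w\bigr|\leq C|w|^2$ valid for $|w|\leq 1/2$ together with $\sum_{|\lambda_n|\geq R}\lambda_n^{-2}=\OO(1/R)$ (an Abel-summation consequence of $n(t)=\OO(t)$), producing a contribution of size $\OO(|z|^2/\max(r,2|z|))=\OO(|z|)$. The ``close'' piece is present only when $r<2|z|$ and consists of at most $n(2|z|)=\OO(|z|)$ terms with $r\leq|\lambda_n|\leq 2|z|$; the $z/\lambda_n$ part of this sum is $\OO(|z|)$ by the boundedness of $S_r$, and for the logarithmic part the elementary inequality
\begin{align*}
\log|1-z/\lambda_n|\leq\log(1+|z|/|\lambda_n|)\leq\log(1+|z|/r)\leq C\log(1+|z|)\quad(\text{provided }r\geq 1)
\end{align*}
controls the upper tail, while the case $r<1$ introduces only a uniformly bounded number of additional terms to be handled directly.

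For the matching lower bound on the close log-contributions the argument must be modified, and this is where the term $|\log|\Phi(z)||$ on the right-hand side of~\eqref{eq:phi-estimate} enters decisively. The plan is to distinguish two subcases according to the distance from $z$ to the zero set $\Lambda$: if $\mathrm{dist}(z,\Lambda)\geq 1$ (``generic'' subcase), then $|1-z/\lambda_n|\geq 1/|\lambda_n|\geq 1/(2|z|)$ yields the two-sided bound $\bigl|\log|1-z/\lambda_n|\bigr|\leq C\log(1+|z|)$ per close term, so a repeat of the estimate in the previous paragraph gives $|T_r(z)|\leq C|z|(\log(1+|z|)+1)$ and hence~\eqref{eq:phi-estimate} without recourse to $|\log|\Phi(z)||$; if $\mathrm{dist}(z,\Lambda)<1$ with $z$ close to some $\lambda_{n_0}\in\Lambda$, then either $|\lambda_{n_0}|<r$ and the singular contribution $\log|z-\lambda_{n_0}|$ appears symmetrically in $\log|p_r(z)|$ and $\log|\Phi(z)|$ so cancels in their difference, or $|\lambda_{n_0}|\geq r$ and $|\log|\Phi(z)||$ blows up while $\log|p_r(z)|$ stays bounded, making~\eqref{eq:phi-estimate} trivial. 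The main obstacle will be the last step: controlling the remaining (non-$\lambda_{n_0}$) summands of the Hadamard expansion uniformly across the generic-to-singular transition, which amounts to applying the generic estimate on a neighborhood of $\lambda_{n_0}$ of radius $1$ while keeping the constants independent of $r$ and of the spacing between zeros of $\Phi$ near $\lambda_{n_0}$.
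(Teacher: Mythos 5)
Your strategy rests on the same pillars as the paper's proof: the Hadamard/genus-one factorization of $\Phi$, the uniform boundedness of the partial sums $\sum_{|\lambda|<r}\lambda^{-1}$ (Lindel\"of/Cartwright), the density bound $n(t)=\OO(t)$, and the comparison of $p_r$ with $\Phi$ through the quotient $\Phi/p_r$. Where you differ is in execution: the paper simply applies Levin's standard upper estimate for genus-one canonical products (Theorem~2 in Lecture~4 of~\cite{le96}) twice --- once to the canonical product representing $p_r$ itself, yielding $\log|p_r(z)|\leq C|z|(\log(1+|z|)+1)$, and once to the canonical product $\Phi/p_r$ over the zeros with $|\lambda|\geq r$, yielding $-\log|p_r(z)|\leq C|z|(\log(1+|z|)+1)+|\log|\Phi(z)||$ --- whereas you reprove the needed estimate by hand via the splitting at $|\lambda_n|=2|z|$. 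Your elementary tail estimates (the $|w|^2$ bound for the far piece, the counting bound for the close piece) are correct and amount to a self-contained proof of the special case of Levin's inequality that is needed, which is a legitimate trade-off: more work, no external citation.

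The one place where your organization creates avoidable trouble is the attempt to bound $|T_r(z)|$ \emph{two-sidedly}: this quantity genuinely blows up as $z$ approaches a zero of modulus $\geq r$, which is why you are forced into the $\mathrm{dist}(z,\Lambda)$ case analysis and why you flag the ``generic-to-singular transition'' as an unresolved obstacle. Moreover, isolating only the single nearest zero $\lambda_{n_0}$ is not quite enough, since several zeros may lie within distance $1$ of $z$ and the generic lower bound $|z-\lambda_n|\geq 1$ then fails for all of them, not just one. The clean repair --- and this is exactly how the paper sidesteps the issue --- is to observe that the target inequality only ever requires \emph{upper} bounds on two log-moduli: $\log|p_r(z)|$ (for the upper half of $|\log|p_r(z)||$) and $\log|\Phi(z)/p_r(z)|$ (for the lower half, after moving $\log|\Phi(z)|$ to the right-hand side as $|\log|\Phi(z)||$). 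Each of these is the log-modulus of an exponential factor times a genus-one canonical product, and upper bounds on such products are immune to proximity to zeros, since each factor satisfies $\log|1-z/\lambda|\leq\log(1+|z|/|\lambda|)$ regardless of how close $z$ is to $\lambda$. Restructured this way, your elementary splitting argument goes through verbatim with no case analysis at all, and the proof closes.
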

   
  \begin{proof}
Since  $\Phi$ is an entire function of Cartwright class, one has 
\begin{align*}
 M & =\lim_{r\rightarrow\infty} \sum_{\lambda\in\Lambda\atop|\lambda|<r}\frac{1}{\lambda}, &  \sup_{r>0}\,\biggl|\sum_{\lambda\in\Lambda\atop|\lambda|<r}\frac{1}{\lambda}\biggr| & \leq C_0,
\end{align*}
for some constants $M$, $C_0\in\R$; see~\cite[Theorem~1 in Lecture~17]{le96}. 
Moreover, by the Hadamard theorem, the function $\Phi$ can be written as 
\begin{align}\label{eq:PhiProd}
  \Phi(z) = \E^{-Mz}\prod_{\lambda\in\Lambda} \biggl(1-\frac{z}{\lambda}\biggr) \E^{\frac{z}{\lambda}}.
\end{align}
 Because the polynomials $p_r$ admit the similar factorization 
  \begin{align}\label{eqnprFac}
   p_r(z) = \E^{-\sum_{\lambda\in\Lambda\atop|\lambda|<r}\frac{z}{\lambda}} \prod_{\lambda\in\Lambda\atop|\lambda|<r} \biggl(1-\frac{z}{\lambda}\biggr)\E^{\frac{z}{\lambda}}, 
 \end{align}
  one also gets   
 \begin{align*}
   \frac{\Phi(z)}{p_r(z)} = \E^{-Mz+\sum_{\lambda\in\Lambda\atop|\lambda|<r}\frac{z}{\lambda}} \prod_{\lambda\in\Lambda\atop|\lambda|\geq r} \biggl(1-\frac{z}{\lambda}\biggr)\E^{\frac{z}{\lambda}}
 \end{align*}
 away from the set $\Lambda$.
Applying~\cite[Theorem~2 in Lecture~4]{le96} to estimate the canonical product on the right-hand side then yields 
 \begin{align*}
  \log\biggl|\frac{\Phi(z)}{p_r(z)}\biggr| & \leq 2C_0 |z| + C_1 |z| \biggl(\int_{0}^{|z|} \frac{n_r(t)}{t^2}dt + |z|\int_{|z|}^\infty \frac{n_r(t)}{t^3}dt\biggr)
 \end{align*}
 for all $z\in\R\backslash\Lambda$, where $C_1 = 12\E$ is an absolute constant and 
 \begin{align*}
   n_r(t) = \#\{\lambda\in\Lambda\,|\, r\leq|\lambda|\leq t\} \leq \#\{\lambda\in\Lambda\,|\, |\lambda|\leq t\} = n(t). 
 \end{align*}
However, as the function $\Phi$ is of Cartwright class, one has $n(t) \leq C_2t$ for some positive constant $C_2$; see~\cite[Theorem~1 in Lecture~17]{le96}. 
From this we get 
 \begin{align}\label{eqnprminus}\begin{split}
  -\log|p_r(z)| & = \log\biggl|\frac{\Phi(z)}{p_r(z)}\biggr| - \log|\Phi(z)| \\
   &  \leq 2C_0|z|  + C_1 C_2 |z| (\log(1+|z|) + |\log\lambda_0| + 1) + |\log|\Phi(z)||,
 \end{split}\end{align}
 where $\lambda_0=\min_{\lambda\in\Lambda}|\lambda|>0$, so that $n(t)=0$ when $t<\lambda_0$. 
 The same estimate for the canonical product representing $p_r$ in~\eqref{eqnprFac} gives  
 \begin{align*} 
   \log|p_r(z)| \leq C_0|z| + C_1 |z| \biggl(\int_{0}^{|z|} \frac{m_r(t)}{t^2}dt + |z|\int_{|z|}^\infty \frac{m_r(t)}{t^3}dt\biggr)
 \end{align*}
 for all $z\in\R\backslash\Lambda$, where 
 \begin{align*}
   m_r(t) = \#\{\lambda\in\Lambda\,|\, |\lambda|<r,\,|\lambda|\leq t\} \leq n(t) \leq C_2t.
 \end{align*}
 In a similar way as above, this results in the bound  
 \begin{align*}
   \log|p_r(z)| \leq C_0|z| + C_1C_2 |z| (\log(1+|z|) + |\log\lambda_0| + 1), 
 \end{align*}
which, together with~\eqref{eqnprminus}, implies~\eqref{eq:phi-estimate} for some constant $C>0$.
\end{proof}
  
 With this auxiliary result, we are now able to establish some identities for real entire functions of Cartwright class with only real and simple zeros.
  
\begin{theorem}\label{thm:CartwrightRepr}
  Let $\Phi$ be a real entire function of Cartwright class with $\Phi(0)=1$ and only real and simple zeros. 
  Then for every $\alpha>0$ one has 
   \begin{align}
    \label{eq:CartTrace01}    \dot{\Phi}(0) & = \frac{\sqrt{\alpha}}{\pi} \int_{\R} \frac{\log|\Phi(\xi^2+\alpha)|}{(\xi^2+\alpha)^2}d\xi +  \frac{1}{2\alpha} \sum_{\kappa\in{\rm Z}} \cF_1\biggl(\frac{\kappa}{\sqrt{\alpha}}\biggr),  \\
    \label{eq:CartTrace02}   \dot{\Phi}(0) - \alpha \ddot{\Phi}(0) + \alpha\dot{\Phi}(0)^2  
         & = \frac{2\sqrt{\alpha}}{\pi} \int_{\R} \frac{\xi^2\log|\Phi(\xi^2+\alpha)|}{(\xi^2+\alpha)^3}d\xi +  \frac{1}{4\alpha}\sum_{\kappa\in{\rm Z}} \cF_2\biggl(\frac{\kappa}{\sqrt{\alpha}}\biggr),
   \end{align} 
   where ${\rm Z} = \{\kappa>0\,|\, \Phi(-\kappa^2+\alpha)=0\}$. 
  \end{theorem}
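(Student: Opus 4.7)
My plan is to prove Theorem~\ref{thm:CartwrightRepr} by first establishing both identities when $\Phi$ is a polynomial and then extending to arbitrary real entire functions of Cartwright class by an approximation argument built around the truncated Hadamard products $p_r$ from Lemma~\ref{lem:phi-estimate}.

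For the polynomial case, additivity of $\log|\Phi|$ over products reduces both identities to a single linear factor $\Phi(z)=1-z/\lambda$ with $\lambda\in\R\setminus\{0\}$. In that situation the left-hand sides are elementary, and the sums on the right-hand sides contain at most one term (present precisely when $\lambda<\alpha$), so that the identity amounts to evaluating a couple of standard integrals of the form $\int_\R \log|\xi^2+\alpha-\lambda|/(\xi^2+\alpha)^n\,d\xi$. I would compute these either by differentiating under the integral sign with respect to the parameter $\lambda$ followed by partial fractions, or by a direct residue calculation on a semicircular contour in $\C_+$ with suitable indentations around any real zeros of the integrand.

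For the extension to general $\Phi$ of Cartwright class, I would apply the polynomial identities to each $p_r$ and then pass to the limit $r\to\infty$. On the algebraic side, $\dot p_r(0)=-\sum_{|\lambda|<r}1/\lambda$ and the analogous expression for $\ddot p_r(0)$ converge to $\dot\Phi(0)=-M$ and $\ddot\Phi(0)$ in view of the Cartwright density estimate $n(t)\leq Ct$ already exploited in the proof of Lemma~\ref{lem:phi-estimate}. On the integral side I would invoke dominated convergence based on Lemma~\ref{lem:phi-estimate}: both $|z|(\log(1+|z|)+1)$ and $|\log|\Phi(z)||$ evaluated at $z=\xi^2+\alpha$ are integrable against the weights $(\xi^2+\alpha)^{-2}$ and $\xi^2(\xi^2+\alpha)^{-3}$ once one uses condition~\eqref{eqnCondCart} after the change of variables $\lambda=\xi^2+\alpha$. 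On the spectral side, the non-negativity and asymptotic behavior of $\cF_1$ and $\cF_2$ at $0$ and $\infty$ (summarized in Appendix~\ref{app:F1F2}) together with the Cartwright density bound yield absolute convergence of the series $\sum_{\kappa\in{\rm Z}}\cF_j(\kappa/\sqrt{\alpha})$ as well as dominated convergence of $\sum_{{\rm Z}_r}$ to $\sum_{{\rm Z}}$.

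The hard part will be tracking the effect of the exponential correction factors $\E^{z/\lambda}$ present in the Hadamard product~\eqref{eq:PhiProd} but absent from $p_r$. The resulting difference
\begin{align*}
\log|\Phi(z)|-\log|p_r(z)|= \biggl(-M+\sum_{\lambda\in\Lambda,\,|\lambda|<r}\frac{1}{\lambda}\biggr)z+\sum_{\lambda\in\Lambda,\,|\lambda|\geq r}\biggl[\log\biggl|1-\frac{z}{\lambda}\biggr|+\frac{z}{\lambda}\biggr]
\end{align*}
contains a linear term whose coefficient converges only conditionally as $r\to\infty$. After the substitution $z=\xi^2+\alpha$, this linear term produces extra contributions to the integrals that must be seen to cancel with the matching corrections appearing in $\dot p_r(0)$ and $\ddot p_r(0)$, so that both sides of the identity pass to the limit simultaneously; verifying this cancellation and justifying dominated convergence of the tail series uniformly in $r$ is the delicate part of the argument.
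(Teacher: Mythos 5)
Your proposal is correct in outline and shares the paper's overall architecture --- truncate $\Phi$ to the polynomials $p_r$ of Lemma~\ref{lem:phi-estimate}, prove the identities for $p_r$, and pass to the limit $r\rightarrow\infty$ using the uniform bound of that lemma together with the asymptotics of $\cF_1$ and $\cF_2$ --- but it handles the polynomial step by a genuinely different mechanism. The paper derives the identities for $p_r$ from the Nevanlinna factorization of $k\mapsto p_r(k^2+\alpha)$ on $\C_+$ (an outer part built from the boundary values $\log|p_r(\xi^2+\alpha)|$ and Blaschke factors from the zeros $\I\kappa$ with $\kappa\in{\rm Z}$), taking the logarithm of the modulus and the first two logarithmic derivatives at $k=\I\sqrt{\alpha}$; the $\cF_1$ and $\cF_2$ terms then fall out of the Blaschke factors automatically, and this factorization technique is exactly what extends to the meromorphic setting of Theorem~\ref{thm:FactorMain}. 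Your route replaces this with additivity over linear factors and an explicit evaluation of $\int_\R \log|\xi^2+\alpha-\lambda|\,(\xi^2+\alpha)^{-n}d\xi$, which is more elementary and self-contained but does not generalize to the later meromorphic trace formulas. Your limiting argument is also cleaner than you fear: since $p_r\rightarrow\Phi$ locally uniformly, one has $\dot{p}_r(0)=-\sum_{|\lambda|<r}\lambda^{-1}\rightarrow\dot{\Phi}(0)$, and the combination $\dot{p}_r(0)-\alpha\ddot{p}_r(0)+\alpha\dot{p}_r(0)^2=-\sum_{|\lambda|<r}\lambda^{-1}+\alpha\sum_{|\lambda|<r}\lambda^{-2}$ involves beyond this only the absolutely convergent series $\sum\lambda^{-2}$; hence the left-hand sides converge on the nose and no separate cancellation with the exponential correction factors needs to be tracked --- dominated convergence with the bound of Lemma~\ref{lem:phi-estimate} plus monotone exhaustion of ${\rm Z}$ by ${\rm Z}_r$ finishes the job. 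Two small imprecisions to repair: $\cF_1$ is \emph{not} non-negative (it is negative on $(0,1)$), but only finitely many $\kappa\in{\rm Z}$ lie below $\sqrt{\alpha}$ because $\Phi$ has finitely many zeros in $(0,\alpha)$, so absolute convergence of the sums still follows from the density bound $n(t)\leq Ct$ and the $s^{-3}$ decay of $\cF_1$ and $\cF_2$ at infinity; and the domination of $|\log|p_r(\xi^2+\alpha)||$ requires the two-sided Poisson integrability of $\log|\Phi|$ valid for Cartwright-class functions, not merely condition~\eqref{eqnCondCart} on $\log_+$, together with local integrability of the logarithm against the Jacobian $(\lambda-\alpha)^{-\nicefrac{1}{2}}$ near $\lambda=\alpha$.
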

   
   \begin{proof}
    For each fixed $\alpha>0$, the entire function $\Phi$ admits the representation
   \begin{align}\label{eq:CartwrightRepr}
     \Phi(k^2+\alpha) = \lim_{r\rightarrow\infty} C_r   \exp\biggl(\frac{\I}{\pi} \int_{\R} \frac{1+k\xi}{k-\xi}\frac{\log|p_r(\xi^2+\alpha)|}{1+\xi^2}d\xi\biggr)\prod_{\kappa\in{\rm Z}\atop\kappa<\sqrt{r+\alpha}} \frac{k-\I\kappa}{k+\I\kappa},
   \end{align}
 where $C_r$ are some complex constants with $|C_r|=1$, the polynomials $p_r$ are defined in~\eqref{eqn:prodPR} and the convergence holds locally uniformly for all $k\in\C_+$.  
 More specifically, since the zero set $\{k\in\C_+\,|\, p_r(k^2+\alpha) =0\}$ coincides with the set $\{\I\kappa\,|\, \kappa\in{\rm Z},~\kappa<\sqrt{r+\alpha}\}$ whenever $r>\alpha$, for sufficiently large $r$ one has the Nevanlinna factorization~\cite[Theorem~6.13]{roro94} 
     \begin{align}\label{eqnprNevFac}
       p_r(k^2+\alpha) & = C_r  \exp\biggl(\frac{\I}{\pi} \int_{\R} \frac{1+k\xi}{k-\xi}\frac{\log|p_r(\xi^2+\alpha)|}{1+\xi^2}d\xi\biggr) \prod_{\kappa\in{\rm Z}\atop\kappa<\sqrt{r+\alpha}} \frac{k-\I\kappa}{k+\I\kappa}
     \end{align}
     on the upper complex half-plane, where $C_r$ is some complex constant with $|C_r|=1$. 
      In view of~\cite[Remark~2 in Lecture 17]{le96}, this gives the claimed representation. 

 By taking the logarithm of the absolute value of~\eqref{eqnprNevFac} at $k=\I\sqrt{\alpha}$, we first get the identity 
\begin{align}\label{eqnPhir}
 0 =   \frac{\sqrt{\alpha}}{\pi}\int_\R \frac{\log|p_r(\xi^2+\alpha)|}{\xi^2+\alpha}d\xi +  \sum_{\kappa\in{\rm Z}\atop\kappa<\sqrt{r+\alpha}} \log\biggl| \frac{\kappa-\sqrt{\alpha}}{\kappa+\sqrt{\alpha}}\biggr|.
\end{align}
Furthermore, taking the logarithmic derivative of~\eqref{eq:CartwrightRepr} gives 
     \begin{align}\label{eqnPhiLogDev} 
 2k \frac{\dot{\Phi}(k^2+\alpha)}{\Phi(k^2+\alpha)} & = \lim_{r\rightarrow\infty}  - \frac{\I}{\pi} \int_{\R} \frac{\log|p_r(\xi^2+\alpha)|}{(k-\xi)^2}d\xi + \sum_{\kappa\in{\rm Z}\atop\kappa<\sqrt{r+\alpha}} \frac{2\I\kappa}{k^2+\kappa^2}
\end{align}
for all $k$ near $\I\sqrt{\alpha}$ (such that $\Phi(k^2+\alpha)\not=0$) and evaluating at $k=\I\sqrt{\alpha}$ yields (take into account that $\Phi$ is a real entire function)  
     \begin{align*}
 2\sqrt{\alpha} \dot{\Phi}(0) & = \lim_{r\rightarrow\infty}  - \frac{1}{\pi} \int_{\R} \frac{(\xi^2-\alpha) \log|p_r(\xi^2+\alpha)|}{(\xi^2+\alpha)^2}d\xi + \sum_{\kappa\in{\rm Z}\atop\kappa<\sqrt{r+\alpha}} \frac{2\kappa}{\kappa^2 - \alpha}.
\end{align*}
 After adding~\eqref{eqnPhir} divided by $\sqrt{\alpha}$ to the limit, we end up with 
     \begin{align}\label{eqndPhirfin}
 2\sqrt{\alpha} \dot{\Phi}(0) & = \lim_{r\rightarrow\infty}  \frac{2\alpha}{\pi} \int_{\R} \frac{\log|p_r(\xi^2+\alpha)|}{(\xi^2+\alpha)^2}d\xi +  \frac{1}{\sqrt{\alpha}} \sum_{\kappa\in{\rm Z}\atop\kappa<\sqrt{r+\alpha}} \cF_1\biggl(\frac{\kappa}{\sqrt{\alpha}}\biggr).
\end{align}
Differentiating~\eqref{eqnPhiLogDev} once more and evaluating at $k=\I\sqrt{\alpha}$ yields 
 \begin{align*}
  & 2 \dot{\Phi}(0) - 4\alpha \ddot{\Phi}(0) + 4\alpha \dot{\Phi}(0)^2 \\
  & \qquad\qquad =  \lim_{r\rightarrow\infty}  \frac{2\sqrt{\alpha}}{\pi} \int_{\R} \frac{(3\xi^2-\alpha)\log|p_r(\xi^2+\alpha)|}{(\xi^2+\alpha)^3}d\xi + \sum_{\kappa\in{\rm Z}\atop\kappa<\sqrt{r+\alpha}} \frac{4\sqrt{\alpha}\kappa}{(\kappa^2-\alpha)^2},
 \end{align*}
which, after adding~\eqref{eqndPhirfin} divided by $\sqrt{\alpha}$, becomes  
 \begin{align}\label{eqnddPhir}\begin{split}
  & 4 \dot{\Phi}(0) - 4\alpha \ddot{\Phi}(0) + 4\alpha \dot{\Phi}(0)^2 \\
  & \qquad\qquad =  \lim_{r\rightarrow\infty}  \frac{8\sqrt{\alpha}}{\pi} \int_{\R} \frac{\xi^2 \log|p_r(\xi^2+\alpha)|}{(\xi^2+\alpha)^3}d\xi + \frac{1}{\alpha} \sum_{\kappa\in{\rm Z}\atop\kappa<\sqrt{r+\alpha}}  \cF_2\biggl(\frac{\kappa}{\sqrt{\alpha}}\biggr).
\end{split}\end{align}
Finally, because of the asymptotics of the functions $\cF_1$ and $\cF_2$ at $\infty$ (see Appendix~\ref{app:F1F2}) 
 and the uniform bound on the polynomials $p_r$ in Lemma~\ref{lem:phi-estimate}, we can pass to the limit $r\rightarrow\infty$ in~\eqref{eqndPhirfin} and~\eqref{eqnddPhir} to arrive at the claimed identities. 
   \end{proof}   

We are now in position to prove the main result of this appendix.

\begin{proof}[Proof of Theorem~\ref{thm:FactorMain}]
 Choose a $\kappa_0>0$ such that $\I\kappa_0$ is neither a zero nor a pole of $G_{\pm,n}$ for all $n=1,\ldots,N$ and define the meromorphic function $Q$ on $\C_+$ by 
 \begin{align}\label{eqnDefQ}
   Q(k) = \prod_{n=1}^N \frac{G_{+,n}(k)}{G_{-,n}(k)}.
 \end{align} 
 Since the functions $G_{\pm,1},\ldots,G_{\pm,N}$ all satisfy the requirements of Theorem~\ref{th:SimonFactor}, it follows readily that the limit $Q(\xi) = \lim_{\varepsilon\downarrow0}Q(\xi+\I\varepsilon)$ exists and is nonzero for almost all $\xi\in\R$ and that this limit satisfies 
   \begin{align}\label{eqnQint}
      \int_{\R} \frac{|\log|Q(\xi)||}{1+\xi^2}d\xi <\infty.
   \end{align} 
  Moreover, Theorem~\ref{th:SimonFactor} guarantees that the function $Q$ admits the factorization
  \begin{align}\label{eqnQfac}
    Q(k) = \sgn(Q(\I\kappa_0))  \exp\biggl(\frac{\I}{\pi} \int_{\R} \frac{1+k\xi}{k-\xi}\frac{\log|Q(\xi)|}{1+\xi^2}d\xi\biggr)  \lim_{\delta\downarrow0} \prod_{n=1}^N \frac{B_{+,n,\delta}(k)}{B_{-,n,\delta}(k)},
  \end{align}
  where the rational functions $B_{\pm,n,\delta}$ are given by
   \begin{align*}
B_{\pm,n,\delta}(k)  & =    \prod_{\kappa\in{\rm Z}_{\pm,n}\atop\delta<\kappa<1/\delta} \frac{\kappa_0-\kappa}{|\kappa_0-\kappa|} \frac{k- \I\kappa}{k+\I\kappa} \prod_{\eta\in{\rm P}_{\pm,n}\atop\delta<\eta<1/\delta}  \frac{\kappa_0-\eta}{|\kappa_0-\eta|} \frac{k+\I \eta}{k-\I \eta}  
   \end{align*} 
  with ${\rm Z}_{\pm,n} = \{\kappa>0\,|\, G_{\pm,n}(\I\kappa)=0\}$ and ${\rm P}_{\pm,n} = \{\eta>0\,|\, G_{\pm,n}(\I\eta)=\infty\}$, and the convergence holds locally uniformly on $\C_+$ away from the zeros and poles of the functions $G_{\pm,1},\ldots,G_{\pm,N}$. 
  In fact, because $\I\sqrt{\alpha}$ is neither a zero nor a pole of $Q$ by our assumptions, we can remove $\sqrt{\alpha}$ from all the sets ${\rm Z}_{\pm,1},\ldots,{\rm Z}_{\pm,N}$ and ${\rm P}_{\pm,1},\ldots,{\rm P}_{\pm,N}$, since the corresponding factors just cancel out anyway. 
  As a consequence, the convergence also holds locally uniformly near $\I\sqrt{\alpha}$. 
 This allows us to take the logarithmic derivative of~\eqref{eqnQfac} near $\I\sqrt{\alpha}$ and arrive at  
 \begin{align}\label{eqnQTF01}
   \begin{split}\dot{Q}(\I\sqrt{\alpha}) & = \frac{2\I\alpha}{\pi} \int_\R \frac{\log|Q(\xi)|}{(\xi^2+\alpha)^2} d\xi + \lim_{\delta\downarrow0} \frac{\I}{\sqrt{\alpha}}\sum_{n=1}^N  \sum_{\kappa\in{\rm Z}_{+,n}\atop\delta<\kappa<1/\delta}  \cF_1\biggl(\frac{\kappa}{\sqrt{\alpha}}\biggr)  \\
    & \qquad  - \sum_{\eta\in{\rm P}_{+,n}\atop\delta<\eta<1/\delta} \cF_1\biggl(\frac{\eta}{\sqrt{\alpha}}\biggr) - \sum_{\kappa\in{\rm Z}_{-,n}\atop\delta<\kappa<1/\delta} \cF_1\biggl(\frac{\kappa}{\sqrt{\alpha}}\biggr) + \sum_{\eta\in{\rm P}_{-,n}\atop\delta<\eta<1/\delta}  \cF_1\biggl(\frac{\eta}{\sqrt{\alpha}}\biggr)  
 \end{split}
 \end{align}
 in the same way as in the proof of Theorem~\ref{thm:CartwrightRepr} (by also using the identity 
  \begin{align}\label{eqnQTF00}
  \begin{split}
   0 & = \frac{\sqrt{\alpha}}{\pi} \int_\R \frac{\log|Q(\xi)|}{\xi^2+\alpha} d\xi + \lim_{\delta\downarrow0} \sum_{n=1}^N  \sum_{\kappa\in{\rm Z}_{+,n}\atop\delta<\kappa<1/\delta}  \log\biggl|\frac{\kappa-\sqrt{\alpha}}{\kappa+\sqrt{\alpha}}\biggr|  \\
    & \qquad - \sum_{\eta\in{\rm P}_{+,n}\atop\delta<\eta<1/\delta} \log\biggl|\frac{\eta-\sqrt{\alpha}}{\eta+\sqrt{\alpha}}\biggr|  - \sum_{\kappa\in{\rm Z}_{-,n}\atop\delta<\kappa<1/\delta} \log\biggl|\frac{\kappa-\sqrt{\alpha}}{\kappa+\sqrt{\alpha}}\biggr| + \sum_{\eta\in{\rm P}_{-,n}\atop\delta<\eta<1/\delta} \log\biggl|\frac{\eta-\sqrt{\alpha}}{\eta+\sqrt{\alpha}}\biggr| 
    \end{split}
 \end{align}
 obtained from~\eqref{eqnQfac} after taking the logarithm of the absolute value at $k=\I\sqrt{\alpha}$). 
 Differentiating the logarithmic derivative of $Q$ once more and evaluating at $k=\I\sqrt{\alpha}$, as in the proof of Theorem~\ref{thm:CartwrightRepr}, yields  
 \begin{align}\label{eqnQTF02}
 \begin{split}
 &  \ddot{Q}(\I\sqrt{\alpha}) - \dot{Q}(\I\sqrt{\alpha})^2 - \frac{\I}{\sqrt{\alpha}} \dot{Q}(\I\sqrt{\alpha}) \\
 & \qquad = \frac{8\sqrt{\alpha}}{\pi} \int_\R \frac{\xi^2 \log|Q(\xi)|}{(\xi^2+\alpha)^3} d\xi+  \lim_{\delta\downarrow0} \frac{1}{\alpha}\sum_{n=1}^N  \sum_{\kappa\in{\rm Z}_{+,n}\atop\delta<\kappa<1/\delta}  \cF_2\biggl(\frac{\kappa}{\sqrt{\alpha}}\biggr)  \\
  & \qquad\qquad - \sum_{\eta\in{\rm P}_{+,n}\atop\delta<\eta<1/\delta} \cF_2\biggl(\frac{\eta}{\sqrt{\alpha}}\biggr)  -  \sum_{\kappa\in{\rm Z}_{-,n}\atop\delta<\kappa<1/\delta} \cF_2\biggl(\frac{\kappa}{\sqrt{\alpha}}\biggr) + \sum_{\eta\in{\rm P}_{-,n}\atop\delta<\eta<1/\delta}  \cF_2\biggl(\frac{\eta}{\sqrt{\alpha}}\biggr).
 \end{split}
 \end{align}
 It remains to notice that the limit $a(\xi)=\lim_{\varepsilon\downarrow0}a(\xi+\I\varepsilon)$ also exists and is nonzero for almost all $\xi\in\R$ because $\Phi_\pm$ are entire functions.
The limit satisfies~\eqref{eq:FlogSzego} due to~\eqref{eqnQint} and condition~\eqref{eqnCondCart} for entire functions of Cartwright class. 
 By combining the identities we proved for $Q$ with the ones we get from Theorem~\ref{thm:CartwrightRepr} applied to the functions $\Phi_\pm$, one obtains the claimed identities after taking into account cancellations in the sums and that $a$ has only simple zeros and poles. 
\end{proof}

\section{Meromorphic functions and trace formulas, \ref*{thm:KSforExB2}}\label{appMeroMainII}

For the proof of the relative trace formula in Section~\ref{secSbSsumrule2} we need another auxiliary technical result that is somewhat different to the one in Appendix~\ref{appMeroMain}. 
To this end, we are first going to establish some more identities for a real entire function $\Phi$ of Cartwright class that also involve its exponential type $\tau$, which is given by 
\begin{align}\label{eq:typePhi}
    \tau  =  \limsup_{s\rightarrow\infty} \frac{\log|\Phi(\I s)|}{s}
\end{align}
 under our assumptions on $\Phi$ (see \cite[Lecture~16]{le96}, \cite[Theorem~6.18]{roro94} for example).
 As in Section~\ref{secSbSsumrule2}, we fix $\beta>0$ and let $\zeta$ be the rational function given by 
 \begin{align}
   \zeta(k) & = \beta \frac{1+k^2}{1-k^2},
 \end{align}
 mapping the upper complex half-plane $\C_+$ conformally onto $\C_+\cup(-\beta,\beta)\cup\C_-$. 
 We also continue to use the functions $\cF_1$ and $\cF_2$ defined by~\eqref{eq:Fsdef}. 
 
\begin{theorem}\label{thm:CartwrightB}
  Let $\Phi$ be a real entire function of Cartwright class with $\Phi(0)=1$ and only real and simple zeros. 
  Then for every $\beta>0$ one has 
   \begin{align}
     \label{eq:CartTrace00B}    -\tau & = \frac{1}{\pi\beta} \int_{\R} \frac{\log|\Phi(\zeta(\xi))|}{1+\xi^2}d\xi + \frac{1}{\beta} \sum_{\kappa\in{\rm Z}} \log\biggl|\frac{\kappa-1}{\kappa+1}\biggr|, \\
    \label{eq:CartTrace01B}    \dot{\Phi}(0) - \tau & = \frac{2}{\pi\beta} \int_{\R} \frac{\log|\Phi(\zeta(\xi))|}{(1+\xi^2)^2}d\xi + \frac{1}{\beta} \sum_{\kappa\in{\rm Z}} \cF_1(\kappa),  \\
    \label{eq:CartTrace02B} - \beta\ddot{\Phi}(0) + \beta\dot{\Phi}(0)^2 -2\tau   & = \frac{8}{\pi\beta} \int_{\R} \frac{\xi^2 \log|\Phi(\zeta(\xi))|}{(1+\xi^2)^3}d\xi +  \frac{1}{\beta} \sum_{\kappa\in{\rm Z}} \cF_2(\kappa), 
   \end{align} 
   where ${\rm Z} = \{\kappa>0\,|\, \Phi\circ\zeta(\I\kappa)=0\}$ and $\tau$ is the exponential type of $\Phi$. 
  \end{theorem}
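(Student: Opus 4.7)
The plan is to adapt the proof strategy of Theorem~\ref{thm:CartwrightRepr} to the new setting, where the two-to-one map $k\mapsto k^2+\alpha$ is replaced by the conformal bijection $\zeta$ from $\C_+$ onto $\C_+\cup(-\beta,\beta)\cup\C_-$, and evaluation at $k=\I\sqrt{\alpha}$ is replaced by evaluation at $k=\I$ (noting $\zeta(\I)=0$, $\dot\zeta(\I)=\I\beta$, $\ddot\zeta(\I)=-\beta$). First, I would approximate $\Phi$ by the polynomials $p_r$ defined in~\eqref{eqn:prodPR} and analyze the rational function $p_r\circ\zeta$ on $\C_+$, whose only $\C_+$-zeros lie on the positive imaginary axis at $\I\kappa_j$ with $\kappa_j=\sqrt{(\beta-\lambda_j)/(\beta+\lambda_j)}$, corresponding to zeros $\lambda_j\in(-\beta,\beta)$ of $p_r$; zeros $\lambda_j\in\R\setminus[-\beta,\beta]$ produce boundary zeros on the real axis, and the function has poles of order $N_r=\deg p_r$ at $k=\pm 1$.

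The key technical step is to establish the Nevanlinna-type factorization
\begin{align*}
p_r(\zeta(k))=B_r(k)\exp\biggl(\frac{\I}{\pi}\int_\R\frac{1+k\xi}{k-\xi}\frac{\log|p_r(\zeta(\xi))|}{1+\xi^2}\,d\xi\biggr),
\end{align*}
with $B_r(k)=\prod_{\kappa_j}\frac{1-\kappa_j}{|1-\kappa_j|}\frac{k-\I\kappa_j}{k+\I\kappa_j}$. The cleanest route is to apply Theorem~\ref{th:SimonFactor} (with base point $\kappa_0=1$) to the polynomial $P_r(k):=(1-k^2)^{N_r}p_r(\zeta(k))$ of degree $2N_r$, whose value at $\I$ equals $2^{N_r}>0$ and whose only $\C_+$-zeros coincide with those of $p_r\circ\zeta$, and then divide out the corresponding factorization of $(1-k^2)^{N_r}$; the factors $N_r\log|1-\xi^2|$ exactly cancel through the identity $\frac{1}{\pi}\int_\R\frac{\log|1-\xi^2|}{1+\xi^2}d\xi=\log 2$.

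Next I would extract the three identities by evaluating the representation at $k=\I$ (giving a zeroth-order identity), by taking the logarithmic derivative at $\I$ (producing the sum with $\cF_1$, after combining with the zeroth-order identity exactly as~\eqref{eq:CartTrace01} was assembled from~\eqref{eqnPhir} and~\eqref{eqndPhirfin} in the proof of Theorem~\ref{thm:CartwrightRepr}), and finally by differentiating once more and adding a suitable multiple of the first-derivative identity to assemble the $\cF_2$-sum. The chain rule for $\Phi\circ\zeta$ at $\I$ supplies the translation between derivatives of $p_r\circ\zeta$ at $\I$ and those of $p_r$ at $0$.

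The main obstacle lies in passing to the limit $r\to\infty$ and in particular in the appearance of the exponential type $\tau$, which has no counterpart in Theorem~\ref{thm:CartwrightRepr}. Whereas $p_r(\zeta(\xi))\to\Phi(\zeta(\xi))$ holds pointwise on $\R\setminus\{\pm 1\}$, the convergence is \emph{non-uniform} near $\xi=\pm 1$, where $\zeta$ has simple poles and $\log|p_r\circ\zeta|$ grows linearly in $N_r$; this non-uniformity is precisely the mechanism through which the exponential type $\tau$ of $\Phi$ (which by the Cartwright--Levinson theorem governs the asymptotic density of the zeros $\lambda_j$, hence the rate $N_r/r$) enters as a boundary correction. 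Establishing a uniform bound for $|\log|p_r(\zeta(\xi))||$ analogous to Lemma~\ref{lem:phi-estimate}, together with the asymptotics of $\cF_1$ and $\cF_2$ near $0$ and $\infty$ from Appendix~\ref{app:F1F2}, will control the individual integrals and sums; the identification of the residual mean-type contribution with $\beta\tau$ is the subtle step that produces the $-\tau$ in~\eqref{eq:CartTrace00B} (versus the $0$ appearing in~\eqref{eqnPhir}) and, through the assembly described above, propagates to the $-\tau$ and $-2\tau$ on the left-hand sides of~\eqref{eq:CartTrace01B} and~\eqref{eq:CartTrace02B}.
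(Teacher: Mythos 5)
Your setup is sound and largely parallels the paper's: the zeros $\I\kappa_j$ with $\kappa_j^2=(\beta-\lambda_j)/(\beta+\lambda_j)$, the derivative values $\dot\zeta(\I)=\I\beta$, $\ddot\zeta(\I)=-\beta$, and the scheme of evaluating the factorization and its first two logarithmic derivatives at $k=\I$ all check out, and the passage to the limit $r\to\infty$ in the individual integrals and sums is indeed controlled by Lemma~\ref{lem:phi-estimate} exactly as you say. However, there is a genuine gap at the one point where this theorem differs from Theorem~\ref{thm:CartwrightRepr}: you correctly diagnose that the exponential type must arise because $\int_\R\log|p_r(\zeta(\xi))|(1+\xi^2)^{-1}d\xi$ does \emph{not} converge to $\int_\R\log|\Phi(\zeta(\xi))|(1+\xi^2)^{-1}d\xi$, but you never actually identify the defect with $-\pi\beta\tau$ — you only call it ``the subtle step.'' The hint you give (Cartwright--Levinson zero-density asymptotics controlling $N_r/r$) does not by itself produce the identity. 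The tool that closes this gap is the classical logarithmic-integral representation of the type of a Cartwright-class function (\cite[Theorem~8.2.1]{bo54}): for the canonical product $\tilde\Phi(z)=\E^{Mz}\Phi(z)$ one has $-\tau=\lim_{R\to\infty}\frac1\pi\int_{-R}^{R}\log|\tilde\Phi(\lambda)|\lambda^{-2}d\lambda$, while the same principal-value integral vanishes for each polynomial $p_r$ (zero type). Substituting $\lambda=\zeta(\xi)$, splitting off the weight $(|\lambda|\sqrt{\lambda^2-\beta^2})^{-1}=$ (constant)$\cdot(1+\xi^2)^{-1}d\xi/d\lambda$, and subtracting the two principal-value identities is precisely what converts the defect of convergence into the exact quantity $-\tau$; without this step the identity~\eqref{eq:CartTrace00B} is not proved, and~\eqref{eq:CartTrace01B} and~\eqref{eq:CartTrace02B} inherit the problem since they are assembled by adding multiples of~\eqref{eq:CartTrace00B}.

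A secondary, repairable issue: you cannot apply Theorem~\ref{th:SimonFactor} directly to $P_r(k)=(1-k^2)^{N_r}p_r(\zeta(k))$, because a generic polynomial of degree $2N_r$ does not satisfy the hypothesis $\im G(k)\geq0$ for $\re k>0$ and $\im G(k)\leq0$ for $\re k<0$. One can salvage your route by factoring $P_r$ into the quadratics $c_j-d_jk^2$ (each of which, or its reciprocal, does satisfy the sign condition), but it is simpler to do what the paper does and invoke the Nevanlinna factorization for rational functions of bounded characteristic on $\C_+$ (\cite[Theorem~6.13]{roro94}) applied to $p_r\circ\zeta$ directly; the boundary poles at $k=\pm1$ then enter only through the (integrable) logarithmic singularity of $\log|p_r(\zeta(\xi))|$ at $\xi=\pm1$, and no separate cancellation of $(1-k^2)^{N_r}$ is needed.
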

   
   \begin{proof}  
     The polynomials $p_r$ defined in Lemma~\ref{lem:phi-estimate} converge locally uniformly to $\Phi$ as $r\rightarrow\infty$ in view of~\cite[Remark~2 in Lecture 17]{le96}. 
     For large enough $r>\beta$, the zero set ${\rm Z}$ and $\{\kappa>0\,|\, p_r\circ\zeta(\I\kappa)=0\}$ 
      coincide and hence the rational function $p_r\circ\zeta$ admits the Nevanlinna factorization~\cite[Theorem~6.13]{roro94} 
     \begin{align}\label{eqnprNevFacB}
       p_r(\zeta(k)) & = C_r  \exp\biggl(\frac{\I}{\pi} \int_{\R} \frac{1+k\xi}{k-\xi}\frac{\log|p_r(\zeta(\xi))|}{1+\xi^2}d\xi\biggr) \prod_{\kappa\in{\rm Z}} \frac{k-\I\kappa}{k+\I\kappa}
     \end{align}
     on the upper complex half-plane, where $C_r$ is some complex constant with $|C_r|=1$. 
   Taking the logarithm of the absolute value of~\eqref{eqnprNevFacB} at $k=\I$ yields the identity 
\begin{align}\label{eqnPhirB}
 0 =   \frac{1}{\pi}\int_\R \frac{\log|p_r(\zeta(\xi))|}{1+\xi^2}d\xi +  \sum_{\kappa\in{\rm Z}} \log\biggl| \frac{\kappa-1}{\kappa+1}\biggr|.
\end{align}
Now applying~\cite[Theorem~8.2.1]{bo54} to the function $\tilde{\Phi}(z) = \E^{Mz}\Phi(z)$, which is the canonical product in~\eqref{eq:PhiProd}, and taking into account that $\Phi$ is a real entire function of Cartwright class, the exponential type of $\Phi$ is given by  
    \begin{align}\label{eqnExpTypePhi}
    \begin{split}
      - \tau & = \lim_{R\to \infty} \frac{1}{\pi} \int_{-R}^R \frac{\log|\tilde{\Phi}(\lambda)|}{\lambda^2}d\lambda\\
      &= \frac{1}{\pi} \int_{-\beta}^\beta \frac{\log|\Phi(\lambda)|-\dot{\Phi}(0)\lambda}{\lambda^2}d\lambda + \frac{1}{\pi} \int_{\R\backslash[-\beta,\beta]} \frac{\log|\Phi(\lambda)|}{\lambda^2}d\lambda.
      \end{split}
    \end{align}
    Due to locally uniform convergence of the polynomials $p_r$ to $\Phi$ as well as the uniform bound in Lemma~\ref{lem:phi-estimate}, the first integral in~\eqref{eqnExpTypePhi} can be written as  
     \begin{align*}
         \int_{-\beta}^\beta \frac{\log|\Phi(\lambda)|-\dot{\Phi}(0)\lambda}{\lambda^2}d\lambda = \lim_{r\rightarrow\infty}  \int_{-\beta}^\beta \frac{\log|p_r(\lambda)|-\dot{p}_r(0)\lambda}{\lambda^2}d\lambda.
    \end{align*}
Notice that the uniform estimate in the above integral near zero is ensured by the normalization $\Phi(0) = p_r(0) = 1$ and the locally uniform convergence of the corresponding second derivatives.     
    The second integral in~\eqref{eqnExpTypePhi} exists because $\Phi$ is of Cartwright class and becomes  
        \begin{align*}
       & \int_{\R\backslash[-\beta,\beta]} \frac{\log|\Phi(\lambda)|}{|\lambda|\sqrt{\lambda^2-\beta^2}}d\lambda + \int_{\R\backslash[-\beta,\beta]} \log|\Phi(\lambda)|\biggl(\frac{1}{\lambda^2}-\frac{1}{|\lambda|\sqrt{\lambda^2-\beta^2}}\biggr)d\lambda \\
         & \qquad =  \frac{1}{\beta} \int_{\R} \frac{\log|\Phi(\zeta(\xi))|}{1+\xi^2}d\xi + \lim_{r\rightarrow\infty}  \int_{\R\backslash[-\beta,\beta]} \log|p_r(\lambda)|\biggl(\frac{1}{\lambda^2}-\frac{1}{|\lambda|\sqrt{\lambda^2-\beta^2}}\biggr)d\lambda,
    \end{align*}
    where we performed a transformation $\lambda=\zeta(\xi)$ and passing to the limit is justified by the uniform bound in Lemma~\ref{lem:phi-estimate}. 
    After adding up in~\eqref{eqnExpTypePhi}, we get 
    \begin{align*}
      -\tau = \frac{1}{\pi\beta}\int_\R \frac{\log|\Phi(\zeta(\xi))|}{1+\xi^2}d\xi - \lim_{r\rightarrow\infty} \frac{1}{\pi}\int_{\R\backslash[-\beta,\beta]} \frac{\log|p_r(\lambda)|}{|\lambda|\sqrt{\lambda^2-\beta^2}}d\lambda,
    \end{align*}
    where we also took into account that $p_r$ has zero exponential type (apply~\cite[Theorem~8.2.1]{bo54} to $p_r$).
    It remains to perform a transformation $\lambda=\zeta(\xi)$ and use~\eqref{eqnPhirB} to arrive at the first identity~\eqref{eq:CartTrace00B}. 
    
   Taking the logarithmic derivative of~\eqref{eqnprNevFacB} gives 
     \begin{align}\label{eqnPhiLogDevB} 
  \frac{\dot{\zeta}(k)\dot{p}_r(\zeta(k))}{p_r(\zeta(k))} & =  - \frac{\I}{\pi} \int_{\R} \frac{\log|p_r(\zeta(\xi))|}{(k-\xi)^2}d\xi + \sum_{\kappa\in{\rm Z}} \frac{2\I\kappa}{k^2+\kappa^2}
\end{align}
for all $k$ near $\I$ (such that $p_r\circ \zeta(k)\not=0$) and evaluating at $k=\I$ yields (take into account that $p_r$ is a real polynomial)  
     \begin{align}\label{eqndPhirfinB}
    \dot{p}_r(0) & =   \frac{1}{\pi\beta} \int_{\R} \frac{(1-\xi^2) \log|p_r(\zeta(\xi))|}{(1+\xi^2)^2}d\xi + \frac{1}{\beta} \sum_{\kappa\in{\rm Z}} \frac{2\kappa}{\kappa^2 - 1}.
\end{align}
  Because the polynomials $p_r$ obey the uniform bound in Lemma~\ref{lem:phi-estimate}, we are able to pass to the limit $r\rightarrow\infty$ to obtain  
      \begin{align*}
    \dot{\Phi}(0) & =   \frac{1}{\pi\beta} \int_{\R} \frac{(1-\xi^2) \log|\Phi(\zeta(\xi))|}{(1+\xi^2)^2}d\xi + \frac{1}{\beta} \sum_{\kappa\in{\rm Z}} \frac{2\kappa}{\kappa^2 - 1}
\end{align*}
and after adding~\eqref{eq:CartTrace00B} we end up with the second identity~\eqref{eq:CartTrace01B}. 
Differentiating~\eqref{eqnPhiLogDevB}  once more and evaluating at $k=\I$ yields 
 \begin{align*}
   - \dot{p}_r(0) - \beta\ddot{p}_r(0) + \beta\dot{p}_r(0)^2  =   \frac{2}{\pi\beta} \int_{\R} \frac{(3\xi^2-1)\log|p_r(\zeta(\xi))|}{(1+\xi^2)^3}d\xi + \frac{1}{\beta} \sum_{\kappa\in{\rm Z}} \frac{4\kappa}{(\kappa^2-1)^2},
 \end{align*}
which, after adding~\eqref{eqndPhirfinB} and subtracting~\eqref{eqnPhirB} divided by $\beta$, becomes  
 \begin{align*}
     & - \beta \ddot{p}_r(0) + \beta \dot{p}_r(0)^2  \\
     & \qquad =   - \frac{2}{\pi\beta} \int_{\R} \frac{(1-\xi^2)^2 \log|p_r(\zeta(\xi))|}{(1+\xi^2)^3}d\xi + \frac{1}{\beta} \sum_{\kappa\in{\rm Z}}   \frac{2\kappa^3 + 2\kappa}{(\kappa^2-1)^2} -\log\biggl| \frac{\kappa-1}{\kappa+1}\biggr|.
 \end{align*}
Due to the bound in Lemma~\ref{lem:phi-estimate}, we may pass to the limit $r\rightarrow\infty$ and obtain 
 \begin{align*}
     & - \beta \ddot{\Phi}(0) + \beta \dot{\Phi}(0)^2  \\
     & \qquad =   - \frac{2}{\pi\beta} \int_{\R} \frac{(1-\xi^2)^2 \log|\Phi(\zeta(\xi))|}{(1+\xi^2)^3}d\xi + \frac{1}{\beta} \sum_{\kappa\in{\rm Z}}   \frac{2\kappa^3 + 2\kappa}{(\kappa^2-1)^2} -\log\biggl| \frac{\kappa-1}{\kappa+1}\biggr|.
 \end{align*}
 It remains to add~\eqref{eq:CartTrace00B} twice to get the last identity~\eqref{eq:CartTrace02B}. 
   \end{proof}  
  
  The main result of this appendix now follows in a similar way as Theorem~\ref{thm:FactorMain}. 
  
\begin{theorem}\label{thm:FactorMainB}
 Let $\beta>0$ and let $a$ be a meromorphic function on $\C_+$ with $a(\I)=1$ and only simple zeros and poles. 
 Suppose that $a$ can be written as 
 \begin{align}
 a(k) = \frac{\Phi_+(\zeta(k))}{\Phi_-(\zeta(k))} \prod_{n=1}^N \frac{G_{+,n}(k)}{G_{-,n}(k)}
 \end{align}
 for some $N\in\N$ and functions $\Phi_\pm$, $G_{\pm,1},\ldots,G_{\pm,N}$, where: 
\begin{enumerate}[label=(\roman*), ref=(\roman*), leftmargin=*, widest=ii]
 \item The real entire functions $\Phi_\pm$ are of Cartwright class with $\Phi_\pm(0)=1$ and only real and simple zeros.
 \item The functions $G_{\pm,1},\ldots,G_{\pm,N}$ are meromorphic on $\C_+$ but not identically zero and such that $\im\,G_{\pm,n}(k)\geq0$ when $\re\,k>0$ and $\im\, G_{\pm,n}(k)\leq0$ when $\re\,k<0$ for all $n=1,\ldots,N$.
\end{enumerate}
 Then the limit $a(\xi)=\lim_{\varepsilon\downarrow0} a(\xi+\I\varepsilon)$ exists and is nonzero for almost all $\xi\in\R$, satisfies 
  \begin{align}\label{eq:FlogSzegoB}
      \int_{\R} \frac{|\log|a(\xi)||}{1+\xi^2}d\xi <\infty,
   \end{align} 
 and the identities 
   \begin{align}
   \label{eq:Gtrace01B}
   \begin{split}
     & \beta(\tau_- - \tau_+)  \\
     & \qquad = \frac{1}{\pi} \int_{\R} \frac{\log|a(\xi)|}{1+\xi^2}d\xi +  \lim_{\delta\downarrow0} \sum_{\kappa\in{\rm Z}\atop\delta<\kappa<1/\delta} \log\biggl|\frac{\kappa-1}{\kappa+1}\biggr| - \sum_{\eta\in{\rm P}\atop\delta<\eta<1/\delta}\log\biggl|\frac{\eta-1}{\eta+1}\biggr|,
   \end{split}
  \\ \label{eq:Gtrace02B}
   \begin{split}
   &  \dot{a}(\I) + \I\beta(\tau_- - \tau_+) \\
    & \qquad =  \frac{2\I}{\pi} \int_{\R} \frac{\log|a(\xi)|}{(1+\xi^2)^2}d\xi +  \I \lim_{\delta\downarrow0} \sum_{\kappa\in{\rm Z}\atop\delta<\kappa<1/\delta} \cF_1(\kappa) - \sum_{\eta\in{\rm P}\atop\delta<\eta<1/\delta}\cF_1(\eta), 
     \end{split}
 \\ \label{eq:Gtrace03B}
   \begin{split}
& \ddot{a}(\I) - \dot{a}(\I)^2 - \I \dot{a}(\I) + 2\beta(\tau_- - \tau_+) \\
& \qquad = \frac{8}{\pi} \int_{\R} \frac{\xi^2 \log|a(\xi)|}{(1+\xi^2)^3}d\xi  +  \lim_{\delta\downarrow0}  \sum_{\kappa\in{\rm Z}\atop\delta<\kappa<1/\delta} \cF_2(\kappa) - \sum_{\eta\in{\rm P}\atop\delta<\eta<1/\delta} \cF_2(\eta),
     \end{split}
   \end{align}
   hold, where ${\rm Z} = \{\kappa>0\,|\, a(\I\kappa)=0\}$, ${\rm P} = \{\eta>0\,|\, a(\I\eta)=\infty\}$ and $\tau_\pm$ are the exponential types of $\Phi_\pm$.
\end{theorem}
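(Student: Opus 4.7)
The strategy mirrors the proof of Theorem~\ref{thm:FactorMain} in Appendix~\ref{appMeroMain}, with the base point $\I\sqrt{\alpha}$ replaced by $\I$ and with the spectral change of variable $k^2+\alpha$ replaced by $\zeta(k)$; the only genuinely new ingredient is the appearance of the exponential types $\tau_\pm$ in the identities of Theorem~\ref{thm:CartwrightB}. First, choose $\kappa_0>0$ so that $\I\kappa_0$ is neither a zero nor a pole of any $G_{\pm,n}$, and define the meromorphic function
\begin{align*}
Q(k) = \prod_{n=1}^N \frac{G_{+,n}(k)}{G_{-,n}(k)}
\end{align*}
on $\C_+$. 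Since each $G_{\pm,n}$ satisfies the hypotheses of Theorem~\ref{th:SimonFactor}, the boundary limit $Q(\xi)=\lim_{\varepsilon\downarrow 0}Q(\xi+\I\varepsilon)$ exists and is nonzero almost everywhere, satisfies $\int_{\R}|\log|Q(\xi)||(1+\xi^2)^{-1}d\xi<\infty$, and $Q$ admits a factorization of the form~\eqref{eqnQfac} with the analog of the Blaschke products built from the sets ${\rm Z}_{\pm,n}$, ${\rm P}_{\pm,n}$. Since $Q$ is finite and nonzero at $\I$, one can remove the element $1$ (if present) from all these sets without affecting the product, so locally uniform convergence of the product near $\I$ is retained.

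Next, I would extract three identities from this factorization by evaluating at $k=\I$: taking the logarithm of the modulus yields the analog of~\eqref{eqnQTF00} (with the factor $\log|(\kappa-1)/(\kappa+1)|$), taking one logarithmic derivative and evaluating at $\I$ gives an identity for $\dot{Q}(\I)$ involving the function $\cF_1$, and differentiating the logarithmic derivative once more gives an identity for $\ddot{Q}(\I)-\dot Q(\I)^2-\I\dot Q(\I)$ involving $\cF_2$. These are the exact analogs of~\eqref{eqnQTF01} and~\eqref{eqnQTF02} with the kernel $(\xi^2+\alpha)^{-j}$ replaced by $(1+\xi^2)^{-j}$ and with the prefactors dictated by $\dot\zeta(\I)$ and $\ddot\zeta(\I)$. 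The computations here are routine residue calculations; the only bookkeeping point is the passage $\delta\downarrow 0$, which is justified by the asymptotics of $\cF_1$, $\cF_2$ at $0$ and $\infty$ (Appendix~\ref{app:F1F2}) exactly as in the proof of Theorem~\ref{thm:FactorMain}.

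Third, I would apply Theorem~\ref{thm:CartwrightB} separately to the real entire Cartwright factors $\Phi_\pm$, obtaining identities~\eqref{eq:CartTrace00B}, \eqref{eq:CartTrace01B}, \eqref{eq:CartTrace02B} in terms of $\tau_\pm$, $\dot\Phi_\pm(0)$, $\ddot\Phi_\pm(0)$ and the sets $\{\kappa>0\,|\,\Phi_\pm\circ\zeta(\I\kappa)=0\}$. Subtracting the $\Phi_-$ identity from the $\Phi_+$ identity produces contributions of $\tau_- - \tau_+$, together with integrals of $\log|\Phi_+(\zeta(\xi))/\Phi_-(\zeta(\xi))|$ against the kernels $(1+\xi^2)^{-1}$, $(1+\xi^2)^{-2}$, $\xi^2(1+\xi^2)^{-3}$, and the sums over the zeros of $\Phi_\pm\circ\zeta$ on the positive imaginary axis weighted by $\log|(\kappa-1)/(\kappa+1)|$, $\cF_1(\kappa)$, $\cF_2(\kappa)$.

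Finally, I would add the three $Q$-identities to the three $(\Phi_+/\Phi_-)$-identities. On the analytic side, $\log|a(\xi)|$ is the sum of $\log|Q(\xi)|$ and $\log|\Phi_+(\zeta(\xi))/\Phi_-(\zeta(\xi))|$, and on the derivatives side the left-hand sides combine via Leibniz into the left-hand sides of~\eqref{eq:Gtrace01B}--\eqref{eq:Gtrace03B}; in particular, $\dot\zeta(\I)=\I\beta$ gives the factor $\I\beta$ in front of $(\tau_- - \tau_+)$. Because $a$ is assumed to have only simple zeros and poles, any common point in the zero set of $\Phi_+\circ\zeta$ and of one of the $G_{+,n}$ (or similarly on the minus side) would be a zero of higher order of the numerator, and likewise for poles; hence the contributions from such common points cancel in the combined sums, leaving precisely the sums over ${\rm Z}$ and ${\rm P}$ as in the statement. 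The bound~\eqref{eq:FlogSzegoB} follows from the corresponding bound for $Q$ and Cartwright's condition~\eqref{eqnCondCart} for $\Phi_\pm$. The main obstacle, as in Theorem~\ref{thm:FactorMain}, is verifying the legitimacy of the limit $\delta\downarrow 0$ after the various sums have been shifted between the $Q$- and $\Phi_\pm$-parts; this is controlled by the asymptotics of $\cF_1$ and $\cF_2$ at $0$ and $\infty$ together with the uniform bound in Lemma~\ref{lem:phi-estimate} applied to the polynomial approximants of $\Phi_\pm$.
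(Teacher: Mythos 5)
Your proposal is correct and follows essentially the same route as the paper: factor out $Q=\prod_n G_{+,n}/G_{-,n}$ and handle it via Theorem~\ref{th:SimonFactor} (the paper simply observes that the identities \eqref{eqnQTF00}, \eqref{eqnQTF01}, \eqref{eqnQTF02} already proved for Theorem~\ref{thm:FactorMain} apply verbatim with $\alpha=1$, since $Q$ does not involve $\zeta$), then apply Theorem~\ref{thm:CartwrightB} to $\Phi_\pm$ and combine, using simplicity of the zeros and poles of $a$ to reduce the sums to ${\rm Z}$ and ${\rm P}$. The only cosmetic quibble is that the prefactors in the $Q$-identities are not ``dictated by $\dot\zeta(\I)$ and $\ddot\zeta(\I)$'' (they are just the $\alpha=1$ specialization); the chain-rule factor $\dot\zeta(\I)=\I\beta$ enters only through the $\Phi_\pm$ contributions, as you correctly use in the final combination.
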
 

\begin{proof}
  We have seen in the proof of Theorem~\ref{thm:FactorMain} that for the meromorphic function $Q$  defined on $\C_+$ by~\eqref{eqnDefQ} the limit $Q(\xi)=\lim_{\varepsilon\downarrow0}Q(\xi+\I\varepsilon)$ exists and is nonzero for almost all $\xi\in\R$ and satisfies~\eqref{eqnQint}. 
  Moreover, the identities~\eqref{eqnQTF01}, \eqref{eqnQTF00} and~\eqref{eqnQTF02} hold with $\alpha=1$. 
 It remains to notice that the limit $a(\xi)=\lim_{\varepsilon\downarrow0}a(\xi+\I\varepsilon)$ also exists and is nonzero for almost all $\xi\in\R$ because $\Phi_\pm$ are entire functions.
The limit satisfies~\eqref{eq:FlogSzegoB} due to~\eqref{eqnQint} and condition~\eqref{eqnCondCart} for entire functions of Cartwright class. 
 By combining the identities~\eqref{eqnQTF01}, \eqref{eqnQTF00} and~\eqref{eqnQTF02} for $Q$ with the ones we get from Theorem~\ref{thm:CartwrightB} applied to the functions $\Phi_\pm$, one obtains the claimed identities after taking into account cancellations in the sums and that $a$ has only simple zeros and poles. 
\end{proof}

\section{Properties of \texorpdfstring{$\cF_1$}{F1} and \texorpdfstring{$\cF_2$}{F2}}\label{app:F1F2}

The purpose of this appendix is to collect a few facts about the functions $\cF_1$ and $\cF_2$, which appear in several trace formulas throughout this article. 
These two functions are defined on $(0,1)\cup(1,\infty)$ by 
\begin{align}
   \cF_1(s) & = \frac{2s}{s^2 - 1} + \log\biggl|\frac{s-1}{s+1}\biggr|, & \cF_2(s) & = \frac{2s^3 + 2s}{(s^2 - 1)^2} + \log\biggl|\frac{s-1}{s+1}\biggr|.
\end{align}
After computing the respective derivatives 
\begin{align}\label{eq:F2monot}
 \cF_1'(s) & = -\frac{4}{(s^2-1)^2}, & \cF_2'(s) & = -\frac{16s^2}{(s^2-1)^3}, 
\end{align}
we see that $\cF_1$ is strictly decreasing on $(0,1)$ and on $(1,\infty)$, whereas $\cF_2$ is strictly increasing on $(0,1)$ and strictly decreasing on $(1,\infty)$.
Because $\cF_2(s)$ tends to zero as $s\rightarrow0$ and as $s\rightarrow\infty$, it follows that $\cF_2$ takes only positive values. 

 In order to estimate the function $\cF_1$, we first write it as 
 \begin{align}
   \cF_1(s) = \int_s^\infty \frac{4}{(r^2-1)^2} dr
 \end{align}
 for $s\in(1,\infty)$, and then estimate the integrand to obtain 
 \begin{align}
   \frac{4}{3 s^3} = \int_s^\infty \frac{4}{r^4} dr < \cF_1(s) < \int_s^\infty \frac{4r}{(r^2-1)^{\nicefrac{5}{2}}} dr = \frac{4}{3(s^2-1)^{\nicefrac{3}{2}}}. 
 \end{align}
 Of course, this readily gives the asymptotics 
  \begin{align}
    \cF_1(s) = \frac{4}{3 s^3} + \oo(s^{-3})
  \end{align}
  as $s\rightarrow\infty$.
 On the other side, one has the estimate  
\begin{align}
  -4s - \frac{s^2}{2} =  - \int_0^s 4 + r\, dr < \cF_1(s) = \int_0^s -\frac{4}{(1-r^2)^2}dr  < -4s 
\end{align}
for $s$ close enough to zero, which yields the asymptotics $-4s/\cF_1(s) \rightarrow 1$ as $s\rightarrow0$.

In a similar way as for $\cF_1$, we write the function $\cF_2$ as
 \begin{align}
   \cF_2(s) = \int_s^\infty \frac{16r^2}{(r^2-1)^3} dr
 \end{align}
 for $s\in(1,\infty)$, so that we can estimate 
\begin{align}\label{eq:F2est01}
  \frac{16}{3(s^2-1)^{\nicefrac{3}{2}}} = \int_s^\infty \frac{16r}{(r^2-1)^{\nicefrac{5}{2}}} dr < \cF_2(s).  
\end{align}
Using the symmetry property $\cF_2(s^{-1}) = \cF_2(s)$, this readily gives the estimate 
\begin{align}\label{eq:F2est03}
   \frac{16s^{3}}{3(1-s^2)^{\nicefrac{3}{2}}} < \cF_2(s)    
\end{align}
for $s\in (0,1)$. 
Moreover, since one has 
\begin{align}
  \cF_2(s) = \int_0^s \frac{16r^2}{(1-r^2)^3}dr < \int_0^s 16r^2 + 8r^3\, dr = \frac{16s^3}{3} + 2s^4
\end{align}
for $s$ close enough to zero, we get the asymptotic behavior  
\begin{align}\label{eqnF2asymzero}
  \cF_2(s) & = \frac{16s^3}{3(1-s^2)^{\nicefrac{3}{2}}} + \oo(s^3), & s&\rightarrow0, \\ 
  \label{eqnF2asyminfty} \cF_2(s) & = \frac{16}{3(s^2-1)^{\nicefrac{3}{2}}} + \oo(s^{-3}), & s&\rightarrow\infty.
\end{align}

\section{Canonical first order systems}\label{app:D}

 The aim of this appendix is to briefly review some facts about {\em canonical systems} as far as they are needed in Section~\ref{secCO}. 
 For more details  we only refer the reader to the books~\cite{dB68}, \cite{rem} and~\cite{rom}. 
 In this article, we are going to use the following notion of a {\em Hamiltonian}. 
   
  \begin{definition} \label{def:Hamilt}
    A {\em Hamiltonian} is a locally integrable, real, symmetric and non-negative definite $2\times2$ matrix function
    \begin{align}
     h = \begin{pmatrix} h_{11} & h_{12} \\ h_{12} & h_{22} \end{pmatrix}
    \end{align}
    on $[0,\infty)$ such that the following properties hold:
    \begin{enumerate}[label=(\roman*), ref=(\roman*), leftmargin=*, widest=iii]
 \item The limit-point case prevails at infinity, meaning that 
 \begin{align}\label{appD:lpcase}
 \int_0^\infty \tr\, h(x)dx =  \int_0^\infty h_{11}(x)+ h_{22}(x)\,dx=\infty. 
 \end{align}
 \item The function $h$ does not vanish on subsets of positive Lebesgue measure.
\item  The function $h_{22}$ is not equal to zero almost everywhere on $[0,\infty)$. 
 \end{enumerate}
 If the Hamiltonian $h$ additionally satisfies  
 \begin{align}\label{eqnTraceNormal}
\tr\, h = h_{11} + h_{22} = 1
\end{align}
almost everywhere on $[0,\infty)$, then it is said to be {\em trace normalized}.
 \end{definition}
 
With each Hamiltonian $h$, one associates the canonical first order system  
 \begin{align}\label{appD:CS}
\begin{pmatrix} 0 & 1\\ -1 & 0 \end{pmatrix}f' = z h f, 
 \end{align} 
 where $z$ is a complex spectral parameter. 
 The fundamental matrix solution $U$ of the canonical system~\eqref{appD:CS} is the unique solution of the integral equation 
 \begin{align}
  U(z,x) = \begin{pmatrix} 1 & 0 \\ 0 & 1 \end{pmatrix} - z\int_0^x \begin{pmatrix} 0 & 1 \\ -1 & 0 \end{pmatrix} h(s) U(z,s)ds. 
 \end{align} 
 Using this solution, the Weyl--Titchmarsh function $m$ of the canonical system~\eqref{appD:CS} with Hamiltonian $h$ can be defined on $\C\backslash\R$ by 
 \begin{align}\label{appD:mWT-CS}
  m(z)=  \lim_{x\rightarrow\infty} \frac{U_{11}(z,x)}{U_{12}(z,x)}.
 \end{align}
 It is then possible to show that the solution $\psi(z,\redot)$ of the system~\eqref{appD:CS} given by 
 \begin{align}\label{eqnCSWeylSol}
    \psi(z,x) = U(z,x)  \begin{pmatrix} 1 \\ -m(z) \end{pmatrix}
 \end{align}
 for $z\in\C\backslash\R$ is square integrable with respect to $h$, meaning that 
 \begin{align}
   \int_0^\infty \psi(z,x)^\ast h(x) \psi(z,x) dx < \infty. 
 \end{align}
 We note that with these solutions, the Weyl--Titchmarsh function $m$ can in turn be expressed as the quotient 
 \begin{align}
   m(z) = - \frac{\psi_2(z,0)}{\psi_1(z,0)}. 
 \end{align}
 Finally, as a Herglotz--Nevanlinna function, the Weyl--Titchmarsh function $m$ of course admits an integral representation of the form~\eqref{eqnWTmIntRep}.
 
 \begin{remark}
The meaning of the coefficients $c_1$, $c_2$ and $L$ in the integral representation~\eqref{eqnWTmIntRep} are different for canonical systems when compared to generalized indefinite strings (see Remark~\ref{rem:uniquerho}). 
 For example, the constant $c_1$ in the linear term can be read off the Hamiltonian $h$ directly since 
 \begin{align}\label{eqnC1fromH}
  c_1 & = \int_0^{x_0} \tr\,h(x)dx,
\end{align}
where the point $x_0\in[0,\infty)$ is given by 
\begin{align}
  x_0 & = \sup\lbrace x\in[0,\infty) \,|\, h_{22} = 0 \text{ almost everywhere on }[0,x)\rbrace. 
\end{align}
\end{remark}

\begin{remark}\label{remTraceNormal}
Canonical systems with different Hamiltonians may have the same Weyl--Titchmarsh function. 
Indeed, introducing the function $\gx:[0,\infty)\rightarrow[0,\infty)$ by 
\begin{align}\label{eqnTraceH}
\gx(x) = \int_0^x \tr\, h(s)ds,
\end{align}
 which is strictly increasing and bijective, it is straightforward to check that  the canonical system with the new Hamiltonian defined by 
\begin{align}\label{eqnTracedH}
\tilde{h}(x) = \frac{1}{\tr\, h(\gx^{-1}(x))}h(\gx^{-1}(x))
\end{align}  
has the same Weyl--Titchmarsh function. 
In order to avoid such an ambiguity, it is customary to assume that the Hamiltonian is trace normalized. 
Notice that in this case, the function defined in~\eqref{eqnTraceH} is simply $\gx(x)=x$ for all $x\in[0,\infty)$.
\end{remark} 

 It is a fundamental result of L.~de Branges (see~\cite{rem,rom}) that the Weyl--Titchmarsh function determines the Hamiltonian almost everywhere up to a reparametrization (in particular, it uniquely determines trace normalized Hamiltonians) and that indeed all Herglotz--Nevanlinna functions arise as the Weyl--Titchmarsh function of some Hamiltonian. 
 Moreover, the correspondence $h\mapsto m$ on all trace normalized Hamiltonians, widely known as the {\em Krein--de Branges correspondence}, becomes homeomorphic with respect to suitable topologies. 
 
We are now going to briefly summarize a connection between trace normalized Hamiltonians and generalized indefinite strings; see~\cite[Section~6]{IndefiniteString}. 
For a given generalized indefinite string $(L,\omega,\dip)$, introduce the function $\varsigma\colon [0,L]\to [0,\infty]$ by 
\begin{align}\label{eqnVarsigma}
\varsigma(x) = x + \int_0^x \Wr(s)^2ds + \int_{[0,x)}d\dip
\end{align}
and define $\xi$ on $[0,\infty)$ as the generalized inverse of $\varsigma$ via 
\begin{align}\label{eqnXiasInverse}
\xi(x) = \sup\{s\in [0,L)\,| \, \varsigma(s)\le x\}.
\end{align}
Notice that $\xi$ is locally absolutely continuous with $0\le \xi'\le 1$ almost everywhere on $[0,\infty)$. 
Moreover, one has $\xi \circ\varsigma(x) = x$ for all $x\in[0,L)$ and 
\begin{align}\label{eqnXiSigma}
    \varsigma\circ \xi(x) & = \begin{cases} x, & x\in \ran{\varsigma},\\ \sup\lbrace s\in\ran{\varsigma}\,|\, s\leq x\rbrace, & x\not\in\ran{\varsigma}. \end{cases} 
  \end{align}
 It then follows that the function $h$ defined by (see~\cite[Equation~(6.10)]{IndefiniteString})
\begin{align}\label{eqnCSfromGIS}
h(x) = \begin{pmatrix} 1 - \xi'(x) & \xi'(x)\Wr(\xi(x)) \\ \xi'(x)\Wr(\xi(x)) & \xi'(x) \end{pmatrix}
\end{align}
is a trace normalized Hamiltonian. 
Furthermore, the corresponding fundamental matrix solution $U$ of the canonical system~\eqref{appD:CS} is given by 
\begin{align}\label{eqnFSSforCSviaGIS}
U(z,x) = \begin{pmatrix} 1 & 0 \\ z(x-\varsigma\circ \xi(x))) & 1 \end{pmatrix} 
             \begin{pmatrix} \theta(z,\xi(x)) & -z\phi(z,\xi(x)) \\ -z^{-1}\theta^\qd(z,\xi(x)) & \phi^\qd(z,\xi(x)) \end{pmatrix},
\end{align}
 which implies that the Weyl-Titchmarsh function of the Hamiltonian $h$ coincides with the Weyl--Titchmarsh function of the generalized indefinite string $(L,\omega,\dip)$.

 Conversely, in order to show how to transform a canonical system to a generalized indefinite string, let a trace normalized Hamiltonian $h$ be given. 
 We introduce the locally absolutely continuous, non-decreasing function $\xi$ on $[0,\infty)$ by  
  \begin{align}\label{eqnIPSurDefxi}
   \xi(x) = \int_0^x h_{22}(s)ds, 
  \end{align} 
  as well as its generalized inverse $\varsigma:[0,L]\rightarrow[0,\infty]$ via  
  \begin{align}\label{appD:xiinverse}
   \varsigma(x) = \sup\lbrace s\in[0,\infty)\,|\, \xi(s)< x\rbrace\cup\lbrace 0\rbrace. 
  \end{align}
  Here, the quantity $L\in(0,\infty]$ denotes the limit of $\xi(s)$ as $s\rightarrow \infty$ which is non-zero indeed due to our assumption on $h_{22}$.
  The function $\varsigma$ is readily verified to be strictly increasing and to satisfy $\xi \circ\varsigma(x) = x$ for all $x\in[0,L)$ as well as~\eqref{eqnXiSigma}.
  One can then show that the real-valued function $\Wr$ defined on $[0,L)$ by
  \begin{align}\label{appD:WrviaCS}
   \Wr(x) = \begin{cases} \frac{h_{12}}{h_{22}}\circ\varsigma(x), & \text{if }h_{22}\circ\varsigma(x)\not=0, \\ 0, & \text{if }h_{22}\circ \varsigma(x)=0, \end{cases}
  \end{align}
  belongs to $L^2_{\loc}[0,L)$ and thus gives rise to a real distribution $\omega$ in $H^{-1}_{\loc}[0,L)$. 
  Furthermore, we may define a positive Borel measure $\dip$ on $[0,L)$ via its distribution function by 
  \begin{align}\label{appD:dipviaCS}
   \int_{[0,x)} d\dip =  \int_0^{\varsigma(x)} 1 - \xi'(s) - \xi'(s)\Wr(\xi(s))^2\,ds. 
  \end{align}
  It is then possible to verify again that the Weyl--Titchmarsh function of the generalized indefinite string $(L,\omega,\dip)$ coincides with the Weyl--Titchmarsh function of the Hamiltonian $h$.  

\begin{remark}\label{rem:GISasCS}
 The considerations above reveal that generalized indefinite strings can be viewed as another instance of suitably normalized canonical systems. 
  Namely, reparametrizing the independent variable using~\eqref{eqnIPSurDefxi}, the Hamiltonian $h$ is transformed into a new one (possibly on a finite interval) with the bottom-right entry normalized to one. 
 However, with this choice of normalization, the top-left entry of the new Hamiltonian becomes a measure in general.
  The entries of this new Hamiltonian can be identified with the coefficients of a generalized indefinite string. 
\end{remark}
 
With the explicit connection between canonical systems and generalized indefinite strings described above, we are now in position to verify the expression~\eqref{eqnthetaphiExpTyp} for the exponential type of solutions stated in Section~\ref{sec:prelim}. 
 
 \begin{proof}[Proof of Theorem~\ref{thm:FSScartwright}]
  Let $h$ be the trace normalized Hamiltonian defined by~\eqref{eqnCSfromGIS}. 
  In view of relation~\eqref{eqnFSSforCSviaGIS}, except for the expression~\eqref{eqnthetaphiExpTyp} for the exponential type, all claims follow immediately from the corresponding facts for canonical systems; see~\cite[Section~4.4]{rem} or~\cite[Section~1]{krla14}. 
  However, we are also able to infer from~\eqref{eqnFSSforCSviaGIS} and the Krein--de Branges type formula (see~\cite[Theorem~4.26]{rem} or \cite[Theorem~11]{rom}), that the exponential type of the functions in~\eqref{eqnFSSstring} is given by
  \begin{align*}
   \int_0^{\varsigma(x)} \sqrt{\det h(s)}ds = \int_0^{\varsigma(x)} \sqrt{\xi'(s) - \xi'(s)^2 - \xi'(s)^2 \Wr(\xi(s))^2}ds.
  \end{align*}
  It therefore suffices to show that one has 
  \begin{align}\label{eqnxixi}
   \xi'(s) = \xi'(s)^2 + \xi'(s)^2 \Wr(\xi(s))^2 + \xi'(s)^2 \varrho(\xi(s))^2
  \end{align}
  for almost all $s\in[0,\infty)$, since the expression for the exponential type above then simplifies to   
  \begin{align*}
      \int_0^{\varsigma(x)} \xi'(s)\varrho(\xi(s))ds,
  \end{align*}
  which yields the integral in~\eqref{eqnthetaphiExpTyp} after a simple substitution (use~\cite[Corollary~5.4.4]{bo07} for example). 
  In order to verify~\eqref{eqnxixi}, let $D$ be the set of all $x\in(0,L)$ such that $\varsigma$ is differentiable at $x$ with 
    \begin{align*}
     \varsigma'(x) = 1 + \Wr(x)^2 + \varrho(x)^2.
  \end{align*}
  As $\varsigma$ is strictly increasing and given by~\eqref{eqnVarsigma}, the complement of $D$ has Lebesgue measure zero. 
  By performing another substitution, one then gets 
  \begin{align*}
   \int_0^x \id_{[0,L)\backslash D}(\xi(s))\xi'(s)ds = \int_0^{\xi(x)} \id_{[0,L)\backslash D}(s)ds = 0
  \end{align*}
  for every $x\in[0,\infty)$, which implies that 
  \begin{align*}
  \id_{[0,L)\backslash D}(\xi(s))\xi'(s) = 0
  \end{align*}
  for almost all $s\in[0,\infty)$. 
  For such an $s$, one clearly has~\eqref{eqnxixi} whenever $\xi'(s)=0$. 
  Otherwise, if $\xi'(s)\not=0$, then (we do not need to consider the case when $\xi(s)=L$ as this happens at most for one $s$ with $\xi'(s)\not=0$) $\xi(s)$ belongs to $D$ and one has  
  \begin{align*}
   \varsigma'(\xi(s)) = 1 + \Wr(\xi(s))^2 + \varrho(\xi(s))^2.
  \end{align*} 
  Since $\varsigma$ is continuous at $\xi(s)$, we may infer from~\eqref{eqnXiSigma} that $s$ necessarily belongs to $\ran{\varsigma}$. 
  Due to left-continuity of the function $\varsigma$, it then follows that there is a strictly increasing sequence $s_n$ in $\ran{\varsigma}$ that converges to $s$ (we may assume that $s$ is not zero of course). 
  We thus get from~\eqref{eqnXiSigma} that
   \begin{align*}
    \frac{\varsigma(\xi(s_n))-\varsigma(\xi(s))}{\xi(s_n)-\xi(s)} \frac{\xi(s_n)-\xi(s)}{s_n-s} = 1
   \end{align*}
  and it remains to take the limit as $n\rightarrow\infty$ to obtain    
  \begin{align*}
   \bigl(1+ \Wr(\xi(s))^2 + \varrho(\xi(s))^2\bigr) \xi'(s) =1,
  \end{align*} 
  which readily yields~\eqref{eqnxixi} again.  
 \end{proof}

\section*{Acknowledgments} 
We are grateful to the anonymous referee for carefully reading our manuscript and providing numerous helpful comments.


\end{document}